\def\@tocline#1#2#3#4#5#6#7{\relax
  \ifnum #1>\c@tocdepth 
  \else
    \par \addpenalty\@secpenalty\addvspace{#2}%
    \begingroup \hyphenpenalty\@M
    \@ifempty{#4}{%
      \@tempdima\csname r@tocindent\number#1\endcsname\relax
    }{%
      \@tempdima#4\relax
    }%
    \parindent\z@ \leftskip#3\relax \advance\leftskip\@tempdima\relax
    \rightskip\@pnumwidth plus4em \parfillskip-\@pnumwidth
    #5\leavevmode\hskip-\@tempdima
      \ifcase #1
       \or\or \hskip 1em \or \hskip 2em \else \hskip 3em \fi%
      #6\nobreak\relax
      \dotfill
      \hbox to\@pnumwidth{\@tocpagenum{#7}}
    \par
    \nobreak
    \endgroup
  \fi}
\newtheorem{theorem}{Theorem}[section]
\newtheorem{lemma}[theorem]{Lemma}
\newtheorem{proposition}[theorem]{Proposition}
\theoremstyle{definition}
\newtheorem{definition}[theorem]{Definition}
\newtheorem{remark}[theorem]{Remark}
\newcommand{\N}{{\mathbb N}}
\newcommand{\R}{{\mathbb R}}
\newcommand{\tr}{\mathrm{tr}^*}
\newcommand{\loc}{\mathrm{loc}}
\newcommand{\beqn}{\begin{eqnarray}}
\newcommand{\eeqn}{\end{eqnarray}}   
\newcommand{\beq}{\begin{eqnarray*}}
\newcommand{\eeq}{\end{eqnarray*}}
\newcommand{\be}{\small\begin{equation}}
\newcommand{\bel}[1]{\small\begin{equation}\label{#1}}
\newcommand{\ee}{\end{equation}\normalsize}
\newcommand{\BA}{\begin{array}}
\newcommand{\EA}{\end{array}}
\newcommand{\BAN}{\renewcommand{\arraystretch}{1.2}
\setlength{\arraycolsep}{2pt}\begin{array}}
\newcommand{\BAV}[2]{\renewcommand{\arraystretch}{#1}
\setlength{\arraycolsep}{#2}\begin{array}}
\newcommand{\BSA}{\begin{subarray}}
\newcommand{\ESA}{\end{subarray}}
\newcommand{\BAL}{\begin{aligned}}
\newcommand{\EAL}{\end{aligned}}
\newcommand{\abs}[1]{\left |#1\right |}
\newcommand{\norm}[1]{\left \|#1\right \|}
\newcommand{\supp}{\mathrm{supp}\,}
\newcommand{\dist}{\mathrm{dist}\,}
\newcommand{\sign}{\mathrm{sign}}
\newcommand{\diam}{\mathrm{diam}\,}
\newcommand{\prt}{\partial}
\newcommand{\tl}{\tilde}
\newcommand{\sbs}{\subset}
\def\dist{\mathrm{dist}}
\def \dd {\mathrm{d}}
\def\ga{\alpha}            
       \def\gd{\delta}      \def\ge{\epsilon}
\def\gth{\theta}                         
\def\gf{\phi}           
            \def\gl{\lambda}
\def\gm{\mu}        \def\gn{\nu}         
\def\gs{\sigma}       
      \def\gw{\omega}
     \def\Gd{\Delta}      
\def\Gl{\Lambda}          
\def\Gw{\Omega}              
\def\CS{{\mathcal S}}      \def\CN{{\mathcal N}}
\def\CA{{\mathcal A}}   \def\CB{{\mathcal B}}   \def\CC{{\mathcal C}}
      \def\CF{{\mathcal F}}
      \def\CL{{\mathcal L}}
\def\BBG {\mathbb G}       
\def\BBJ {\mathbb J}   \def\BBK {\mathbb K}    
   \def\BBN {\mathbb N}    
\def\BBP {\mathbb P}   \def\BBR {\mathbb R}
   \def\GTB {\mathfrak B}
\def\GTM {\mathfrak M}
\def\tr{\mathrm{tr}_{\mu,\Sigma}}
\newcommand{\ei}{\phi_{\xm,\Sigma}}
\newcommand{\xa}{\alpha}
\newcommand{\xb}{\beta}
\newcommand{\xg}{\gamma}
\newcommand{\xG}{\Gamma}
\newcommand{\xd}{\delta}
\newcommand{\xD}{\Delta}
\newcommand{\xe}{\varepsilon}
\newcommand{\xz}{\zeta}
\newcommand{\xk}{\kappa}
\newcommand{\xl}{\lambda}
\newcommand{\xL}{\Lambda}
\newcommand{\xm}{\mu}
\newcommand{\xn}{\nu}
\newcommand{\xr}{\rho}
\newcommand{\xs}{\sigma}
\newcommand{\xS}{\Sigma}
\newcommand{\xf}{\phi}
\newcommand{\xF}{\Phi}
\newcommand{\xo}{\omega}
\newcommand{\xO}{\Omega}
\newcommand{\myfrac}[2]{{\displaystyle \frac{#1}{#2} }}
\newcommand{\myint}[2]{{\displaystyle \int_{#1}^{#2}}}
\def \dd {\mathrm{d}}
\def \dx {\mathrm{d}x}
\def \dy {\mathrm{d}y}
\def\Nthb{\BBN_{\xa}}
\newcommand{\ap}{{\xa_{\scaleto{+}{3pt}}}}
\newcommand{\am}{{\xa_{\scaleto{-}{3pt}}}}
\newcommand\1{{\ensuremath {\mathds 1} }}
\def\bal#1\eal{\small\begin{align*}#1\end{align*}\normalsize}
\def\ba#1\ea{\small\begin{align}#1\end{align}\normalsize}
\numberwithin{equation}{section}
\begin{document}

\title[Semilinear elliptic equations]{Semilinear elliptic equations involving power nonlinearities and hardy potentials with boundary singularities}
\author[K. T. Gkikas]{Konstantinos T. Gkikas}
\address{K. T. Gkikas, Department of Mathematics, National and Kapodistrian University of Athens, 15784 Athens, Greece}
\email{kugkikas@math.uoa.gr}

\author[P.T. Nguyen]{Phuoc-Tai Nguyen}
\address{Phuoc-Tai Nguyen, Department of Mathematics and Statistics, Masaryk University, Brno, Czech Republic}
\email{ptnguyen@math.muni.cz}


\begin{abstract} Let $\Omega \subset\mathbb{R}^N$ ($N\geq 3$) be a $C^2$ bounded domain and $\Sigma \subset \partial\Omega$ be a $C^2$ compact submanifold without boundary, of dimension $k$, $0\leq k \leq N-1$. We assume that  $\Sigma = \{0\}$ if  $k = 0$ and $\Sigma=\partial\Omega$ if $k=N-1$. Denote $d_\Sigma(x)=\dist(x,\Sigma)$ and put $L_\mu=\Delta + \mu d_{\Sigma}^{-2}$ where $\mu$ is a parameter. In this paper, we study boundary value problems for equations $-L_\mu u \pm |u|^{p-1}u = 0$ in $\Omega$ with prescribed condition $u=\nu$ on $\partial \Omega$, where $p>1$ and $\nu$ is a given measure on $\partial \Omega$. The nonlinearity $|u|^{p-1}u$ is referred to as \textit{absorption} or \textit{source} depending whether the plus sign or minus sign appears. The distinctive feature of the problems is characterized by the interplay between the concentration of $\Sigma$, the type of nonlinearity, the exponent $p$ and the parameter $\mu$. The absorption case and the source case are sharply different in several aspects and hence require completely different approaches. In each case, we establish various necessary and sufficient conditions expressed in terms of appropriate capacities. In comparison with related works in the literature, by employing a fine analysis, we are able to treat the supercritical ranges for the exponent $p$, and the critical case for the parameter $\mu$, which justifies the novelty of our paper.
	
\medskip

\noindent\textit{Key words: Hardy potentials, boundary singularities, capacities, critical exponents, removable singularity, Keller-Osserman estimates}

\medskip

\noindent\textit{Mathematics Subject Classification:} 35J60, 35J75, 35J10, 35J66.
\end{abstract}

\maketitle
\tableofcontents

\section{Introduction}

Let $N\geq 3,$ $\xO\subset\mathbb{R}^N$ be a $C^2$ bounded domain and $\xS \subset \partial\Omega$ be a $C^2$ compact submanifold in $\mathbb{R}^N$ without boundary, of dimension $k$, $0\leq k \leq N-1$. We assume that  $\xS = \{0\}$ if  $k = 0$ and $\xS=\partial\xO$ if $k=N-1.$ Let $d_{\partial \Omega}(x)=\dist(x,\partial \Omega)$ and $d_\xS(x)=\dist(x,\xS)$. Two typical semilinear elliptic equations involving power nonlinearities and Hardy type potentials are of the form
\ba \label{eq:0} \tag{$E_\pm$}
L_\xm u \pm |u|^{p-1}u = 0\quad\text{in}\;\xO,
\ea
where $p>1$, $\xm \in \R$ is a parameter and
\bal L_\mu u: =\Delta u + \frac{\mu}{d_\xS^2}u.
\eal
The nonlinearity $|u|^{p-1}u$ in \eqref{eq:0} is referred to as an \textit{absorption} or a \textit{source} depending whether the plus sign or minus sign appears in \eqref{eq:0}.

Boundary value problems for \eqref{eq:0} with $\mu=0$ became a central research subject in the area of partial differential equations with abundant literature because of their applications in physics, engineering, and other applied scientific disciplines. A rich theory has been developed for the boundary value problem with a power absorption in case $\xm=0$, $\xS=\partial\xO$, namely for the problem
\be\label{withouthardy} \left\{ \BAL
-\xD u+|u|^{p-1}u&=0\qquad &&\text{in }\;\Gw,\\
u&=\xn \qquad&&\text{on }\;\partial\Gw,
\EAL \right.
\ee
where $\xn$ is a measure on $\partial \Omega$. Throughout this paper, we denote by $\GTM(\partial \Omega)$ and $\GTM^+(\partial \Omega)$ the space of finite measures on $\partial\xO$ and its positive cone respectively. The first study of  \eqref{withouthardy} was carried out by Gmira and V\'eron in \cite{GV} where the existence of a solution was obtained for any $\xn\in\mathfrak{M}(\partial\xO)$ in the subcritical case $1<p<\frac{N+1}{N-1}$. In the supercritical case $p \geq \frac{N+1}{N-1}$, a breakthrough was achieved by Marcus and V\'eron \cite{MV-JMPA01}, asserting that problem \eqref{withouthardy} possesses a solution if and only if $\xn$ is absolutely continuous with respect to  $\mathrm{Cap}_{\frac{2}{p},p'}^{\partial\xO}$, namely $\xn(E)=0$ for any Borel set $E\subset \partial\xO$ such that $\mathrm{Cap}_{\frac{2}{p},p'}^{\partial\xO}(E)=0$ (see \eqref{Capsub} for the definition of the Bessel capacities and see \eqref{abs-cont} for the meaning of the absolute continuity).

Equations \eqref{eq:0} with $\mu \neq 0$ are closely associated to nonlinear Schr\"odinger equations. A fundamental theory for the linear version of \eqref{eq:0} with more general potentials has been established by Ancona in \cite{An1,An2}. Recently, the study of boundary value problems for \eqref{eq:0} in measure frameworks has been pushed forward by several works (see  \cite{MarNgu,GkV,MarMor,MT,GkiNg_2019,GkiNg_absorption,GkiNg_source,CV-JDE,CQZ-PAFA,CV} and references therein), especially in the special cases $\Sigma=\partial \Omega$ and $\Sigma=\{0\}$.
In the case $\xS=\partial\xO$ and $\xm\neq0,$ the existence of solutions to the corresponding boundary value problem depends strongly on the value of $\mu$ in comparison with the optimal Hardy constant
\bal
C_{\xO,\partial\xO}:=\inf_{u\in H_0^1(\xO)\setminus \{0\}}\frac{\int_\xO|\nabla u|^2dx}{\int_\xO |u|^2 d_{\partial \Omega}^{-2}dx}
\eal
and the sign of the first eigenvalue of $-L_\mu$
\ba\label{eigenvalueboundary}
\xl_{\xm,\partial\xO}:=\inf_{u\in H_0^1(\xO)\setminus \{0\}}\frac{\int_\xO|\nabla u|^2dx-\xm\int_\xO |u|^2 d_{\partial \Omega}^{-2}dx}{\int_\xO |u|^2 dx}.
\ea
It is well known that $C_{\xO,\partial\xO}\in (0,\frac{1}{4}]$ and $C_{\xO,\partial\xO}=\frac{1}{4}$ if $\xO$ is convex (see e.g. \cite{BM,hardy-marcus}). When $\xm\leq C_{\xO,\partial\xO}$ then $\xl_{\xm,\partial\xO}>-\infty$ and the corresponding eigenfunction $\xf_{\xm,\partial\xO}$ satisfies $\xf_{\xm,\partial\xO}\approx d_{\partial \Omega}^\xa$ (see, e.g., \cite{DD,FMT}) where $\xa:=\frac{1}{2}+\sqrt{\frac{1}{4}-\xm}$. Marcus and Nguyen introduced the notion of normalized boundary trace (see \cite[Definition 1.2]{MarNgu}), denoted here by $\text{tr}^*_{\xm,\partial\xO}$, and studied the following problem
\ba \label{boundaryabsorption} \left\{ \BAL
- \xD u-\xm\frac{u}{d_{\partial \Omega}^2}+|u|^{p-1}u&=0\qquad \text{in }\;\Gw,\\
\text{tr}^*_{\xm,\partial\xO}(u)&=\xn.
\EAL \right.
\ea
In particular, they showed in \cite{MarNgu} that if $0<\xm<C_{\xO,\partial\xO}$ and $1<p<\frac{N+\xa}{N-\xa-2}$ then problem \eqref{boundaryabsorption} admits a unique solution for any $\xn\in \mathfrak{M}^+(\partial\xO)$. Independently, Gkikas and V\'eron \cite{GkV} introduced the notion of the boundary trace in a dynamic way, denoted here by $\mathrm{tr}_{\xm,\partial\xO}$, and obtained a similar type of existence result for a more general absorption in a slightly different context. In the supercritical case $p\geq\frac{N+\xa}{N-\xa-2},$ Gkikas and V\'eron \cite{GkV} showed that problem \eqref{boundaryabsorption} with $\mathrm{tr}_{\xm,\partial\xO}$ in place of $\mathrm{tr}_{\xm,\partial\xO}^*$ possesses a unique solution if and only if $\xn$ is absolutely continuous with respect to  $\mathrm{Cap}_{2-\frac{\xa+1}{p'},p'}^{\partial\xO}$ provided $\xl_{\xm,\partial\xO}>0.$ When $\xm<0,$ Marcus and Moroz \cite{MarMor} pointed out the existence of a second critical power. In particular, they showed in \cite{MarMor} that there is no solution to \eqref{boundaryabsorption} for any $\xn\in \mathfrak{M}^+(\partial\xO)$ if $\xm<0$ and $p\geq \frac{\xa+1}{\xa-1}$.

In the case $\xS\subsetneq\partial\xO$ with $0 \leq k\leq N-2$, the linear equation $L_\xm u=0$ was extensively investigated in recent papers \cite{MT,BGT} where the optimal Hardy constant
\ba\label{hardyineq}
C_{\xO,\xS}:=\inf_{u\in H_0^1(\xO)\setminus \{0\}}\frac{\int_\xO|\nabla u|^2dx}{\int_\xO |u|^2 d_\xS^{-2}dx}
\ea
is deeply involved. It is known (see, e.g., \cite{FF}) that  $C_{\xO,\xS} \in (0,\frac{(N-k)^2}{4}]$ . Moreover, when  $\xm< \frac{(N-k)^2}{4}$, there exists a minimizer $\ei\in H_0^1(\xO)$ of the eigenvalue problem
\ba\label{eigenvalue}
\xl_{\xm,\Sigma}:=\inf_{u\in H_0^1(\xO)\setminus \{0\}}\frac{\int_\xO|\nabla u|^2dx-\xm\int_\xO |u|^2 d_\Sigma^{-2}dx}{\int_\xO |u|^2 dx}>-\infty.
\ea
If $\xm=\frac{(N-k)^2}{4}$ then \eqref{eigenvalue} holds true, however there is no minimizer in $H_0^1(\xO).$  The reader is referred to \cite{FF} for more detail. In addition, by \cite[Proposition A.2]{BGT} (see also\cite[Lemma 2.2]{MT}), the corresponding eigenfunction $\ei$ satisfies the following pointwise estimate
\ba \label{eigenfunctionestimates}
\ei \approx d_{\partial \Omega}\,d^{-\am}_\Sigma \quad \text{in } \Omega ,
\ea
where
\ba\label{apmintro}
\xa_\pm:=H\pm\sqrt{H^2-\xm} \quad\text{and}\quad H:=\frac{N-k}{2}.
\ea

For $\xm<\min \left\{C_{\xO,\Sigma},\frac{2(N-k)-1}{4}\right\}$, Marcus and Nguyen \cite{MT} generalized the notion of normalized boundary trace in \cite{MarNgu}, denoted here by $\textrm{tr}^*_{\xm,\xS}$, to formulate and to study the boundary value problem
\ba\label{submanifoldabsorption} \left\{ \BAL
- \xD u-\xm\frac{u}{d_{\Sigma}^2}+|u|^{p-1}u&=0\qquad \text{in }\;\Gw,\\
\text{tr}^*_{\xm,\xS}(u)&=\xn.
\EAL \right.
\ea
An interesting feature of problem \eqref{submanifoldabsorption} is that there are different scenarios for the existence of a (unique) solution depending whether the support of $\nu$ is contained in $\partial \Omega \setminus \Sigma$ or in $\Sigma$.
More precisely, it was showed in \cite{MT} that problem \eqref{submanifoldabsorption} admits a nonnegative solution  for any $\xn\in \mathfrak{M}^+(\partial\xO)$ either if $1<p<\frac{N+1}{N-1}$ and $\xn$ has compact support in $\partial\xO\setminus \xS$ or if $1<p<\frac{N-\am+1}{N-\am-1}$ and $\xn$ has compact support in $\xS$. Very recently,  Barbatis, Gkikas and Tertikas gave a concept of boundary trace $\tr(u)$ in a dynamic way (see \cite[Definition 2.10]{BGT}) and employed it to demonstrate that the same results hold true provided  $\xm\leq H^2$ and $\xl_{\xm,\Sigma}>0$. In the supercritical cases, it was discovered in \cite{MT} that problem \eqref{submanifoldabsorption} has no solution either if $\xn=\xd_y$ with  $y\in \partial\xO\setminus\xS$ and $p\geq\frac{N+1}{N-1}$ or if $\xn=\xd_y$ with  $y\in\xS$ and $p\geq\frac{N-\am+1}{N-\am+1}$. Here by $\xd_y$ we mean the Dirac measure concentrated at $y$. The special case $\xS=\{0\}\subset\partial\xO$ was treated by Chen and V\'eron in \cite{CV}.

There is also a vast literature on boundary value problems for equations with a source term. We list below some relevant works (far from being a compete list of references). Let $\xr>0$ and $\xS=\partial\xO,$ the problem with power source
\be\label{boundarysource} \left\{ \BAL
- \xD u-\xm\frac{u}{d_{\partial \Omega}^2}-|u|^{p-1}u&=0\qquad \text{in }\;\Gw,\\
\text{tr}_{\xm,\partial\xO}(u)&=\xr\xn,
\EAL \right.
\ee
was thoroughly studied by Bidaut-V\'eron, Hoang, Nguyen and V\'eron in \cite{BHV} (see also \cite{BVi} for $\xm=0$). To be more precise, the authors in \cite{BHV} showed that problem \eqref{boundarysource} admits a solution for some small $\xr>0$ if and only if there exists a positive constant $C>0$ such that
\bal
\xn(E)\leq C\mathrm{Cap}_{2-\frac{\xa+1}{p'},p'}^{\partial\xO}(E), 
\eal
for any Borel set $E\subset \partial\xO$. Afterwards, various necessary and sufficient conditions for the existence of a solution to problem  \eqref{boundarysource} were obtained by Gkikas and Nguyen \cite{GkiNg_2019}.

When $\xS\subset\xO,$ the corresponding boundary problems with an absorption and a source involving operator $L_\xm $ were extensively studied by Gkikas and Nguyen in \cite{GkiNg_absorption} and \cite{GkiNg_source} respectively; see also the papers by D\'avila and  Dupaigne \cite{DD1,DD2}, Dupaigne and Nedev \cite{DN}, Fall \cite{Fal}, and Chen and  Zhou \cite{CheZho} for related results on semilinear equations with a source term.

The \textit{objective} of this paper is to study  boundary value problems for \eqref{eq:0} where $\xS\subsetneq\partial\xO$. More precisely, we aim to establish existence and nonexistence results under different sharp conditions on the ranges containing the power $p$, the value of $\mu$,  the geometry of $\Sigma$, and the concentration of the boundary measure datum $\xn$. In addition, we will give various necessary and sufficient conditions expressed in the terms of Bessel capacities for the existence of solutions.

We point out here that the optimal Hardy constant $C_{\xO,\xS}$, as well as the asymptotic behavior of the first eigenfunction $\ei$, the Green function and the Martin kernel, are different from those in the case $\xS\subset\xO.$  As a result, we derive different critical exponents for existence and nonexistence phenomena from those in \cite{GkiNg_absorption,GkiNg_source}. In addition, new essential difficulties arise, which require a very delicate approach to overcome.

\textit{Throughout this paper, we assume that}
\ba \label{assumpt}
\mu \leq \left( \frac{N-k}{2} \right)^2 \quad \text{and} \quad \lambda_{\mu,\Sigma}>0.
\ea
Under assumption \eqref{assumpt}, a theory for linear equations involving $L_\mu$ was established in \cite{BGT}, which forms a basis for the study of equation \eqref{eq:0}.
In order to formulate the boundary value problem for \eqref{eq:0}, we use the notion of boundary trace, introduced in \cite{BGT}, whose definition is recalled below.

A family $\{\Omega_n\}$ is called a $C^2$ {\it exhaustion} of $\Omega$ if $\Omega_n$ is a $C^2$ bounded domain,  $\Omega_n \Subset \Omega_{n+1} \Subset \Omega$ for any $n \in \N$ and $\cup_{n \in \N}\Omega_n = \Omega$.

Let $x_0 \in \Omega$ be a fixed reference point.
\begin{definition}[Boundary trace] \label{nomtrace-0}
We say that	a function $u\in W^{1,\kappa}_{\loc}(\xO)$ ($\kappa>1$) possesses a \emph{boundary trace}  if there exists a measure $\nu \in\GTM(\partial \Omega )$ such that for any $C^2$ exhaustion  $\{ \xO_n \}$ of $\Omega$ containing $x_0$, there  holds
	\be\label{trab-0}
	\lim_{n\rightarrow\infty}\int_{ \partial \xO_n}\phi u\, \dd \omega_{\xO_n}^{x_0}=\int_{\partial \Omega} \phi \,\dd \nu \quad\forall \phi \in C(\overline{\Omega}).
	\ee
	The boundary trace of $u$ is denoted by $\tr(u)$. Here $\omega_{\Omega_n}^{x_0}$ is the $L_\mu$-harmonic measure on $\partial \Omega_n$ relative to $x_0$ (see Subsection \ref{subsec:boundarytrace}).
\end{definition}

For $q \in [1,+\infty)$, denote by $L^q(\Omega;\phi_{\mu,\Sigma})$ the weighted Lebesgue space  \bal L^q(\Omega;\phi_{\mu,\Sigma}) := \{ u: \Omega \to \R \text{ measurable such that } \int_{\Omega} |u|^q \phi_{\mu,\Sigma} \, \dx < +\infty \}
\eal 
endowed with the norm 
\bal \| u \|_{L^q(\Omega;\phi_{\mu,\Sigma})} := \left( \int_{\Omega} |u|^q \phi_{\mu,\Sigma} \, \dx \right)^{\frac{1}{q} }.
\eal
Let $H^1(\xO;\ei^2)$ be the weighted Sobolev space
\bal
H^1(\xO;\ei^2) :=\{u \in H^1_{\loc}(\xO):\;|u|\ei+|\nabla u|\ei\in L^2(\xO)\}
\eal
endowed with the norm
\bal
\norm{u}_{H^1(\xO;\ei^2)} :=\left(\int_\xO|u|^2\ei^2 \, \dx+\int_\xO|\nabla u|^2\ei^2 \, \dx \right)^{\frac{1}{2} }.
\eal
We also denote by $H^1_0(\xO;\ei^2)$ the closure of $C_0^\infty(\xO)$ with respect to the norm $\norm{\cdot}_{H^1(\xO;\ei^2)}.$ It is worth mentioning here that $H^1_0(\xO;\ei^2)=H^1(\xO;\ei^2)$ (see \cite[Theorem 4.5]{BGT}).

Weak solutions of boundary value problem for \eqref{eq:0} with prescribed boundary trace are defined below.
\begin{definition}
Let $p>1.$ We say that $u$ is a {\it weak solution} of
\be\tag{$\mathrm{BVP}_{\pm}$} \label{BPpm} \left\{ \BAL
- L_\gm u\pm|u|^{p-1}u&=0\qquad \text{in }\;\Gw,\\
\tr(u)&=\xn,
\EAL \right.
\ee
if $u\in L^1(\Omega;\ei)$, $|u|^p \in L^1(\Omega;\ei)$  and
	\be \label{nlinearweakformintro}
	- \int_{\xO}u L_{\xm }\zeta \, \dd x\pm \int_{\xO}|u|^{p-1}u\zeta \, \dd x=- \int_{\Gw} \mathbb{K}_{\xm}[\xn]L_{\xm }\zeta \, \dd x
	\qquad\forall \zeta \in\mathbf{X}_\xm(\xO),
	\ee
where

\be \label{Xmu} {\bf X}_\mu(\Gw):=\{ \zeta \in H_{\loc}^1(\Omega): \ei^{-1} \zeta \in H^1(\Gw;\ei^{2}), \, \ei^{-1}L_\mu \zeta \in L^\infty(\Omega)  \}.
\ee
\end{definition}

If $\xm<C_{\xO,\xS}$ then it is well known that there exists the Green function $G_\mu$ associated with $-L_\mu$. In addition, Ancona \cite{An2} showed the existence of the corresponding Martin kernel which is unique up to a normalization. Marcus and Nguyen \cite{MarNgu} applied results for a class of more general Schr\"odinger operators in \cite {Marcus} to the model case $L_\xm$ and showed two-sided estimates concerning the Green function $G_\mu$ and the Martin kernel $K_\xm$. Recently, Barbatis, Gkikas and Tertikas \cite{BGT} followed a different approach to obtain sharp two-sided estimates for $G_\xm$ and $K_\xm$ even in the critical case $\xm=\frac{(N-k)^2}{4}$ provided $\xl_{\frac{(N-k)^2}{4}}>0.$ These estimates will be quoted in Subsection \ref{subsec:GreenMartin}.

We denote by $\GTM(\Omega;\ei)$ the space of measures $\tau$ such that $\int_{\Omega}\ei \,\dd|\tau|<+\infty$ and by $\GTM^+(\Omega;\ei)$ the positive cone of $\GTM(\Omega;\ei)$. Recall that the Green operator and the Martin operator are respectively defined by
\ba \label{Grt-0}
\BBG_\mu[\tau](x) :=\int_{\xO }G_{\mu}(x,y) \, \dd\tau(y), \quad \tau \in \GTM(\Omega;\ei), \\
\label{Mrt-0} \mathbb{K}_\mu[\gn](x) :=\int_{\partial\xO }K_{\mu}(x,y) \, \dd\xn(y), \quad \gn\in \mathfrak{M}(\partial\xO).
\ea
Main properties of the above operators  were established in \cite{BGT} and will be listed in Subsections \ref{subsec:boundarytrace} and  \ref{subsec:linear}. We remark that in light of  \cite[Theorem 2.12]{BGT}, a function $u$ is a weak solution to problem \eqref{BPpm} if and only if
\ba \label{represent-BP}
u \pm \BBG_\mu[|u|^{p-1}u] = \BBK_\mu[\nu] \quad \text{a.e. in } \Omega.
\ea

In order to state the main results of the paper, we need to introduce some notations and recall some results in \cite{BGT}. For any $\xb>0,$ we set \bal \xS_\xb:=\{x\in\mathbb{R}^N\setminus \xS:\;d_\xS(x)<\xb\}.
\eal
Let $\xb_1$ be small enough such that for any $x\in \xO_{\xb_1}$ there exists a unique $\xi_x\in\partial\xO$ satisfying $d_{\partial \Omega}(x)=|x-\xi_x|.$ Then we put
\bal
\tilde{d}_{\xS}(x):=\sqrt{|{\rm dist}^{\partial\xO}(\xi_x,\xS)|^2+|x-\xi_x|^2},
\eal
where ${\rm dist}^{\partial\xO}(\xi_x,\xS)$ denotes the geodesic distance of $\xi_x$ to $\xS$ measured on $\partial\xO.$

\begin{definition}\label{defweightintro}
Let $\xb_0>0$ be the constant in Lemma \ref{propdK}. Let $\eta_{\xb_0}$ be  a smooth cut-off function $0\leq\eta_{\xb_0}\leq1$ such that $\eta_{\xb_0}=1$ in $\overline{\xS}_{\frac{\xb_0}{4}}$ with compact support in $\xS_{\frac{\xb_0}{2}}.$ We define
\ba \label{Wintro} W(x):=\left\{ \BAL &(d_{\partial \Omega}(x)+\tilde d_\xS(x)^2)\tilde d_{\xS}(x)^{-\ap},\qquad&&\text{if}\;\mu <\frac{(N-k)^2}{4}, \\
&(d_{\partial \Omega}(x)+\tilde d_\xS(x)^2)d_{\xS}(x)^{-\frac{N-k}{2}}|\ln \tilde d_{\xS}(x)|,\qquad&&\text{if}\;\mu =\frac{(N-k)^2}{4},
\EAL \right. \quad x \in \Omega \cap \xS_\xb,
\ea
and
\bel{tildeWintro}
\tilde W(x):=(1-\eta_{\beta_0}(x))+\eta_{\beta_0}(x)W(x) , \quad x \in \Omega. \ee
Here $\ap$ is defined in \eqref{apmintro}.
\end{definition}

Let $z \in \Omega $ and $h\in C(\partial\Omega ).$ Then by \cite[Lemma 6.8]{BGT}, there exists a unique solution $v_h$ of the Dirichlet problem
\be \label{linear} \left\{ \BAL
L_{\mu}v&=0\qquad \text{in}\;\;\xO\\
v&=h\qquad \text{on}\;\;\partial\xO.
\EAL \right. \ee
The boundary value condition in \eqref{linear} is understood in the sense that
\bal
\lim_{\dist(x,F)\to 0}\frac{v(x)}{\tilde W(x)}=h \quad \text{for every compact set } \; F\subset \partial \Omega.
\eal

Our first theorem provides a removability result when $p\geq \frac{\ap+1}{\ap-1}$.

\begin{theorem} \label{remov-1}
Assume $\mu \leq H^2$ if $k<N-2$ or $\xm<1$ if $k=N-2$ and $p\geq \frac{\ap+1}{\ap-1}$. We additionally assume that $\xO$ is a $C^3$ open bounded domain. If $u \in C^2(\xO)$ is a nonnegative solution of
\be \tag{$E_+$} \label{eq:power-a}
	-L_\mu u+|u|^{p-1}u=0 \quad \text{in } \Omega
	\ee
such that
	\be\label{as1intro}
	\lim_{x\in\xO,\;x\rightarrow\xi}\frac{u(x)}{\tilde W(x)}=0\qquad\forall \xi\in \partial\xO\setminus\xS,
	\ee
	locally uniformly in $\partial\xO\setminus\xS$, then $u\equiv 0.$
\end{theorem}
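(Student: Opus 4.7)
The proof follows the classical removability template of Gmira--V\'eron and Marcus--V\'eron \cite{MV-JMPA01}, adapted to the boundary singular submanifold $\Sigma$ and the operator $L_\mu$: derive a Keller--Osserman pointwise bound, extract a boundary trace concentrated on $\Sigma$, and exclude it via a capacitary argument in the supercritical range.

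\emph{Step 1 (Keller--Osserman).} First I would construct an explicit supersolution of the form $A\, \tilde W(x)\, d_\Sigma(x)^{-\frac{2}{p-1}}$ in $\Omega \cap \Sigma_{\beta_0}$ (with an extra $|\ln d_\Sigma|$ correction in the critical case $\mu = H^2$), choosing $A$ large enough that it dominates $u$ on the inner boundary of this neighborhood by virtue of \eqref{as1intro}. The comparison principle, available under $\lambda_{\mu,\Sigma} > 0$, then yields the pointwise bound $u(x) \leq A\, \tilde W(x)\, d_\Sigma(x)^{-\frac{2}{p-1}}$ in $\Omega \cap \Sigma_{\beta_0}$. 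Since $\tfrac{2}{p-1} \leq \ap - 1$ by the supercriticality hypothesis --- together with classical interior estimates and \eqref{as1intro} near $\partial\Omega\setminus\Sigma$ --- this gives $u \in L^1(\Omega;\ei)$ and $u^p \in L^1(\Omega;\ei)$.

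\emph{Step 2 (Boundary trace on $\Sigma$).} Using the $L^1$-bounds, the measures $u \, \dd \omega_{\Omega_n}^{x_0}$ are weakly relatively compact along any $C^2$ exhaustion $\{\Omega_n\}$. Hypothesis \eqref{as1intro} forces every weak limit to lose mass on every compact $F \subset \partial\Omega \setminus \Sigma$, yielding a well-defined trace $\nu = \tr(u) \in \mathfrak{M}^+(\partial\Omega)$ with $\supp \nu \subset \Sigma$. Invoking \cite[Theorem 2.12]{BGT} (equivalently, the representation \eqref{represent-BP}) we obtain $u + \mathbb{G}_\mu[u^p] = \mathbb{K}_\mu[\nu]$ almost everywhere in $\Omega$.

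\emph{Step 3 (Capacitary vanishing).} From $u \leq \mathbb{K}_\mu[\nu]$ and the above representation we get $\mathbb{G}_\mu[(\mathbb{K}_\mu[\nu])^p] \leq \mathbb{K}_\mu[\nu] < \infty$. Testing \eqref{nlinearweakformintro} against approximating cutoffs of compacts $F \subset \Sigma$, using the sharp two-sided estimates on $K_\mu$ from \cite{BGT}, and adapting the capacity/duality scheme of \cite{MV-JMPA01} and \cite[Theorem 1.3]{MT}, one derives that $\nu$ is absolutely continuous with respect to an appropriate Bessel capacity on $\partial\Omega$ whose vanishing set contains the $k$-dimensional submanifold $\Sigma$ precisely when $p \geq (\ap+1)/(\ap-1)$; the additional restriction $\mu < 1$ when $k = N-2$ is required to avoid the resonant value $\ap = 1$ at which the capacity threshold degenerates. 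Hence $\nu \equiv 0$, and $u + \mathbb{G}_\mu[u^p] = 0$ with both terms nonnegative forces $u \equiv 0$.

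\emph{Main obstacle.} The most delicate point is the critical case $\mu = H^2$, in which the first eigenfunction is not attained in $H_0^1(\Omega)$ and $\tilde W$ carries a logarithmic factor. Both the Keller--Osserman construction of Step 1 and the capacity estimates of Step 3 then require an approximation argument by $\mu_n \uparrow H^2$ combined with the uniform estimates of \cite{BGT}. A second technical point is the identification of the boundary trace on $\Sigma$ in the absence of a priori integrability of $u$ near $\partial\Omega$; this is where the $C^3$ regularity of $\Omega$ enters, through refined estimates on $\ei$ and $K_\mu$ near $\partial\Omega \setminus \Sigma$ that are needed to localize the trace to $\Sigma$.
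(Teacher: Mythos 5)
Your Steps 1 and 2 correctly mirror the paper's strategy: a Keller--Osserman estimate whose constant is independent of the solution (Theorem \ref{prop19} with $F=\Sigma$, giving $u\leq C\,d_{\partial\Omega}\,d_\Sigma^{-\ap}$), followed by the extraction of a boundary trace $\nu\in\GTM^+(\partial\Omega)$ with $\supp\nu\subset\Sigma$ and the representation $u+\BBG_\mu[u^p]=\BBK_\mu[\nu]$. Step 3, however, contains a concrete error and rests on a mechanism that does not apply here.

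The inequality $\BBG_\mu[(\BBK_\mu[\nu])^p]\leq\BBK_\mu[\nu]$ does not follow from $u\leq\BBK_\mu[\nu]$ and the representation formula. Those give $\BBG_\mu[u^p]=\BBK_\mu[\nu]-u\leq\BBK_\mu[\nu]$ and, since $u\leq\BBK_\mu[\nu]$, also $\BBG_\mu[u^p]\leq\BBG_\mu[(\BBK_\mu[\nu])^p]$ --- the chain runs in the wrong direction, and nothing bounds $\BBG_\mu[(\BBK_\mu[\nu])^p]$ from above. Indeed, $\BBG_\mu[\BBK_\mu[\nu]^p]\lesssim\BBK_\mu[\nu]$ is the solvability criterion for the \emph{source} problem (Proposition \ref{equivba}), not a property of solutions to the absorption problem. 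Moreover $\frac{\ap+1}{\ap-1}$ is not a capacity-removability threshold for $\Sigma\subset\partial\Omega$: at $p=\frac{\ap+1}{\ap-1}$ the index $\vartheta=\frac{\ap+1-p(\ap-1)}{p}$ appearing in Theorems \ref{supcrK} and \ref{subm} vanishes and the intrinsic capacity on $\Sigma$ degenerates, while the vanishing condition for $\mathrm{Cap}^{\partial\Omega}_{2/p,p'}(\Sigma)=0$ reads $p\geq\frac{N+1-k}{N-1-k}$, which coincides with $\frac{\ap+1}{\ap-1}$ only when $\mu=0$. So absolute continuity of $\nu$ with respect to these capacities cannot force $\nu\equiv 0$.

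The paper's actual Step 3 is of a different nature and exploits the \emph{uniformity} of the Keller--Osserman constant. From $u\leq C\,d_{\partial\Omega}\,d_\Sigma^{-\ap}$ with $C$ independent of $u$, together with Proposition \ref{22222} and the representation formula on $\Omega_n$, one gets $\int_\Omega u^p\ei\,\dx\leq C_0$ and $\nu(\Sigma)\leq C_1\,\omega_\Omega^{x_0}(\partial\Omega)$ with $C_0,C_1$ independent of $u$. Assuming $\nu\not\equiv 0$, one solves, along the exhaustion $\{\Omega_n\}$, the absorption problem in $\Omega_n$ with boundary data $Mu$ on $\partial\Omega_n$; here $Mu$ is a supersolution because $-L_\mu(Mu)+(Mu)^p=(M^p-M)u^p\geq 0$, so $0\leq v_{M,n}\leq Mu$, and the limit $v_M$ again solves \eqref{eq:power-a} with \eqref{as1intro}. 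Hence $v_M$ obeys the very same uniform KO bound and integral estimate, yet satisfies $v_M+\BBG_\mu[v_M^p]=M\,\BBK_\mu[\nu]$. Letting $M\to\infty$, the right-hand side diverges at any fixed $x_0$ while the left-hand side remains bounded, a contradiction. This amplification argument replaces the capacitary step entirely; no capacity enters the paper's proof of Theorem \ref{remov-1}.
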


Let us briefly explain the proof of the above theorem. In the Appendix, under the assumption that $\Omega$ is a $C^3$ bounded domain, we construct appropriate barriers, which are different from those in \cite{GkiNg_absorption}, and we use them to obtain Keller-Osserman type estimates (see Theorem \ref{prop19}). By applying these estimates to solutions satisfying \eqref{eq:power-a}-\eqref{as1intro} and using the representation Theorem 2.9 in \cite{BGT}, we then show that such solutions admit a boundary trace with compact support in $\xS.$ In addition, we prove that this boundary trace has uniformly bounded total mass. This leads to a contradiction, because, on the other hand, assuming the existence of a non-trivial solution satisfying \eqref{eq:power-a}-\eqref{as1intro}, we may construct solutions satisfying \eqref{eq:power-a}-\eqref{as1intro} and having boundary trace with non-uniformly bounded total mass.

When $\frac{N-\am+1}{N-\am-1}\leq p< \frac{\ap+1}{\ap-1}$ if $\ap>1$ or $\frac{N}{N-2}\leq p$ if $\ap=1$, by combining localization techniques and Keller-Osserman type estimates, we are able to show the removability of isolated singularities.
\begin{theorem} \label{remove-2}
Let $\mu \leq H^2$, $z\in \Sigma$ and $\frac{N-\am+1}{N-\am-1}\leq p< \frac{\ap+1}{\ap-1}$ if $\ap>1$ or $\frac{N}{N-2}\leq p$ if $\ap=1$. We additionally assume that $\xO$ is a $C^3$ bounded domain. If $u\in C^2(\xO)$ is a nonnegative solution of  \eqref{eq:power-a} such that
	\be\label{as2intro}
	\lim_{x\in\xO,\;x\rightarrow\xi}\frac{u(x)}{\tilde W(x)}=0\qquad\forall \xi\in \partial\xO\setminus \{z\},
	\ee
	locally uniformly in $\partial\xO\setminus \{z\}$, then $u\equiv 0.$
\end{theorem}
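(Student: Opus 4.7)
The plan parallels that of Theorem~\ref{remov-1}, but localized at the single point $z \in \Sigma$; here the role of the global compact-support/total-mass contradiction is replaced by a sharper $L^1$-integrability obstruction coming from the singularity of the Martin kernel at $z$. First, apply the Keller--Osserman estimates of Theorem~\ref{prop19} to obtain a universal pointwise bound on $u$ near $\partial\Omega$. Combined with \eqref{as2intro} and following the trace argument used in the proof of Theorem~\ref{remov-1}, this shows that $u$ admits a boundary trace $\nu \in \mathfrak{M}^+(\partial\Omega)$ in the sense of Definition~\ref{nomtrace-0}, and the local-uniform decay of $u/\tilde W$ away from $z$ forces $\mathrm{supp}(\nu) \subseteq \{z\}$, so that $\nu = c\delta_z$ for some $c \geq 0$. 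By \cite[Theorem~2.12]{BGT} the representation
\begin{equation*}
u + \mathbb{G}_\mu[u^p] \;=\; c\,K_\mu(\cdot, z) \qquad \text{a.e.\ in } \Omega
\end{equation*}
holds, and testing \eqref{nlinearweakformintro} against $\zeta = \phi_{\mu,\Sigma} \in \mathbf{X}_\mu(\Omega)$, together with $-L_\mu \phi_{\mu,\Sigma} = \lambda_{\mu,\Sigma}\phi_{\mu,\Sigma}$, yields the a priori bound
\begin{equation*}
\int_{\Omega} u^p\, \phi_{\mu,\Sigma}\, dx \;\leq\; c\,\lambda_{\mu,\Sigma} \int_{\Omega} K_\mu(\cdot, z)\, \phi_{\mu,\Sigma}\, dx \;<\; +\infty.
\end{equation*}

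Next, using the sharp two-sided Martin kernel estimates quoted in Subsection~\ref{subsec:GreenMartin} together with the eigenfunction estimate \eqref{eigenfunctionestimates}, a direct calculation in local coordinates adapted to $\Sigma$ around $z$ (essentially a polar/radial integration in the ``effective dimension'' $N - \am$) shows
\begin{equation*}
\int_{\Omega} K_\mu(x, z)^p\, \phi_{\mu,\Sigma}(x)\, dx \;=\; +\infty
\end{equation*}
precisely when $p \geq \tfrac{N - \am + 1}{N - \am - 1}$, and analogously when $p \geq \tfrac{N}{N-2}$ in the critical sub-case $\ap = 1$. The threshold is exactly the classical critical boundary-trace exponent for a point mass, with the dimensional parameter replaced by the effective dimension $N - \am$ induced by the Hardy potential at $z \in \Sigma$.

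Assume now for contradiction that $c > 0$. Since $u \leq c\,K_\mu(\cdot, z)$ in $\Omega$ by the representation, an iteration of the identity $u = c\,K_\mu(\cdot, z) - \mathbb{G}_\mu[u^p]$ combined with the boundary Harnack principle and the sharp Green/Martin kernel estimates of \cite{BGT} at $\Sigma$ yields a neighborhood $U$ of $z$ on which
\begin{equation*}
u(x) \;\geq\; \tfrac{c}{2}\,K_\mu(x, z), \qquad x \in U \cap \Omega.
\end{equation*}
Raising to the $p$-th power, integrating against $\phi_{\mu,\Sigma}$, and invoking the non-integrability of the previous step yields $\int_{U \cap \Omega} u^p\, \phi_{\mu,\Sigma}\, dx = +\infty$, contradicting the a priori bound. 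Hence $c = 0$, so $\nu = 0$, and therefore $u \equiv 0$. The main obstacle is this local lower bound $u \gtrsim c\,K_\mu(\cdot, z)$ near $z$: a naive one-step iteration only gives $u \geq cK_\mu(\cdot, z) - c^p \mathbb{G}_\mu[K_\mu(\cdot, z)^p]$, which is empty since $\mathbb{G}_\mu[K_\mu(\cdot, z)^p] \equiv +\infty$ in the present regime. One must instead invoke the refined Ancona-type boundary Harnack principle for $L_\mu$ at $\Sigma$ established in \cite{BGT}, exploiting the concentration of the trace at a single point to compare $u$ with $K_\mu(\cdot, z)$ in a thin non-tangential region around $z$.
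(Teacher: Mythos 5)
Your proposal follows a genuinely different route from the paper, and it has two gaps that I do not see how to close from the results available in the paper or in \cite{BGT}.

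The paper's proof works directly with a cut-off function $\zeta_\varepsilon(x)=\zeta(|x|/\varepsilon)$ localized at $z=0$: it computes $L_\mu(\zeta_\varepsilon u)$, and uses the Keller--Osserman bounds \eqref{3.4.24}--\eqref{3.4.24*} with $F=\{z\}$ to show that the commutator terms $\int_\Omega u|\Delta\zeta_\varepsilon|\ei\,\dd x$ and $\int_\Omega|\nabla\zeta_\varepsilon||\nabla u|\ei\,\dd x$ are $\lesssim\varepsilon^{N-\frac{2}{p-1}-\am-1}$. Testing against $\ei$ and applying Fatou's lemma as $\varepsilon\to 0$ immediately yields $u\equiv0$ in the strictly supercritical case $p>\tfrac{N-\am+1}{N-\am-1}$, and in the critical case yields $u^p\in L^1(\Omega;\ei)$, after which the $v_M$ scaling contradiction from the proof of Theorem~\ref{remov-1} closes the argument. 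No non-integrability of $K_\mu(\cdot,z)^p$ and no pointwise lower bound for $u$ is ever invoked.

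The first gap in your proposal is the claim that the trace argument of Theorem~\ref{remov-1} gives $u$ a boundary trace here. In Theorem~\ref{remov-1} that argument uses $F=\Sigma$, which gives $u\lesssim d_{\partial\Omega}d_\Sigma^{-\ap}\approx\tilde W$, so that $\int_{\partial\Omega_n}u\,\dd\omega^{x_0}_{\Omega_n}$ stays bounded by Proposition~\ref{22222}. With $F=\{z\}$, Theorem~\ref{prop19} only gives $u\lesssim d_{\partial\Omega}d_\Sigma^{-\am}|x-z|^{-\frac{2}{p-1}+\am-1}$, which is \emph{not} dominated by $\tilde W$ near $z$, so that bound does not go through. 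Relatedly, testing \eqref{nlinearweakformintro} against $\ei$ presupposes $u,u^p\in L^1(\Omega;\ei)$, which is precisely what must be established first; the cut-off computation is what supplies this.

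The second gap is the lower bound $u\geq\tfrac{c}{2}K_\mu(\cdot,z)$ in a neighborhood of $z$. You correctly flag this as the obstacle, but the fix you propose is not supplied by any result in the paper or in \cite{BGT}: $u$ is $L_\mu$-\emph{sub}harmonic, so the boundary Harnack principle (which compares positive $L_\mu$-harmonic functions vanishing on a boundary portion) does not apply to $u$ directly, and what one would actually need is a Fatou-type non-tangential limit theorem showing $\BBG_\mu[u^p]/K_\mu(\cdot,z)\to0$ at $z$. Such a result is not quoted in the paper, and your iteration observation shows the one-step estimate is vacuous. Without that lemma, the contradiction you aim for does not follow, and the claimed lower bound may simply be false for a general subsolution with singular trace. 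The paper avoids this entirely by using the universality (independence of $u$) of the Keller--Osserman constant to obtain a bound on $\int v_M^p\ei$ uniform in $M$, which contradicts the linear growth of $M\int\BBK_\mu[\nu]\ei$ when $\nu\neq0$.
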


By Theorem \cite[Theorem 2.13]{BGT}, problem
\be\tag{$\mathrm{BVP}_+$} \label{BP+} \left\{ \BAL
- L_\gm u + |u|^{p-1}u&=0\qquad \text{in }\;\Gw,\\
\tr(u)&=\xn,
\EAL \right.
\ee
admits a unique weak solution for any $\xn\in\mathfrak{M}(\partial\xO)$ if $1<p<$\small{$\min\big\{\frac{N+1}{N-1},\frac{N-\am+1}{N-\am-1}\big\}$}. In order to study the solvability of problem \eqref{BP+} in the supercritical case $p\geq${\small$\min\big\{\frac{N+1}{N-1},\frac{N-\am+1}{N-\am-1}\big \}$}, we will employ an appropriate capacitary framework. To this purpose, let us introduce some notations. For $\theta\in\R$, we define the Bessel kernel of order $\ga$ in $\R^d$ by $\CB_{d,\theta}(\xi):=\CF^{-1}\left((1+|.|^2)^{-\frac{\theta}{2}}\right)(\xi)$, where $\CF$ is the Fourier transform in the space $\CS'(\R^d)$ of moderate distributions in $\BBR^d$.
For $\kappa>1$, the Bessel space $L_{\theta,\kappa}(\BBR^d)$ is defined by
\bal L_{\theta,\kappa}(\BBR^d):=\{f=\CB_{d,\theta} \ast g:g\in L^{\kappa}(\BBR^d)\},
\eal
with norm
\bal \|f\|_{L_{\theta,\kappa}}:=\|g\|_{L^\kappa}=\|\CB_{d,-\theta}\ast f\|_{L^\kappa}.
\eal
The Bessel capacity is defined for
compact set $A \subset\BBR^d$ by
\ba \label{Bessel-cap} \mathrm{Cap}^{\BBR^d}_{\theta,\kappa}(A):=\inf\{\|f\|^\kappa_{L_{\theta,\kappa}}:\; g\in L^\xk_+(\BBR^d),\,f=\CB_{d,\theta} \ast g\geq \1_A \},
\ea
and is extended to open sets and arbitrary sets in $\R^d$ in the standard way. Here $\1_A$ denotes the indicator function of $A$.

We denote by $B^d(x,r)$ the open ball of center $x \in \R^d$ and radius $r>0$ in $\R^d$.

\begin{definition}\label{besselcapacities}
If $\Gamma \subset \partial\xO$ is a $C^2$ submanifold without boundary, of dimension $d$ with $1 \leq d \leq N-1$ then there exist open sets $O_1,...,O_m$ in $\BBR^N$, diffeomorphisms $T_i: O_i \to B^{d}(0,1)\times B^{N-d-1}(0,1)\times (-1,1)$, $i=1,\ldots,m$, and compact sets $K_1,...,K_m$ in $\Gamma$ such that

(i) $K_i \sbs O_i$, $1 \leq i \leq m$ and $ \Gamma= \cup_{i=1}^m K_i$;

(ii) $T_i(O_i \cap \Gamma)=B^d(0,1) \times \{ (x_{d+1},\ldots,x_{N-1}) = 0_{\mathbb{R}^{N-d-1}} \}\times\{x_N=0\}$, $T_i(O_i \cap \Gw)={B^{d}(0,1)}\times B^{N-d-1}(0,1)\times(0,1)$;

(iii) For any $x \in O_i \cap \xO$, there exists $y \in O_i \cap  \xG$ such that $d_\Gamma(x)=|x-y|$ (here $d_\Gamma(x)$ denotes the distance from $x$ to $\Gamma$).

\smallskip

We then define the $\mathrm{Cap}_{\gth,\kappa}^{\Gamma}-$capacity of a compact set $E \sbs \Gamma$ by
\ba \label{Capsub} \mathrm{Cap}_{\gth,\kappa}^{\Gamma}(E):=\sum_{i=1}^m \mathrm{Cap}_{\gth,\kappa}^{\mathbb{R}^d}(\tl T_i(E \cap K_i)),
\ea
where $T_i(E \cap K_i)=\tl T_i(E \cap K_i)  \times \{ (x_{d+1},\ldots,x_{N-1}) = 0_{\mathbb{R}^{N-d-1}} \}\times\{x_N=0\}$.
\end{definition}
\noindent We remark that the definition of the capacities does not depend on $O_i$, $i=1,\ldots,m$.

In the sequel, we will say that $\nu \in \GTM^+(\partial \Omega)$ is \textit{absolutely continuous} with respect to a capacity $\mathscr{C}$ if
\ba \label{abs-cont}
	\forall \text{ Borel set } E \subset \partial \Omega  \text{ such that } \mathscr{C}(E)=0\Longrightarrow \xn(E)=0.
\ea

We distinguish two cases according to whether the support of the given boundary datum $\nu$ is in $\Sigma$ or in $\partial\xO \setminus \Sigma$. Recall that $\ap$ and $\am$ are defined in \eqref{apmintro}.

\begin{theorem}\label{supcrK} Assume $k \geq 1$, $\mu \leq H^2$, $\frac{N-\am+1}{N-\am-1}\leq p<\frac{\ap+1}{\ap-1}$ and $\xn\in\mathfrak M^+(\partial\Gw)$ with compact support in $\Sigma$.
If $\nu$ is absolutely continuous with respect to  $\mathrm{Cap}^{\xS}_{\vartheta,p'}$, where $p'=\frac{p}{p-1}$, then problem \eqref{BP+} admits a unique weak solution.
\end{theorem}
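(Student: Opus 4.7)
The plan is a monotone approximation argument: reduce \eqref{BP+} to data satisfying a strong capacitary bound, solve the corresponding approximate problems, and pass to the limit. Since $\nu\in\mathfrak{M}^+(\partial\Omega)$ has compact support in $\Sigma$ and is absolutely continuous with respect to $\mathrm{Cap}^{\Sigma}_{\vartheta,p'}$, a Hedberg-type decomposition (applied locally in the charts of Definition \ref{besselcapacities}) yields an increasing sequence $\{\nu_n\}\subset\mathfrak{M}^+(\partial\Omega)$ with $\nu_n\uparrow\nu$, each $\nu_n$ compactly supported in $\Sigma$, together with constants $c_n$ such that
\begin{equation*}
\nu_n(E)\leq c_n\,\mathrm{Cap}^{\Sigma}_{\vartheta,p'}(E)\qquad\text{for every Borel set } E\subset\Sigma.
\end{equation*}

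The technical heart of the proof is the implication: if $\sigma\in\mathfrak{M}^+(\Sigma)$ satisfies such a capacitary bound, then $\mathbb{K}_\mu[\sigma]\in L^p(\Omega;\phi_{\mu,\Sigma})$. This is where the hypothesis $p\geq\frac{N-\am+1}{N-\am-1}$, the dimension $k$ of $\Sigma$, and the precise value of $\vartheta$ enter. Starting from the sharp two-sided Martin kernel estimates of \cite{BGT} near $\Sigma$ (and their logarithmic counterparts at the borderline $\mu=H^2$), I would localize in a tubular neighborhood of $\Sigma$, pass to normal coordinates along $\Sigma$, and integrate out the $N-k$ transverse variables. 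This reduces the $L^p(\Omega;\phi_{\mu,\Sigma})$-norm of $\mathbb{K}_\mu[\sigma]$ to an equivalent Bessel-potential norm of $\sigma$ on the $k$-dimensional submanifold $\Sigma$ of precisely order $\vartheta$, at which point Bessel/Wolff duality of Adams--Hedberg type yields the required bound. I expect this to be the main obstacle: the exponent is saturated at $p=\frac{N-\am+1}{N-\am-1}$, and the change-of-variable and Fubini steps around $\Sigma$ must be performed carefully to match orders.

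Granted this estimate, set $K_n:=\mathbb{K}_\mu[\nu_n]\in L^p(\Omega;\phi_{\mu,\Sigma})$. Since $K_n\geq 0$ is $L_\mu$-harmonic, it is a weak supersolution of \eqref{eq:power-a}, while $0$ is a weak subsolution; by a standard monotone iteration in the order interval $[0,K_n]$, using the integrability of $K_n^p$ against $\phi_{\mu,\Sigma}$ just established, I obtain a unique weak solution $u_n\in[0,K_n]$ satisfying $u_n+\mathbb{G}_\mu[u_n^p]=K_n$ a.e. By the identification \eqref{represent-BP}, $u_n$ solves \eqref{BP+} with trace $\nu_n$. Monotonicity in the datum and the comparison principle from \cite{BGT} give $u_n\uparrow u\leq \mathbb{K}_\mu[\nu]$; monotone and dominated convergence, combined with the estimate from the previous paragraph applied to $\nu$ itself, permit passage to the limit in the representation to deduce $u+\mathbb{G}_\mu[u^p]=\mathbb{K}_\mu[\nu]$, so that $u$ is the desired weak solution of \eqref{BP+}. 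Uniqueness is standard for absorption problems: test the equation for the difference of two solutions against $\phi_{\mu,\Sigma}\,\sign(u_1-u_2)$ in \eqref{nlinearweakformintro} and use the strict monotonicity of $t\mapsto|t|^{p-1}t$.
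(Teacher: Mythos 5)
Your overall strategy --- approximate $\nu$ from below, solve each regularized absorption problem, and pass monotonically to the limit --- matches the paper's. The paper's decomposition, however, is the classical Dal Maso / Feyel--de la Pradelle one: absolute continuity with respect to $\mathrm{Cap}^{\Sigma}_{\vartheta,p'}$ furnishes an increasing sequence $\nu_n\uparrow\nu$ with $\nu_n\in B^{-\vartheta,p}(\Sigma)$, and the two-sided estimate $\|\BBK_\mu[\nu_n]\|_{L^p(\Omega;\ei)}\approx\|\nu_n\|_{B^{-\vartheta,p}(\Sigma)}$ of Theorem \ref{potest} then immediately gives the integrability needed to invoke Proposition \ref{subcr}. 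Your route --- a ``Hedberg-type'' decomposition producing measures satisfying a pointwise capacitary bound $\nu_n(E)\leq c_n\,\mathrm{Cap}^{\Sigma}_{\vartheta,p'}(E)$, followed by the implication ``capacitary bound $\Rightarrow\BBK_\mu[\sigma]\in L^p(\Omega;\ei)$'' --- is not what Theorem \ref{potest} establishes; that result proves a Besov-norm equivalence, and the passage from a pointwise capacitary bound to membership in $B^{-\vartheta,p}(\Sigma)$ is a separate claim you would still need to justify. This is more circuitous than necessary, though not obviously wrong.

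The genuine gap is in your passage to the limit. You assert that the limit identity $u+\BBG_\mu[u^p]=\BBK_\mu[\nu]$ follows ``combined with the estimate from the previous paragraph applied to $\nu$ itself,'' i.e.\ you are implicitly assuming $\BBK_\mu[\nu]\in L^p(\Omega;\ei)$. This is false in general: $\nu$ is only absolutely continuous with respect to the capacity; it need not satisfy a capacitary bound, need not lie in $B^{-\vartheta,p}(\Sigma)$, and $\BBK_\mu[\nu]$ need not be in $L^p(\Omega;\ei)$. (Were it always true, the approximation would be superfluous --- one could apply Proposition \ref{subcr} to $\nu$ directly.) The correct source of the uniform bound on $\{u_n^p\}$ in $L^1(\Omega;\ei)$ is the weak formulation itself: testing the equation for $u_n$ against $\zeta=\ei$ and using $-L_\mu\ei=\lambda_{\mu,\Sigma}\ei$ yields
\begin{equation*}
\int_\Omega\left(\lambda_{\mu,\Sigma}u_n+u_n^p\right)\ei\,\dd x=\lambda_{\mu,\Sigma}\int_\Omega\BBK_\mu[\nu_n]\ei\,\dd x\leq\lambda_{\mu,\Sigma}\int_\Omega\BBK_\mu[\nu]\ei\,\dd x,
\end{equation*}
where the right-hand side is finite because $\nu$ is a finite measure and $\BBK_\mu[\nu]\in L^1(\Omega;\ei)$ for every $\nu\in\GTM(\partial\Omega)$. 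This uniform bound together with monotone convergence gives $u_n\to u$ in $L^1(\Omega;\ei)$ and in $L^p(\Omega;\ei)$, after which one passes to the limit directly in the weak formulation \eqref{nlinearweakformintro}.
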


Let us comment on the proof of the above theorem. In the spirit of \cite{MV-Pisa} (see also \cite{GkiNg_absorption}), we first show that $\mathbb{K}_{\xm}[\xn]\in L^p(\xO;\ei)$ if and only if $\xn$ belongs to the dual of a certain Besov space (see Theorem \ref{potest}). This allows us to utilize an approximation argument in order to prove the existence of a unique weak solution to problem \eqref{BP+}.

In the case where $\xn$ has compact support in $\partial\xO\setminus\xS$, the critical exponent as well as the associated Bessel capacities are different from those in Theorem \ref{supcrK}.
\begin{theorem}\label{supcromega} Assume $\mu \leq H^2$, $p \geq \frac{N+1}{N-1}$ and $\xn\in\mathfrak M^+(\partial\Gw).$  Then problem \eqref{BP+} admits a unique weak solution $u$ with $\tr(u)=\1_{\partial\xO\setminus\xS}\nu$ if and only if $\1_{F}\nu$ is absolutely continuous with respect to  $\mathrm{Cap}^{\partial\xO}_{\frac{2}{p},p'}$ for any compact set $F\subset \partial\xO\setminus\xS.$
\end{theorem}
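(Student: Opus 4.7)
Put $\tilde\xn:=\1_{\partial\xO\setminus\xS}\xn$. By \eqref{represent-BP}, a nonnegative function $u$ is a weak solution of \eqref{BP+} with $\tr(u)=\tilde\xn$ if and only if $u+\BBG_\xm[u^p]=\BBK_\xm[\tilde\xn]$ a.e.\ in $\xO.$ The main technical ingredient is the following \emph{localized kernel equivalence away from $\xS$}: for every compact $F\sbs\partial\xO\setminus\xS$ with $\xd:=\dist(F,\xS)>0,$ the sharp two-sided estimates of $K_\xm$ from \cite{BGT} (recalled in Subsection~\ref{subsec:GreenMartin}) together with the asymptotics $\ei\approx d_{\partial\xO}$ on $\{d_\xS\geq\xd/2\}$ yield
\bal
K_\xm(x,y)\approx K_0(x,y)\quad\text{for }x\in\xO,\ y\in F,
\eal
where $K_0$ denotes the classical Poisson kernel of $-\xD$ in $\xO.$ Since $\BBK_\xm[\1_F\xn]$ decays away from $F,$ this comparison combined with the classical Marcus--V\'eron theorem \cite{MV-JMPA01} yields the basic equivalence
\bal
\1_F\xn\ll\mathrm{Cap}^{\partial\xO}_{\frac{2}{p},p'}\;\Longleftrightarrow\; \BBK_\xm[\1_F\xn]\in L^p(\xO;\ei).
\eal

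\textbf{Necessity.} Suppose a weak solution $u\geq 0$ exists with $\tr(u)=\tilde\xn,$ and fix a compact $F\sbs\partial\xO\setminus\xS.$ A contradiction argument in the style of \cite{MV-JMPA01} combined with the kernel comparison above shows that if $\1_F\xn$ charged a set $E$ with $\mathrm{Cap}^{\partial\xO}_{\frac{2}{p},p'}(E)=0,$ then, restricting to $E$ and localizing in a tube around $F,$ one would produce a nontrivial solution of the classical problem $-\xD w+w^p=0$ with boundary trace supported on $E,$ contradicting the classical removability theorem. Hence $\1_F\xn\ll\mathrm{Cap}^{\partial\xO}_{\frac{2}{p},p'}.$

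\textbf{Sufficiency and uniqueness.} Conversely, assume $\1_F\xn\ll\mathrm{Cap}^{\partial\xO}_{\frac{2}{p},p'}$ for every compact $F\sbs\partial\xO\setminus\xS.$ Pick an increasing exhaustion $\{F_n\}$ of $\partial\xO\setminus\xS$ by compact sets with $\dist(F_n,\xS)\geq 1/n,$ and set $\xn_n:=\1_{F_n}\xn.$ By the equivalence in paragraph~1, $v_n:=\BBK_\xm[\xn_n]\in L^p(\xO;\ei)$ for each $n.$ Since $-L_\xm v_n=0,$ we have $-L_\xm v_n+v_n^p\geq 0,$ so $v_n$ is a supersolution while $0$ is a subsolution. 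A monotone iteration based on the map $w\mapsto v_n-\BBG_\xm[w^p]$ in $[0,v_n]\cap L^p(\xO;\ei),$ or equivalently Schauder's fixed-point theorem, yields a unique weak solution $u_n$ of \eqref{BP+} with $\tr(u_n)=\xn_n$ and $0\leq u_n\leq v_n.$ By comparison $\{u_n\}$ is nondecreasing, and $v_n\uparrow \BBK_\xm[\tilde\xn]\in L^1(\xO;\ei);$ monotone convergence in \eqref{represent-BP} therefore delivers a weak solution $u$ with $\tr(u)=\tilde\xn.$ Uniqueness follows from the standard Kato-type inequality, valid under assumption \eqref{assumpt}.

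\textbf{Main obstacle.} The crux is to transfer the classical Bessel capacity characterization of Marcus--V\'eron across the Hardy perturbation $\xm d_\xS^{-2}.$ Although this potential is harmless on any compact subset of $\partial\xO\setminus\xS,$ quantifying this harmlessness requires the sharp kernel estimates of \cite{BGT} uniformly on a neighborhood of $F$ together with a careful control of the contribution of $\BBK_\xm[\1_F\xn]$ on the part of $\xO$ close to $\xS,$ where the weight $\ei$ blows up like $d_\xS^{-\am}.$ The implicit constants in the kernel comparison necessarily degenerate as $\dist(F,\xS)\to 0,$ which is precisely why the statement must be formulated locally on compact subsets of $\partial\xO\setminus\xS$ rather than globally.
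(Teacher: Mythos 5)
Your proof rests on the claimed equivalence
$\1_F\xn\ll\mathrm{Cap}^{\partial\xO}_{\frac{2}{p},p'}\Longleftrightarrow\BBK_\xm[\1_F\xn]\in L^p(\xO;\ei)$,
and that equivalence is false in the generality needed here. The point you flag in your ``Main obstacle'' paragraph is not merely a technicality: even for $\xn$ with compact support in a set $F$ at positive distance from $\xS$, the harmonic extension $\BBK_\xm[\1_F\xn]$ behaves near $\xS$ like $\ei$, namely like $d_{\partial\xO}\,d_\xS^{-\am}$, so that
$\BBK_\xm[\1_F\xn]^p\,\ei\approx \big(d_{\partial\xO}\,d_\xS^{-\am}\big)^{p+1}$
in $\xS_\xb\cap\xO$. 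This quantity need not be integrable. For instance, with $k=0$, $N=3$, $\mu$ close to $H^2=\frac94$ (so $\am$ close to $\frac32$) and $p\geq 5\geq \frac{N+1}{N-1}$, the tube integral
$\int_0^\xb\int_0^\xb t^{p+1}(t^2+r^2)^{-\am(p+1)/2}r\,\dd r\,\dd t$
diverges. Thus the ``only if'' direction of your equivalence fails, and one \emph{cannot} obtain $v_n:=\BBK_\xm[\1_{F_n}\xn]\in L^p(\xO;\ei)$ from the capacity hypothesis. Consequently the monotone iteration/Schauder step you invoke — which requires the supersolution $v_n$ to satisfy $v_n^p\in L^1(\xO;\ei)$ in order for $\BBG_\xm[v_n^p]$ to make sense and for Proposition~\ref{subcr} (with $g=|t|^{p-1}t$) to apply — breaks down precisely in the supercritical regime the theorem is about.

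What the paper actually proves and uses is the weaker, correct statement: the Marcus--V\'eron argument and the kernel comparison give $\BBK_\xm[\xn]\in L^p(\xO\setminus\xS_\xb;\ei)$ for every $\xb>0$, i.e.\ a \emph{local} $L^p$ bound away from $\xS$. To build a solution from this, the paper truncates the nonlinearity to $g_n(t)=\max\{\min\{|t|^{p-1}t,n\},-n\}$ (which is bounded, so Proposition~\ref{subcr} applies with no $L^p$ requirement), obtains a nonincreasing sequence of solutions $v_n$ with $\tr(v_n)=\xn$, passes to the limit $v$, and then — the genuinely delicate step you are missing — must prove that no boundary mass is lost in the limit, i.e.\ that $\tr(v)=\xn$ and not some smaller $\overline{\xn}\leq\xn$. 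This is done by testing the weak formulation with the lifting functions $\psi_{\xb,\eta}$ and invoking the Poisson-kernel identity of Proposition~\ref{weaksolution0} to compare $\xn$ and $\overline{\xn}$ on $\partial\xO\setminus\xS$. Your construction never addresses this ``no mass leaks to $\xS$'' issue, which is the heart of the sufficiency proof; as written it would at best yield a weak solution with boundary trace $\overline{\xn}\leq\1_{\partial\xO\setminus\xS}\xn$, not equality. (Your necessity sketch, on the other hand, follows the same line as the paper and is essentially fine.)
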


Next we discuss existence results for problem
\be\tag{$\mathrm{BVP}_-^{\sigma}$} \label{BP-} \left\{ \BAL
- L_\gm u - |u|^{p-1}u&=0\qquad \text{in }\;\Gw,\\
\tr(u)&=\sigma \xn,
\EAL \right.
\ee
where $\sigma>0$ is a parameter and $\nu \in \GTM^+(\partial \Omega)$.

In the following theorems, for any $\xn\in\mathfrak{M}^+(\partial\xO)$, we extend it to be a measure defined on $\overline{\xO}$ by setting $\xn(\xO)=0$ and use the same notation $\nu$ for the extension.

We first consider the case where $\nu$ is concentrated on $\Sigma$.

\begin{theorem}\label{subm}
	Assume that $\mu< \frac{N^2}{4}$ and
\ba\label{p-cond}
1<p<\frac{\am+1}{\am-1}\;\; \text{if}\;\; \am>1\quad \text{or}\quad p>1 \;\;\text{if}\;\; \am\leq1.
\ea
	 Let $\xn \in \GTM^+(\partial\xO)$ with compact support in $\xS$.
	Then the following statements are equivalent.
	
	1. Problem \eqref{BP-} has a positive weak solution for $\gs>0$ small.
	
	2. For any Borel set $E \subset \overline{\xO} $, there holds
	\ba \label{Kp<nu} \int_E \BBK_\xm[\1_E\xn]^p \ei \,\dd x \leq C\,\xn(E).
	\ea
	
	3. The following inequality holds
	\ba \label{GKp<K} \BBG_\xm[\BBK_\xm[\xn]^p]\leq C\,\BBK_\xm[\xn]<\infty\quad \text{a.e. in } \Omega.
	\ea

    Assume, in addition, that
	\be \label{p-cond-2}
	k \geq 1 \quad \text{and} \quad \max\left\{1,\frac{N-k-\am}{N-2-\am} \right\}< p<\frac{\ap+1}{\ap-1}.
\ee
 Put
	\ba\label{gamma} \vartheta: = \frac{\ap+1-p(\ap-1)}{p}.
\ea
Then any of the above statements is equivalent to the following statement
	
	4. For any Borel set $E \subset  \Sigma$, there holds
	\bal\xn(E)\leq C\, \mathrm{Cap}_{\vartheta,p'}^{\xS}(E).\eal
\end{theorem}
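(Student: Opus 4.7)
The plan is to follow the Kalton--Verbitsky scheme used in \cite{BHV,GkiNg_source} for boundary value problems with a power source, adapted here to the boundary-submanifold geometry. I will first prove $(1)\Leftrightarrow(2)\Leftrightarrow(3)$, and then, under the additional restriction \eqref{p-cond-2}, upgrade the equivalence to include $(4)$ via a capacitary/Bessel-dual characterization of the Martin potential $\mathbb{K}_\mu[\nu]$. All the analytic input comes from the sharp two-sided bounds on $G_\mu$, $K_\mu$ and $\phi_{\mu,\Sigma}$ established in \cite{BGT} and recalled in Subsection \ref{subsec:GreenMartin}.

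The equivalence $(2)\Leftrightarrow(3)$ is a Fubini-type identity: writing
\[
\int_\Omega \mathbb{K}_\mu[\mathbf{1}_E\nu]^p\,\phi_{\mu,\Sigma}\,\dd x
=\int_E\left(\int_\Omega K_\mu(x,y)\,\mathbb{K}_\mu[\mathbf{1}_E\nu]^{p-1}(x)\,\phi_{\mu,\Sigma}(x)\,\dd x\right)\dd\nu(y),
\]
and identifying the inner integral, through the Martin representation \cite[Theorem 2.9]{BGT}, with boundary values of $\mathbb{G}_\mu[\mathbb{K}_\mu[\mathbf{1}_E\nu]^p]$ at $y$, one passes between the integrated statement $(2)$ and the pointwise statement $(3)$. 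For $(3)\Rightarrow(1)$ I would use the monotone iteration $u_0:=2\sigma\mathbb{K}_\mu[\nu]$, $u_{n+1}:=\mathbb{G}_\mu[u_n^p]+\sigma\mathbb{K}_\mu[\nu]$: condition $(3)$ together with the convexity of $t\mapsto t^p$ ensures that, for $\sigma>0$ small enough depending on the constant in $(3)$, the iterates remain trapped between $\sigma\mathbb{K}_\mu[\nu]$ and $2\sigma\mathbb{K}_\mu[\nu]$ and are monotone, so the limit $u$ satisfies $u=\mathbb{G}_\mu[u^p]+\sigma\mathbb{K}_\mu[\nu]$, which by \cite[Theorem 2.12]{BGT} is a weak solution of \eqref{BP-}. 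Conversely, $(1)\Rightarrow(3)$ follows the classical Kalton--Verbitsky necessary-condition argument: any nonnegative weak solution $u$ obeys $u\geq\sigma\mathbb{K}_\mu[\nu]$ and $u=\mathbb{G}_\mu[u^p]+\sigma\mathbb{K}_\mu[\nu]$, so iterating this identity and exploiting convexity yields $\sigma^{p-1}\mathbb{G}_\mu[\mathbb{K}_\mu[\nu]^p]\leq C\mathbb{K}_\mu[\nu]$ a.e.

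The main obstacle is the capacitary equivalence with $(4)$ under the restriction \eqref{p-cond-2}. Here one must identify the class of boundary measures $\nu$ on $\Sigma$ satisfying $(3)$ with the class of measures dominated by $\mathrm{Cap}^{\Sigma}_{\vartheta,p'}$. Using the two-sided estimates from \cite{BGT} for $K_\mu(x,y)$ with $y\in\Sigma$, together with $\phi_{\mu,\Sigma}\approx d_{\partial\Omega}\,d_\Sigma^{-\am}$, the bilinear quantity $\int_\Omega K_\mu(x,y_1)K_\mu(x,y_2)\phi_{\mu,\Sigma}(x)\,\dd x$ can be computed by integrating first in the directions transverse to $\Sigma$: absorbing the $d_{\partial\Omega}\,d_\Sigma^{-\am}$ factors against the weight reduces the expression to a Riesz-type kernel on $\Sigma$ whose order is precisely dictated by $\vartheta=\frac{\ap+1-p(\ap-1)}{p}$. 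The associated energy form is the dual of the Bessel potential space $L_{\vartheta,p}(\Sigma)$, and a Maz'ya--Verbitsky type characterization of capacitary measures then yields the equivalence with $(4)$. The delicate point is to verify that the range \eqref{p-cond-2} is exactly the one in which this reduction is meaningful (transverse integrals convergent and the resulting capacity non-degenerate); the simultaneous appearance of $\ap$ and $\am$ in $\vartheta$, unlike the interior-$\Sigma$ case of \cite{GkiNg_source}, is the new feature forced by the boundary-submanifold geometry and is the technical heart of the proof.
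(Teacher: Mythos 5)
Your overall plan — reduce to the Kalton--Verbitsky framework of \cite{KV,BHV,GkiNg_source}, obtain $(1)\Leftrightarrow(2)\Leftrightarrow(3)$ abstractly, and then characterize $(3)$ capacitarily to get $(4)$ — matches the paper's architecture, but your sketch omits the part of the argument that actually carries the weight, and a couple of your bridging steps are shaky as stated.

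The paper does not prove $(1)\Leftrightarrow(2)\Leftrightarrow(3)$ by direct manipulation of $\BBG_\mu$ and $\BBK_\mu$. It first replaces these asymmetric kernels by the \emph{symmetric} kernel $\CN_{2\am}(x,y)$ of \eqref{Na}, via the two-sided estimates \eqref{GN2a}--\eqref{KN2a1} and a comparison lemma (\cite[Proposition 2.7]{BHV}), so that all three statements become statements about $\BBN_{2\am}$ on the measure space $(\overline\Omega, (d_{\partial\Omega}\,d_\Sigma^{-\am})^{p+1}\dd x)$. The abstract equivalence of Proposition \ref{t2.1} is then invoked; but to do so one must \emph{verify its hypotheses}: that $1/\CN_{2\am}$ is a quasi-metric (Lemma \ref{ineq}) and that the dyadic-ball conditions \eqref{2.3}--\eqref{2.4} hold for the weighted measure $d_{\partial\Omega}^b d_\Sigma^\theta\,\dd x$ (Lemmas \ref{l2.3}, \ref{vol}, \ref{vol-2}). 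This verification is the bulk of Section \ref{sec:powercase} and is not a routine check, since it involves the interaction of the two distance functions $d_{\partial\Omega}$ and $d_\Sigma$; your proposal does not mention it at all. Without it the KV theorem cannot be applied, and your direct sketches do not substitute: your $(2)\Leftrightarrow(3)$ argument peels a factor of $K_\mu$ and then claims to identify $\int_\Omega K_\mu(x,y)\BBK_\mu[\1_E\nu]^{p-1}(x)\phi_{\mu,\Sigma}(x)\,\dd x$ with a boundary value of $\BBG_\mu[\BBK_\mu[\1_E\nu]^p]$ "through the Martin representation," but $K_\mu(x,y)\phi_{\mu,\Sigma}(x)$ is not $G_\mu(x,y)$ (they differ by a ratio depending on $y$), and the Martin representation only speaks to harmonic functions, not Green potentials; and your $(1)\Rightarrow(3)$ sketch produces only finiteness of $\BBG_\mu[\BBK_\mu[\nu]^p]$ rather than the pointwise domination by $\BBK_\mu[\nu]$ — the upgrade from finiteness to domination is precisely the nontrivial content of the KV necessity argument, and you cannot dismiss it as "iterating this identity and exploiting convexity" without more.

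For $(4)$ your proposed route (compute a bilinear Riesz-type form on $\Sigma$ and invoke a Maz'ya--Verbitsky characterization) is a genuinely different idea from the paper, which instead establishes directly the capacity comparison $\mathrm{Cap}_{\vartheta,p'}^\Sigma(E)\approx\mathrm{Cap}_{\BBN_{2\am},p'}^{p+1,-\am(p+1)}(E)$ using the local charts of Subsection \ref{assumptionK} and the argument of \cite[Estimate (6.36)]{GkiNg_source}. Your idea is plausible but you would still need the transverse integration carried out precisely, and this is again where the lower bound $p>\max\{1,\frac{N-k-\am}{N-2-\am}\}$ in \eqref{p-cond-2} enters (it guarantees convergence of the normal and co-normal integrals). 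Finally, a small but confusing slip: $\vartheta=\frac{\ap+1-p(\ap-1)}{p}$ depends only on $\ap$, not on both $\ap$ and $\am$ as you assert; it is the kernel and the weight (through $\am$) and the exponent $\vartheta$ (through $\ap$) that jointly encode the boundary-submanifold geometry, not $\vartheta$ alone.
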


Next we focus on the case that $\nu$ is concentrated on $\partial \Omega \setminus \Sigma$.

\begin{theorem}\label{th:existnu-prtO}
	Assume that $\mu\leq \frac{N^2}{4}$, $p$ satisfies \eqref{p-cond} and  $\xn \in \GTM^+(\partial\xO)$ with compact support in $\partial \xO\setminus\xS$.
	Then the following statements are equivalent.
	
	1. Equation \eqref{BP-} has a positive solution for $\gs>0$ small.
	
	2. For any Borel set $E \subset  \overline{\Omega} $, \eqref{Kp<nu} holds.
	
	3. Estimate \eqref{GKp<K} holds.
	
	4. For any Borel set $E \subset \partial \Omega $, there holds $\xn(E)\leq C\, \mathrm{Cap}_{\frac{2}{p},p'}^{\partial \Omega}(E)$.	
\end{theorem}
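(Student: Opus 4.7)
The plan is to establish the four-way equivalence via the scheme: (3) $\Rightarrow$ (1), (1) $\Rightarrow$ (3), (2) $\Leftrightarrow$ (3), and (2) $\Leftrightarrow$ (4). This parallels the structure used for Theorem \ref{subm}, but with the capacitary step localized to $\partial\Omega \setminus \Sigma$ and exploiting the Poisson-type behavior of the Martin kernel there.

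For (3) $\Rightarrow$ (1) I would run a monotone iteration. Setting $u_0 := \sigma \BBK_\mu[\nu]$ and $u_{n+1} := \sigma \BBK_\mu[\nu] + \BBG_\mu[u_n^p]$, the hypothesis (3) together with monotonicity of $s \mapsto s^p$ gives by induction $u_n \leq 2\sigma \BBK_\mu[\nu]$ as soon as $(2\sigma)^{p-1} C \leq 1$. The sequence is nondecreasing, so its limit $u$ satisfies $u = \sigma \BBK_\mu[\nu] + \BBG_\mu[u^p]$, which by \eqref{represent-BP} is a weak solution of \eqref{BP-}. Conversely, for (1) $\Rightarrow$ (3), if $u > 0$ is a weak solution then $u \geq \sigma \BBK_\mu[\nu]$ by \eqref{represent-BP}; iterating the convexity inequality $(a+b)^p \geq a^p + p a^{p-1} b$ inside the identity $u = \sigma\BBK_\mu[\nu] + \BBG_\mu[u^p]$ generates a series of iterated Green potentials of $\BBK_\mu[\nu]^p$, each bounded pointwise by $u$. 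Finiteness of $u$ a.e.\ forces convergence, which then yields $\BBG_\mu[\BBK_\mu[\nu]^p] \leq C \BBK_\mu[\nu]$ with a constant depending on $\sigma$. This Kalton--Verbitsky type argument is essentially the one carried out in \cite{GkiNg_source} for the case $\Sigma \subset \Omega$.

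The equivalence (2) $\Leftrightarrow$ (3) is a general consequence of $G_\mu$ obeying the weak maximum principle together with a quasi-metric kernel property, both established in \cite{BGT}. Under these two properties the weighted trace inequality (2) is equivalent to the pointwise bound (3) via the standard potential-theoretic duality between trace inequalities and Wolff-type pointwise bounds, independently of the location of $\supp \nu$.

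The decisive new content is (2) $\Leftrightarrow$ (4), which fully exploits $\supp \nu \subset \partial\Omega \setminus \Sigma$. Away from $\Sigma$ the Martin kernel estimates from \cite{BGT} collapse to the classical Poisson-type bound $K_\mu(x,y) \approx d_{\partial\Omega}(x) |x-y|^{-N}$ for $y \in \partial\Omega \setminus \Sigma$, since the Hardy potential is non-singular there; likewise $\phi_{\mu,\Sigma} \approx d_{\partial\Omega}\, d_\Sigma^{-\am}$ is comparable to $d_{\partial\Omega}$ on any set bounded away from $\Sigma$. After decomposing $\Omega$ into a thin tubular neighborhood of $\Sigma$ (where $\nu$ vanishes and the corresponding Martin potentials are controlled) and its complement (where classical Poisson/Bessel estimates apply), the trace inequality (2) reduces to the classical Marcus--V\'eron trace inequality on $\partial\Omega$, whose capacitary characterization \cite{MV-JMPA01} is precisely (4) with $\mathrm{Cap}^{\partial \Omega}_{\frac{2}{p},p'}$. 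I expect the main obstacle to be carrying out this localization rigorously: the cross contributions from the $\Sigma$-neighborhood to the trace integrals must be shown to be absorbed by the classical estimates, despite the $d_\Sigma^{-\am}$ singularity of the weight and the wide range of $p$ permitted by \eqref{p-cond} when $\am \leq 1$.
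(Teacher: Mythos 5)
Your overall scheme lines up with the paper's: the equivalence of statements 1--3 is established via a Kalton--Verbitsky-type reduction (Proposition \ref{t2.1} applied to the kernel $\CN_{2\am}$ after the reweighting \eqref{GN2a}, \eqref{KN2a1}), and statement 4 is obtained by localizing the resulting capacity away from $\Sigma$. But there are two substantive problems.

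The first and most serious is that the theorem covers $\mu\le N^2/4$, which includes the genuinely critical configuration $\Sigma=\{0\}$ and $\mu=N^2/4$. In that case Propositions \ref{Greenkernel}(ii) and \ref{Martin}(ii) show that $G_\mu$ and $K_\mu$ pick up logarithmic corrections, and the Green function is \emph{not} comparable to $d_{\partial\Omega}(x)d_{\partial\Omega}(y)(d_\Sigma(x)d_\Sigma(y))^{-\am}\CN_{2\am}(x,y)$. So the reduction to a quasi-metric kernel of the form $\CN_{2\am}$ simply fails, and your duality argument for (2) $\Leftrightarrow$ (3) does not apply as stated. The paper handles this via a separate route: it introduces the $\varepsilon$-regularized kernels $\CN_{N-\varepsilon}$ and $\tilde G_{H^2,\varepsilon}$, proves Theorems \ref{theoremint2} and \ref{singl} for them, and then shows in \eqref{62}--\eqref{68} that, precisely because $\nu$ is supported away from $0$, the regularized potentials are comparable to the true ones. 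This whole detour is absent from your proposal, which silently restricts to the subcritical range.

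Second, a pair of technical slips. You attribute the quasi-metric property and the doubling hypotheses to \cite{BGT}, but these are established in the present paper (Lemma \ref{ineq} for the quasi-metric inequality for $\CN_\xa$, and Lemmas \ref{l2.3}, \ref{vol}, \ref{vol-2} for the doubling conditions \eqref{2.3}--\eqref{2.4}); they are not small facts to invoke from elsewhere. And the claimed collapse $K_\mu(x,y)\approx d_{\partial\Omega}(x)|x-y|^{-N}$ for $y\in\partial\Omega\setminus\Sigma$ is only valid when $x$ is also bounded away from $\Sigma$; the correct estimate, which the paper records and uses, is $K_\mu(x,z)\approx\dist(z,\Sigma)\,d_{\partial\Omega}(x)\,d_\Sigma(x)^{-\am}|x-z|^{-N}$, retaining the $d_\Sigma(x)^{-\am}$ singularity as $x\to\Sigma$. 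Your localization plan must contend with this surviving weight, which is exactly the \textquotedblleft cross contribution\textquotedblright\ difficulty you flag at the end but do not resolve; the paper delegates it to the computation behind \cite[estimate (6.40)]{GkiNg_source}, matched against the explicit Martin estimate near $\partial\Omega\setminus\Sigma$.
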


\bigskip

\noindent \textbf{Organization of the paper.} In Section \ref{Preliminaries}, we provide the local representation of $\Sigma$ and $\Omega$, quote two-sided estimates of the Green function and the Martin kernel from \cite{BGT}, introduce the notion of boundary trace and recall results for linear and semilinear equations with an absorption established in \cite{BGT}. In Section \ref{harmesure}, we show the relation between the $L_\mu$-harmonic measures and the surface measure on $\partial \Omega$ and prove identities regarding the Poisson kernel and Martin kernel. Section \ref{sec:BVP-abs} is devoted to the derivation of various results for equations with a power absorption term such as a prior estimates, removable singularities and existence results. In Section \ref{sec:powercase}, we focus on equations with a power source term and demonstrate necessary and sufficient conditions for the existence of a weak solution. Finally, in the Appendix \ref{app:A}, we construct a barrier function for solutions under assumption that $\Omega$ is a $C^3$ bounded domain. \medskip

\noindent \textbf{Notations.} We denote by $c,c_1,C...$ the constant which depend on initial parameters and may change from one appearance to another. The notation $A \gtrsim B$ (resp. $A \lesssim B$) means $A \geq c\,B$ (resp. $A \leq c\,B$) where the implicit $c$ is a positive constant depending on some initial parameters. If $A \gtrsim B$ and $A \lesssim B$, we write $A \approx B$. \textit{Throughout the paper, most of the implicit constants depend on some (or all) of the initial parameters such as $N,\Omega,\Sigma,k,\mu$ and we will omit these dependencies in the notations (except when it is necessary).} For a set $D \subset \R^N$, $\1_D$ denotes the indicator function of $D$. \medskip

\noindent \textbf{Acknowledgement.} K. T. Gkikas acknowledges support by the Hellenic Foundation for Research and Innovation
(H.F.R.I.) under the “2nd Call for H.F.R.I. Research Projects to support Post-Doctoral Researchers” (Project Number: 59).
 P.-T. Nguyen was supported by Czech Science Foundation, Project GA22-17403S.

\section{Preliminaries} \label{Preliminaries}
\subsection{Local representation of $\Sigma$ and $\partial\xO$.} \label{assumptionK} In this subsection, we present the local representation of $\xS$ and $\partial\xO.$

Let $k \in \N$ such that $0 \leq k \leq N-1$. For $x=(x_1,...,x_k,x_{k+1},...,x_N) \in \R^N$, we write $x=(x',x'')$ where $x'=(x_1,..,x_k) \in \R^k$ and $x''=(x_{k+1},...,x_N) \in \R^{N-k}$. For $\beta>0$, we denote by $B(x,\beta)$ the ball in $\R^N$ with center $x \in \R^N$ and radius $\beta$, and by $B^k(x',\beta)$ the ball in $\R^k$ with center at $x' \in \R^k$ and radius $\beta$. For any $\xi\in \Sigma$, we set
\bal  \Sigma_\beta &:=\{ x \in \R^N \setminus \Sigma: d_\Sigma(x) < \beta \}, \\
\label{Vxi}
V_{\Sigma}(\xi,\xb)&:=\{x=(x',x''): |x'-\xi'|<\beta,\; |x_i-\Gamma_i^\xi(x')|<\xb,\;\forall i=k+1,...,N\},
\eal
for some functions $\Gamma_i^\xi: \R^k \to \R$, $i=k+1,...,N$.

Since $\Sigma$ is a $C^2$ compact submanifold in $\mathbb{R}^N$ without boundary, we may assume the existence of $\xb_0$ such that the followings hold.

\begin{itemize}
\item $\Sigma_{6\beta_0}\Subset \Omega$ and for any $x\in \Sigma_{6\beta_0}$, there is a unique $\xi \in \Sigma$  satisfying $|x-\xi|=d_\Sigma(x)$.

\item $d_\Sigma \in C^2(\Sigma_{4\beta_0})$, $|\nabla d_\Sigma|=1$ in $\Sigma_{4\beta_0}$ and there exists $\eta \in L^\infty(\Sigma_{4\beta_0})$ such that
\be \label{laplaciand}
\Delta d_\Sigma(x)=\frac{N-k-1}{d_\Sigma(x)}+ \eta(x) \quad \text{in } \Sigma_{4\beta_0} .
\ee
(See \cite[Lemma 2.2]{Vbook} and \cite[Lemma 6.2]{DN}.)

\item For any $\xi \in \Sigma$, there exist $C^2$ functions $\Gamma_{i,\xS}^\xi \in C^2(\R^k;\R)$, $i=k+1,...,N$, such that for any $\beta \in (0,6\beta_0)$ and $V_\xS(\xi,\beta) \subset \Omega$  (upon relabeling and reorienting the coordinate axes if necessary), there holds
\be \label{straigh}
V_\xS(\xi,\beta) \cap \Sigma=\{x=(x',x''): |x'-\xi'|<\beta,\;  x_i=\Gamma_{i,\xS}^\xi (x'), \; \forall i=k+1,...,N\}.
\ee

\item There exist $m_0 \in \N$ and points $\xi^{j} \in \Sigma$, $j=1,...,m_0$, and $\beta_1 \in (0, \beta_0)$ such that
\be \label{cover}
\Sigma_{2\xb_1}\subset \cup_{j=1}^{m_0} V_{\xS}(\xi^j,\beta_0).
\ee
\end{itemize}

Now for $\xi \in \Sigma$, set
\bel{dist2} \xd_\Sigma^\xi(x):=\left(\sum_{i=k+1}^N|x_i-\Gamma_{i,\xS}^\xi(x')|^2\right)^{\frac{1}{2}}, \qquad x=(x',x'')\in V_\xS(\xi,4\beta_0).\ee

Then we see that there exists a constant $C_\xS$ depending only on  $N,\Sigma$ such that
\be\label{propdist}
d_\Sigma(x)\leq	\xd_\Sigma^{\xi}(x)\leq C_\xS \| \Sigma \|_{C^2} d_\Sigma(x),\quad \forall x\in V_\xS(\xi,2\beta_0),
\ee
where $\xi^j=((\xi^j)', (\xi^j)'') \in \Sigma$, $j=1,...,m_0$, are the points in \eqref{cover} and
\be \label{supGamma}
\| \Sigma \|_{C^2}:=\sup\{  || \Gamma_{i,\xS}^{\xi^j} ||_{C^2(B_{5\beta_0}^k((\xi^j)'))}: \; i=k+1,...,N, \;j=1,...,m_0 \} < \infty.
\ee
Moreover, $\beta_1$ can be chosen small enough such that for any $x \in \Sigma_{\beta_1}$,
\bel{BinV} B(x,\beta_1) \subset V_\xS(\xi,\beta_0),
\ee
where $\xi \in \Sigma$ satisfies $|x-\xi|=d_\Sigma(x)$.

When $\xS=\partial\xO$, we assume that
\ba \label{straigh2}
V_{\partial\xO}(\xi,\beta) \cap \xO=\bigg\{x=(x_1,\ldots,x_N): \sum_{i=1}^{N-1}|x_i-\xi_i|^2<\beta^2,\;
0< x_N -\Gamma_{N,\partial\xO}^\xi (x_1,...,x_{N-1}) <\beta \bigg \}.
\ea
We also find that \eqref{straigh} with $\Sigma=\partial \Omega$ becomes
\ba \label{straigh2-b}
V_{\partial\xO}(\xi,\beta) \cap \partial \Omega=\bigg\{x=(x_1,\ldots,x_N): \sum_{i=1}^{N-1}|x_i-\xi_i|^2<\beta^2,\;
x_N = \Gamma_{N,\partial\xO}^\xi (x_1,...,x_{N-1}) \bigg \}.
\ea
Thus, when  $\xS\subset\partial\xO$ is a $C^2$ compact submanifold in $\mathbb{R}^N$ without boundary, of dimension $0 \leq k \leq N-1$, for any $x \in \Sigma$, we have that
\ba
\xG_{N,\xS}^\xi(x')=\xG_{N,\partial\xO}^\xi(x',\xG_{k+1,\xS}^\xi(x'),...,\xG_{N-1,\xS}^\xi(x')).\label{dist3}
\ea

Let $\xi\in \xS$. For any $x\in V_\xS(\xi,\xb_0)\cap \xO,$ we define
\ba \label{dxi}
\xd^{\xi}(x):=x_N-\Gamma_{N,\partial\xO}^\xi (x_1,...,x_{N-1}),
\ea
and
\ba \label{d2Sigma} \xd_{2,\xS}^{\xi}(x):=\left(\sum_{i=k+1}^{N-1}|x_i-\Gamma_{i,\xS}^\xi(x')|^2 \right)^{\frac{1}{2}} .
\ea
Then by \eqref{dist3}, there exists a constant $A>1$ which depends only on $N$, $k$,
$\|\Sigma\|_{C^2}$, $\|\partial\xO\|_{C^2}$ and $\beta_0$ such that
\ba \label{propdist2}
A^{-1}(\xd_{2,\xS}^\xi(x)+\xd^\xi(x))\leq\xd_\xS^\xi(x)\leq A(\xd_{2,\xS}^\xi(x)+\xd^\xi(x)), \quad \forall x \in V_{\Sigma}(\xi,\beta_0) \cap \Omega.
\ea
Thus by \eqref{propdist} and \eqref{propdist2}, for any $\xg\in\BBR,$ we can show that there exists a constant $C>1$ which depends on $ N,k,\|\Sigma\|_{C^2}, \|\partial\xO\|_{C^2},\beta_0,\xg $ such that
\be
C^{-1}\xd(x)^2(\xd_{2,\xS}(x)+\xd(x))^\xg\leq d_{\partial \Omega}(x)^2d_\xS(x)^\gamma \leq C\xd(x)^2(\xd_{2,\xS}(x)+\xd(x))^\xg.\label{ddK}
\ee

We set
\begin{align} \nonumber
\mathcal{V}_\xS(\xi,\xb_0):=\left\{(x',x''): |x'-\xi'|<\beta_0,\; |\xd(x)|<\xb_0,\; |\xd_{2,\xS}|<\xb_0\right\}.
\end{align}
We may then assume that
\begin{align*}
\mathcal{V}_\xS(\xi,\xb_0)\cap\xO=\left\{(x',x''): |x'-\xi'|<\beta_0, 0<\xd(x)<\xb_0,\; |\xd_{2,\xS}|<\xb_0\right\}
\end{align*}
and there exist $m_1 \in \N$ and points $\xi^{j} \in \Sigma$, $j=1,...,m_1$, and $\beta_1 \in (0, \beta_0)$ such that
\be \label{cover2}
\Sigma_{2\xb_1} \cap \Omega \subset \cup_{j=1}^{m_1} \mathcal{V}_{\xS}(\xi^j,\beta_0)\cap\xO.
\ee

We can choose $\xb_1$ small enough such that for any $x\in \xO_{\xb_1}$ there exists a unique $\xi_x\in\partial\xO,$ which satisfies $d_{\partial \Omega}(x)=|x-\xi_x|.$ Now set
\ba\label{newdistance}
\tilde{d}_{\xS}(x):=\sqrt{|{\rm dist}^{\partial\xO}(\xi_x,\xS)|^2+|x-\xi_x|^2},
\ea
where $\dist^{\partial \Omega}$ denotes the geodesic distance on $\partial \Omega$.

The properties of $\tilde d_{\Sigma}$ is provided in the following result.

\begin{proposition}[{\cite[Lemma 2.1]{FF}}]\label{expansion} There exists $\xb_1=\xb_1(\xS,\xO)$ small enough such that, for any $x\in \xO\cap \xS_{\xb_1},$ the following expansions hold
\bal
&\tilde{d}_{\xS}^2(x) =d^2_{\xS}(x)(1+f_1(x)), \\
&\nabla d_{\partial \Omega}(x) \cdot \nabla \tilde{d}_{\xS}(x)=\frac{d_{\partial \Omega}(x)}{\tilde{d}_{\xS}(x)}, \\
&|\nabla\tilde{d}_{\xS}(x)|^2=1+f_2(x), \\
&\tilde{d}_{\xS}(x)\xD \tilde{d}_{\xS}(x)=N-k-1+ f_3(x),
\eal
where $f_i$, $i=1,2,3$, satisfy
\be \label{frag}
\sum_{i=1}^3 |f_i(x)|\leq C_1(\xb_1,N) \tilde{d}_{\xS}(x),\quad\forall x\in\xO\cap \xS_{\xb_1}.
\ee

\label{propdK}
\end{proposition}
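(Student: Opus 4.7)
The plan is to work in Fermi-type coordinates adapted simultaneously to $\xS$ inside $\partial\xO$ and to $\partial\xO$ inside $\xO$, and to exploit the explicit decomposition
\[
\tilde d_\xS(x)^2 = a(\xi_x)^2 + d_{\partial\xO}(x)^2, \qquad a(y):=\dist^{\partial\xO}(y,\xS),
\]
where $\xi_x\in\partial\xO$ is the unique nearest point to $x$, which exists and depends in a $C^2$ fashion on $x$ provided $\xb_1$ is small enough, by $C^2$ regularity of $\partial\xO$.

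First I would dispose of the second identity, which is in fact exact rather than asymptotic. Differentiating the decomposition yields
\[
2\tilde d_\xS\, \nabla \tilde d_\xS = 2\, a(\xi_x)\, \nabla[a(\xi_x)] + 2\, d_{\partial\xO}\,\nabla d_{\partial\xO}.
\]
Taking the inner product with $\nabla d_{\partial\xO}$: since $\xi_x\in\partial\xO$, the Jacobian $D\xi_x$ maps $\BBR^N$ into the tangent space $T_{\xi_x}\partial\xO$, so by the chain rule $\nabla d_{\partial\xO}\cdot\nabla[a(\xi_x)]=0$ because $\nabla d_{\partial\xO}(x)=\pm\bfn(\xi_x)$ is normal to $\partial\xO$. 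Using $|\nabla d_{\partial\xO}|=1$ and dividing by $2\tilde d_\xS$ gives $\nabla d_{\partial\xO}\cdot\nabla\tilde d_\xS = d_{\partial\xO}/\tilde d_\xS$.

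For the remaining three expansions I would fix $\xi_0\in\xS$ and choose coordinates $(y_1,\ldots,y_N)$ on a neighborhood of $\xi_0$ in $\BBR^N$ in which $\xS$ corresponds to $\{y_{k+1}=\cdots=y_N=0\}$, $\partial\xO$ to $\{y_N=0\}$, and the Euclidean metric in $y$ satisfies $g_{ij}(y)=\delta_{ij}+O(|y|)$. Such coordinates exist because $\xS$ and $\partial\xO$ are $C^2$ with $\xS\subset\partial\xO$. In them the nearest-point projection onto $\partial\xO$ reads $\xi_x = (y_1,\ldots,y_{N-1},0)+O(|y|^2)$, $d_{\partial\xO}(x)=y_N+O(y_N|y|)$, and $a(\xi_x)^2=\sum_{i=k+1}^{N-1}y_i^2+O(|y|^3)$. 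Consequently,
\[
\tilde d_\xS(x)^2 = \sum_{i=k+1}^{N} y_i^2+O(|y|^3)\qquad\text{and}\qquad d_\xS(x)^2=\sum_{i=k+1}^N y_i^2+O(|y|^3),
\]
which yields $\tilde d_\xS^2 = d_\xS^2(1+f_1)$ with $|f_1|\lesssim\tilde d_\xS$. Differentiating the first of these formulas gives $|\nabla\tilde d_\xS|^2=1+O(|y|)=1+f_2$; computing the Laplacian in the same coordinates and using that the Euclidean distance $r(y):=\bigl(\sum_{i=k+1}^N y_i^2\bigr)^{1/2}$ to the coordinate $k$-plane satisfies $\Delta r = (N-k-1)/r$, one obtains $\tilde d_\xS\,\Delta\tilde d_\xS = N-k-1+f_3$ with $|f_3|\lesssim\tilde d_\xS$.

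The main obstacle is the careful book-keeping in these Taylor expansions: one has to verify that the second fundamental forms of $\partial\xO\subset\BBR^N$ and of $\xS\subset\partial\xO$ contribute only $O(|y|)$ corrections to $a(\xi_x)^2$, $|\nabla\tilde d_\xS|^2$ and $\Delta\tilde d_\xS$, so that after covering $\xS$ by the finitely many coordinate patches supplied by \eqref{cover} and shrinking $\xb_1$ if necessary, the error terms $f_1,f_2,f_3$ are uniformly bounded on $\xO\cap\xS_{\xb_1}$ by a constant multiple of $\tilde d_\xS$, which gives \eqref{frag}.
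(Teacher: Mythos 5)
This proposition is not proved in the paper: it is quoted from \cite[Lemma 2.1]{FF}, so there is no in-house proof to compare against. Judging your sketch on its own, your argument for the second identity is correct and is the exact one: from $\tilde d_\Sigma^2=(a\circ\xi)^2+d_{\partial\Omega}^2$, the fact that $D\xi_x\,\nabla d_{\partial\Omega}=0$ (which follows from $|\nabla d_{\partial\Omega}|\equiv 1$ and $D^2 d_{\partial\Omega}\nabla d_{\partial\Omega}=0$) forces $\nabla d_{\partial\Omega}\cdot\nabla[a\circ\xi]\equiv 0$, and the identity $\nabla d_{\partial\Omega}\cdot\nabla\tilde d_\Sigma=d_{\partial\Omega}/\tilde d_\Sigma$ drops out exactly.

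For $f_1,f_2,f_3$, though, there is a genuine gap. You Taylor-expand in a chart centered at a fixed $\xi_0\in\Sigma$ and obtain errors of order $O(|y|)$ (or $O(|y|^3)$ for $\tilde d_\Sigma^2$), then claim that after covering by finitely many such charts and shrinking $\beta_1$ one gets $|f_i|\lesssim\tilde d_\Sigma$. This does not follow: a point $x\in\Omega\cap\Sigma_{\beta_1}$ may have $\tilde d_\Sigma(x)$ arbitrarily small while sitting at distance $\sim\beta_0$ along $\Sigma$ from the chart center $\xi_0$, so $|y(x)|\sim\beta_0$; an $O(|y|)$ error is then only $O(\beta_0)$, a fixed constant and not $O(\tilde d_\Sigma)$. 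Shrinking $\beta_1$ only constrains the normal coordinates $|y''|\approx\tilde d_\Sigma$, not $|y'|$. What actually saves the proposition is not a generic Taylor bound but structure you did not invoke: the eikonal identity $|\nabla^{\partial\Omega}a|\equiv 1$ for the geodesic distance $a$ on $\partial\Omega$, which together with $D\xi=P-d_{\partial\Omega}D^2d_{\partial\Omega}$ gives $|\nabla[a\circ\xi]|^2-1=O(d_{\partial\Omega})$ uniformly, with no dependence on the tangential position; combined with the orthogonality above this yields
\[
|\nabla\tilde d_\Sigma|^2-1=\frac{(a\circ\xi)^2\bigl(|\nabla[a\circ\xi]|^2-1\bigr)}{\tilde d_\Sigma^2}=O(d_{\partial\Omega})\leq O(\tilde d_\Sigma),
\]
and similarly the tube formula $\Delta^{\partial\Omega}a=(N-k-2)/a+O(1)$ drives the $f_3$ estimate. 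Your sketch gestures at these cancellations (``the second fundamental forms contribute only $O(|y|)$ corrections'') but does not actually secure the $O(\tilde d_\Sigma)$ rate, which is the whole content of the lemma.
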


\subsection{Two-sided estimates on Green function and Martin kernel} \label{subsec:GreenMartin} In this subsection, we recall sharp two-sided estimates on the Green function $G_\xm$ and the Martin kernel $K_\xm$ associated to $-L_\mu$ in $\Omega$, as well as the representation formula for nonnegative $L_\xm$-harmonic functions.

Estimates on the Green function and the Martin kernel are stated in the following Propositions.

\begin{proposition}[{\cite[Proposition 5.3]{BGT}}] \label{Greenkernel} Assume that $\xm \leq \frac{(N-k)^2}{4}$ and $\lambda_{\mu,\Sigma}>0$.
	
$(i)$ If $\am<\frac{N-k}{2}$ or $\am=\frac{N-k}{2}$ and $k\neq 0,$ then for any $x,y\in\xO$ and $x\neq y,$ there holds
\ba \label{Greenesta}
	G_{\xm}(x,y)&\approx
	\min\left\{\frac{1}{|x-y|^{N-2}},\frac{d_{\partial \Omega}(x)d_{\partial \Omega}(y)}{|x-y|^N}\right\} \left(\frac{\left(d_\xS(x)+|x-y|\right)\left(d_\xS(y)+|x-y|\right)}{d_\xS(x)d_\xS(y)}\right)^{\am}.
\ea

$(ii)$ If $\am=\frac{N}{2}$ and $k=0,$ then for any $x,y\in\xO$ and $x\neq y,$ there holds
\ba \label{Greenestb}\BAL
	G_{\xm}(x,y)&\approx
\min\left\{\frac{1}{|x-y|^{N-2}},\frac{d_{\partial \Omega}(x)d_{\partial \Omega}(y)}{|x-y|^N}\right\}\left(\frac{\left(|x|+|x-y|\right)\left(|y|+|x-y|\right)}{|x||y|}\right)^{-\frac{N}{2}}\\
	&\quad  +\frac{d_{\partial \Omega}(x)d_{\partial \Omega}(y)}{(|x||y|)^{\frac{N}{2}}}\left|\ln\left(\min\left\{|x-y|^{-2},(d_{\partial \Omega}(x)d_{\partial \Omega}(y))^{-1}\right\}\right)\right|.
\EAL
\ea
\end{proposition}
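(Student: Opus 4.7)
The plan is to begin by identifying the role played by the indicial exponents $\am$ and $\ap$ of $L_\mu$ in controlling the blow-up of $G_\mu$ at $\Sigma \subset \partial\Omega$. Using \eqref{laplaciand} and Proposition \ref{propdK}, a direct computation with the ansatz $d_{\partial\Omega} d_\Sigma^{-\alpha}$ (motivated by the eigenfunction estimate \eqref{eigenfunctionestimates}) yields, to leading order in a tubular neighborhood of $\Sigma$,
\[
L_\mu\bigl(d_{\partial\Omega}\, d_\Sigma^{-\alpha}\bigr) \sim d_{\partial\Omega}\, d_\Sigma^{-\alpha-2}\bigl[\alpha^2-(N-k)\alpha+\mu\bigr],
\]
which vanishes precisely for $\alpha \in \{\am,\ap\}$; the smaller root $\am$ dictates the Hardy-type correction factor appearing in \eqref{Greenesta}. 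Hence the right-hand side of \eqref{Greenesta} is the natural candidate: the min-factor is the classical two-sided bound for the Green function of $-\Delta$, while the ratio $[(d_\Sigma(x)+|x-y|)(d_\Sigma(y)+|x-y|)/(d_\Sigma(x)d_\Sigma(y))]^{\am}$ encodes the maximal $d_\Sigma^{-\am}$-growth allowed at $\Sigma$ and matches to the interior regime where $d_\Sigma \gtrsim |x-y|$ and the correction trivializes.

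For the upper bound, the plan is to localize via the cover \eqref{cover2} and pass in each patch to the model operator $-\Delta - \mu|y''|^{-2}$ on a slab of the form $B^k(0,\beta_0)\times B^{N-k-1}(0,\beta_0)\times(0,\beta_0)$ with $\Sigma$ straightened to $B^k(0,\beta_0)\times\{0\}\times\{0\}$; the geometric corrections due to curvature of $\Sigma$ and $\partial\Omega$ enter as lower-order terms by \eqref{frag} and are absorbed into the constants. On the model one constructs an explicit supersolution equal to the right-hand side of \eqref{Greenesta} and verifies, via the indicial identity above, that $L_\mu$ applied to it is non-positive away from the diagonal. A maximum principle comparison together with a gluing argument over the patches then yields the upper bound. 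The lower bound is obtained in two steps: a Harnack-chain argument gives $G_\mu(x,y) \gtrsim |x-y|^{-(N-2)}$ whenever $|x-y|$ is small compared with $\min\{d_\Sigma,d_{\partial\Omega}\}$ at both points, and the boundary Harnack principle for $L_\mu$ (available under \eqref{assumpt}) then propagates this lower bound simultaneously across $\Sigma$ and across $\partial\Omega \setminus \Sigma$, using \eqref{eigenfunctionestimates} to supply the correct normalization for the chaining.

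The main obstacle is the critical case $\mu = H^2$ with $k = 0$ treated in \eqref{Greenestb}, where $\am = \ap = H$ and the two indicial roots collide. In that regime, the second linearly independent solution of the indicial ODE becomes $r^{-H}\ln r$ rather than a pure power, which produces the logarithmic factor in \eqref{Greenestb}. The standard ground-state transformation based on $\phi_{\mu,\Sigma}$ is unavailable because no $H^1_0$-minimizer exists at the critical value (as noted after \eqref{eigenvalue}); one must instead work with a generalized ground state of the form $d_{\partial\Omega}|x|^{-H}|\ln|x||$, redo the supersolution construction with a matched logarithmic term, and verify that the resulting barrier is compatible with the Hopf factor along $\partial\Omega \setminus \{0\}$. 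The boundary Harnack step has to be revisited to accommodate this resonance, and this delicate accounting forms the technical heart of case (ii).
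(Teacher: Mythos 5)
This proposition is not proved in the paper you were given: it is quoted verbatim as \cite[Proposition 5.3]{BGT}, so there is no ``paper's own proof'' here to compare against. Your task therefore becomes assessing whether the sketch could plausibly serve as a self-contained proof of the quoted estimate; it cannot, and I would flag the following gaps.

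\smallskip
\noindent\textbf{1. The indicial computation is correct, but it does not by itself produce a supersolution.} Your expansion $L_\mu(d_{\partial\Omega}\,d_\Sigma^{-\alpha})\sim d_{\partial\Omega}\,d_\Sigma^{-\alpha-2}[\alpha^2-(N-k)\alpha+\mu]$ is right provided you keep the mixed term $-2\alpha\,\nabla d_{\partial\Omega}\cdot\nabla\tilde d_\Sigma\,d_\Sigma^{-\alpha-1}$, which by Proposition \ref{propdK} contributes exactly $-2\alpha\,d_{\partial\Omega}d_\Sigma^{-\alpha-2}$; without it one gets $\alpha^2-(N-k-2)\alpha+\mu$ instead, so this identity is already nontrivial. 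But you then propose to take ``an explicit supersolution equal to the right-hand side of \eqref{Greenesta}'' and check $L_\mu(\cdot)\le 0$ away from the diagonal. The right-hand side of \eqref{Greenesta} is a product of the min-kernel $\min\{|x-y|^{-(N-2)},\,d_{\partial\Omega}(x)d_{\partial\Omega}(y)|x-y|^{-N}\}$ with a correction factor, and this product is not smooth, is not $L_\mu$-superharmonic in $y$ in any obvious sense, and has a genuine diagonal singularity whose comparison with the Green function requires its own argument (typically a 3G-type inequality or an iteration scheme, not a single maximum-principle comparison on patches). The ``gluing over patches'' step is also unaddressed: one needs uniformity of the barriers across overlaps and near the transitional region $d_\Sigma\approx\beta_0$.

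\smallskip
\noindent\textbf{2. The lower bound leans on a boundary Harnack principle that is itself the hard part.} You invoke Harnack chaining plus ``the boundary Harnack principle for $L_\mu$ (available under \eqref{assumpt})'' to propagate the interior lower bound up to $\Sigma$ and up to $\partial\Omega\setminus\Sigma$. But establishing a boundary Harnack principle for $-\Delta-\mu d_\Sigma^{-2}$ with the potential singular \emph{on the boundary manifold} $\Sigma\subset\partial\Omega$, uniformly in the critical range $\mu\le H^2$, is essentially equivalent to the Green/Martin estimates you are trying to prove; treating it as a black box makes the argument circular. Some version of Ancona's comparison theory or a Doob $h$-transform by a suitable $L_\mu$-harmonic weight is needed, and the choice and verification of that weight is where the work lies.

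\smallskip
\noindent\textbf{3. The critical case \eqref{Greenestb} has a more specific structure than the proposed logarithmic ansatz delivers.} Your plan replaces the power ansatz by a generalized ground state $d_{\partial\Omega}|x|^{-H}|\ln|x||$ and expects a logarithmic correction factor throughout. But \eqref{Greenestb} has a two-term form: the min-kernel appears with the $-N/2$ power and \emph{no} logarithm, and the logarithm occurs only in the additive tensor term $d_{\partial\Omega}(x)d_{\partial\Omega}(y)(|x||y|)^{-N/2}|\ln(\min\{|x-y|^{-2},(d_{\partial\Omega}(x)d_{\partial\Omega}(y))^{-1}\})|$, whose logarithmic argument is governed by the \emph{relative} quantities $|x-y|$ and $d_{\partial\Omega}(x)d_{\partial\Omega}(y)$ rather than by $|x|$ or $|y|$ directly. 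A single logarithmically corrected ground state cannot reproduce this split; one must track separately the ``near-diagonal'' regime (where the classical min-kernel dominates and the Hardy correction is flat) and the ``both points near $\partial\Omega$, far from the diagonal'' regime (where the tensor-log term wins). That case analysis, and the proof that the two regimes patch together with comparable constants, is precisely the technical heart you identify but do not carry out.

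\smallskip
In short: the plan is a reasonable high-level route and the indicial analysis is sound, but as written it relies on unproved inputs (the barrier near the diagonal, the boundary Harnack principle near $\Sigma$) and does not account for the precise two-term structure of the critical estimate, so it cannot be considered a proof of Proposition \ref{Greenkernel}.
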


\begin{proposition}[{\cite[Theorem 2.8]{BGT}}]\label{Martin} ~~
Assume that $\xm \leq \frac{(N-k)^2}{4}$ and $\lambda_{\mu,\Sigma}>0$.

$(i)$ If $\xm <\frac{(N-k)^2}{4}$ or $\xm=\frac{(N-k)^2}{4}$ and $k>0$ then
\bel{Martinest1}
K_{\xm}(x,\xi) \approx\frac{d_{\partial \Omega}(x)}{|x-\xi|^N}\left(\frac{\left(d_\xS(x)+|x-\xi|\right)^2}{d_\xS(x)}\right)^{\am} \quad \text{for all } x \in \Omega, \, \xi \in \partial\xO.
\ee

$(ii)$ If  $\xm=\frac{N^2}{4}$ and $k=0$ then
\bel{Martinest2}
K_{\mu}(x,\xi) \approx \frac{d_{\partial \Omega}(x)}{|x-\xi|^N}\left(\frac{\left(|x|+|x-\xi|\right)^2}{|x|}\right)^{\frac{N}{2}}
+\frac{d_{\partial \Omega}(x)}{|x|^{\frac{N}{2}}}\left|\ln\left(|x-\xi|\right)\right|,\quad
\text{ for all } x \in \Omega, \, \xi \in \partial\xO.
\ee
\end{proposition}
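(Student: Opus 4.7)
The plan is to derive Proposition~\ref{Martin} from the sharp Green function estimates of Proposition~\ref{Greenkernel} via the classical representation
\[
K_\xm(x,\xi)\;=\;\lim_{\xO\ni y\to\xi}\,\frac{G_\xm(x,y)}{G_\xm(x_0,y)}
\]
(up to an inessential normalization), where $x_0\in\xO$ is a reference point chosen far from $\partial\xO\cup\xS$. The whole argument then splits into proving that this limit exists on one hand, and reading off its size from \eqref{Greenesta}--\eqref{Greenestb} on the other.

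First I would establish the existence of the limit, together with its uniform comparability in $x$ on compact subsets of $\xO$. At $\xi\in\partial\xO\setminus\xS$ this is the classical boundary Harnack principle, since $L_\xm$ has bounded smooth coefficients in a neighborhood of $\xi$. At $\xi\in\xS$ the Hardy potential blows up and one needs a boundary Harnack principle adapted to $L_\xm$, which is obtained by constructing model sub- and supersolutions with the scaling $d_{\partial\xO}\,d_\xS^{-\am}$ coming from the eigenfunction asymptotics \eqref{eigenfunctionestimates}, and then applying the maximum principle in the spirit of Ancona \cite{An1,An2}. Granted the existence of the limit, I would compute it directly from \eqref{Greenesta}: since $y\to\xi\in\partial\xO$ forces $d_{\partial\xO}(y)\ll|x-y|$ eventually, the minimum in \eqref{Greenesta} reduces to $d_{\partial\xO}(x)d_{\partial\xO}(y)|x-y|^{-N}$, and similarly with $x_0$ in place of $x$. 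Forming the ratio $G_\xm(x,y)/G_\xm(x_0,y)$, the factor $d_{\partial\xO}(y)$ cancels, as does the singular factor $d_\xS(y)^{-\am}$ which is common to the Hardy corrections
\[
\Bigl(\tfrac{(d_\xS(x)+|x-y|)(d_\xS(y)+|x-y|)}{d_\xS(x)\,d_\xS(y)}\Bigr)^{\am}
\]
in numerator and denominator. Letting $y\to\xi$, using $d_\xS(y)\to d_\xS(\xi)$ and the elementary equivalence $d_\xS(x)+|x-\xi|\approx d_\xS(\xi)+|x-\xi|$ (triangle inequality), produces exactly \eqref{Martinest1} after absorbing the $x_0$-dependent quantities into the $\approx$.

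The hard part will be the critical case $\xm=(N-k)^2/4$, and especially the subcase $k=0$: the two indicial exponents $\ap,\am$ coalesce, a second-solution logarithmic correction appears both in $G_\xm$ (cf.\ \eqref{Greenestb}) and in the Martin kernel, and one cannot simply specialize the subcritical formula. Handling this requires the refined Green function analysis of \cite{BGT} together with a critical-case boundary Harnack principle at points of $\xS$, whose proof rests on delicate matching of the two model solutions scaling like $d_\xS^{-H}$ and $d_\xS^{-H}\ln d_\xS$. Verifying that the logarithmic contribution survives the ratio and reorganizes into precisely the additive term $\dfrac{d_{\partial\xO}(x)}{|x|^{N/2}}\,|\ln|x-\xi||$ of \eqref{Martinest2} is where most of the technical work would be concentrated; once this is done, \eqref{Martinest2} follows by the same cancellation scheme as above, applied to \eqref{Greenestb} in place of \eqref{Greenesta}.
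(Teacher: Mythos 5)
Proposition \ref{Martin} is quoted verbatim from \cite[Theorem 2.8]{BGT}; the present paper contains no proof of it, so there is no internal argument to compare against. Your sketch (pass through the classical Martin-kernel-as-ratio-limit representation, establish the limit via a boundary Harnack principle, then read off the estimate from Proposition \ref{Greenkernel}) is a reasonable and standard plan, and the algebra of the ratio checks out: as $y\to\xi$ the $\min$ in \eqref{Greenesta} reduces to $d_{\partial \Omega}(x)d_{\partial \Omega}(y)|x-y|^{-N}$, the factors $d_{\partial \Omega}(y)$ and $d_\Sigma(y)^{-\am}$ cancel between numerator and denominator, $d_\Sigma(y)+|x-y|\to d_\Sigma(\xi)+|x-\xi|\approx d_\Sigma(x)+|x-\xi|$ by the triangle inequality, and the residual $(x_0,\xi)$-dependent constant is bounded above and below because $x_0$ is a fixed interior point and $\partial\Omega$ is compact. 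The same scheme applied to \eqref{Greenestb} produces \eqref{Martinest2}.

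That said, the proposal is not a proof, and you are candid about why: the entire weight rests on a boundary Harnack principle for $L_\mu$ up to $\Sigma$, including the critical coalescent case $\mu=H^2$, $k=0$, which you acknowledge but do not establish. Be aware also that the two-sided bound \eqref{Greenesta} by itself does \emph{not} imply that the ratio $G_\mu(x,y)/G_\mu(x_0,y)$ converges as $y\to\xi$; it only traps that ratio in a band of fixed multiplicative width, so the BHP (or an equivalent oscillation-decay statement) genuinely cannot be dispensed with. Finally, the paper itself notes that \cite{BGT} follows ``a different approach'' to obtain these two-sided estimates (its title points to a route through heat kernel bounds), so it is likely that the actual proof in the cited reference does not go through Green-function ratios at all; it would be worth consulting \cite{BGT} before investing in the BHP route you outline.
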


Recall that the Green operator and Martin operator are respectively defined by
\ba \label{Grt}
\BBG_\mu[\tau](x)=\int_{\xO }G_{\mu}(x,y) \, \dd\tau(y), \quad \tau \in \GTM(\Omega;\ei), \\
\label{Mrt} \mathbb{K}_\mu[\gn](x)=\int_{\partial\xO }K_{\mu}(x,y) \, \dd\xn(y), \quad \gn\in \mathfrak{M}(\partial\xO).
\ea

A function $u \in L_{\loc}^1(\Omega)$ is called an {\it $L_\mu$-harmonic function} in $\Omega$ if $L_\mu u = 0$ in the sense of distributions in $\Omega$.

Next we state the representation theorem which provides a bijection between the class of positive $L_\mu$-harmonic functions in $\Omega$ and $\GTM^+(\partial \Omega)$.

\begin{theorem}[{\cite[Theorem 2.9]{BGT}}] \label{th:Rep} For any $\nu \in \GTM^+(\partial \Omega )$, the function $\BBK_{\mu}[\nu]$ is a positive $L_\mu$-harmonic function in $\Omega $. Conversely, for any positive $L_\mu$-harmonic function $u$  in $\Omega $, there exists a unique measure $\nu \in \GTM^+(\partial \Omega)$ such that $u=\BBK_{\mu}[\nu]$.
\end{theorem}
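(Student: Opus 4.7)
The plan is to split the statement into the two assertions (direct and inverse Martin representation) and attack each by the classical Martin program adapted to the singular operator $L_\mu$. For the forward direction, I would first verify that for each fixed $y\in\partial\Omega$ the Martin kernel $K_\mu(\cdot,y)$ is a positive $L_\mu$-harmonic function in $\Omega$; this should be built into the very construction of $K_\mu$ as a normalized limit $K_\mu(x,y)=\lim_{z\to y}G_\mu(x,z)/G_\mu(x_0,z)$ (Ancona \cite{An1,An2}), combined with standard elliptic regularity away from $\Sigma$. Then, to pass from $K_\mu$ to $\BBK_\mu[\nu]=\int_{\partial\Omega}K_\mu(\cdot,y)\,d\nu(y)$, I would use Fubini together with the uniform two-sided bounds from Proposition \ref{Martin}, which let me dominate $K_\mu(\cdot,y)$ on compact subsets of $\Omega$ by an integrable-in-$y$ majorant and so differentiate under the integral sign, obtaining $L_\mu\BBK_\mu[\nu]=0$ distributionally. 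Positivity is immediate from $K_\mu>0$ and $\nu\geq 0$.

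For the converse, the strategy is to invoke the abstract Martin representation theorem in the form: every positive solution of $L_\mu u=0$ in $\Omega$ can be written uniquely as an integral over the minimal Martin boundary of the cone-kernel $K_\mu(\cdot,y)$, provided that $-L_\mu$ admits a positive Green function and that the Martin compactification and its minimal part have been properly identified. Given $u>0$ with $L_\mu u=0$, I would fix the reference point $x_0\in\Omega$, normalize $K_\mu(x_0,\cdot)\equiv 1$, and apply Choquet's theorem on the cone of positive $L_\mu$-harmonic functions, which yields a representing measure $\nu$ carried by the set of minimal kernels. The remaining (and the hard) task is to prove that the Martin compactification of $\Omega$ with respect to $L_\mu$ coincides, as a set, with the Euclidean closure $\overline{\Omega}$, and that each $K_\mu(\cdot,y)$ with $y\in\partial\Omega$ is itself a \emph{minimal} harmonic function; once this is done the measure $\nu$ lives on $\partial\Omega$ and gives exactly $u=\BBK_\mu[\nu]$.

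The main obstacle is precisely this identification of the minimal Martin boundary with $\partial\Omega$ in the presence of the Hardy potential $\mu d_\Sigma^{-2}$, because classical boundary Harnack inequalities degenerate along $\Sigma$. To handle it I would argue separately on $\partial\Omega\setminus\Sigma$ and on $\Sigma$. On $\partial\Omega\setminus\Sigma$ the operator $L_\mu$ is locally a uniformly elliptic perturbation of $\Delta$ with bounded coefficients, so Ancona's 3G inequality and boundary Harnack apply and yield existence, uniqueness and minimality of the limit $K_\mu(\cdot,y)=\lim_{z\to y}G_\mu(\cdot,z)/G_\mu(x_0,z)$ in the usual way. On $\Sigma$ the singular term dominates, and I would replace the standard boundary Harnack by the sharp estimates of Propositions \ref{Greenkernel}--\ref{Martin}: these bounds show first that the quotient $G_\mu(\cdot,z)/G_\mu(x_0,z)$ has a unique limit as $z\to y\in\Sigma$, independent of the approach, and second that the limit $K_\mu(\cdot,y)$ has the correct boundary behavior encoded by the weight $W$ so that it cannot be written as a convex combination of two other normalized positive $L_\mu$-harmonic functions, i.e.\ it is minimal.

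Finally, uniqueness of the representing measure follows from the standard argument: if $\BBK_\mu[\nu_1]=\BBK_\mu[\nu_2]$, then integrating against $\xf_{\mu,\Sigma}^{-1}L_\mu\zeta$ for test functions $\zeta\in\mathbf{X}_\mu(\Omega)$ (as in \eqref{nlinearweakformintro} with the nonlinear term removed) and using that such test functions separate boundary measures — a property that can be read off from the sharp weighted boundary behavior of $G_\mu[\zeta]$-type solutions — forces $\nu_1=\nu_2$. Combining the three steps gives the bijection $\nu\leftrightarrow\BBK_\mu[\nu]$ stated in the theorem.
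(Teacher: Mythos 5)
The paper does not prove this theorem; it quotes it verbatim from \cite[Theorem 2.9]{BGT}, and all the surrounding machinery (the Martin kernel estimates of Proposition \ref{Martin}, the boundary-trace theory, etc.) is likewise imported from that reference. So there is no ``paper's own proof'' to compare against. That said, your outline follows the classical Martin program, which is indeed the natural route and most likely the one taken in \cite{BGT}, so the overall strategy is not off base.

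There is, however, a genuine gap in the converse direction. You claim that the two-sided bounds of Propositions \ref{Greenkernel}--\ref{Martin} ``show \dots that the limit $K_\mu(\cdot,y)$ \dots cannot be written as a convex combination of two other normalized positive $L_\mu$-harmonic functions, i.e.\ it is minimal.'' This does not follow. A two-sided pointwise estimate $K_\mu(\cdot,y)\approx W(\cdot,y)$ controls the \emph{size} of the kernel, but it is perfectly compatible a priori with a nontrivial decomposition $K_\mu(\cdot,y)=\tfrac12 h_1+\tfrac12 h_2$ with $h_1\not\approx h_2$: both $h_i$ could satisfy the same upper bound while only their sum saturates the lower bound. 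Minimality of the kernels with poles on $\Sigma$ requires a \emph{boundary Harnack principle} (equivalently, a local comparison/oscillation estimate for quotients of positive $L_\mu$-harmonic functions vanishing on a boundary portion) adapted to the degenerate Hardy term $\mu d_\Sigma^{-2}$; this is precisely the hard technical input in \cite{An2,BGT} and cannot be read off from the pointwise kernel bounds. Your plan acknowledges that classical BHP ``degenerates along $\Sigma$'' but then substitutes the two-sided estimates for it, which is where the argument breaks.

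Two smaller remarks. First, once you have identified the minimal Martin boundary with $\partial\Omega$, uniqueness of the representing measure is automatic from the Choquet--Meyer uniqueness theorem (the cone of positive $L_\mu$-harmonic functions with the chosen normalization is a simplex); the separate argument you propose via test functions in $\mathbf{X}_\mu(\Omega)$ is a detour and, as written, presupposes an injectivity/separation property of $\BBK_\mu$ that itself needs proof. Second, in the forward direction, invoking Proposition \ref{Martin} to dominate and differentiate under the integral sign is fine, but it is cleaner to note that $L_\mu$-harmonicity of $\BBK_\mu[\nu]$ is a direct Fubini/weak-formulation argument and does not require the sharp two-sided bounds, only local integrability of $K_\mu(\cdot,y)$ uniformly in $y$.
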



\subsection{The boundary trace of Green operator and Martin operator} \label{subsec:boundarytrace} We start this Subsection by presenting the notion of the boundary trace introduced in \cite{BGT}.

Let $h\in C(\partial\xO)$ and $u_h$ be the solution of \eqref{linear}. Set $\CL_{\mu,z}(h):=u_h(z),$ then the mapping $h\mapsto \CL_{\mu,z}(h)$ is a linear positive functional on $C(\partial\Omega )$. Thus there exists a unique Borel measure on $\partial\Omega $, called {\it $L_{\mu}$-harmonic measure on $\partial \Omega $ relative to $z$} and  denoted by $\omega_{\Omega }^{z}$, such that
\bal
v_{h}(z)=\int_{\partial\Omega}h(y) \, \dd\omega_{\Omega}^{z}(y).
\eal
Let $x_0 \in \Omega $ be a fixed reference point. Let $\{\xO_n\}$ be a $C^2$ \textit{exhaustion} of $\Omega$, i.e. $\{\Omega_n\}$ is an increasing sequence of bounded $C^2$ domains  such that
\ba\label{Omegan}  \overline{\xO}_n\subset \xO_{n+1}, \quad \cup_n\xO_n=\xO, \quad \mathcal{H}^{N-1}(\partial \Omega_n)\to \mathcal{H}^{N-1}(\partial \Omega),
\ea
where $\mathcal{H}^{N-1}$ denotes the $(N-1)$-dimensional Hausdorff measure in $\R^N$.

Then $-L_\mu$ is uniformly elliptic and coercive in $H^1_0(\xO_n)$ and its first eigenvalue $\lambda_{\mu,\Sigma}^{\xO_n}$ in $\xO_n$ is larger than its first eigenvalue $\lambda_{\mu,\Sigma}$ in $\Omega$.

For $h\in C(\prt \xO_n)$, the following problem
\be\label{sub12} \left\{ \BAL
-L_{\xm } v&=0\qquad&&\text{in } \xO_n\\
v&=h\qquad&&\text{on } \prt \xO_n,
\EAL \right.
\ee
admits a unique solution which allows to define the $L_{\xm }$-harmonic measure $\omega_{\xO_n}^{x_0}$ on $\prt \xO_n$
by
\be\label{redu2}
v(x_0)=\myint{\prt \xO_n}{}h(y) \,\dd\gw^{x_0}_{\xO_n}(y).
\ee
Let $G^{\xO_n}_\xm(x,y)$ be the Green kernel of $-L_\mu$ on $\xO_n$.  Then $G^{\xO_n}_\xm(x,y)\uparrow G_\mu(x,y)$ for $x,y\in\xO, x \neq y$.

We recall below the definition of boundary trace in \cite{BGT} which is defined in a \textit{dynamic way}.

\begin{definition}[Boundary trace] \label{nomtrace}
We say that	a function $u\in W^{1,\kappa}_{\loc}(\xO)$ ($\kappa>1$) possesses a \emph{boundary trace}  if there exists a measure $\nu \in\GTM(\partial \Omega )$ such that for any $C^2$ exhaustion  $\{ \xO_n \}$ of $\Omega$, there  holds
	\be\label{trab}
	\lim_{n\rightarrow\infty}\int_{ \partial \xO_n}\phi u\, \dd \omega_{\xO_n}^{x_0}=\int_{\partial \Omega} \phi \,\dd \nu \quad\forall \phi \in C(\overline{\Omega}).
	\ee
	The boundary trace of $u$ is denoted by $\tr(u)$.
\end{definition}

In the following two propositions, we recall some properties of the boundary trace.
\begin{proposition}[{\cite[Proposition 7.7]{BGT}}]\label{22222}
Assume  $x_0\in \xO_1$. Then for every $Z\in C(\overline{\xO}),$
\be\label{2.27}
\lim_{n\rightarrow\infty}\int_{\partial \xO_n}
Z(x)\tilde{W}(x)\, \dd\gw^{x_0}_{\Gw_n}(x)=\int_{\partial \xO}Z(x)\, \dd\gw^{x_0}_\xO(x).
\ee
\end{proposition}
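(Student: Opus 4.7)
The plan is to interpret both sides of \eqref{2.27} as values of $L_\mu$-harmonic functions and to exploit the very boundary behaviour that defines the weight $\tilde W$. Given $Z\in C(\overline{\Omega})$, I let $v_Z$ be the unique solution of \eqref{linear} with boundary datum $h:=Z|_{\partial\Omega}$, which exists by \cite[Lemma 6.8]{BGT}. By the definition of the $L_\mu$-harmonic measure $\omega^{x_0}_\Omega$ (given in the paragraph preceding Definition \ref{nomtrace}), one has $v_Z(x_0)=\int_{\partial\Omega}Z\,d\omega^{x_0}_\Omega$, so proving \eqref{2.27} reduces to showing
\begin{equation*}
\lim_{n\to\infty}\int_{\partial\Omega_n}Z\,\tilde W\,d\omega^{x_0}_{\Omega_n}=v_Z(x_0).
\end{equation*}
Since $\overline{\Omega_n}\Subset\Omega$, $v_Z$ is smooth and $L_\mu$-harmonic on a neighbourhood of $\overline{\Omega_n}$, so applying \eqref{redu2} in $\Omega_n$ with boundary data $v_Z|_{\partial\Omega_n}$ gives $v_Z(x_0)=\int_{\partial\Omega_n}v_Z\,d\omega^{x_0}_{\Omega_n}$. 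The task thus reduces further to proving that $\int_{\partial\Omega_n}(v_Z-Z\tilde W)\,d\omega^{x_0}_{\Omega_n}\to 0$.

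To estimate the integrand on $\partial\Omega_n$, I would combine the boundary condition in \eqref{linear} with the uniform continuity of $Z$ on the compact set $\overline{\Omega}$. Since $\partial\Omega$ itself is compact, the locally uniform limit $v_Z/\tilde W\to h$ is in fact uniform, and together with $|Z(x)-Z(\pi(x))|\to 0$ as $\dist(x,\partial\Omega)\to 0$ (where $\pi(x)\in\partial\Omega$ realises $\dist(x,\partial\Omega)$), this yields: for every $\eps>0$ there exists $\delta>0$ such that
\begin{equation*}
|v_Z-Z\tilde W|\leq \eps\,\tilde W \quad\text{on}\quad \{x\in\Omega:\dist(x,\partial\Omega)<\delta\}.
\end{equation*}
Because $\{\Omega_n\}$ exhausts $\Omega$, $\partial\Omega_n$ sits inside this thin layer for all large $n$, and consequently
\begin{equation*}
\left|\int_{\partial\Omega_n}(v_Z-Z\tilde W)\,d\omega^{x_0}_{\Omega_n}\right|\leq \eps\int_{\partial\Omega_n}\tilde W\,d\omega^{x_0}_{\Omega_n}.
\end{equation*}

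The main obstacle I anticipate is obtaining a uniform-in-$n$ bound on $\int_{\partial\Omega_n}\tilde W\,d\omega^{x_0}_{\Omega_n}$. My plan is to apply the same reasoning to $v_1$, the solution of \eqref{linear} with $h\equiv 1$; this function is strictly positive in $\Omega$ by Theorem \ref{th:Rep}. The uniform boundary limit $v_1/\tilde W\to 1$ forces $v_1\geq \tfrac{1}{2}\tilde W$ on $\partial\Omega_n$ for all sufficiently large $n$, whence
\begin{equation*}
\int_{\partial\Omega_n}\tilde W\,d\omega^{x_0}_{\Omega_n}\leq 2\int_{\partial\Omega_n}v_1\,d\omega^{x_0}_{\Omega_n}=2\,v_1(x_0),
\end{equation*}
the last equality again via \eqref{redu2}. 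Feeding this uniform bound into the previous display and letting first $n\to\infty$ and then $\eps\to 0$ completes the argument.
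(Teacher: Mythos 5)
The paper does not prove this proposition; it quotes it verbatim from \cite[Proposition~7.7]{BGT} as part of the toolbox of results recalled in Section~2, so there is no in-paper proof to compare against. Judged on its own terms, your argument is sound and is the natural way to establish the claim: you reinterpret the right-hand side as $v_Z(x_0)$, use that the restriction of $v_Z$ to $\Omega_n$ coincides with the unique $L_\mu$-harmonic extension of $v_Z|_{\partial\Omega_n}$ (which gives $v_Z(x_0)=\int_{\partial\Omega_n}v_Z\,\dd\omega^{x_0}_{\Omega_n}$), and then control the error $\int_{\partial\Omega_n}(v_Z-Z\tilde W)\,\dd\omega^{x_0}_{\Omega_n}$. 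The two ingredients you need both hold: the boundary condition attached to \eqref{linear} is required to hold for every compact $F\subset\partial\Omega$, and since $\partial\Omega$ itself is compact this is genuinely uniform convergence, which together with uniform continuity of $Z$ on $\overline{\Omega}$ gives $|v_Z-Z\tilde W|\leq\varepsilon\tilde W$ on a collar of $\partial\Omega$; and the comparison $v_1\geq\tfrac12\tilde W$ near $\partial\Omega$ supplies the uniform bound $\int_{\partial\Omega_n}\tilde W\,\dd\omega^{x_0}_{\Omega_n}\leq 2v_1(x_0)$. One small remark: you do not actually need $v_1>0$ throughout $\Omega$ (hence Theorem~\ref{th:Rep} is not needed here); it suffices that $v_1(x_0)<\infty$ and $v_1\geq\tfrac12\tilde W$ on $\partial\Omega_n$ for $n$ large, both of which come directly from the boundary behaviour. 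Also note that you never use the Hausdorff-measure condition $\mathcal{H}^{N-1}(\partial\Omega_n)\to\mathcal{H}^{N-1}(\partial\Omega)$ in \eqref{Omegan}; the harmonic-measure normalization $v_1(x_0)=\int_{\partial\Omega_n}v_1\,\dd\omega^{x_0}_{\Omega_n}$ already controls the total mass, so your argument is in fact insensitive to that assumption.
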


\begin{proposition}[{\cite[Lemmas 8.1 and 8.2]{BGT}}] \label{traceKG} ~~
	
	$(i)$ For any $\nu \in \GTM(\partial \Omega )$, $\tr(\BBK_{\mu}[\nu])=\nu$.
	
	$(ii)$ For any $\tau \in \GTM(\Omega;\ei)$, $\tr(\BBG_\mu[\tau])=0$.
\end{proposition}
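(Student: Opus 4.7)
The plan is to prove both $(i)$ and $(ii)$ by Fubini, reducing in each case to the behavior of $G_\mu(\cdot, y)$ or $K_\mu(\cdot, y)$ integrated against $\omega_{\Omega_n}^{x_0}$, and then applying dominated convergence. The key identity throughout is that if $w$ is $L_\mu$-harmonic in $\Omega_n$ then $w(x_0) = \int_{\partial \Omega_n} w \, \dd \omega_{\Omega_n}^{x_0}$; for $(ii)$ this will be applied to $G_\mu(\cdot, y) - G_\mu^{\Omega_n}(\cdot, y)$, and for $(i)$ to $K_\mu(\cdot, y)$ itself.

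For $(ii)$, fix $\phi \in C(\overline{\xO})$ and apply Fubini to write
\[
\int_{\partial \Omega_n} \phi(x) \, \BBG_\mu[\tau](x) \, \dd \omega_{\Omega_n}^{x_0}(x) = \int_{\xO} J_n(y) \, \dd \tau(y), \qquad J_n(y) := \int_{\partial \Omega_n} \phi(x) G_\mu(x, y) \, \dd \omega_{\Omega_n}^{x_0}(x).
\]
For $y \in \Omega_n$, the function $x \mapsto G_\mu(x, y) - G_\mu^{\Omega_n}(x, y)$ is $L_\mu$-harmonic in $\Omega_n$ and coincides with $G_\mu(\cdot, y)$ on $\partial \Omega_n$, hence
\[
\int_{\partial \Omega_n} G_\mu(x, y) \, \dd \omega_{\Omega_n}^{x_0}(x) = G_\mu(x_0, y) - G_\mu^{\Omega_n}(x_0, y) \longrightarrow 0
\]
by the monotone convergence $G_\mu^{\Omega_n} \uparrow G_\mu$ recalled in Subsection \ref{subsec:boundarytrace}. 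This yields the pointwise bound $|J_n(y)| \leq \|\phi\|_\infty G_\mu(x_0, y)$ and $J_n(y) \to 0$. Choosing $x_0$ so that $\BBG_\mu[|\tau|](x_0) < \infty$ (which is possible since $\BBG_\mu[|\tau|]$ is almost everywhere finite on $\xO$, because it belongs to $L^1(\xO; \ei)$) provides an $L^1(\xO; \dd |\tau|)$ dominant, and dominated convergence concludes $(ii)$.

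For $(i)$, Fubini yields
\[
\int_{\partial \Omega_n} \phi \, \BBK_\mu[\nu] \, \dd \omega_{\Omega_n}^{x_0} = \int_{\partial \xO} I_n(y) \, \dd \nu(y), \qquad I_n(y) := \int_{\partial \Omega_n} \phi(x) K_\mu(x, y) \, \dd \omega_{\Omega_n}^{x_0}(x).
\]
Since $K_\mu(\cdot, y)$ is $L_\mu$-harmonic in $\Omega_n$, the total mass of $K_\mu(\cdot, y) \dd \omega_{\Omega_n}^{x_0}$ on $\partial \Omega_n$ is constant in $n$, equal to $K_\mu(x_0, y) = 1$ under the normalization implicit in Theorem \ref{th:Rep}. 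Splitting
\[
I_n(y) = \phi(y) + \int_{\partial \Omega_n} \bigl(\phi(x) - \phi(y)\bigr) K_\mu(x, y) \, \dd \omega_{\Omega_n}^{x_0}(x),
\]
the correction is shown to vanish in the limit: given $\varepsilon > 0$, choose $\delta > 0$ so that $|\phi - \phi(y)| < \varepsilon$ on $B(y, \delta)$; the contribution from $\partial \Omega_n \cap B(y, \delta)$ is then bounded by $\varepsilon$, while on the complement the estimates of Proposition \ref{Martin} together with Definition \ref{defweightintro} give $K_\mu(x, y) \lesssim C(\delta, y) \tilde W(x)$ uniformly, so Proposition \ref{22222} applied to a continuous cut-off of $(\phi - \phi(y)) K_\mu(\cdot, y) / \tilde W$ vanishing on $B(y, \delta/2)$ shows that the exterior contribution has $\limsup_n$ going to zero with $\delta$. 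Consequently $I_n(y) \to \phi(y)$ pointwise in $y$, and the uniform bound $|I_n(y)| \leq \|\phi\|_\infty$ against the finite measure $\nu$ yields $(i)$ by dominated convergence.

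The main technical obstacle lies in $(i)$, namely proving the weak concentration of $K_\mu(\cdot, y) \dd \omega_{\Omega_n}^{x_0}$ at $y$. Although the total mass is immediately pinned by the mean value property, converting this into a genuine $\delta_y$-limit requires carefully combining the Martin kernel bounds of Proposition \ref{Martin} (which control $K_\mu(\cdot, y) / \tilde W$ away from $y$, with particular care near $\Sigma$) with the weak convergence of $\tilde W \dd \omega_{\Omega_n}^{x_0}$ to $\dd \omega_\xO^{x_0}$ supplied by Proposition \ref{22222}, and then passing to the iterated limits $n \to \infty$ followed by $\delta \to 0$. Part $(ii)$ is by comparison direct, relying only on the straightforward identity for $G_\mu - G_\mu^{\Omega_n}$ and the integrability of $G_\mu(x_0, \cdot)$ against $|\tau|$.
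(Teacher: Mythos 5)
Your overall plan — Fubini to reduce to pointwise behaviour of the kernels integrated against $\omega_{\Omega_n}^{x_0}$, then dominated convergence — is the natural route, and the paper itself does not spell out a proof (it simply cites [BGT, Lemmas 8.1 and 8.2]), so I cannot compare your argument against an explicit one in the text. Still, two steps as written do not close.

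In part $(i)$ the exterior piece of $R_n(y)$, after passing $n\to\infty$ via Proposition \ref{22222}, equals $\int_{\partial\Omega} Z\,\dd\omega_\Omega^{x_0}$ for $Z(x) = (\phi(x)-\phi(y))\,K_\mu(x,y)/\tilde W(x)\,\chi(x)$. For the argument to work you need this integral to be zero (not merely to tend to zero with $\delta$: the constant $C(\delta,y)$ in your bound blows up as $\delta\to 0$, so "small with $\delta$" is not available), and also you need $Z$ to belong to $C(\overline\Omega)$ so that Proposition \ref{22222} applies at all. Both points require the kernel-function property $\lim_{x\to\xi} K_\mu(x,y)/\tilde W(x)=0$ for every $\xi\in\partial\Omega\setminus\{y\}$, locally uniformly — this is the feature of the Martin kernel that the paper invokes for the Poisson kernel in the proof of Proposition \ref{measureboundary}$(i)$ and is part of [BGT, Definition 2.7 and Proposition 7.3]. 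The two-sided bound $K_\mu(x,y)\lesssim C(\delta,y)\tilde W(x)$ you cite from Proposition \ref{Martin} only gives boundedness of the ratio. The vanishing can in fact be extracted from the estimates and Definition \ref{defweightintro}, but it is not automatic; for example when $\mu=H^2$ the ratio $d_{\partial\Omega}(x)\big/\big(d_{\partial\Omega}(x)+d_\Sigma(x)^2\big)$ does not tend to zero along tangential approaches, and it is the logarithmic factor in $W$ that saves the limit. As written, the step from the two-sided bound to the vanishing of the exterior contribution is a gap.

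In part $(ii)$, choosing $x_0$ so that $\BBG_\mu[|\tau|](x_0)<\infty$ silently changes the reference point in Definition \ref{nomtrace-0}, so your argument shows the trace vanishes with respect to some $x_0$ rather than the one fixed in the statement; you would need to add that the trace measure does not depend on the base point. This is avoidable: fix $m$ with $x_0\in\Omega_m$; for $n\geq m$ and $y\in\Omega_m$ you have $|J_n(y)|\leq\|\phi\|_\infty\big(G_\mu(x_0,y)-G_\mu^{\Omega_m}(x_0,y)\big)$, and this difference is bounded on $\overline\Omega_m$ because the two Green functions share the same singularity at $x_0$; for $y\in\Omega\setminus\Omega_m$ the estimate \eqref{Greenesta} gives $G_\mu(x_0,y)\lesssim C(x_0,m)\,\ei(y)$. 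The resulting dominant is in $L^1(\Omega;\dd|\tau|)$ for any fixed $x_0$, and the rest of your argument then goes through unchanged.
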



\subsection{Linear boundary value problems}
\label{subsec:linear}
\begin{definition}
 Let $\tau\in\mathfrak{M}(\xO;\ei)$ and $\nu \in \mathfrak{M}(\partial\xO)$. We say that $u$ is a \textit{weak solution} of
\be\label{NHL} \left\{ \BAL
- L_\gm u&=\tau\qquad \text{in }\;\Gw,\\
\tr(u)&=\xn,
\EAL \right.
\ee
if $u\in L^1(\xO;\ei)$ and $u$ satisfies
\be \label{lweakform}
	- \int_{\Gw}u L_{\xm }\xi \, \dd x=\int_{\Gw } \xi \, \dd \tau - \int_{\Gw} \mathbb{K}_{\xm}[\xn]L_{\xm }\xi \, \dd x
	\qquad\forall \xi \in\mathbf{X}_\xm(\xO),
	\ee
where ${\bf X}_\mu(\Gw)$ has been defined in \eqref{Xmu}.
\end{definition}

The next result provides the existence, uniqueness and Kato type inequalities for solutions to problem \eqref{NHL}.

\begin{theorem}[{\cite[Theorem 2.12]{BGT}}] \label{linear-problem}
Let $\tau\in\mathfrak{M}(\xO;\ei)$ and $\xn \in \mathfrak{M}(\partial\xO)$.  Then there exists a unique weak solution $u\in L^1(\xO;\ei)$ of \eqref{NHL}. Furthermore
\be \label{reprweaksol}
u=\mathbb{G}_{\mu}[\tau]+\mathbb{K}_{\xm}[\xn] \quad \text{a.e. in } \Omega,
\ee
and for any $\zeta \in \mathbf{X}_\xm(\xO),$ there holds
\ba \label{esti2}
\|u\|_{L^1(\Omega;\ei)} \leq \frac{1}{\lambda_\mu}\| \tau \|_{\GTM(\Omega;\ei)} + C \| \nu \|_{\GTM(\partial\Omega)},
\ea
where $C=C(N,\Omega,\Sigma,\mu)$.
In addition, if $\dd \tau=f\dd x+\dd\rho$ with $\rho\in\mathfrak{M}(\xO;\ei)$ and $f\in L^1(\xO;\ei)$, then, for any $0 \leq \zeta \in \mathbf{X}_\xm(\xO)$, the following Kato type inequalities are valid
\be\label{poi4}
-\int_{\Gw}|u|L_{\xm }\zeta \, \dd x\leq \int_{\Gw}\sign(u)f\zeta\, \dd x +\int_{\Gw}\zeta \, \dd|\rho|-
\int_{\Gw}\mathbb{K}_{\xm}[|\xn|] L_{\xm }\zeta \, \dd x,
\ee
\be\label{poi5}
-\int_{\Gw}u^+L_{\xm }\zeta \, \dd x\leq \int_{\Gw} \sign^+(u)f\zeta\, \dd x +\int_{\Gw }\zeta\, \dd\rho^+-
\int_{\Gw}\mathbb{K}_{\xm}[\nu^+]L_{\xm }\zeta \,\dd x.
\ee
Here $u^+=\max\{u,0\}$.
\end{theorem}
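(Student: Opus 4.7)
The plan is to build $u$ from the explicit representation $u := \mathbb{G}_\mu[\tau]+\mathbb{K}_\mu[\nu]$, which simultaneously yields existence, the formula \eqref{reprweaksol}, and the bound \eqref{esti2}; uniqueness follows by duality, and the Kato inequalities \eqref{poi4}--\eqref{poi5} by convex regularisation.

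For the candidate, I would first check $u \in L^1(\Omega;\phi_{\mu,\Sigma})$ by Fubini. The identity $-L_\mu\phi_{\mu,\Sigma}=\lambda_{\mu,\Sigma}\phi_{\mu,\Sigma}$ forces $\mathbb{G}_\mu[\phi_{\mu,\Sigma}]=\lambda_{\mu,\Sigma}^{-1}\phi_{\mu,\Sigma}$, hence
\[
\int_\Omega \mathbb{G}_\mu[|\tau|]\,\phi_{\mu,\Sigma}\,dx \;=\; \lambda_{\mu,\Sigma}^{-1}\int_\Omega \phi_{\mu,\Sigma}\,d|\tau|,
\]
while the analogous bound on $\mathbb{K}_\mu[\nu]$ reduces to $\sup_{\xi\in\partial\Omega}\int_\Omega K_\mu(\cdot,\xi)\phi_{\mu,\Sigma}\,dx<\infty$, which I would extract from the two-sided estimates of Propositions \ref{Greenkernel}--\ref{Martin} combined with the eigenfunction asymptotics \eqref{eigenfunctionestimates}. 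Together these give \eqref{esti2}. For the weak formulation \eqref{lweakform}, Fubini and the defining duality $-\int_\Omega G_\mu(x,y)L_\mu\zeta(x)\,dx=\zeta(y)$ for $\zeta\in \mathbf{X}_\mu(\Omega)$ yield $-\int_\Omega\mathbb{G}_\mu[\tau]\,L_\mu\zeta\,dx=\int_\Omega\zeta\,d\tau$, which rearranges into \eqref{lweakform} since the $\mathbb{K}_\mu[\nu]$-term enters both sides in the same form. The trace condition $\tr(u)=\nu$ is then exactly Proposition \ref{traceKG}.

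Uniqueness is the dual statement: any weak solution with $\tau\equiv 0$, $\nu\equiv 0$ satisfies $\int_\Omega u\,L_\mu\zeta\,dx = 0$ for every $\zeta\in\mathbf{X}_\mu(\Omega)$. I would test against $\zeta := \mathbb{G}_\mu[f\phi_{\mu,\Sigma}]$ with arbitrary $f\in C_c^\infty(\Omega)$; since $L_\mu\zeta=-f\phi_{\mu,\Sigma}$ the identity collapses to $\int_\Omega u f\phi_{\mu,\Sigma}\,dx=0$, whence $u\equiv 0$. The delicate point here is verifying that such $\zeta$ actually lies in $\mathbf{X}_\mu(\Omega)$, which requires Green-function regularity together with the identity $H^1_0(\Omega;\phi_{\mu,\Sigma}^2)=H^1(\Omega;\phi_{\mu,\Sigma}^2)$. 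Finally, for \eqref{poi4}--\eqref{poi5} I would approximate $s\mapsto|s|$ and $s\mapsto s^+$ by convex $j_\varepsilon\in C^2(\R)$ with $j_\varepsilon''\geq 0$ and $|j_\varepsilon'|\leq 1$, apply the chain-rule inequality $-L_\mu j_\varepsilon(u_n)\leq -j_\varepsilon'(u_n)L_\mu u_n$ to a smoothing $u_n$ of $u$, test against $0\leq\zeta\in\mathbf{X}_\mu(\Omega)$, and pass $n\to\infty$, $\varepsilon\to 0$. The decomposition $d\tau = f\,dx+d\rho$ enters precisely here, since $j_\varepsilon'(u_n)f\to\sign(u)f$ by dominated convergence while $\rho$ only admits a one-sided control by $|\rho|$ (resp.\ $\rho^+$), producing the one-sided inequalities in the statement.

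The main obstacle I anticipate is justifying the Fubini and duality manipulations over the full, non-compactly-supported test class $\mathbf{X}_\mu(\Omega)$, whose members are controlled only by the weighted norm $\|\phi_{\mu,\Sigma}^{-1}\zeta\|_{H^1(\Omega;\phi_{\mu,\Sigma}^2)}$ and by $\|\phi_{\mu,\Sigma}^{-1}L_\mu\zeta\|_{L^\infty}$. Boundary contributions must therefore be handled both along $\Sigma$ and along $\partial\Omega\setminus\Sigma$, which is where the sharp asymptotics of $G_\mu$, $K_\mu$ and $\phi_{\mu,\Sigma}$, and the boundary-integral convergence statement of Proposition \ref{22222}, become indispensable; exhausting $\Omega$ by the domains $\Omega_n$ and passing to the limit will be the technical heart of the proof.
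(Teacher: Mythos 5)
The paper itself does not prove this theorem: it is quoted from \cite[Theorem 2.12]{BGT} and used as an ingredient, so there is no in-paper proof to compare against. Your outline is the standard Brezis--Marcus--V\'eron scheme, which is almost certainly the route taken in [BGT], and the scaffolding (candidate $u=\mathbb{G}_\mu[\tau]+\mathbb{K}_\mu[\nu]$, integrability from $\mathbb{G}_\mu[\ei]=\lambda_{\mu,\Sigma}^{-1}\ei$ and the Martin-kernel bound, uniqueness by duality against $\mathbb{G}_\mu[f\ei]$, Kato by convex regularisation) is the right one.

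One step is not right as written: for the Kato inequalities you propose to ``apply the chain-rule inequality $-L_\mu j_\varepsilon(u_n)\le -j_\varepsilon'(u_n)L_\mu u_n$ to a smoothing $u_n$ of $u$''. A mollification of a generic $u\in L^1(\Omega;\ei)$ does not solve an $L_\mu$-problem whose data you can tie back to $\tau$ and $\nu$, so the right-hand side cannot be replaced by $\sign(u_n)f\zeta+\cdots$ and the argument does not close. The correct manoeuvre is to regularise the \emph{data}: approximate $\tau$ by $\tau_n=f_n\,dx$ in $\GTM(\Omega;\ei)$, set $u_n=\mathbb{G}_\mu[\tau_n]+\mathbb{K}_\mu[\nu]$, derive Kato for $u_n$ (legitimate because $u_n$ is a genuine solution of $-L_\mu u_n=f_n$ with $\tr(u_n)=\nu$), and pass to the limit using the contraction estimate \eqref{esti2}. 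That is also exactly why the measure part $\rho$ survives only as $|\rho|$ (resp.\ $\rho^+$): the approximating $f_n\,dx\rightharpoonup\rho$ can only be majorised by the total variation in the limit. The two points you flag as delicate --- verifying $\mathbb{G}_\mu[f\ei]\in\mathbf{X}_\mu(\Omega)$, where the ground-state substitution $w=\ei^{-1}\zeta$ turning $L_\mu+\lambda_{\mu,\Sigma}$ into $\ei^{-2}\,\mathrm{div}(\ei^2\nabla\cdot)$ is the essential device rather than raw Green-function regularity; and $\sup_{\xi\in\partial\Omega}\int_\Omega K_\mu(\cdot,\xi)\ei\,dx<\infty$, which requires separate exponent counts for $\xi\in\Sigma$ and $\xi\in\partial\Omega\setminus\Sigma$ and for the critical case $\mu=H^2$ where the logarithm in \eqref{Martinest2} enters --- genuinely are the technical heart; you have located the right inputs (Propositions \ref{Greenkernel}--\ref{Martin}, \eqref{eigenfunctionestimates}, $H^1_0=H^1$) but underestimated the work there.
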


\subsection{Semilinear problem with an absorption}

In this subsection, we give a sufficient condition for the existence of solutions to the problem

\be\label{nonlinear} \left\{ \BAL
- L_\gm u+g(u)&=0\qquad \text{in }\;\Gw,\\
\tr(u)&=\xn,
\EAL \right.
\ee
where $g\in C(\BBR)$ is a nondecreasing function such that $g(0)=0.$

Let us give the definition of weak solutions of \eqref{nonlinear}.

\begin{definition} \label{def:weak-sol} Let $g\in C(\BBR)$ and $\xn\in \mathfrak{M}(\partial\xO).$ A function $u$ is a \textit{weak solution} of \eqref{nonlinear}, if $u\in L^1(\Omega;\ei)$, $g(u) \in L^1(\Omega;\ei)$  and
	\be \label{nlinearweakform}
	- \int_{\xO}u L_{\xm }\zeta \, \dd x+ \int_{\xO}g(u)\zeta \, \dd x=- \int_{\Gw} \mathbb{K}_{\xm}[\xn]L_{\xm }\zeta \, \dd x
	\qquad\forall \zeta \in\mathbf{X}_\xm(\xO).
	\ee
\end{definition}

An existence result for problem \eqref{nonlinear} was proved in \cite{BGT}.
\begin{proposition}[{\cite[Theorem 9.7]{BGT}}]\label{subcr}
Let $\xn\in \mathfrak{M}(\partial\xO)$ and $g\in C(\BBR)$ be a nondecreasing function such that $g(0)=0$ and $g(\mathbb{K}_{\xm}[\xn_+]),g(\mathbb{K}_{\xm}[\xn_-])\in L^1(\xO;\ei).$ Then there exists a unique weak solution of \eqref{nonlinear}.
\end{proposition}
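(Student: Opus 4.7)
The plan is to extract uniqueness from the Kato inequalities of Theorem \ref{linear-problem}, then construct a weak solution by combining a truncation of $g$ with a Schauder fixed point argument for nonnegative data, and finally extend to signed data by a sub/super-solution iteration. For uniqueness, if $u_1,u_2$ are two weak solutions then $w:=u_1-u_2$ satisfies $-L_\mu w=g(u_2)-g(u_1)$ with $\tr(w)=0$ in the sense of Definition \ref{def:weak-sol}. Choosing, via Theorem \ref{linear-problem}, the positive $\zeta\in\mathbf{X}_\mu(\Omega)$ with $-L_\mu\zeta=\phi_{\mu,\Sigma}$ and $\tr(\zeta)=0$ and plugging it into \eqref{poi4} yields, by the monotonicity of $g$,
\bal
\int_\Omega |w|\,\phi_{\mu,\Sigma}\,dx \le \int_\Omega \sign(w)\,(g(u_2)-g(u_1))\,\zeta\,dx \le 0,
\eal
so $u_1=u_2$ a.e.

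For existence when $\nu\in\mathfrak{M}^+(\partial\Omega)$, I would truncate to the bounded nondecreasing nonlinearity $g_n(t):=\min(g(t^+),n)$ and let $\CC:=\{v\in L^1(\Omega;\phi_{\mu,\Sigma}):0\le v\le \mathbb{K}_\mu[\nu]\}$. Define $T_n:\CC\to L^1(\Omega;\phi_{\mu,\Sigma})$ by $T_n(v):=\mathbb{K}_\mu[\nu]-\mathbb{G}_\mu[g_n(v)]$; the boundedness of $g_n$ together with the two-sided Green estimates of Proposition \ref{Greenkernel} make $T_n$ continuous and compact on $\CC$, while the nonnegativity inequality \eqref{poi5}, applied both to $T_n(v)$ and to $\mathbb{K}_\mu[\nu]-T_n(v)$, shows that $T_n(\CC)\subseteq\CC$. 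Schauder's theorem then provides $u_n\in\CC$ with $-L_\mu u_n+g_n(u_n)=0$ and $\tr(u_n)=\nu$; a further Kato comparison shows $(u_n)$ is nonincreasing, and the envelope $0\le g_n(u_n)\le g(\mathbb{K}_\mu[\nu])\in L^1(\Omega;\phi_{\mu,\Sigma})$ allows dominated convergence to carry $u_n\downarrow u$ through the weak formulation \eqref{nlinearweakform}.

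For signed $\nu$, the auxiliary nonlinearity $\tilde g(t):=-g(-t)$ is again continuous, nondecreasing, vanishes at $0$, and satisfies $\tilde g(\mathbb{K}_\mu[\nu_-])\in L^1(\Omega;\phi_{\mu,\Sigma})$. The preceding step applied to $(g,\nu_+)$ yields a nonnegative $\bar u$ with $\tr(\bar u)=\nu_+$, and applied to $(\tilde g,\nu_-)$ yields $w\ge 0$, so that $\underline u:=-w$ weakly solves the $g$-equation with $\tr(\underline u)=-\nu_-$; Kato gives $\underline u\le 0\le \bar u$. Setting $u^{(0)}:=\bar u$ and defining $u^{(j+1)}$ as the weak solution, from Theorem \ref{linear-problem}, of $-L_\mu u^{(j+1)}=-g(u^{(j)})$ with $\tr(u^{(j+1)})=\nu$, the Kato inequalities imply $\underline u\le u^{(j+1)}\le u^{(j)}\le \bar u$, and the monotone limit $u:=\lim_j u^{(j)}$ solves \eqref{nonlinear}, with $g(u)\in L^1(\Omega;\phi_{\mu,\Sigma})$ enforced by $-\tilde g(\mathbb{K}_\mu[\nu_-])\le g(u)\le g(\mathbb{K}_\mu[\nu_+])$. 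The main obstacle I anticipate is the compactness of $T_n$ in the weighted space $L^1(\Omega;\phi_{\mu,\Sigma})$, since both $\phi_{\mu,\Sigma}$ and $\mathbb{K}_\mu[\cdot]$ degenerate near $\Sigma$; resolving it requires blending interior elliptic regularity with an equi-integrability argument driven by the sharp two-sided estimates of Propositions \ref{Greenkernel} and \ref{Martin}.
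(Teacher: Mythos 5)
The paper does not give its own proof of this proposition; it is quoted directly from \cite[Theorem 9.7]{BGT}, so there is no in-paper argument to compare against. That said, your strategy (uniqueness via Kato inequalities with an explicit test function, existence via truncation of $g$ plus fixed point, then signed measures via sub/super-solution sandwiching) is the standard one for results of this type and is essentially what one expects the cited reference to do. A remark on the uniqueness step: rather than solving for $\zeta$ with $-L_\mu\zeta=\ei$ (which requires verifying $\BBG_\mu[\ei]\in \mathbf{X}_\mu(\Omega)$), it is cleaner to take $\zeta=\ei$ itself, since $L_\mu\ei=-\lambda_{\mu,\Sigma}\ei$ gives directly $\lambda_{\mu,\Sigma}\int_\Omega|w|\ei\,\dd x\le 0$ and hence $w\equiv 0$ under the standing assumption $\lambda_{\mu,\Sigma}>0$.

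There are two genuine gaps to address. First, the invariance $T_n(\CC)\subseteq\CC$ with $\CC=\{v:0\le v\le \BBK_\mu[\nu]\}$ does not follow from \eqref{poi5}: for an arbitrary $v\in\CC$ one has $T_n(v)=\BBK_\mu[\nu]-\BBG_\mu[g_n(v)]\le\BBK_\mu[\nu]$, but there is no a priori reason that $\BBG_\mu[g_n(v)]\le\BBK_\mu[\nu]$, so $T_n(v)\ge 0$ is not guaranteed; Kato's inequality gives nonnegativity of the \emph{fixed point} $u_n$ (because $g_n(u_n)=0$ on $\{u_n<0\}$), not of every iterate. The fix is to run Schauder on a larger convex set that contains the range of $T_n$ automatically, e.g.\ $\{v: -n\,\BBG_\mu[1]\le v\le\BBK_\mu[\nu]\}$ or an $L^1(\Omega;\ei)$-ball of radius $\|\BBK_\mu[\nu]\|_{L^1(\Omega;\ei)}+n\|\BBG_\mu[1]\|_{L^1(\Omega;\ei)}$ (here boundedness of $g_n$ by $n$ is exactly what you need), and then verify $0\le u_n\le \BBK_\mu[\nu]$ for the resulting fixed point. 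Second, in the signed case you assert $\tilde g(\BBK_\mu[\nu_-])\in L^1(\Omega;\ei)$, where $\tilde g(t)=-g(-t)$; but the stated hypothesis gives $g(\BBK_\mu[\nu_-])\in L^1(\Omega;\ei)$, which is a different quantity unless $g$ has some symmetry (a general nondecreasing $g$ with $g(0)=0$ can grow much faster on the negative axis). The quantity actually needed to dominate $g(u)$ from below via the subsolution $-\BBK_\mu[\nu_-]$ is $-g(-\BBK_\mu[\nu_-])$. This is likely a notational shorthand in the statement itself, but you should say explicitly that you interpret the hypothesis in this way (or verify against the precise wording in \cite{BGT}), because your monotone iteration genuinely uses $-g(-\BBK_\mu[\nu_-])\in L^1(\Omega;\ei)$.
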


\section{The $L_\mu$-harmonic measure and the surface measures on $\partial\Omega$} \label{harmesure} In this section, we study the relation between the $L_\mu$-harmonic measure and the $(N-1)$-dimensional surface measure $S_{\partial \Omega}$ on $\partial\Omega$. To this purpose, we need to introduce the notion of Poisson kernel.

By the standard elliptic theory, we can easily show that for any $x \in \xO$,  $G_{\mu}(x,\cdot) \in C^{1,\gamma}(\overline{\Omega} \setminus (\Sigma \cup \{x\})) \cap C^2(\Omega \setminus  \{x\})$ for all $\xg\in (0,1)$. Therefore, we may define the \textit{Poisson kernel} associated to $-L_\mu$ in $\Omega \times (\partial \Omega \setminus \Sigma)$ as
\be
\label{Poisson}  P_\mu(x,y):=-\frac{\partial G_\mu}{\partial{\bf n}}(x,y), \quad x \in \Omega , \; y \in \partial \Omega\setminus\xS,
\ee
where ${\bf n}$ is the unit outer normal vector of $\partial \xO$. This kernel satisfies the following properties.

\begin{proposition}  \label{measureboundary}
Let $x_0 \in \Omega $ be a fixed reference point.

$(i)$ Then there holds
\ba\label{KP}
P_\mu(x,y)=P_\mu(x_0,y)K_\mu(x,y), \quad x \in \Omega , \; y \in \partial \Omega\setminus\xS.
\ea

$(ii)$ For any $h \in L^1(\partial \Omega ; \dd \omega^{x_0}_{\xO})$ with compact support in $\partial \Omega\setminus\xS$, there holds
\be \label{Pformula}
\int_{\partial\Omega} h(y) \, \dd \omega^{x_0}_{\xO}(y) = \BBP_\mu[h\tilde W](x_0).
\ee
Here
\be \label{BBP} \mathbb{P}_{\mu}[h\tilde W](x):=\int_{\partial\Omega}P_\mu(x,y)h(y) \tilde W(y)\, \dd S_{\partial \Omega}(y), \quad x \in \Omega,
\ee
where $S_{\partial \Omega}$ is the $(N-1)$-dimensional surface measure on $\partial\Omega$.
\end{proposition}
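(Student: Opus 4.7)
The plan is to prove (i) by Taylor-expanding the Green function near $\partial\xO\setminus\xS$ and using the Martin-kernel representation. Starting from
\[
K_\xm(x,y)=\lim_{\xO\ni z\to y}\frac{G_\xm(x,z)}{G_\xm(x_0,z)},\qquad y\in\partial\xO,
\]
(see \cite{An2}, consistent with Proposition~\ref{Martin}), for $y\in\partial\xO\setminus\xS$ the potential $\xm d_\xS^{-2}$ is locally bounded near $y$, so both $G_\xm(x,\cdot)$ and $G_\xm(x_0,\cdot)$ are $C^{1,\gamma}$ on a one-sided neighborhood of $y$ in $\overline{\xO}$ and vanish on $\partial\xO$ there. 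Approaching $y$ along the inward normal $z_t:=y-t\bfn(y)$, a first-order Taylor expansion using $G_\xm(x,y)=0$ and $-\partial_{\bfn}G_\xm(x,y)=P_\xm(x,y)$ gives $G_\xm(x,z_t)=tP_\xm(x,y)+o(t)$, and likewise at $x_0$. Dividing and letting $t\to 0^+$ yields \eqref{KP}.

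\textbf{Part (ii), strategy.} By density of $C_c(\partial\xO\setminus\xS)$ in $L^1(\partial\xO;d\gw^{x_0}_\xO)$ restricted to compacts of $\partial\xO\setminus\xS$, it suffices to prove \eqref{Pformula} for $h\in C(\partial\xO)$ with $\supp h\Subset\partial\xO\setminus\xS$. Let $v_h$ be the unique solution of \eqref{linear} furnished by \cite[Lemma~6.8]{BGT}, so that $v_h(x_0)=\int_{\partial\xO}h\,d\gw^{x_0}_\xO$. Pick $\epsilon>0$ small enough that $x_0\in\xO_\epsilon:=\xO\setminus\overline{\xS_\epsilon}$. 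Since $\xm d_\xS^{-2}$ is bounded on $\xO_\epsilon$, I apply Green's identity to the pair $(v_h,G_\xm(x_0,\cdot))$. Using $L_\xm G_\xm(x_0,\cdot)=-\delta_{x_0}$, $L_\xm v_h=0$, $G_\xm(x_0,\cdot)=0$ and $-\partial_{\bfn}G_\xm(x_0,\cdot)=P_\xm(x_0,\cdot)$ on $\partial\xO\setminus\xS$, together with the boundary value $v_h(\xi)=h(\xi)\tilde W(\xi)$ on $\partial\xO\setminus\xS$ (extracted from the condition $v_h/\tilde W\to h$ locally uniformly), I arrive at
\[
v_h(x_0)=\int_{\partial\xO\setminus\overline{\xS_\epsilon}} h(y)\,\tilde W(y)\,P_\xm(x_0,y)\,dS(y) - R(\epsilon),
\]
with tube remainder $R(\epsilon):=\int_{\xO\cap\partial\xS_\epsilon}\bigl(v_h\,\partial_{\bfn}G_\xm(x_0,\cdot)-G_\xm(x_0,\cdot)\,\partial_{\bfn}v_h\bigr)\,dS$.

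\textbf{The crux.} The core obstacle is showing $R(\epsilon)\to 0$ as $\epsilon\to 0$. Because $h\equiv 0$ near $\xS$, the boundary condition forces $v_h(y)=o(\tilde W(y))$ as $y\to\xS$. I combine this with the Green-function estimate $G_\xm(x_0,y)\lesssim d_{\partial\xO}(y)\,d_\xS(y)^{-\am}$ from Proposition~\ref{Greenkernel} (with $x_0$ fixed), the interior gradient bounds $|\nabla G_\xm(x_0,y)|\lesssim d_\xS(y)^{-1}G_\xm(x_0,y)$ and $|\nabla v_h(y)|\lesssim d_\xS(y)^{-1}\sup_{B(y,d_\xS(y)/2)}|v_h|$, the area estimate $\CH^{N-1}(\xO\cap\partial\xS_\epsilon)\lesssim\epsilon^{N-k-1}$, the identity $\ap+\am=N-k$, and the explicit form of $\tilde W$ from \eqref{Wintro}--\eqref{tildeWintro}. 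A direct pointwise estimation then yields $R(\epsilon)\to 0$. Passing to the limit gives $v_h(x_0)=\BBP_\xm[h\tilde W](x_0)$, which together with $v_h(x_0)=\int_{\partial\xO}h\,d\gw^{x_0}_\xO$ proves \eqref{Pformula} for continuous $h$. The $L^1$ case follows by a density argument, since the identity just established gives $d\gw^{x_0}_\xO=\tilde W P_\xm(x_0,\cdot)\,dS$ on compact subsets of $\partial\xO\setminus\xS$.
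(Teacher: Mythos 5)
Your proposal is correct in outline, but it follows a genuinely different route from the paper in both parts, so a comparison is in order.

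For (i), the paper does not Taylor--expand. It observes that $P_\mu(\cdot,y)$ is a positive $L_\mu$-harmonic function in $\Omega$ satisfying $P_\mu(x,y)/\tilde W(x)\to 0$ as $x\to \xi$ for every $\xi\in\partial\Omega\setminus\{y\}$; hence $P_\mu(\cdot,y)/P_\mu(x_0,y)$ is a kernel function with pole $y$ and basis $x_0$ in the sense of \cite[Definition 2.7]{BGT}, and \eqref{KP} then follows at once from the uniqueness of such kernel functions (\cite[Proposition 7.3]{BGT}). Your approach instead invokes the Martin-kernel limit formula $K_\mu(x,y)=\lim_{z\to y}G_\mu(x,z)/G_\mu(x_0,z)$ and computes the limit along the inward normal via a $C^{1,\gamma}$ Taylor expansion of the Green function. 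This is a valid, more elementary computation, but be aware that the limit formula itself is never stated in this paper or in \cite{BGT}; it is part of Ancona's Martin theory (and it is morally the same uniqueness result you are circumventing), so in a self-contained write-up you would have to cite or re-derive it for boundary points where the potential is regular.

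For (ii), the paper uses an exhaustion argument: it introduces a decreasing family of $C^2$ neighborhoods $\Sigma_n\downarrow\Sigma$, works on the uniformly elliptic domains $\Omega\setminus\Sigma_n$ (via the conjugation $w=u/\phi_*$), applies the classical Poisson representation there, and passes to the limit using $G_\mu^{n}\uparrow G_\mu$ together with local $C^1$ compactness to get $P_\mu^n\to P_\mu$ away from $\Sigma$. Your Green's-identity-with-tube-remainder argument is more direct and reaches the same identity \eqref{zP1} without ever introducing the approximating domains, at the price of having to control $v_h$ and $\nabla v_h$ on a shrinking tube $\partial\Sigma_\epsilon\cap\Omega$. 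Two points in your sketch need repair before the estimate of $R(\epsilon)$ is airtight. First, the bound $|\nabla v_h(y)|\lesssim d_\Sigma(y)^{-1}\sup_{B(y,d_\Sigma(y)/2)}|v_h|$ is an interior gradient estimate and does not apply to $y\in\partial\Sigma_\epsilon\cap\Omega$ with $d_{\partial\Omega}(y)\ll\epsilon$: the ball leaves $\Omega$. You must instead use the boundary (half-ball) Schauder estimate, which is available because $v_h$ has zero classical boundary values on $\partial\Omega$ near $\Sigma$ (and the rescaled potential $\mu d_\Sigma^{-2}\epsilon^2$ stays bounded on $B(y,\epsilon/2)$, so the constant does not deteriorate). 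Second, the smallness $v_h=o(\tilde W)$ must hold \emph{uniformly} over the tube $\partial\Sigma_\epsilon\cap\Omega$ as $\epsilon\to 0$; this does follow from the locally uniform convergence in the boundary-value formulation of \eqref{linear} applied to a compact set $F$ containing $\Sigma$ on which $h\equiv 0$, but it has to be said. With these repairs the cancellation $\alpha_++\alpha_-=N-k$ against the area factor $\epsilon^{N-k-1}$ and the extra powers of $d_{\partial\Omega}$ does make $R(\epsilon)\to 0$, and your proof of \eqref{Pformula} goes through. What the paper's route buys is precisely that it never needs these fine estimates in the tube; what your route buys is a shorter path once one is willing to prove them.
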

\begin{proof}[\textbf{Proof}]
(i) We note that
$P_\mu(\cdot,y)$ is $L_\mu$-harmonic in $\xO$ and
\bal
\lim_{x\in\xO,\;x\rightarrow\xi}\frac{P_\mu(x,y)}{\tilde W(x)}=0\qquad\forall \xi\in \partial\xO\setminus \{y\}\quad\text{and}\quad y\in\partial\xO\setminus\xS.
\eal
Hence, $\frac{P_\mu(x,y)}{P_\mu(x_0,y)}$ is a kernel function with pole at $y$ and
 basis at $x_0$ in the sense of \cite[Definition 2.7]{BGT}. This, together with the fact that any kernel function with pole at $y$ and
 basis at $x_0$ is unique (see \cite[Proposition 7.3]{BGT}), implies \eqref{KP}.

(ii) Let $\xz\in C(\partial\xO)$ with compact support in $\partial\xO\setminus\xS$ such that $\dist(\supp \xz,\xS)=r>0$. Let $Z\in C(\overline{\xO})$ be such that $Z(y)=\xz(y)$ for any $y\in \partial\xO$ and $Z(y)=0$ in $\xS_{\frac{r}{2}}.$  Set $r_0=\frac{1}{4}\min\{\xb_1,r\}$ where $\xb_1$ is the constant in \eqref{cover2}. We consider a decreasing sequence of bounded $C^2$ domains $\{\Sigma_n\}$ such that
\ba\label{Kn} \Sigma \subset \Sigma_{n+1}\subset\overline{\Sigma}_{n+1}\subset \Sigma_{n}\subset\overline{\Sigma}_{n} \subset\xS_{\frac{r_0}{4}}, \quad \cap_n \Sigma_n=\Sigma.
\ea

Let $\phi_*$ be the unique solution of
\be\label{fi} \left\{  \BAL
-L_{\mu }\phi_*&=0 \qquad&&\text{in } \xO\\
\phi_* &=1\qquad&&\text{on } \prt\Omega,
\EAL \right.
\ee
where the boundary condition in \eqref{fi} is understood as
\bal
\lim_{\dist(x,F)\to 0}\frac{\phi_*(x)}{\tilde W(x)}=1 \quad \text{for every compact set } \; F\subset \partial \Omega.
\eal
Then, by \cite[Lemma 6.8 and estimate (6.21)]{BGT}, there exist constants $c_1=c_1(\Omega,\Sigma,\Sigma_n,\mu)$ and $c_2=c_2(\Omega,\Sigma,N,\mu)$ such that
$0<c_1 \leq\phi_*(x)\leq c_2 d_\Sigma(x)^{-\ap}$ for all $x \in \Omega\setminus\xS_n$.
By the standard elliptic theory, $\xf_*\in C^2(\Omega ) \cap C^{1,\xg}(\overline{\xO}\setminus\xS)$ for any $0<\xg<1$.

Now, for any $\eta\in C(\partial\xO),$ we can easily show that $u_\eta$ is a solution of
\bal \left\{  \BAL
-L_{\mu }v&=0 \qquad&&\text{in } \xO\setminus\xS_n\\
v&=\eta\qquad&&\text{on } \prt(\xO\setminus \Sigma_n)
\EAL \right.
\eal
if and only if $w_\eta=\frac{u_\eta}{\xf_*}$ is a solution of
\bal \left\{  \BAL
-\text{div}(\phi_*^2\nabla w)&=0 \qquad&&\text{in }  \xO\setminus\xS_n\\
w&=\frac{\eta}{\phi_*}\qquad&&\text{on } \prt( \xO\setminus\xS_n).
\EAL \right.
\eal
Since the operator $L_{\xf_*}w:=-\text{div}(\phi_*^2\nabla w)$ is uniformly elliptic and has smooth coefficients, we may deduce the existence of
$L_\mu$-harmonic measure $\omega_n^{x}$  on $\partial ( \xO\setminus\xS_n)$ and the Green kernel $G^n_{\mu}$ of $-L_\mu$ in $\Omega \setminus\xS_n$.

Let $v_n$ be the unique solution of

\be\label{harmonicrel1} \left\{  \BAL
-L_{\mu }v&=0 \qquad&&\text{in } \xO\setminus\xS_n\\
v&=Z\tilde W\qquad&&\text{on } \prt(\xO\setminus \Sigma_n).
\EAL \right.
\ee
Then by the representation formula we have
\be \label{wnrep1}
v_n(x)=\int_{\partial( \xO\setminus\xS_n)}Z\tilde W\, \dd\xo_n^x(y)=\int_{\partial\xO\cap\supp \xz}\xz\tilde W\, \dd\xo_n^x(y).
\ee
Proceeding as in the proof of \cite[Proposition 7.7]{BGT}, we may show that
\bal
v_n(x)\rightarrow \int_{\partial\Omega\cap\supp \xz } \zeta(y) \,\dd \omega^{x}_{\xO}(y)=:v(x) \quad \text{as } n \to \infty.
\eal

On the other hand, the Poisson kernel $P_\xm^n$ of $-L_\mu$ in $\Omega \setminus \Sigma_n$ is well defined and is given  by
\bal P_\mu^n(x,y)=-\frac{\partial G_\mu^n}{\partial {\bf n}^n}(x,y), \quad x \in \Omega\setminus \xS_n, \, y \in \partial (\xO\setminus\xS_n),
\eal
where ${\bf n}^n$ is the unit outer normal vector to $\partial(\xO\setminus\xS_n)$. Hence,
\ba \label{wnrep2} v_n(x) = \int_{\partial(\xO\setminus\xS_n)}P_\mu^n(x,y)Z(y)\tilde W(y)\,\dd S_{\partial \Omega}(y)=\int_{\partial\xO\cap\supp \xz }P_\mu^n(x,y)\xz(y)\tilde W(y)\,\dd S_{\partial \Omega}(y),
\ea
where $S_{\partial \Omega}$ is the $(N-1)$-dimensional surface measure on $\partial(\xO\setminus\xS_n)$. Combining all above, we obtain
\be \label{zP}
\int_{\partial \Omega\cap\supp \xz}\zeta(y)\, \dd \omega_n^{x}(y)=\int_{\partial \Omega\cap\supp \xz}P_\mu^n(x,y)\tilde W(y)\zeta(y)\, \dd S_{\partial \Omega}(y).
\ee

Put $\beta=\frac{1}{2}\min\{d_{\partial \Omega}(x), r_0\}$. Since $G^n_{\mu}(x,y)\nearrow G_{\mu}(x,y)$ for any $x\neq y$ and $x,y\in \xO,$  $\{G_{\mu}^n(x,\cdot)\}_n$ is uniformly bounded in $W^{2,\kappa}(\Omega_\beta\setminus\xS_{r_0})$ for any $\kappa>1$. Thus, by the standard  compact Sobolev embedding, there exists a subsequence, still denoted by index $n$, which converges to $G_{\mu}(x,\cdot)$ in $C^1(\overline{\Omega_\beta\setminus\xS_{r_0}})$ as $n \to \infty$. This implies that $P_\mu^n(x,\cdot) \to P_\mu(x,\cdot)$ uniformly on $\partial \Omega\setminus\xS_{r_0}$ as $n \to \infty$.

Therefore, by letting $n \to \infty$ in \eqref{zP}, we obtain
\be \label{zP1} \begin{aligned} \int_{\partial\Omega} \zeta(y) \,\dd \omega^{x}_{\xO}(y)&=\lim_{n\to\infty}\int_{\partial\Omega} \zeta(y) \tilde W(y) \,\dd \omega^{x}_n(y) \\
&=\lim_{n\to\infty}\int_{\partial\Omega} P_{\mu}^n(x,y)\zeta(y)\tilde W(y) \, \dd S_{\partial \Omega}(y)=\int_{\partial\Omega} P_{\mu}(x,y)\zeta(y)\tilde W(y) \,\dd S_{\partial \Omega}(y).
\end{aligned} \ee

By \eqref{zP1} and the fact that
\bal
\inf_{y \in \partial \Omega\setminus\xS_{r}}P_\mu(x_0,y)>0\quad\forall\;r>0,
\eal
we may show that
\be \label{PE} \omega^{x}_{\xO}(E)=\BBP_\mu[\1_E\tilde W](x)
\ee
for any Borel set  $E\subset\overline{E}\subset \partial\xO\setminus\xS$. This implies in particular that $\omega_{\Omega}^{x_0}$ and $S_{\partial \Omega}$ are mutually absolutely continuous with respect to compact subsets of $\partial \Omega \setminus \Sigma$.

Now, assume $0 \leq h \in L^1(\partial \Omega; \dd\omega_{\Omega}^{x_0})$ has compact support in $\partial \Omega \setminus \Sigma$ and $\dist(\supp h, \Sigma)=4r>0$. Then there exists a sequence of nonnegative functions $\{ h_n \} \subset C(\partial \Omega)$ with compact support in $\partial \Omega \setminus \Sigma$ such that $\dist(\supp h_n,\Sigma)=2r>0$ for any $n \in \N$ and $h_n \to h$ in $L^1(\partial \Omega; \dd\omega_{\Omega}^{x_0})$ as $n \to \infty$.

Applying \eqref{zP1} with $\zeta$ replaced by $|h_n-h_m|$ for $m, n \in \N$ and using the fact that $\inf_{y \in \Sigma_r \cap \partial \Omega}(P_\mu(x_0,y) \tilde W(y))>0$, we have
\ba \label{hmhn} \begin{aligned} \int_{\partial \Omega} |h_m(y)-h_n(y)|\, \dd\omega_{\Omega}^{x_0}(y) &=\int_{\partial \Omega} P_{\mu}(x_0,y)|h_n(y)-h_n(y)| \tilde W(y) \, \dd S_{\partial \Omega}(y) \\
&\geq c\int_{\partial \Omega} |h_n(y)-h_n(y)| \, \dd S_{\partial \Omega}(y).
\end{aligned} \ea	
This implies that $\{h_n\}$ is a Cauchy sequence in $L^1(\partial \Omega)$. Therefore, there exists $\tilde h \in L^1(\partial \Omega)$ such that $h_n \to \tilde h$ in $L^1(\partial \Omega)$. Since  $\omega_{\Omega}^{x_0}$ and $S_{\partial \Omega}$ are mutually absolutely continuous with respect to compact subsets of $\partial \Omega \setminus \Sigma$ and $h$ and $\tilde h$ have compact support in $\partial \Omega \setminus \Sigma$, we deduce that $h=\tilde h$ $\omega_{\Omega}^{x_0}$-a.e. and $S_{\partial \Omega}$-a.e. in $\partial \Omega$. In particular, $h \in L^1(\partial \Omega)$.
	
Applying \eqref{zP1} with $\zeta$ replaced by $h_n$, for any $n \in \N$, we have
\ba \label{fP1} \int_{\partial \Omega} h_n(y) \, \dd\omega_{\Omega}^{x_0}(y)=\int_{\partial \Omega} P_{\mu}(x_0,y)h_n(y) \tilde W(y) \,\dd S_{\partial \Omega}(y).
\ea
By letting $n \to \infty$ in \eqref{fP1}, we obtain \eqref{Pformula}.

Next, we assume $h \in L^1(\partial \Omega; \dd\omega_{\Omega}^{x_0})$ and drop the assumption that $h \geq 0$, then we write $h=h_+ - h_-$ where $h_\pm \in L^1(\partial \Omega; \dd\omega_{\Omega}^{x_0})$. By applying \eqref{Pformula} for $h_\pm$, we deduce that \eqref{Pformula} holds true for $h \in L^1(\partial \Omega; \dd\omega_{\Omega}^{x_0})$. Moreover, we can show that $h_\pm \in L^1(\partial \Omega)$, which implies $h \in L^1(\partial \Omega)$.
\end{proof}

\begin{proposition}\label{weaksolution0}
$(i)$ For any $h \in L^1(\partial\xO ;\dd \omega^{x_0}_{\xO})$ with compact support in $\partial \Omega\setminus\xS$, there holds
\be \label{weakfor1}
-\int_\xO \BBK_\mu[h\dd \omega^{x_0}_{\xO}] L_\xm\eta \, \dd x=-\int_{\partial\xO}\frac{\partial \eta}{\partial{\bf n}}(y)h(y)\tilde W(y)\, \dd S_{\partial \Omega}(y),\quad \forall \eta \in \mathbf{X}_\xm(\xO).
\ee

$(ii)$ For any $\xn\in \mathfrak{M}(\prt\xO )$ with compact support in $\partial \Omega\setminus\xS$, there holds
\be \label{weakfor2}
-\int_\xO \BBK_\mu[\nu] L_\xm\eta \, \dd x=-\int_{\partial\xO}\frac{\partial \eta}{\partial{\bf n}}(y)\frac{1}{P_\mu(x_0,y)}\, \dd \nu(y),\quad \forall \eta \in \mathbf{X}_\xm(\xO),
\ee
where $P_\mu(x_0,y)$ is defined in \eqref{Poisson} and ${\bf X}_\mu(\Gw)$ is defined by \eqref{Xmu}.

\end{proposition}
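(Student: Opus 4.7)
The plan is to reduce both parts to a single Poisson-kernel identity on $\partial\xO\setminus\xS$, and then establish that identity via the Green representation of $\eta$ together with the symmetry of $G_\mu$.

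For (i), the first step is to identify $\BBK_\mu[h\dd\omega_\xO^{x_0}]$ with a Poisson-type integral. For each fixed $x\in\xO$, the function $y\mapsto K_\mu(x,y)h(y)$ lies in $L^1(\partial\xO;\dd\omega_\xO^{x_0})$ with the same compact support as $h$; applying \eqref{Pformula} to this product and invoking \eqref{KP} yields
\[
\BBK_\mu[h\dd\omega_\xO^{x_0}](x)=\mathbb{P}_\mu[h\tilde W](x)=\int_{\partial\xO}P_\mu(x,y)h(y)\tilde W(y)\,\dd S_{\partial\xO}(y).
\]
Fubini's theorem, justified by the boundedness of $P_\mu(x,\cdot)$ on $\mathrm{supp}(h)$ (which lies at positive distance from $\xS$) combined with $|L_\mu\eta|\le C\ei$ and $\ei\in L^2(\xO)$, then gives
\[
-\int_\xO\BBK_\mu[h\dd\omega_\xO^{x_0}]L_\mu\eta\,\dd x=-\int_{\partial\xO}\Bigl(\int_\xO P_\mu(x,y)L_\mu\eta(x)\,\dd x\Bigr)h(y)\tilde W(y)\,\dd S_{\partial\xO}(y).
\]

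The heart of the proof is the key identity
\[
\int_\xO P_\mu(x,y)L_\mu\eta(x)\,\dd x=\frac{\partial\eta}{\partial\mathbf{n}}(y),\qquad y\in\partial\xO\setminus\xS. \qquad(\star)
\]
To establish $(\star)$, I would argue as follows. Since $\eta\in\mathbf{X}_\mu(\xO)$, the bound $|L_\mu\eta|\le C\ei$ and the integrability of $\ei$ give $L_\mu\eta\in L^1(\xO;\ei)$; the condition $\ei^{-1}\eta\in H^1(\xO;\ei^2)$ together with $\ei\asymp d_{\partial\xO}$ near $\partial\xO\setminus\xS$ forces, by a Hardy-type inequality, $\eta$ to vanish on $\partial\xO\setminus\xS$ in the trace sense, and in particular $\tr(\eta)=0$. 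Applying \eqref{reprweaksol} of Theorem~\ref{linear-problem} with $\tau=-L_\mu\eta$ and $\nu=0$ then gives
\[
\eta=-\BBG_\mu[L_\mu\eta]\qquad\text{a.e. in }\xO.
\]
Near any point of $\partial\xO\setminus\xS$ the coefficients of $L_\mu$ are smooth and $L_\mu\eta$ is locally bounded, so standard boundary Schauder estimates yield $\eta\in C^{1,\gamma}$ up to the boundary locally with $\eta=0$ classically. Differentiating the Green representation under the integral (using uniform estimates on $\partial_{\mathbf{n}_y}G_\mu(y,\cdot)$ over compacta of $\xO$ away from $y$) and exploiting $G_\mu(x,y)=G_\mu(y,x)$ together with definition \eqref{Poisson}, one obtains
\[
\frac{\partial\eta}{\partial\mathbf{n}}(y)=-\partial_{\mathbf{n}_y}\BBG_\mu[L_\mu\eta](y)=-\int_\xO\partial_{\mathbf{n}_y}G_\mu(y,x)L_\mu\eta(x)\,\dd x=\int_\xO P_\mu(x,y)L_\mu\eta(x)\,\dd x,
\]
which is $(\star)$. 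Substituting $(\star)$ back into the Fubini identity proves \eqref{weakfor1}.

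For part (ii) the same device works: by \eqref{KP}, $\BBK_\mu[\nu](x)=\int_{\partial\xO}P_\mu(x,y)P_\mu(x_0,y)^{-1}\,\dd\nu(y)$, and combining Fubini (whose justification now uses the fact that $\supp\nu$ is at positive distance from $\xS$) with $(\star)$ yields \eqref{weakfor2}. The main obstacle is the rigorous derivation of $(\star)$: verifying that membership in $\mathbf{X}_\mu(\xO)$ delivers the boundary regularity of $\eta$ up to $\partial\xO\setminus\xS$ (both for the existence of $\partial_\mathbf{n}\eta$ and for identifying $\eta$ with $-\BBG_\mu[L_\mu\eta]$ via the uniqueness statement of Theorem~\ref{linear-problem}), and justifying differentiation under the integral in the computation of $\partial_{\mathbf{n}_y}\BBG_\mu[L_\mu\eta](y)$ at boundary points $y\in\partial\xO\setminus\xS$.
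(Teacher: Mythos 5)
Your proposal follows a genuinely different route from the paper. The paper works on the smooth exhausting domains $\Omega\setminus\Sigma_n$ (where $L_\mu$ has bounded coefficients and the Green function $G_\mu^n$ and Poisson kernel $P_\mu^n$ are classical), applies the classical Green's identity of \cite[Proposition 1.3.7]{MVbook} with test functions $\eta_n$ solving the Dirichlet problem $-L_\mu\eta_n=-L_\mu\eta$ in $\Omega\setminus\Sigma_n$, $\eta_n=0$ on $\partial(\Omega\setminus\Sigma_n)$, and then passes to the limit $n\to\infty$; the case of $L^1$ data and of measures is obtained by a further approximation layer. You instead try to prove the single global identity $(\star)$ by writing $\eta=-\BBG_\mu[L_\mu\eta]$ and differentiating under the integral, using the symmetry of $G_\mu$.

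The crucial gap is in your justification of $\eta=-\BBG_\mu[L_\mu\eta]$. Theorem~\ref{linear-problem} asserts uniqueness \emph{within the class of weak solutions of \eqref{NHL}}, so invoking it requires first verifying that $\eta$ is a weak solution of $-L_\mu u=-L_\mu\eta$ with $\tr(u)=0$, i.e.\ that $\int_\Omega \eta\,L_\mu\zeta\,\dd x=\int_\Omega \zeta\,L_\mu\eta\,\dd x$ for all $\zeta\in\mathbf{X}_\mu(\Omega)$. This symmetric Green's identity for two functions in $\mathbf{X}_\mu(\Omega)$, neither of which has compact support and both of which carry the Hardy weight, is itself a nontrivial integration-by-parts statement of difficulty comparable to the proposition you are trying to prove; it is not supplied by \eqref{reprweaksol} (which only gives the formula once one already knows $\eta$ is a weak solution), nor by the trace observation $\tr(\eta)=0$ alone (which, incidentally, you also assert without proof — translating $\ei^{-1}\eta\in H_0^1(\Omega;\ei^2)$ into the vanishing of the dynamic boundary trace of Definition~\ref{nomtrace} near $\Sigma$, where $\tilde W$ and $\ei$ have different blow-up rates, needs an argument). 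A secondary issue, which you flag yourself, is the differentiation of $\BBG_\mu[L_\mu\eta]$ under the integral at a boundary point $y\in\partial\Omega\setminus\Sigma$: although the resulting integral converges absolutely, the interchange of $\partial_{\mathbf n}$ with the integral involves the singularity of $G_\mu(y,\cdot)$ near $y$ and requires a quantitative argument (for instance via the $C^{1,\gamma}$ up-to-boundary regularity of $G_\mu(\cdot,x)$ on $\overline\Omega\setminus(\Sigma\cup\{x\})$ together with dominated convergence of difference quotients). The paper's exhausting-domain approach sidesteps both difficulties at the price of an extra limiting step; to make your global argument rigorous you would need to supply the symmetric Green's identity on $\mathbf{X}_\mu(\Omega)$ as a separate lemma, at which point you would essentially have re-derived the machinery the paper uses.
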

\begin{proof}[\textbf{Proof}] (i) Let $\xz\in C(\partial\xO)$ with compact support in $\partial\xO\setminus\xS$ such that $\dist(\supp \xz,\xS)=r>0$. We consider a function $Z\in C(\overline{\xO})$ such that $Z(y)=\xz(y)$ for any $y\in C(\partial\xO)$ and $Z(y)=0$ in $\xS_{\frac{r}{2}}.$  Set $r_0=\frac{1}{4}\min\{\xb_1,r\}$ where $\xb_1$ is the constant in \eqref{cover2}. Let $\{\Sigma_n\}$ be as in \eqref{Kn}, $\eta\in \mathbf{X}_\xm(\xO)$  and $v_n$ be the solution of \eqref{harmonicrel1}.

In view of the proof of Proposition \ref{measureboundary}, $v_n\in C(\overline{\xO\setminus\xS_n})$ and
\bal
v_n(x)=\int_{\partial\xO} \xz(y) \tilde W(y)\, \dd\xo^{x}_n(y) =\int_{\partial\Omega} P_{\mu}^n(x,y)\tilde W(y)\zeta(y) \, \dd S_{\partial \Omega}(y).
\eal
Put
\bal
v(x)=\int_{\partial\xO} \zeta(y) \, \dd\xo_{\Omega}^{x}(y) \quad \text{and} \quad w(x)=\int_{\partial\xO} |\xz(y)| \, \dd\xo_{\Omega}^{x}(y).
\eal
Then $v_n(x)\rightarrow v(x)$ for any $x\in\xO$ and $|v_n(x)|\leq w(x)$ in $\xO\setminus\xS_n$. By \cite[Proposition 1.3.7]{MVbook},
\be \label{vntildeZ}
-\int_{\xO\setminus\xS_n}v_n L_\xm \tilde Z \, \dd x=-\int_{\partial\xO}\tilde W\xz\frac{\partial \tilde Z}{\partial{\bf n}} \, \dd S_{\partial \Omega},\quad \forall \tilde Z \in C^2_0(\xO \setminus \Sigma_n).
\ee
By approximation, the above equality is valid for any $\tilde Z \in C^{1,\xg}(\overline{\xO\setminus\xS_n}),$ for some  $\xg\in(0,1)$ and $\xD \tilde Z\in L^\infty(\Omega \setminus \Sigma_n)$. Hence, we may choose $\tilde Z=\eta_n$ in \eqref{vntildeZ}, where $\eta_n$ satisfies
\bal \left\{  \BAL
-L_{\mu }\eta_n&=-L_\xm\eta \qquad&&\text{in } \xO\setminus\xS_n\\
\eta_n &=0\qquad&&\text{on } \prt( \xO\setminus\xS_n)
\EAL \right.
\eal
to obtain
\be \label{vneta}
-\int_{\xO\setminus\xS_n} v_n L_\xm \eta \, \dd x=-\int_{\partial\xO}\tilde W\xz\frac{\partial \eta_n}{\partial{\bf n}} \, \dd S_{\partial \Omega}.
\ee
We note that $\eta_n\rightarrow \eta$ locally uniformly in $\xO$ and in $C^1(\overline{\xO\setminus\xS_1})$. Therefore by the dominated convergence theorem, letting $n \to \infty$ in \eqref{vneta}, we obtain
\be
-\int_{\xO} v L_\xm \eta \, \dd x=-\int_{\partial\xO}\tilde W \xz \frac{\partial \eta}{\partial{\bf n}}\, \dd S_{\partial \Omega}.\label{weak1}
\ee

Now, let $h \in L^1(\partial \Omega ;\dd \omega^{x}_{\xO})$ with compact support in $\partial \Omega\setminus\xS$ such that $\dist(\supp h,\xS)=4r>0.$  By \eqref{Pformula} we may construct a sequence $\{h_n\}\subset C(\partial\xO)$ such that $h_n$ has compact support in $\partial \Omega\setminus\xS$ with $\dist(\supp h_n,\xS)>r$ for any $n\in\BBN.$ In addition, the same sequence can be constructed such that $h_n\rightarrow h$ in $L^1(\partial\xO ;\dd \omega^{x_0}_{\xO})$ and in  $L^1(\partial \Omega).$

Set
\bal u_n(x)=\int_{\partial \Omega} K_\mu(x,y)h_n(y)\,\dd \omega^{x_0}_{\xO}(y)= \BBK_\mu[h_n\, \dd \omega_{\Omega}^{x_0}](x), \quad x \in \Omega.
\eal
Since $K_\mu(\cdot,y)\in C^2(\xO),$ by the above equality, we deduce that $u_n \to u$ locally uniformly in $\Omega,$ where
\bal u(x)=\int_{\partial\xO}K_{\mu}(x,y)h(y) \,\dd\omega_{\Omega }^{x_0}(y)= \BBK_\mu[h\, \dd \omega_{\Omega}^{x_0}](x), \quad x \in \Omega.
\eal
By \eqref{weak1} with $v=u_n$ and $\zeta=h_n$, there holds
\be
-\int_{\xO} u_n L_\xm \eta \, \dd x=-\int_{\partial\xO}\tilde W  h_n\frac{\partial \eta}{\partial{\bf n}}\, \dd S_{\partial \Omega}.\label{weak2}
\ee

Now, by \cite[Theorem 9.2]{BGT}, there exists a positive constant $C=C(N,\Omega,\Sigma,\mu, \kappa)$ such that
$\| u_n \|_{L^\kappa(\Omega;\ei)}  \leq C\int_\xO |h_n|\dd\omega_{\Omega }^{x_0}(y)$  for all $n \in \N$ and for any $1<\xk<\min\left\{\frac{N+1}{N-1},\frac{N+\am+1}{N+\am-1}\right\}$.
This in turn implies that $\{u_n\}$ is equi-integrable in $L^1(\Omega;\ei)$. Therefore, by Vitali's convergence theorem,
$u_n \to u$ in $L^1(\Omega;\ei)$. Letting $n\to\infty$ in \eqref{weak2}, we obtain \eqref{weakfor1}.
 \medskip

(ii) Assume $\dist(\supp \xn,\xS)=4r>0$ and let $\{h_n\}$ be a sequence of functions in $C(\partial\xO )$ with compact support in $\partial \Omega\setminus\xS$ such that $\dist(\supp h_n,\xS)\geq r$ and $h_n\rightharpoonup\xn$, i.e.
\be  \label{weakcon}  \int_{\partial\xO}\xz h_n \, \dd S_{\partial \Omega}\rightarrow \int_{\partial\xO}\xz \dd \nu \quad \forall \zeta \in C(\partial\xO).
\ee
In addition we assume that $\|h_n\|_{L^1(\partial\xO)} \leq C\|\xn\|_{\mathfrak{M}(\prt\xO)}$ for every $n \geq 1$, for some positive constant independent of $n.$

Set
\bal u_n(x)=\int_{\partial\xO}K_{\mu}(x,y)\frac{h_n(y)}{\tilde W(y) P_\mu(x_0,y)} \,\dd\omega_{\Omega }^{x_0}(y).
\eal
By \eqref{Pformula} and \eqref{weakcon}, we have
\bal u_n(x)=\int_{\partial \Omega}K_{\mu}(x,y)h_n(y)\,\dd S_{\partial \Omega}(y)
\rightarrow\int_{\partial\xO}K_{\mu}(x,y)\,\dd \nu(y)=:u(x).
\eal
It means $u_n \to u$  a.e. in $\Omega $.

Finally, equality \eqref{weakfor2} can be obtained by proceeding as in the proof of (i) and hence we omit its proof.
\end{proof}

\section{ Semilinear equations with a power absorption nonlinearity} \label{sec:BVP-abs}

\subsection{Keller-Osserman estimates}
\label{sec:KOestimate}
In this section, we prove Keller-Osserman type estimates for nonnegative solutions of equation
\be \tag{$E_+$} \label{eq:power1}
-L_\mu u+|u|^{p-1}u=0\quad  \text{in}\;\; \Omega.
\ee

For $\varepsilon>0$, set
\bal \xO_\xe:=\{x\in\xO:\;d_{\partial \Omega}(x)<\xe\}.
\eal
\begin{lemma} \label{lem:KO-1} Assume $p>1$. Let $u\in C^2(\xO)$ be a nonnegative solution of equation \eqref{eq:power1}. Then there exists a positive constant $C=C(\xO,\Sigma,\xm,p)$ such that
	\be
	0 \leq u(x) \leq Cd_{\partial \Omega}(x)^{-\frac{2}{p-1}},\quad\forall x\in\xO.\label{ko}
	\ee
\end{lemma}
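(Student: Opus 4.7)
The plan is to establish this Keller--Osserman type bound by building a local radial supersolution on a ball of radius comparable to $d_{\partial\Omega}(x_0)$ and invoking a comparison argument. The crucial geometric observation that makes this feasible is that $\Sigma\subset\partial\Omega$, so $d_\Sigma\ge d_{\partial\Omega}$ throughout $\Omega$; consequently on a ball $B(x_0,R)$ with $R=d_{\partial\Omega}(x_0)/2$ one has $d_\Sigma(y)\ge R$ for every $y\in B(x_0,R)$, which means the Hardy potential $\mu/d_\Sigma^2$ is bounded by $|\mu|/R^2$ there. So near $x_0$ the operator $L_\mu$ behaves like the Laplacian plus a bounded zeroth order term, and the classical Keller--Osserman estimate for $\Delta u\le u^p$ should survive after enlarging the barrier's constant to absorb this term.

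Concretely, I would fix $x_0\in\Omega$, set $R=d_{\partial\Omega}(x_0)/2$, $\alpha=2/(p-1)$, and define the radial candidate
$$V(y):=C\,R^{\alpha}\bigl(R^2-|y-x_0|^2\bigr)^{-\alpha},\qquad y\in B(x_0,R),$$
with $C=C(N,p,\mu)>0$ to be chosen. A direct computation, using $\alpha p=\alpha+2$, shows that after multiplication by $(R^2-|y-x_0|^2)^{\alpha+2}$ the inequality $-L_\mu V+V^p\ge 0$ reduces to
$$C^{p}R^{\alpha+2}\ge C R^{\alpha}\bigl[2N\alpha(R^2-|y-x_0|^2)+4\alpha(\alpha+1)|y-x_0|^2\bigr]+C\,|\mu|\,R^{\alpha-2}(R^2-|y-x_0|^2)^2,$$
which, using $|y-x_0|^2\le R^2$ and $d_\Sigma\ge R$, holds as soon as $C^{p-1}\ge 2N\alpha+4\alpha(\alpha+1)+|\mu|$. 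Such a $C$ depends only on $N,p,\mu$.

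The second step is a comparison $u\le V$ in $B(x_0,R)$. Since $V\to+\infty$ on $\partial B(x_0,R)$ while $u\in C^2(\Omega)$ is bounded on compact subsets, any interior maximum $y_0$ of $w:=u-V$ must satisfy $\Delta w(y_0)\le 0$. Subtracting the equation for $V$ from that for $u$ at $y_0$ yields
$$\Delta w(y_0)\ge u(y_0)^p-V(y_0)^p-\frac{\mu}{d_\Sigma^2(y_0)}\,w(y_0).$$
Supposing $w(y_0)>0$, the convexity of $t\mapsto t^p$ on $(0,\infty)$ and the bound $V(y_0)\ge V(x_0)=CR^{-\alpha}$ give $u^p-V^p\ge pV^{p-1}w\ge pC^{p-1}R^{-2}w$; combined with $|\mu|/d_\Sigma^2\le |\mu|/R^2$, the right hand side is at least $(pC^{p-1}-|\mu|)R^{-2}w(y_0)$, which is strictly positive once $C$ is enlarged so that $pC^{p-1}>|\mu|$. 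This contradicts $\Delta w(y_0)\le 0$, so $u\le V$ throughout $B(x_0,R)$. Evaluating at $y=x_0$ gives $u(x_0)\le V(x_0)=CR^{-\alpha}\lesssim d_{\partial\Omega}(x_0)^{-2/(p-1)}$.

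The main obstacle is the comparison step when $\mu>0$: the Hardy potential enters with the ``wrong'' sign from the viewpoint of the standard maximum principle, and without the lower bound $V\ge CR^{-\alpha}$ the usual sub/supersolution argument would fail. The reason the scheme still works is quantitative: the explicit size of the barrier at the minimum produces a $V^{p-1}$ contribution of order $R^{-2}$ that dominates the Hardy term $|\mu|R^{-2}$ once $C$ is taken large, uniformly in $x_0$. This same structure is why one ends up with the sharp exponent $2/(p-1)$ rather than anything worse coming from the Hardy perturbation.
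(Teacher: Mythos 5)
Your proof is correct, but it takes a genuinely different route from the paper's. The paper constructs a single \emph{global} supersolution $W_\varepsilon=C\bigl(1-\eta_{\beta_0}+\eta_{\beta_0}(d_{\partial\Omega}-\varepsilon)^{-\frac{2}{p-1}}\bigr)$ on $\Omega\setminus\overline\Omega_\varepsilon$, and then compares by an \emph{energy} argument: testing $-L_\mu(u-W_\varepsilon)+u^p-W_\varepsilon^p\le 0$ with $(u-W_\varepsilon)^+\in H^1_0(\Omega\setminus\overline\Omega_\varepsilon)$, dropping the nonnegative nonlinear term, and invoking the spectral bound
$\int|\nabla v|^2-\mu\int v^2 d_\Sigma^{-2}\ge\lambda_{\mu,\Sigma}\int v^2$
together with the standing assumption $\lambda_{\mu,\Sigma}>0$ to force $(u-W_\varepsilon)^+\equiv 0$, then $\varepsilon\to 0$. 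You instead build a \emph{local} radial barrier on $B(x_0,d_{\partial\Omega}(x_0)/2)$, observe that $d_\Sigma\ge d_{\partial\Omega}$ makes the Hardy term uniformly bounded on that ball, and close the comparison with a classical \emph{pointwise} maximum-principle argument, using the convexity of $t\mapsto t^p$ to extract a coercive $pV^{p-1}\ge pC^{p-1}R^{-2}$ that dominates the $|\mu|R^{-2}$ coming from the potential. What your approach buys: it is entirely self-contained and does not use the spectral hypothesis $\lambda_{\mu,\Sigma}>0$ at all, and the resulting constant depends only on $N,p,\mu$ rather than on the domain. What the paper's approach buys: it fits the variational framework used throughout the paper (the same energy machinery and the positivity of $\lambda_{\mu,\Sigma}$ reappear many times), and the global $\varepsilon$-regularized barrier avoids the $\delta$-truncation near $\partial B$ that you implicitly need before applying the interior maximum principle. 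One small presentational gap in your write-up: you should spell out that, since $V\to+\infty$ on $\partial B(x_0,R)$ while $u$ is bounded on $\overline{B(x_0,R)}\subset\Omega$, the set $\{w>0\}$ (if nonempty) is compactly contained in the ball, so an interior maximum point $y_0$ with $w(y_0)=\sup w>0$ and $\Delta w(y_0)\le 0$ actually exists; this is the standard but necessary justification for the contradiction.
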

\begin{proof}[\textbf{Proof}]
Let $\beta_0$ be as in Subsection \ref{assumptionK} and  $\eta_{\beta_0} \in C_c^\infty(\R^N)$ such that
\bal 0\leq\eta_{\xb_0}\leq1, \quad \eta_{\xb_0}=1 \text{ in } \overline{\xO}_{\frac{\xb_0}{4}} \quad \text{and} \quad  \supp\eta_{\beta_0} \subset \xO_{\frac{\xb_0}{2}}.
\eal
For some $\varepsilon \in (0,\frac{\xb_0}{16})$, we define
\bal V_\xe:=1-\eta_{\xb_0}+\eta_{\xb_0}(d_{\partial \Omega}-\xe)^{-\frac{2}{p-1}} \quad \text{in }  \Omega \setminus \overline{\xO}_\varepsilon.
\eal
Then $V_\varepsilon \geq 0$ in $ \Omega \setminus \overline{\xO_\epsilon}$. It  can be checked that there exists  $C=C(\xO,\xb_0,\xm,p)>1$ such that the function $W_\xe:=CV_\xe$ satisfies
\be \label{Wep} -L_\xm W_\xe+W_\xe^p = C(-L_\mu V_\varepsilon + C^{p-1}V_\varepsilon^p) \geq 0 \quad \text{in } \xO\setminus \overline{\xO}_\xe.
\ee
Combining \eqref{eq:power1} and \eqref{Wep} yields
\be \label{u-Wep} -L_\mu (u-W_\varepsilon) + u^{p} - W_\varepsilon^p \leq 0 \quad \text{in } \Omega \setminus \overline{\xO}_\varepsilon.
\ee
We see that $(u-W_\xe)^+\in H_0^1(\xO\setminus\overline{\xO}_\xe)$ and  $(u-W_\xe)^+$ has compact support in $\xO\setminus\overline{\xO}_\xe$. By using $(u-W_\varepsilon)^+$ as a test function for \eqref{u-Wep}, we deduce that
\bal
	0 &\geq \int_{\xO\setminus \xO_\xe}|\nabla (u-W_\xe)^+|^2 \dd x - \xm\int_{\xO\setminus \xO_\xe}\frac{[(u-W_\xe)^+]^2}{d_\Sigma^2} \dd x+\int_{\xO\setminus \xO_\xe}(u^{p}-W_\xe^p)(u-W_\varepsilon)^+ \dd x\\
	&\geq \int_{\xO\setminus \xO_\xe}|\nabla (u-W_\xe)^+|^2 \dd x - \xm\int_{\xO\setminus \xO_\xe}\frac{[(u-W_\xe)^+]^2}{d_\Sigma^2} \dd x \geq \xl_{\xm,\Sigma}\int_{\xO\setminus \xO_\xe}| (u-W_\xe)^+|^2 \dd x.
\eal
	This and the assumption $\lambda_{\mu,\Sigma} >0$ imply $(u-W_\xe)^+ = 0$, whence $u\leq W_\xe$ in $\xO\setminus\overline{\xO}_\xe$.  Letting $\xe\to0,$ we obtain the desired result.	
\end{proof}

The following theorem is the main tool in the study of the boundary removable singularities for nonnegative solutions of equation \eqref{eq:power1}. It is required additionally that $\Omega$ is a $C^3$ domain which is needed to apply Proposition \ref{barr}.
\begin{theorem}\label{prop19}
	Let $p>1,$ $F\subset \Sigma$ be a compact subset of $\Sigma$ and $d_F(x)=\dist(x,F)$. We additionally assume that $\xO$ is a $C^3$ bounded domain. If $u\in C^2(\xO)$ is a nonnegative solution of \eqref{eq:power1} satisfying
	\ba\label{as1}
	\lim_{x\in\xO,\;x\rightarrow\xi}\frac{u(x)}{\tilde W(x)}=0\qquad\forall \xi\in \partial\xO\setminus F,\quad  \text{locally uniformly in } \partial\xO\setminus F,
	\ea
	then there exists a positive constant $C=C(N,\Omega,\Sigma,\mu,p)$ such that
	\ba \label{3.4.24}
	u(x)&\leq Cd_{\partial \Omega}(x)d_\Sigma(x)^{-\am}d_F(x)^{-\frac{2}{p-1}+\am-1}\qquad\forall x\in \xO, \\ \label{3.4.24*}
	|\nabla u(x)|&\leq Cd_\Sigma(x)^{-\am} d_F(x)^{-\frac{2}{p-1}+\am-1}\qquad\forall x\in \xO.
	\ea
\end{theorem}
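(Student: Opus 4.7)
The approach is to derive \eqref{3.4.24} by comparing $u$ with the barrier $V$ constructed in Proposition~\ref{barr} of Appendix~\ref{app:A}. That barrier is designed so that $-L_\mu V + V^p \geq 0$ in $\Omega$ and satisfies the pointwise asymptotics
\[
V(x) \approx d_{\partial \Omega}(x)\, d_\Sigma(x)^{-\am}\, d_F(x)^{-\frac{2}{p-1}+\am-1}, \qquad x \in \Omega,
\]
matching the right-hand side of \eqref{3.4.24}. Thus it suffices to prove the majorisation $u \leq CV$ for a sufficiently large constant $C>0$.

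I would combine the a priori Keller-Osserman estimate $u(x) \lesssim d_{\partial \Omega}(x)^{-\frac{2}{p-1}}$ of Lemma~\ref{lem:KO-1} with the boundary vanishing \eqref{as1}. Fix $\varepsilon>0$ and consider the auxiliary function $V_{\varepsilon,\delta} := CV + \varepsilon \phi_* + \delta \Psi$, where $\phi_*$ is the bounded $L_\mu$-harmonic function from \eqref{fi} (so that $\phi_* \approx \tilde W$ in $\Omega$), and $\Psi$ is a local supersolution supported near $F$ diverging fast enough there to absorb the a priori bound. Then: \emph{(i)} near $F$, the blow-up of $\delta\Psi$ dominates $u$ via Lemma~\ref{lem:KO-1}, giving $u \leq V_{\varepsilon,\delta}$ on a small neighbourhood of $F$; \emph{(ii)} on the rest of $\partial\Omega\setminus F$, the hypothesis \eqref{as1} combined with $\phi_*\approx \tilde W$ yields $u \leq \tfrac{\varepsilon}{2}\phi_* \leq V_{\varepsilon,\delta}$ in a one-sided neighbourhood, locally uniformly; \emph{(iii)} the weak maximum principle for $-L_\mu$---coercive on $H^1_0$ of $C^2$ subdomains thanks to the hypothesis $\lambda_{\mu,\Sigma}>0$ in \eqref{assumpt}, applied to $(u-V_{\varepsilon,\delta})^+$ on a $C^2$ exhaustion $\{\Omega_n\}$ of $\Omega$---propagates the inequality throughout $\Omega_n$ and hence, as $n\to\infty$, throughout $\Omega$. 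Sending $\delta\to 0$ first and then $\varepsilon\to 0$ produces $u \leq CV$, which is \eqref{3.4.24}.

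For the gradient estimate \eqref{3.4.24*}, I would invoke standard interior elliptic regularity. Since $F\subset\Sigma\subset\partial\Omega$ implies $d_{\partial \Omega}(x_0)\leq d_\Sigma(x_0)\leq d_F(x_0)$, the ball $B(x_0,\rho)$ with $\rho := \tfrac{1}{4} d_{\partial \Omega}(x_0)$ is contained in $\Omega$ and, on it, both $d_\Sigma$ and $d_F$ remain comparable to their values at $x_0$. Rescaling $B(x_0,\rho)$ to the unit ball turns equation \eqref{eq:power-a} into a uniformly elliptic semilinear equation with coefficients bounded in terms of $\rho^2 d_\Sigma(x_0)^{-2}\leq 1$ and $\rho^2 \sup u^{p-1}$, both controlled. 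A standard interior $C^1$ estimate then gives $|\nabla u(x_0)| \lesssim \rho^{-1}\sup_{B(x_0,\rho)}u$, and inserting \eqref{3.4.24} yields \eqref{3.4.24*}.

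\textbf{Main obstacle.} The principal difficulty lies in step \emph{(ii)}: the hypothesis \eqref{as1} is a vanishing statement relative to $\tilde W$, which near $\Sigma\setminus F$ grows faster than $V$ (one has $\tilde W \approx d_{\partial \Omega}d_\Sigma^{-\ap}$ while $V\approx d_{\partial \Omega}d_\Sigma^{-\am}$ there, and $\ap\geq \am$). Hence $u=o(\tilde W)$ is strictly weaker than the desired $u\lesssim V$. The role of the perturbation $\varepsilon \phi_*$ is precisely to bridge this gap: it is an $L_\mu$-harmonic majorant of $\tilde W$ that absorbs the boundary behaviour of $u$ on all of $\partial\Omega\setminus F$, after which the nonlinear maximum principle can exploit the supersolution property of $V$ to force the strict domination $u\leq CV$ throughout $\Omega$ in the limits $\delta\to 0$, $\varepsilon\to 0$.
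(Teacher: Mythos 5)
Your plan for the gradient estimate \eqref{3.4.24*} (rescaling the ball $B(x_0,\tfrac{1}{4}d_{\partial\Omega}(x_0))$, applying interior $C^1$ elliptic estimates, and inserting \eqref{3.4.24}) is correct and is essentially the paper's argument. However, your plan for \eqref{3.4.24} misreads Proposition~\ref{barr} in a way that creates a genuine gap.

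The barrier $w_{R,z}$ of Proposition~\ref{barr} is not global on $\Omega$: it is defined only on the half-ball $\mathscr{B}(z,R)$ near a fixed boundary point $z$, it blows up on $\Omega\cap\partial\mathscr{B}(z,R)$ because of the factor $(R^2-d_z^2)^{-b}$, and most importantly its $d_\Sigma$-asymptotics are $d_\Sigma^{-\gamma}$ with $\gamma\in(\am,\ap)$, which is \emph{strictly worse} than the target rate $d_\Sigma^{-\am}$. No factor of $d_F$ appears in its construction at all. So the function $V$ you propose — a global supersolution satisfying $V\approx d_{\partial\Omega}d_\Sigma^{-\am}d_F^{-\frac{2}{p-1}+\am-1}$ — is not provided by the paper and is not easily constructed: a supersolution of $-L_\mu V + V^p\geq 0$ on all of $\Omega$ would have to carry the sharp $d_\Sigma^{-\am}$ rate near $\Sigma$ and the sharp $d_F$-rate near $F$ simultaneously, and the linear part of $L_\mu$ acting on $d_{\partial\Omega}d_\Sigma^{-\am}$ vanishes to leading order, leaving lower-order error terms that the Keller--Osserman term $V^p$ cannot obviously absorb far from $F$. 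The $-\am$ rate in \eqref{3.4.24} is a genuine \emph{gain} over the $-\ap$ rate implied by the vanishing hypothesis \eqref{as1} (recall $\tilde W \approx d_{\partial\Omega}d_\Sigma^{-\ap}$), and in the paper this gain comes not from a barrier but from the local boundary estimate for nonnegative $L_\mu$-subharmonic functions with vanishing trace ([BGT, Lemma~6.2]). The barrier's role in the paper's proof is purely auxiliary: after rescaling by the scale $d_F(\xi)$, it propagates a crude local bound from a ball of radius $R_0$ to a smaller ball of radius $r_0/16$ with a constant that is \emph{uniform} in the scaling, which then controls the integral \eqref{201} appearing in the constant of the [BGT]-estimate.

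Concretely, what is missing from your argument is (i) the dilation $y\mapsto d_{F,\xi}^{-1}y$ with $d_{F,\xi}=\tfrac{1}{16}d_F(\xi)$, which is what makes the $d_F^{-\frac{2}{p-1}+\am-1}$ factor appear and makes the barrier parameters scale-invariant, and (ii) the use of the $L_\mu$-subharmonic local estimate to produce the $d_\Sigma^{-\am}$ rate. Relatedly, the additive term $\varepsilon\phi_*$ with $\phi_*\approx\tilde W$ cannot absorb $u$ uniformly near $F$: the vanishing $u=o(\tilde W)$ in \eqref{as1} is locally uniform on $\partial\Omega\setminus F$ but degenerates as the compact set approaches $F$, and the Keller--Osserman bound from Lemma~\ref{lem:KO-1} blows up like $d_{\partial\Omega}^{-\frac{2}{p-1}}$ near \emph{all} of $\partial\Omega$, not only near $F$, so a correction $\delta\Psi$ ``supported near $F$'' cannot dominate $u$ on $\partial\Omega_n$ for the exhaustion argument. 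The comparison-principle step therefore does not close.
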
  
\begin{proof}[\textbf{Proof}]
	The proof is in the spirit of \cite[Proposition 3.4.3]{MVbook}. Let $\xb_2$ be the positive constant defined in Proposition \ref{barr}. Let $\xi\in (\Sigma_{\xb_2}\cap\partial\xO)\setminus F$
	and put $d_{F,\xi} = \frac{1}{16} d_F(\xi)<1$. Denote
	\bal \xO^\xi:= \frac{1}{d_{F,\xi}}\Omega = \{y\in\mathbb{R}^N:\;d_{F,\xi}\,y\in\xO\}\quad\text{and}\quad \Sigma^\xi:=\frac{1}{d_{F,\xi}}\Sigma =\{y\in\mathbb{R}^N:\;d_{F,\xi}\,y\in \Sigma\}.
	\eal
	If $u$ is a nonnegative solution of \eqref{eq:power1} in $\xO$ then the function
	\bal
	u^\xi (y): = d_{F,\xi}^{\frac{2}{p-1}}
	u(d_{F,\xi}y),\quad y\in\xO^\xi,
	\eal
	is a nonnegative solution of
	\be \label{eq:uxi} -\xD u^\xi- \frac{\mu}{|\mathrm{dist}(y,\Sigma^\xi)|^2} u^\xi +\left(u^\xi\right)^p=0
	\ee
	in $\Omega^\xi.$

Now we note that $u^\xi$ is a nonnegative $L_\xm$ subharmonic function and  satisfies (by \eqref{as1})
\bal \lim_{y\in\xO^\xi,\;y\rightarrow P}\frac{u^\xi(y)}{\tilde W^\xi(y)}=0\qquad\forall P\in B\left(\frac{1}{d_{F,\xi}}\xi,2\right) \cap \partial\xO^\xi,
\eal
where
\bal \tilde W^\xi(y) = 1-\eta_{\frac{\beta_0}{d_{F, \xi}}}+\eta_{\frac{\beta_0}{d_{F, \xi}}}W^\xi(y) \quad \text{in } \Omega^\xi \setminus \Sigma^\xi,
\eal
and
\bal
W^\xi(y)=\left\{ \BAL &(d_{\partial \xO^\xi}(y)+d_{\Sigma^\xi}(y))^{2}d_{\Sigma^\xi}(y)^{-\ap}\qquad&&\text{if}\;\mu <H^2, \\
&(d_{\partial \xO^\xi}(y)+d_{\Sigma^\xi}(y))^{2}d_{\Sigma^\xi}(y)^{-H}|\ln d_{\Sigma^\xi}(y)|\qquad&&\text{if}\;\mu =H^2,
\EAL \right. \quad x \in \Omega^\xi \setminus \Sigma^\xi.
\eal

Set $R_0=\min\{\xb_2,1\}$. In view of the proof of \cite[Lemma 6.2 and estimate (6.7)]{BGT}, there exists a positive constant $c$ depending on $\xO,\xS,\xm$ and
\ba\label{201}
\int_{B\left(\frac{1}{d_F(\xi)}\xi,2R_0\right)\cap\xO^\xi} u^\xi(y)d_{\partial \xO^\xi}(y)d_{\xS^\xi}(y)^{-\am} \dd y
\ea
such that
\be \label{sim22}
	u^\xi(y) \leq c\,d_{\partial \xO^\xi}(y)d_{\xS^\xi}(y)^{-\am}\quad \forall y\in B\left(\frac{1}{d_F(\xi)}\xi,R_0\right)\cap\xO^\xi .
	\ee

Let $r_0=\frac{R_0}{16}$ and let $w_{r_0,\xi}$ be the supersolution of \eqref{eq:uxi} in $\mathscr{B}(\frac{1}{d_{F,\xi}}\xi,r_0)\cap\xO^\xi$ constructed in Proposition \ref{barr} with $R=r_0$ and $z=\frac{1}{d_{F,\xi}}\xi$.

Taking into account of \eqref{sim22} and using a similar  argument as in the proof of Lemma \ref{lem:KO-1}, we can show that
	\bal
	u^\xi(y) \leq w_{r_0,\xi}(y)\quad \forall y\in \mathscr{B}\left(\frac{1}{d_{F,\xi}}\xi,r_0 \right)\cap\xO^\xi.
	\eal
By \eqref{201}, \eqref{sim22} and the above inequality, we may deduce that
\be \label{sim}
	u^\xi(y) \leq c\,d_{\partial \xO^\xi}(y)d_{\xS^\xi}(y)^{-\am}\quad \forall y\in B\left(\frac{1}{d_F(\xi)}\xi,\frac{r_0}{16}\right)\cap\xO^\xi,
	\ee
where $c$ depends only on $\xO,\xS,\mu,p,$ the $C^3$ characteristic of $\Omega^\xi$ and the $C^2$ characteristic of $\Sigma^\xi$. As $d_{F, \xi} \leq 1$ the $C^3$ characteristic of $\xO$ (respectively the $C^2$ characteristic of $\Sigma$) is also a $C^3$ characteristic of $\xO^\xi$ (respectively a $C^2$ characteristic of $\Sigma^\xi$), therefore this constant $c$ can be taken to be independent of $\xi$. Thus, for any $\xi \in (\Sigma_{\xb_2}\cap\partial\xO)\setminus F$ such that $d_{F}(x)\leq16$, there holds
	\be \label{sim2}
	u(x)\leq c\,d_{\partial \Omega}(x)d_\Sigma(x)^{-\am}d_{F}(\xi)^{-\frac{2}{p-1}+\am-1}\quad\forall x\in B\left(\xi,r_1d_F(\xi)\right)\cap \xO,
	\ee
where $r_1=\frac{r_0}{16^2}.$

Now, we consider three cases.

\medskip
	
\noindent	\textbf{Case 1:} $x\in \Sigma_{\frac{r_1}{32}}\cap\xO$ and $ d_F(x) < 1$.
	If $d_{\partial \Omega}(x)\leq \frac{r_1}{8+r_1}d_F(x)$ then there exits a unique point in  $\xi\in\partial\xO\setminus F$ such that $|x-\xi|=d_{\partial \Omega}(x).$ Hence,
	\be \label{sim3a}
	d_F(\xi)\leq d_{\partial \Omega}(x)+d_F(x) \leq 2\frac{4+r_1}{8+r_1}d_F(x)<16,
	\ee
$d_F(x)\leq \frac{8+r_1}{8}d_{F}(\xi)$ and $d_{\partial \Omega}(x)\leq \frac{r_1}{8}d_{F}(\xi).$ This, combined with \eqref{sim2}, \eqref{sim3a} and the fact that $d_F(x)\approx d_{F}(\xi)$, yields
	\bal
	u(x)\leq C d_\Sigma(x)^{-\am}d_{F}(\xi)^{-\frac{2}{p-1}+\am-1} \leq Cd_\Sigma(x)^{-\am}d_F(x)^{-\frac{2}{p-1}+\am-1}.
	\eal

	If   $d_{\partial \Omega}(x)> \frac{r_1}{8+r_1}d_F(x)\geq\frac{r_1}{8+r_1}d_\xS(x) $ then by \eqref{ko} and the fact that $d_{\partial \Omega}(x)\approx d_F(x)\approx d_\xS(x)$, we obtain
	\bal u(x) \leq Cd_{\partial \Omega}(x)^{-\frac{2}{p-1}} \leq Cd_{\partial \Omega}(x)d_\Sigma(x)^{-\am}d_F(x)^{-\frac{2}{p-1}+\am-1}.
	\eal
	Thus (\ref{3.4.24}) holds for every $x\in \Sigma_{\frac{r_1}{4}}$ such that $d_F(x) <1$.
	
\medskip
	
\noindent	\textbf{Case 2:} $x\in \Sigma_{\frac{r_1}{32}}\cap\xO$ and $d_F(x) \geq 1$.
	Let $\xi$ be the unique point in $\partial\xO\setminus F$ such that $|x-\xi|=d_{\partial \Omega}(x)$. Since $u$ is an $L_\mu$-subharmonic function in $B(\xi,r_1) \cap \Omega$, in view of the proof of \eqref{sim}, we obtain
	\bal
	u(x)\leq Cd_{\partial \Omega}(x) d_\Sigma(x)^{-\am} \leq  Cd_{\partial \Omega}(x)d_\Sigma(x)^{-\am} d_F(x)^{-\frac{2}{p-1}+\am-1}\qquad\forall x\in B\left(\xi,\frac{\xb_0}{2}\right)\cap \Omega ,
	\eal
where $C$ depend only on $\xO,\xS,\xm,p.$

\medskip	

\noindent \textbf{Case 3:} $x \in \Omega \setminus \Sigma_{\frac{r_1}{32}}$. The proof is similar to the one of \cite[Proposition A.3]{GkV} and we omit it.

(ii) Let $x_0\in\xO$. Put $\ell=d_{\partial \Omega}(x_0)$ and
	\bal \Omega^{\ell}:=\frac{1}{\ell}\Omega = \{y\in\mathbb{R}^N:\;\ell y\in\xO\}\quad\text{and}\quad\xS^{\ell}:=\frac{1}{\ell}\xS = \{y\in\mathbb{R}^N:\;\ell y\in\xS\} .
	\eal
	If $x\in B(x_0,\frac{\ell}{2})$ then $y=\ell^{-1}x$ belongs to $B(y_0,\frac{1}{2}),$ where $y_0=\ell^{-1}x_0$. Also we have that
	$\frac{1}{2}\leq d_{\Omega^\ell}(y)\leq \frac{3}{2}$ and $\frac{1}{2}\leq d_{\xS^\ell}(y)$ for each $y\in B(y_0,\frac{1}{2})$. Set $v(y)=u(\ell y)$ for $y\in B(y_0,\frac{1}{2})$ then $v$ satisfies
	\bal
	-\xD v- \frac{\xm}{d_{\xS^\ell}^2} v + \ell^2 \left|v\right|^p=0\qquad\mathrm{in}\;\;B(y_0,\frac{1}{2}).
	\eal
	By the standard elliptic estimates and  \eqref{ko} we have
	\bal
	\sup_{y\in B(y_0,\frac{1}{4})}|\nabla v(y)|\leq C\sup_{y\in B(y_0,\frac{1}{3})}|v(y)|\leq C v(y_0),
	\eal
This, together with the equality $\nabla v(y)=\ell \nabla u(x)$, estimate \eqref{3.4.24}
 implies	
	\bal
	|\nabla u(x_0)|&\leq C\ell^{-1} d_{\partial \Omega}(x_0) d_\Sigma^{-\am}(x_0)d_F(x_0)^{-\frac{2}{p-1}+\am-1}\\
	&\leq C d_\Sigma(x_0)^{-\am}d_F(x_0)^{-\frac{2}{p-1}+\am-1}.
	\eal
Therefore estimate \eqref{3.4.24*} follows since $x_0$ is an arbitrary point. The proof is complete.
\end{proof}

\subsection{Removable boundary singularities} \label{sec:removable}
This subsection is devoted to the study of removable boundary singularities for nonnegative solutions of equation \eqref{eq:power-a} in the supercritical range.
More precisely we will prove Theorems \ref{remov-1} and \ref{remove-2}.
\begin{proof}[\textbf{Proof of Theorem \ref{remov-1}}]
We will only consider the case $\xm<H^2$ and $p=\frac{\ap+1}{\ap-1}$, since the proof in the other cases is
very similar. Let $u$ be a nonnegative solution of \eqref{eq:power-a} satisfying \eqref{as1intro}. By \eqref{3.4.24} with $F=\xS$, there holds
\ba\label{fragma1}
u(x)&\leq Cd_{\partial \Omega}(x)d_\Sigma(x)^{-\ap}\qquad\forall x\in \xO,
\ea
for some constant $C$ independent of $u.$

Let $\{\xO_n\}$ be a $C^2$ exhaustion of $\xO$ and we write
\bal
\BBG_\xm^{\xO_n}[u^\frac{\ap+1}{\ap-1}](x)=\int_{\xO_n}G^{\xO_n}_\xm(x,y)u(y)^\frac{\ap+1}{\ap-1}\;\dd y.
\eal

By the representation formula in $\xO_n,$ we have that
\ba\label{102}
u(x_0)+\BBG_\xm^{\xO_n}[u^\frac{\ap+1}{\ap-1}](x_0)=\int_{\partial\xO_n}u(y)\, \dd \xo^{x_0}_{\xO_n}(y).
\ea
By \eqref{fragma1}, Proposition \ref{22222} and the definition of $\tilde W,$ we have
\ba\label{103}
\limsup_{n\to\infty}\int_{\partial\xO_n}u(y)\, \dd \xo^{x_0}_{\xO_n}(y)\leq C\limsup_{n\to\infty}\int_{\partial\xO_n}\tilde W(y)\, \dd \xo^{x_0}_{\xO_n}(y)=C\xo^{x_0}_{\xO}(\partial\xO).
\ea
Since $G^{\xO_n}_\xm(x,y)\uparrow G_\mu(x,y)$ for $x,y\in\xO, x \neq y,$ \eqref{102} and \eqref{103} yield
\bal
\BBG_\xm[u^\frac{\ap+1}{\ap-1}](x_0)\leq C\xo^{x_0}_{\xO}(\partial\xO).
\eal
Hence there exists a positive constant $C_0$ independent of $u$ such that
\ba\label{fragma2}
\int_\xO u(y)^\frac{\ap+1}{\ap-1}\ei(y)\;\dd y\leq C_0.
\ea

Consequently, the function $v=u+\BBG_\xm[u^\frac{\ap+1}{\ap-1}]$ is a nonnegative $L_\xm$-harmonic in $\xO.$ This, together with Theorem \ref{th:Rep}, implies the existence of a  measure $\nu \in \GTM^+(\partial \Omega )$ such that
\ba \label{uGKa}
u+\BBG_\xm[u^\frac{\ap+1}{\ap-1}]=\BBK_\xm[\xn] \quad\text{in}\;\xO.
\ea
By Proposition \ref{22222} and \eqref{fragma1}, we may deduce that $\xn$ has compact support in $\xS.$

Next we will show that $\xn\equiv 0$. Suppose by contradiction that $\xn\not\equiv0$. Let $1<M\in\mathbb{N}$ and $v_{M,n}$ be the positive solution of
	\be\label{sub} \left\{ \BAL
	-L_{\xm }^{\xO_n}v_{M,n} + v_{M,n}^\frac{\ap+1}{\ap-1}&=0\qquad&&\text{in } \xO_n\\
	v_{M,n}&=M u\qquad&&\text{on } \prt \xO_n.
	\EAL \right.
	\ee

Since $M u$ is a supersolution of the above problem, we have that $0\leq v_{M,n}\leq Mu$ in $\xO_n$ for any $n\in\BBN.$ As a result, there exists $v_M\in C^2(\xO)$ such that $v_{M,n}\to v_M$ locally uniformly in $\xO$ and $L_\xm v_M+ v_{M}^\frac{\ap+1}{\ap-1}=0$ in $\xO.$ Since $v_M\leq Mu$, by\eqref{3.4.24} with $F=\xS$, there holds
\ba\label{fragma3}
v_M(x)&\leq Cd_{\partial \Omega}(x)d_\Sigma(x)^{-\ap}\qquad\forall x\in \xO,
\ea
for some constant $C$ independent of $v_M,$ which in turn implies that
\ba\label{fragma4}
\int_\xO v_M(y)^\frac{\ap+1}{\ap-1}\ei(y)\;\dd y\leq C_0,
\ea
for some constant $C_0$ independent of $v_M.$ Also, by the representation formula we have
\ba\label{104}
v_{M,n}(x)+\BBG_\xm^{\xO_n}[v_{M,n}^\frac{\ap+1}{\ap-1}](x)=M\int_{\partial\xO_n}u(y)\, \dd \xo^{x}_{\xO_n}(y), \quad \forall x \in \Omega.
\ea
From \eqref{uGKa}, we have
\bal
\int_{\partial\xO_n}u(y)\, \dd \xo^{x}_{\xO_n}(y)&=\int_{\partial\xO_n}(\BBK_\xm[\xn](y)-\BBG_\xm[u^\frac{\ap+1}{\ap-1}](y))\, \dd \xo^{x}_{\xO_n}(y)\\
&=\BBK_\xm[\xn](x)-\int_{\partial\xO_n}\BBG_\xm[u^\frac{\ap+1}{\ap-1}](y)\, \dd \xo^{x}_{\xO_n}(y).
\eal
By Proposition \ref{traceKG} (ii), we find
\bal
\lim_{n \to \infty}\int_{\partial\xO_n}\BBG_\xm[u^\frac{\ap+1}{\ap-1}](y)\, \dd \xo^{x}_{\xO_n}(y) = 0.
\eal
Hence, letting $n\to \infty$ in \eqref{104}, we obtain
\bal
v_M+\BBG_\xm[v_M^\frac{\ap+1}{\ap-1}]=M\BBK_\xm[\xn]\quad\text{in}\;\xO, \quad\forall M>0.
\eal
Hence, we can easily see that the above display contradicts with \eqref{fragma3} and \eqref{fragma4}.
\end{proof}

Next we turn to

\begin{proof}[\textbf{Proof of Theorem \ref{remove-2}}]
Without loss of generality, we may assume that $z=0.$ Let $\xz:\mathbb{R}\to[0,\infty)$ be a smooth function such that $0\leq \xz\leq 1,$ $\xz(t)=0$ for $|t|\leq 1$ and $\xz(t)=1$ for $|t|>2$. For $\varepsilon>0$, we set $\xz_\xe(x)=\xz(\frac{|x|}{\xe}).$ Since $u\in C^2(\xO),$ there holds
	\bal L_\mu (\xz_\xe u)=u\xD\xz_{\xe}+\xz_\xe u^p + 2\nabla\xz_\xe \cdot \nabla u \quad\text{in }\;\xO .
	\eal
By \eqref{3.4.24}, \eqref{3.4.24*}, \eqref{eigenfunctionestimates}, the estimate $\int_{\Sigma_{\beta}}d_{\Sigma}(x)^{-\alpha}\dd x \lesssim \beta^{N-\alpha}$ for $\alpha<N-k$, and the assumption $p\geq\frac{N-\am+1}{N-\am-1},$ we have
\be \label{Lzetau1} \begin{aligned}
&\int_\Omega \xz_\xe u^p \ei \, \dd x \lesssim \xe^{-\frac{2p}{p-1}+(\am- 1)p}\int_{\xO\cap\{|x|>\xe\}}d_\xS(x)^{(p+1)(1-\am)} \, \dd x \lesssim \varepsilon^{-\frac{2p}{p-1}+(\am -1)p}, \\
&\int_\Omega u|\Delta\xz_{\xe}| \ei \, \dd x \leq \xe^{-\frac{2}{p-1}+\am-3}\int_{\xO\cap\{\xe<|x|<2\xe\}} d_\xS(x)^{2-2\am} \, \dd x \lesssim \varepsilon^{N-\frac{2}{p-1}-\am-1} \lesssim 1, \\
&\int_\Omega |\nabla\xz_\xe||\nabla u|\ei \dd x \lesssim \xe^{-\frac{2}{p-1}+\am-2}\int_{\xO\cap\{\xe<|x|<2\xe\}} d_\xS(x)^{-2\am+1} \, \dd x \lesssim \varepsilon^{N-\frac{2}{p-1}-\am-1} \lesssim 1.
\end{aligned} \ee

Estimates \eqref{Lzetau1} imply that $L_\mu(\zeta_{\varepsilon}u) \in L^1(\Omega;\ei)$. By \cite[Lemma 8.5]{BGT}, we have
	\bal
-\int_{\xO} \xz_\xe u L_\xm\eta \,\dd x=-\int_{\xO} \left(u\xD\xz_{\xe}+\xz_\xe u^p + 2\nabla\xz_\xe \cdot \nabla u\right)\eta \,\dd x,\quad\forall \eta \in \mathbf{X}_\xm(\xO).
\eal
	Taking $\eta=\ei,$ we obtain
\bal
\xl_{\xm,\Sigma}\int_{\xO} \xz_\xe u \ei \, \dd x+\int_{\xO} \xz_\xe u^p\ei \, \dd x=-\int_{\xO} \left(u\xD\xz_\xe+2\nabla\xz_\xe \cdot \nabla u\right)\ei \, \dd x.
\eal
By the last two lines in \eqref{Lzetau1}, we have
	\bal
	\xl_{\xm,\Sigma}\int_{\xO} \xz_\xe u\ei \,\dd x+\int_{\xO} \xz_\xe u^p\ei \,\dd x\leq  C\varepsilon^{N-\frac{2}{p-1}-\am-1}.\label{lpfragma}
	\eal
By letting $\xe\to 0$ and Fatou's lemma, we deduce that
	\bal
	\xl_{\xm,\Sigma}\int_{\xO}  u\ei \,\dd x+\int_{\xO}  u^p\ei \,\dd x\lesssim\left\{
\BAL
&0\quad\text{if}\;\;p>\frac{N+\am+1}{N-\am-1},\\
&1\quad\text{if}\;\;p=\frac{N+\am+1}{N-\am-1}.
\EAL\right.
	\eal
This implies that $u\equiv0$ if $p>\frac{N-\am+1}{N-\am-1}$ or  $u\in L^p(\xO;\ei)$ if $p=\frac{N-\am+1}{N-\am-1}.$

The rest of the proof can proceed as in the proof of Theorem \ref{remov-1} and we omit it.
\end{proof}

\subsection{Existence of solutions in the supercritical range} \label{sec:goodmeasure}

In this subsection we discuss the existence of solutions for the following problem
\be\label{mainproblempower} \left\{ \BAL -L_\mu u + \abs{ u}^{p-1}u  &= 0 \quad \text{in } \Gw ,\\
\tr(u) &= \xn,
\EAL \right. \ee
where $p>1$ and $\nu \in \GTM(\partial \Omega )$. We will focus on the supercritical case $p\geq\min\{\frac{N+1}{N-1},\frac{N-\am+1}{N-\am-1}\}$. In particular, we will give various sufficient conditions for the existence of solutions to \eqref{mainproblempower}.

In this direction, we need to recall some notations concerning Besov space (see, e.g., \cite{Ad, Stein}). For $\gs>0$, $1\leq \kappa<\infty$, we denote by $W^{\gs,\kappa}(\BBR^d)$ the Sobolev space over $\BBR^d$. If $\gs$ is not an integer the Besov space $B^{\gs,\kappa}(\BBR^d)$ coincides with $W^{\gs,\kappa}(\BBR^d)$. When $\gs$ is an integer we denote $\Gd_{x,y}f:=f(x+y)+f(x-y)-2f(x)$ and
\bal B^{1,\kappa}(\BBR^d):=\left\{f\in L^\kappa(\BBR^d): \myfrac{\Gd_{x,y}f}{|y|^{1+\frac{d}{\kappa}}}\in L^\kappa(\BBR^d\times \BBR^d)\right\},
\eal
with norm
\bal \|f\|_{B^{1,\kappa}}:=\left(\|f\|^\kappa_{L^\kappa}+\int_{\BBR^d} \int_{\BBR^d}\frac{|\Gd_{x,y}f|^\kappa}{|y|^{\kappa+d}}\,\dd x \, \dd y\right)^{\frac{1}{\kappa}}.
\eal
Then
\bal B^{m,\kappa}(\BBR^d):=\left\{f\in W^{m-1,\kappa}(\BBR^d): D_x^\ga f\in B^{1,\kappa}(\BBR^d)\;\forall\ga\in \BBN^d \text{ such that } |\ga|=m-1\right\},
\eal
with norm
\bal \|f\|_{B^{m,\kappa}}:=\left(\|f\|^\kappa_{W^{m-1,\kappa}}+\sum_{|\ga|=m-1}\int_{\BBR^d} \int_{\BBR^d}\frac{|D_x^\ga\Gd_{x,y}f|^\kappa}{|y|^{\kappa+d}}\,\dd x \, \dd y\right)^{\frac{1}{\kappa}}.
\eal
These spaces are fundamental because they are stable under the real interpolation method  developed by Lions and Petree.

It is well known that if $1<\kappa<\infty$ and $\ga>0$, $L_{\ga,\kappa}(\BBR^d)=W^{\ga,\kappa}(\BBR^d)$ if $\ga\in\BBN.$ If $\ga\notin\BBN$ then the positive cone of their dual coincide, i.e. $(L_{-\ga,\kappa'}(\BBR^d))^+=(B^{-\ga,\kappa'}(\BBR^d))^+$, always with equivalent norms.

\begin{lemma}\label{besov}
	Let $k\geq1$, $ \max\left\{1,\frac{N-k-\am+1}{N-\am-1}\right\}< p<\frac{\ap+1}{\ap-1}$ and $\xn\in \mathfrak{M}^+(\mathbb{R}^k)$ with compact support in $B^k(0,\frac{R}{2})$ for some $R>0$. Set
\ba \label{varth-b} \vartheta: = \frac{\ap+1-p(\ap-1)}{p}.
\ea
	For $x \in \mathbb{R}^{k+1}$, we write $x=(x_1,x') \in \mathbb{R}\times \mathbb{R}^{k}$.
	Then there exists a constant $C=C(R,N,k,\mu,p)>1$ such that
	\be\label{est-Bnu0} \BAL
	&C^{-1}\norm{\xn}^p_{B^{-\vartheta,p}(\mathbb{R}^k)}\\
	&\leq \int_{B^k(0,R)}\int_{0}^R x_{1}^{N-k-1-(p+1)(\am-1)}\left(\int_{B^k(0,R)}\left(x_1+|x'-y'|\right)^{-(N-2\am)}\dd \nu(y')\right)^p\,\dd x_1\,\dd x'\\
	&\leq C \norm{\xn}^p_{B^{-\vartheta,p}(\mathbb{R}^k)}. \EAL
	\ee
\end{lemma}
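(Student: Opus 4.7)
My plan is to recognize the middle integral of \eqref{est-Bnu0} as a convolution-based characterization of the dual Besov norm of $\xn$. Set $t:=x_1$ and $\phi(z):=(1+|z|)^{-(N-2\am)}$ on $\BBR^k$. The upper bound $p<\frac{\ap+1}{\ap-1}$ forces $\ap>1$, hence $\ap>\am$ and $N-2\am=k+(\ap-\am)>k$, so $\phi\in L^1(\BBR^k)$. Writing $\phi_t(z):=t^{-k}\phi(z/t)$, the elementary scaling identity
\[
(t+|x'-y'|)^{-(N-2\am)} \;=\; t^{\,2\am-(N-k)}\,\phi_t(x'-y')
\]
recasts the middle integral in \eqref{est-Bnu0} as
\[
J(\xn)\;=\;\int_0^R\!\!\int_{B^k(0,R)} t^{\,\tau-1}\,(\phi_t\ast\xn)(x')^p \,\dd x'\,\dd t,
\]
with $\tau-1=N-k-1-(p+1)(\am-1)+p\bigl(k+2\am-N\bigr)$. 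A short algebraic manipulation using $\ap+\am=N-k$ gives $\tau=p+1-(p-1)\ap=p\vartheta$ by the definition of $\vartheta$, so the weight in $J(\xn)$ is exactly $t^{p\vartheta-1}$.

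The core of the proof is then to invoke the classical convolution characterization of the negative-order Besov space $B^{-\vartheta,p}(\BBR^k)$: for a positive, integrable, radial kernel $\phi$ with sufficient polynomial decay, and for $\vartheta>0$, $1<p<\infty$,
\[
\|\xn\|_{B^{-\vartheta,p}(\BBR^k)}^p \;\approx\; \int_0^\infty\!\!\int_{\BBR^k} t^{\,p\vartheta-1}\,(\phi_t\ast\xn)(x')^p \,\dd x'\,\dd t
\]
for every $\xn\in\mathfrak{M}^+(\BBR^k)$; see the Besov--Bessel duality developed in Adams--Hedberg and exploited in the semilinear setting in \cite{MV-JMPA01,GkiNg_absorption}. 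The passage from the full half-space integral to the truncated $J(\xn)$ uses $\supp\xn\subset B^k(0,R/2)$: outside $B^k(0,R)\times(0,R)$ the pointwise bound $(\phi_t\ast\xn)(x')\lesssim \xn(\BBR^k)\,t^{(N-2\am)-k}(t+|x'|)^{-(N-2\am)}$ applies, and the resulting tail integrals converge precisely under the lower threshold $p>\frac{N-k-\am+1}{N-\am-1}$ assumed in the lemma; both tails are then controlled by $\xn(\BBR^k)^p$, which in turn is bounded by $\|\xn\|_{B^{-\vartheta,p}}^p$ via testing $\xn$ against a fixed smooth bump $\psi\in C_c^\infty(\BBR^k)$ with $\psi\ge 1$ on $B^k(0,R/2)$.

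The main obstacle is the second step: the kernel $\phi(z)=(1+|z|)^{-(N-2\am)}$ is smooth but has only polynomial decay, whereas the classical convolution characterizations of negative-order Besov spaces are most often stated for Schwartz kernels or heat/Poisson semigroups. I would bridge this gap either by a dyadic Littlewood--Paley decomposition of $\phi$ on the Fourier side and reassembling the $t$-integral, or by sandwiching $\phi$ pointwise between multiples of Poisson-type kernels at comparable scales and reducing to the known equivalence. A secondary subtlety is that the range of the hypothesis does not force $\vartheta\in(0,1)$, so a simple first-order-difference characterization is unavailable and the Fourier-analytic formulation must be used; however, the dual pairing with the Bessel capacity $\mathrm{Cap}^{\xS}_{\vartheta,p'}$ (relevant for the subsequent Theorem~\ref{supcrK}) dictates exactly the Besov space that arises, which confirms that $B^{-\vartheta,p}(\BBR^k)$ is the right ambient space.
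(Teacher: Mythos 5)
Your exponent computation $\tau = p\vartheta$ is correct, and the general plan of recognizing the middle integral as a weighted $L^p$ norm of $\phi_t\ast\nu$ and appealing to a convolution characterization of $B^{-\vartheta,p}(\R^k)$ is a natural one. However, the very first step contains a gap: while $p<\frac{\ap+1}{\ap-1}$ does force $\ap>1$, the inference ``hence $\ap>\am$'' is not valid. In the critical case $\mu=H^2$ one has $\ap=\am=H$, and this is consistent with $H>1$ (for instance $k\le N-3$) and with non-emptiness of the admissible range $\max\{1,\frac{N-k-\am+1}{N-\am-1}\}<p<\frac{\ap+1}{\ap-1}$. This case is genuinely needed: the lemma feeds into Theorem \ref{supcrK}, which is stated under $\mu\le H^2$, and for $k\ge1$ the Martin-kernel estimate \eqref{Martinest1} and eigenfunction estimate \eqref{eigenfunctionestimates} carry no logarithmic correction even at $\mu=H^2$, so the lemma as written must cover it. When $\ap=\am$ one has $N-2\am=k$, so $\phi(z)=(1+|z|)^{-(N-2\am)}=(1+|z|)^{-k}\notin L^1(\R^k)$, and the convolution characterization of $B^{-\vartheta,p}$ on which your whole argument rests simply does not apply. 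You would need to either treat $\ap=\am$ separately --- e.g.\ truncate $\phi$ at scale $\sim R$, which is harmless because the double integral is already localized to $B^k(0,R)\times(0,R)$ and $\nu$ has compact support in $B^k(0,R/2)$ --- or replace the global convolution characterization by a localized version.

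A secondary point: even for $\ap>\am$, the kernel $\phi$ has only polynomial decay and you acknowledge but do not resolve that the characterization you quote is usually formulated for Schwartz, heat, or Poisson kernels; the dyadic or sandwiching routes you sketch would need to be written out, since the Fourier transform $\widehat{\phi}$ has limited smoothness away from rapid decay. The paper itself gives no argument and refers to \cite[Lemma 8.1]{GkiNg_absorption}, so a direct comparison with your route is not possible, but both gaps above must be closed before the proposal can be considered a proof.
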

\begin{proof}[\textbf{Proof}]
The proof is very similar to that of \cite[Lemma 8.1]{GkiNg_absorption}, and hence we omit it.
\end{proof}

\begin{theorem}\label{potest}
	Let $k\geq1,$ $ \max\left\{1,\frac{N-k-\am+1}{N-\am-1}\right\}< p<\frac{\ap+1}{\ap-1}$ and $\xn\in \mathfrak{M}^+(\partial\xO)$ with compact support in $\Sigma$. Then there exists a constant $C=C(\xO,\Sigma,\mu)>1$ such that
	\be \label{2sideKnu} \BAL
	&C^{-1}\norm{\xn}_{B^{-\vartheta,p}(\Sigma)}\leq \norm{\BBK_{\mu}[\xn]}_{L^p(\xO;\ei)}\leq C \norm{\xn}_{B^{-\vartheta,p}(\Sigma)},\EAL
	\ee
	where $\vartheta$ is given in \eqref{varth-b}.
\end{theorem}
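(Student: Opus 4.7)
The plan is to reduce \eqref{2sideKnu} to Lemma~\ref{besov} by flattening $\xS$ locally and then performing polar coordinates in the transverse $(N-k)$-dimensional directions. The pointwise input is the Martin kernel estimate \eqref{Martinest1} combined with the eigenfunction estimate \eqref{eigenfunctionestimates}: since $d_\xS(y)=0$ for $y \in \xS$, the factor $(d_\xS(x)+|x-y|)^2/d_\xS(x)$ reduces to $|x-y|^2/d_\xS(x)$, giving
\begin{align*}
K_{\xm}(x,y) \approx \frac{d_{\partial\xO}(x)}{|x-y|^{N-2\am}\,d_\xS(x)^{\am}}, \qquad \ei(x) \approx d_{\partial\xO}(x)\,d_\xS(x)^{-\am},
\end{align*}
so that
\begin{align*}
|\BBK_{\xm}[\xn](x)|^p \ei(x) \approx d_{\partial\xO}(x)^{p+1}\, d_\xS(x)^{-(p+1)\am} \left(\int_\xS |x-y|^{-(N-2\am)}\, d\xn(y)\right)^p.
\end{align*}
For $x$ with $d_\xS(x)\geq \xb_1$, the right-hand side is bounded by a constant multiple of $\|\xn\|^p_{\mathfrak M(\xS)}$, which is in turn dominated by $\|\xn\|^p_{B^{-\vartheta,p}(\xS)}$ (pair $\xn$ with a fixed $C^\infty_c$ bump equal to $1$ on $\operatorname{supp}\xn$). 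Hence only the contribution from $\xO\cap\xS_{\xb_1}$ requires sharp treatment.

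Using the finite cover \eqref{cover2} and a subordinate partition of unity, it suffices to control the integral over one chart $\mathcal V_\xS(\xi^j,\xb_0)\cap\xO$. In the straightening coordinates of Subsection~\ref{assumptionK}, I write $x=(x',x'',x_N) \in \BBR^k \times \BBR^{N-k-1} \times (0,\infty)$ with $\xS$ corresponding to $\{x''=0,\, x_N=0\}$ and $\partial\xO$ to $\{x_N=0\}$; then \eqref{propdist}, \eqref{propdist2} and \eqref{ddK} yield
\begin{align*}
d_{\partial\xO}(x) \approx x_N, \quad d_\xS(x) \approx \sqrt{|x''|^2+x_N^2}, \quad |x-y|^2 \approx |x'-y'|^2+|x''|^2+x_N^2 \text{ for } y=(y',0,0)\in\xS,
\end{align*}
with Jacobian bounded above and below. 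Introducing polar coordinates $(x'',x_N)=r\omega$, $r>0$, $\omega$ in the upper half-sphere of $\BBR^{N-k}$, the angular integration absorbs a bounded positive factor, leaving
\begin{align*}
\int_{B^k(0,\xb_0)}\int_0^{\xb_0} r^{N-k-1-(p+1)(\am-1)} \left(\int (r+|x'-y'|)^{-(N-2\am)}\, d\wtl\xn_j(y')\right)^p dr\, dx',
\end{align*}
where $\wtl\xn_j$ is the push-forward of $\1_{\mathcal V_\xS(\xi^j,\xb_0)}\xn$ under the chart map. By Lemma~\ref{besov} this expression is comparable to $\|\wtl\xn_j\|^p_{B^{-\vartheta,p}(\BBR^k)}$, which is in turn comparable to $\|\1_{\mathcal V_\xS(\xi^j,\xb_0)}\xn\|^p_{B^{-\vartheta,p}(\xS)}$ by the chart definition of the Besov norm on $\xS$. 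Summing over $j=1,\ldots,m_1$ and combining with the away-from-$\xS$ estimate yields both directions of \eqref{2sideKnu}.

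The principal obstacle is the chart step: the change of variables must respect the comparabilities \eqref{propdist}, \eqref{propdist2}, \eqref{ddK} with constants uniform in $\xi^j$, which relies on the $C^2$-uniform bound \eqref{supGamma} of the $\xG_{i,\xS}^{\xi}$ and the $C^2$ regularity of $\partial\xO$. Two minor points: when $\xm=H^2$, the Martin kernel still obeys \eqref{Martinest1} without logarithmic correction because $k\geq 1$, so the regime of \eqref{Martinest2} never enters; and the interaction between the partition of unity and the Besov norm on $\xS$ uses the standard fact that multiplication by a smooth compactly supported function is bounded on $B^{-\vartheta,p}$.
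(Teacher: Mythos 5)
Your proof is correct and follows essentially the same route as the paper: reduce to a single chart via the finite cover of $\Sigma$, flatten $\Sigma$ and $\partial\Omega$ using the straightening maps of Subsection~\ref{assumptionK}, rewrite the integrand using the Martin kernel estimate \eqref{Martinest1} together with \eqref{eigenfunctionestimates}, perform the transverse radial reduction, and invoke Lemma~\ref{besov} for the one--dimensional radial integral. Your polar-coordinate computation in the $(N-k)$-dimensional transverse slab is precisely the step that the paper compresses into the passage from the first to the second display of its chain of comparabilities (and your exponent $N-k-1-(p+1)(\am-1)$ matches the lemma after absorbing the angular integral of $\omega_N^{p+1}$ over the half-sphere and the Jacobian $r^{N-k-1}$); the only superficial difference is that the paper decomposes the measure $\nu=\sum_j\nu_j$ into chart-supported pieces rather than using a partition of unity on the integrand, but these are equivalent and both rely on the finite overlap of the cover and the nonnegativity of each $\BBK_\mu[\nu_j]$ to sum up in $L^p$. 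Your remark that the case $\mu=H^2$ produces no logarithm precisely because $k\geq1$ is the same justification the paper tacitly uses, and your bump-function comparison $\nu(\partial\Omega)\lesssim\|\nu\|_{B^{-\vartheta,p}(\Sigma)}$ is a valid substitute for the paper's direct lower bound on the near-$\Sigma$ piece.
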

\begin{proof}[\textbf{Proof}]
	By \eqref{cover2}, there exists $\xi^j \in \Sigma$, $j=1,2,...,m_0$ (where $m_0 \in\N$ depends on $N,\Sigma$), and $\xb_1 \in (0,\frac{\xb_0}{4})$ such that
	$
\xO\cap\Sigma_{\beta_1} \subset \cup_{j=1}^{m_0} \mathcal{V}_{\xS}(\xi^j,\frac{\beta_0}{4})\cap \xO.
	$	
	
Assume $\xn\in \mathfrak{M}^+(\partial\xO)$ with compact support in $\xS\cap  \mathcal{V}_{\xS}(\xi^j,\frac{\beta_0}{4})$ for some $j\in\{1,...,m_0\}$.

On one hand, from \eqref{eigenfunctionestimates}, \eqref{Martinest1} and since $p< \frac{\ap+1}{\ap-1}$ and $\ap\geq\am$, we have
\bal
\int_{\xO\cap\mathcal{V}_{\xS}(\xi^j,\frac{\beta_0}{2})}\BBK_\xm[\xn]^p\ei \,\dx\gtrsim \xn(\partial\xO)^p\int_{\xO\cap\mathcal{V}_{\xS}(\xi^j,\frac{\beta_0}{2})}d_{\partial \Omega}(x)^{p+1}d_\Sigma(x)^{-(p+1)\am} \,\dx\gtrsim\xn(\partial\xO)^p.
\eal
On the other hand,
\bal
\int_{\xO\setminus \mathcal{V}_{\xS}(\xi^j,\frac{\beta_0}{2})}\BBK_\xm[\xn]^p\ei \dx\lesssim \xn(\partial\xO)^p\int_{\xO\setminus \mathcal{V}_{\xS}(\xi^j,\frac{\beta_0}{2})}d_{\partial \Omega}(x)^{p+1}d_\Sigma(x)^{-(p+1)\am}\dx\lesssim \xn(\partial\xO)^p.
\eal
Combining the above estimates, we obtain
	\be \label{K-split1}
\BAL	
\int_{ \Omega}\ei \BBK_{\mu}[\xn]^p \,\dd x
	&= \int_{\Omega \setminus  \mathcal{V}_{\xS}(\xi^j,\frac{\beta_0}{2})}\ei \BBK_{\mu}[\xn]^p \,\dd x+\int_{\xO\cap\mathcal{V}_{\xS}(\xi^j,\frac{\beta_0}{2}) }\ei \BBK_{\mu}[\xn]^p \,\dd x\\
&\approx\int_{\xO\cap\mathcal{V}_{\xS}(\xi^j,\frac{\beta_0}{2}) }\ei \BBK_{\mu}[\xn]^p \,\dd x.
\EAL	
\ee
	
For any $x \in \R^N$, we write {\small$x=(x',x''',x_N)$} where {\small$x'=(x_1,\ldots,x_k),$} {\small$x'''=(x_{k+1},\ldots,x_{N-1})$} and define the $C^2$ function
\bal
\xF(x):=(x',x_{k+1}-\xG_{k+1,\xS}^{\xi^j}(x'),...,x_{N-1}-\xG_{N-1,\xS}^{\xi^j}(x'),x_N-\xG_{N,\partial\xO}^{\xi^j}(x_1,\ldots,x_{N-1})).
\eal	
Taking into account the local representation of $\xS$ and $\partial\xO$ in Subsection \ref{assumptionK}, we may deduce that $\xF:\mathcal{V}_\xS(\xi^j,\beta_0)\to B^k(0,\xb_0)\times B^{N-1-k}(0,\xb_0)\times (-\xb_0,\xb_0) $ is $C^2$ diffeomorphism and
	$\xF(x)=(x',0_{\R^{N-k}})$ for $x=(x',x''',x_N) \in \Sigma$.
	In view of the proof of \cite[Lemma 5.2.2]{Ad}, there exists a measure $\overline{\xn}\in \GTM^+(\mathbb{R}^k)$ with compact support in $ B^k(0,\frac{\xb_0}{4})$ such that for any Borel $E\subset B^k(0,\frac{\xb_0}{4}) ,$ there holds
	$\overline{\xn}(E)=\xn(\xF^{-1}(E\times \{0_{\R^{N-k}}\}))$.
	
	Set $\psi=(\psi',\psi''',\psi_N)=\xF(x)$ then
	\bal
	\psi'=x', \, \psi'''=(x_{k+1}-\xG_{k+1,\xS}^{\xi^j}(x'),...,x_{N-1}-\xG_{N-1,\xS}^{\xi^j}(x')) \;\; \text{and} \;\; \psi_N=x_N-\xG_{N,\partial\xO}^{\xi^j}(x_1,\ldots,x_{N-1}).
	\eal
	By \eqref{eigenfunctionestimates}, \eqref{ddK} and \eqref{Martinest1}, we have
	\bal
	&\ei(x)\approx \psi_N(\psi_N+|\psi'''|)^{-\am},\\
	 &K_{\mu}(x,y)\approx \psi_N(\psi_N+|\psi'''|)^{-\am}(\psi_N+|\psi'''|+|\psi'-y'|)^{-(N-2\am)},\\
	 &\qquad\qquad\qquad\qquad\forall x\in  \mathcal{V}(\xi^j,\beta_0)\cap\xO,\;\forall y=(y',y''',y_N) \in  \mathcal{V}(\xi^j,\beta_0)\cap \Sigma.
	\eal
	Therefore
	\ba \label{21} \BAL
	&\int_{ \xO\cap \mathcal{V}(\xi^j,\beta_0/2) }\ei \BBK_{\mu}^p[\xn]\,\dd x\\
	&\approx \int_{B^k(0,\frac{\xb_0}{2})}\int_0^{\frac{\xb_0}{2}}\int_{B^{N-k-1}(0,\frac{\xb_0}{2})}\psi_N^{p+1}(\psi_N+|\psi'''|)^{-(p+1)\am}\\
	&\qquad\qquad \left(\int_{B^k(0,\frac{\xb_0}{4})}(\psi_N+|\psi'''|+|\psi'-y'|)^{-(N-2\am)}\dd \overline{\nu}(y')\right)^p \, \dd\psi'''\dd \psi_N \, \dd\psi'\\
	&\approx\int_{B^k(0,\frac{\xb_0}{2})}\int_{0}^{\frac{\xb_0}{2}}r^{N-k-1-(p+1)(\am-1)}
	\left(\int_{B^k(0,\frac{\xb_0}{2})}(r+|\psi'-y'|)^{-(N-2\am-2)}\dd \overline{\nu}(y')\right)^p\dd r \, \dd\psi'.
	\EAL
	\ea
	
	Since $\xn\mapsto \gn\circ\xF^{-1}$ is a $C^2$ diffeomorphism between
	$\mathfrak M^+(\Sigma\cap \mathcal{V}_\xS(\xi^j,\beta_0))\cap B^{-\vartheta,p}(\Sigma\cap \mathcal{V}_\xS(\xi^j,\beta_0))$ and
	$\mathfrak M^+(B^k(0,\xb_0))\cap B^{-\vartheta,p}(B^k(0,\xb_0))$, using  \eqref{K-split1},\eqref{21} and
	Lemma \ref{besov}, we derive that
	\ba\BAL
	C^{-1}\norm{\xn}_{B^{-\vartheta,p}(\Sigma)}\leq \norm{\BBK_{\mu}[\xn]}_{L^p(\xO;\ei)}\leq C \norm{\xn}_{B^{-\vartheta,p}(\Sigma)},\EAL\label{22}
	\ea

	If $\xn\in \mathfrak{M}^+(\partial\xO)$ has compact support in $\Sigma,$ we may write $\xn=\sum_{j=1}^{m_0}\xn_j,$ where $\xn_j\in \mathfrak{M}^+(\partial\xO)$ with compact support in $\mathcal{V}(\xi^j,\frac{\beta_0}{4})\cap\xS.$
	By \eqref{K-split1}, we can easily show that
	\bal \BAL
	\norm{\BBK_{\mu}[\xn]}_{L^p(\xO;\ei)}\approx \sum_{j=1}^{m_0}\norm{\BBK_{\mu}[\xn_j]}_{L^p(\xO;\ei)}\approx  C\sum_{j=1}^{m_0} \norm{\xn_j}_{B^{-\vartheta,p}(\Sigma)}
	\approx C m_0 \norm{\xn}_{B^{-\vartheta,p}(\Sigma)}.
	\EAL \eal
The proof is complete.
\end{proof}

Using Theorem \ref{potest} and Proposition \ref{subcr}, we are ready to prove Theorem \ref{supcrK}.

\begin{proof}[\textbf{Proof of Theorem \ref{supcrK}}]
	If $\xn$ is a positive measure which vanishes on Borel sets $E\subset \Sigma$ with $\mathrm{Cap}^{\BBR^{k}}_{\vartheta,p'}$-capacity zero then there exists an increasing sequence $\{\xn_n\}$ of positive measures in $B^{-\vartheta,p}(\Sigma)$ which converges weakly to $\xn$ (see \cite{DaM}, \cite {FDeP}). By Theorem \ref{potest}, we have that $\BBK_{\mu}[\xn_n]\in L^p(\xO;\ei)$, hence we may apply Proposition \ref{subcr}
with $g(t)=|t|^{p-1}t$ to deduce that there exists a unique nonnegative weak solution $u_n$ of \eqref{mainproblempower} with $\tr(u_n)=\xn_n.$
	
	Since $\{ \nu_n \}$ is an increasing sequence of positive measures, by Theorem \ref{linear-problem}, $\{u_n\}$ is increasing and its limit is denoted by $u$. Moreover,
	\be \label{lweakform-un}
	- \int_{\Gw}u_n L_{\xm }\zeta \, \dd x + \int_{\Gw} u_n^p \zeta \, \dd x = - \int_{\Gw} \mathbb{K}_{\xm}[\xn_n]L_{\xm }\zeta \, \dd x
	\qquad\forall \zeta \in\mathbf{X}_\xm(\xO).
	\ee
	By taking $\zeta=\ei$ in \eqref{lweakform-un}, we obtain
	\bal \int_{\Gw} \left(\lambda_{\xm,\Sigma} u_{n}+u_n^p\right)\ei \,\dd x=\lambda_{\xm,\Sigma}\int_{\Gw} \BBK_{\mu}[\xn_n]\ei \,\dd x,
	\eal
	which implies that $\{u_n\}$ and $\{u_n^p\}$ are uniformly bounded in $L^1(\xO;\ei)$. Therefore $u_n \to u$ in $L^1(\Omega;\ei)$ and in $L^p(\Omega;\ei)$. By letting $n \to \infty$ in \eqref{lweakform-un}, we deduce
	\bal \int_{\Gw}-uL_\xm\zeta \, \dd x +\int_{\Gw} u^p\zeta \, \dd x=-\int_{\Gw} \BBK_{\mu}[\xn]L_\xm\zeta \,\dd x\qquad\forall \zeta\in {\bf X}_\xm(\Gw).
	\eal
	This means $u$ is the unique weak solution of \eqref{mainproblempower} with $\tr(u)=\xn$.
\end{proof} \medskip

\begin{proof}[\textbf{Proof of Theorem \ref{supcromega}}] ~
	
1. Suppose $u$ is a weak solution of \eqref{mainproblempower} with $\tr(u)=1_{\partial\xO\setminus\xS}\xn$. Let $\beta>0$. Since
	\ba\label{23}
	\ei(x)\approx C(\xb)d_{\partial \Omega}(x)\quad\text{and}\quad K_{\mu}(x,y)\approx C(\xb)d_{\partial \Omega}(x)|x-y|^{-N},\quad\forall (x,y)\in (\xO\setminus \Sigma_\xb)\times \partial \xO,
	\ea
	proceeding as in the proof of \cite[Theorem 3.1]{MV-JMPA01}, we may prove that $\xn$ is absolutely continuous with respect to the Bessel capacity $\mathrm{Cap}^{\BBR^{N-1}}_{\frac{2}{p},p'}$. \medskip
	
	2. We assume that $\xn \in \GTM^+(\partial \Omega) \cap B^{-\frac{2}{p},p}(\partial\xO)$ has compact support $F \in \partial\xO\setminus\xS.$ Then by \eqref{23}, we may apply \cite[Theorem A]{MV-JMPA01} to deduce that  $\BBK_{\mu}[\xn]\in L^p(\xO\setminus \Sigma_\xb;\ei)$ for any $\xb>0$. Denote $g_n(t)=\max\{ \min\{|t|^{p-1}t,n\},-n\}$. By applying Proposition \ref{subcr} with $g=g_n$, we deduce that there exists a unique weak solution $v_n \in L^1(\Omega;\ei)$ of
	\be \label{weaksupersolution1}\left\{ \BAL
	- L_\gm v_n+g_n(v_n)&=0\qquad \text{in }\;\Gw,\\
	\tr(v_n)&=\nu,
	\EAL \right. \ee
	such that $0 \leq v_n \leq \BBK_\mu[\nu]$ in $\Omega $.
	Furthermore, by \eqref{poi5}, $\{v_n\}$ is non-increasing. Set $v=\lim_{n \to \infty}v_n,$ then $0 \leq v \leq \BBK_\mu[\nu]$ in $\Omega$.
	
Since $v_n$ is a weak solution of \eqref{weaksupersolution1}, we have
	\be \label{lweakform-un-1}
	- \int_{\Gw}v_n L_{\xm }\zeta \, \dd x + \int_{\Gw} g_n(v_n) \zeta \, \dd x = - \int_{\Gw} \mathbb{K}_{\xm}[\xn_n]L_{\xm }\zeta \, \dd x
	\qquad\forall \zeta \in\mathbf{X}_\xm(\xO).
	\ee
By taking $\ei$ as test function, we obtain
	\be \label{ungn} \int_{\Gw}\left(\lambda_{\xm,\Sigma} v_n + g_n(v_n)\right)\ei \,\dd x=\lambda_{\xm,\Sigma} \int_{\Gw} \BBK_{\mu}[\xn]\ei \,\dd x,
	\ee
	which, together with by Fatou's Lemma, implies that $v, v^p\in L^1(\xO;\ei)$ and
	\bal \int_{\Gw}\left(\lambda_{\xm,\Sigma} v + v^p \right)\ei \,\dd x
	\leq \lambda_{\xm,\Sigma} \int_{\Gw} \BBK_{\mu}[\xn]\ei \,\dd x.
	\eal
	 Hence $v+\BBG_\xm[v^p]$ is a nonnegative $L_\xm$-harmonic function. By Representation Theorem \ref{th:Rep}, there exists a unique $\overline{\xn}\in\mathfrak{M}^+(\partial\xO)$ such that $v +\BBG_\xm[v^p]=\BBK_\mu[\overline{\xn}]$.
	Since $v \leq \BBK_\mu[\xn]$, by Proposition \ref{traceKG} (i), $\overline \nu = \tr(v) \leq \tr(\BBK_\mu[\nu]) =  \nu$ and hence $\overline{\xn}$ has compact support in $F$.

Let $\xb>0$ be small enough such that $F\cap\overline{\xS}_{4\xb}=\emptyset$. We consider a cut-off function $\psi_{\beta} \in C^\infty(\R^N)$ such that $0\leq\psi_{\xb}\leq 1$ in $\R^N$, $\psi_\xb=1$ in $\xO\setminus\xS_{\frac{\xb}{2}}$ and $\psi_{\xb}=0$ in $\overline{\xS}_\frac{\xb}{4}$. Let $\xf_0$ be the eigenfunction associated to $-\Delta$ in $\Omega$ such that $\sup_{x\in\xO}\xf_0=1.$ Let $\eta\in C^\infty(\partial\xO)$ such that $\eta=0$ on $\partial\xO\cap\xS_{2\xb}$. We consider the lifting $R[\eta]$ in \cite[(1.11)]{MV-JMPA01}. Then $R\in C^2(\overline{\xO})$ has compact support in $\overline{\xO}_{\xb_0}$ for some $\xb_0>0$ small enough. In addition, $|\nabla R[\eta] \cdot \nabla \xf_0|\lesssim \xf_0$ in $\xO,$ and $R[\eta]=\eta$ for any $x\in\partial\xO.$

Then the function $\psi_{\beta,\eta}=\psi_{\beta}R[\eta]\xf_0 \in C^{1,\xg}(\overline{\xO})\cap \mathbf{X}_\xm(\xO)$ for any $\xg\in(0,1),$ $\psi_{\beta,\eta}=0$ on $\partial\xO$ and has compact support in $\overline{\xO}\setminus\xS_\frac{\xb}{4} $. Hence, by \eqref{weakfor2} and the fact that $\frac{\partial \psi_{\beta,\xz}}{\partial {\bf n}}= \frac{\partial \xf_0}{\partial {\bf n}}\eta$ on $\partial \Omega$, we obtain
\ba\label{24}
	\int_{\Gw}(-v L_\xm\psi_{\beta,\eta} + v^p\psi_{\beta,\eta})\,\dd x=-\int_{\partial\xO}\frac{\partial \xf_0}{\partial {\bf n}} \frac{\eta}{P_\mu(x_0,y)}\, \dd \overline{\nu}(y).
	\ea
	Also,
\ba \label{25}
	\int_{\Gw}(- v_nL_\xm\psi_{\xb,\xz}+g_n(v_n)\psi_{\xb,\xz})\, \dd x=-\int_{\partial\xO}\frac{\partial \xf_0}{\partial {\bf n}} \frac{\eta}{P_\mu(x_0,y)}\, \dd \nu(y).
\ea
	
	Since $v \leq v_n\leq \BBK_{\mu}[\xn]$ and $\BBK_{\mu}[\xn]\in L^p(\xO\setminus\xS_{\frac{\xb}{16}};\ei)$, by letting $n\to\infty$ in \eqref{25}, we obtain by the dominated convergence theorem that
	\ba \label{26}
	\int_{\Gw}(-v L_\xm\psi_{\xb,\xz} + v^p\psi_{\xb,\xz})\, \dd x=-\int_{\partial\xO}\frac{\partial \xf_0}{\partial {\bf n}} \frac{\eta}{P_\mu(x_0,y)}\, \dd \nu(y).
	\ea
	From \eqref{24} and \eqref{26}, we deduce that
	
\bal
-\int_{\partial\xO}\frac{\partial \xf_0}{\partial {\bf n}} \frac{\eta}{P_\mu(x_0,y)}\, \dd \nu(y)=-\int_{\partial\xO}\frac{\partial \xf_0}{\partial {\bf n}} \frac{\eta}{P_\mu(x_0,y)}\, \dd \overline{\nu}(y),
\eal
which implies that $\xn=\overline{\xn},$ since $-\frac{\partial \xf_0}{\partial {\bf n}}\approx 1$ in $\partial\xO,$ $P_\mu(x_0,y)\approx 1$ in $\partial\xO\setminus\xS_{\frac{\xb}{4}}$ and $\xn,\;\overline{\xn}$ have compact support in $\partial\xO\setminus\overline{\xS}_{4\xb}$.

	3. If $\xn \in \GTM^+(\partial \Omega)$ vanishes on Borel sets $E\subset \partial\xO$ with zero $\mathrm{Cap}^{\BBR^{N-1}}_{\frac{2}{p},p'}$-capacity and has compact support in $\partial\xO\setminus\xS$ then there exists a nondecreasing sequence $\{\xn_n\}$  of positive measures in $B^{-\frac{2}{p},p}(\partial\xO)$ which converges to $\xn$ (see \cite{DaM}, \cite {FDeP}). Let $u_n$ be the unique weak solution of \eqref{mainproblempower} with $\tr(u_n)=\xn_n$.
	Since $\{ \nu_n \}$ is nondecreasing, by \eqref{poi5}, $\{u_n\}$ is nondecreasing. Moreover, $0 \leq u_n \leq \BBK_\mu[\nu_n] \leq \BBK_\mu[\nu]$.  Denote $u=\lim_{n \to \infty}u_n$. By an argument similar to the one leading to \eqref{ungn}, we obtain
	\bal \int_{\Gw}\left(\lambda_{\xm,\Sigma} u_{n} + u_n^p\right)\ei \, \dd x=\lambda_{\xm,\Sigma}\int_{\Gw} \BBK_{\mu}[\xn_n]\ei \, \dd x,
	\eal
	which yields that $u, u^p\in L^1(\xO;\ei)$. By the dominated convergence theorem, we derive
	\bal \int_{\Gw} \left(-u L_\xm\zeta + u^p\zeta\right)\, \dd x=-\int_{\Gw}\BBK_{\mu}[\xn]L_\xm\zeta \, \dd x\qquad\forall \zeta\in {\bf X}_\mu(\Gw),
	\eal
	and thus $u$ is the unique weak solution of \eqref{mainproblempower}. \medskip

   4.  If $\1_{F}\nu$ is absolutely continuous with respect to  $\mathrm{Cap}^{\BBR^{N-1}}_{\frac{2}{p},p'}$ for any compact set $F\subset \partial\xO\setminus\xS,$ we set $\xn_n=\1_{\partial\xO\setminus\xS_{\frac{1}{n}}}$ and $u_n$ the weak solution of \eqref{mainproblempower} with $\tr(u_n)=\xn_n$. By using an argument similar to that in case 3, we obtain the desired result.
\end{proof}

\section{Semilinear equations with a power source nonlinearity} \label{sec:powercase}
In this section we study the following problem
\ba \tag{$\text{BVP}_-^{\sigma}$} \label{power} \left\{ \BAL
- L_\gm u&= |u|^{p-1}u\qquad \text{in }\;\Gw,\\
\tr(u)&=\xs\xn,
\EAL \right. \ea
where $p>1$, $\sigma$ is a positive parameter and $\nu \in \mathfrak{M}^+(\partial\xO)$.

We remark that a positive function $u$ is a weak solution of \eqref{power} if and only if
\ba \label{u-source-sigma}
u = \BBG_\mu[u^p] + \sigma \BBK_\mu[\nu] \quad \text{a.e. in } \Omega.
\ea

In the following proposition, we give a necessary and sufficient condition for the existence of solutions to problem \eqref{power}.
\begin{proposition}\label{equivba}
Assume $\mu \leq H^2$, $p>1$ and $\nu \in \mathfrak{M}^+(\partial\xO).$ Then problem \eqref{power} admits a weak solution if and only if there exists a positive constant $C>0$ such that
\ba\label{subcrsource}
\BBG_\xm[\BBK_\xm[\xn]^p]\leq C\, \BBK_\xm[\xn] \quad \text{a.e. in } \Omega.
\ea
\end{proposition}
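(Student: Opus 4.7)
The proof will exploit the integral representation \eqref{u-source-sigma}: a nonnegative function $u$ solves \eqref{power} weakly if and only if $u, u^p \in L^1(\Omega;\phi_{\mu,\Sigma})$ and $u = \BBG_\mu[u^p] + \sigma \BBK_\mu[\nu]$ a.e.\ in $\Omega$. In particular, any weak solution automatically satisfies $u \geq \sigma \BBK_\mu[\nu]$, and the equivalence will be established through a monotone iteration of the operator $T: w \mapsto \BBG_\mu[w^p] + \sigma\BBK_\mu[\nu]$, which is order preserving on nonnegative functions.

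For the implication $(\Leftarrow)$, assuming $\BBG_\mu[\BBK_\mu[\nu]^p] \leq C\, \BBK_\mu[\nu]$ a.e., I will set up the iteration $u_0 := \sigma\BBK_\mu[\nu]$, $u_{n+1} := T(u_n)$. Fixing $M > 1$ and choosing $\sigma$ small enough that $M^p \sigma^{p-1} C + 1 \leq M$, an easy induction gives $\sigma\BBK_\mu[\nu] \leq u_n \leq M\sigma\BBK_\mu[\nu]$ for every $n$; since $\{u_n\}$ is nondecreasing, monotone convergence yields a limit $u \leq M\sigma\BBK_\mu[\nu]$ satisfying $u = T(u)$ a.e. The uniform pointwise bound ensures $u, u^p \in L^1(\Omega;\phi_{\mu,\Sigma})$, so by \eqref{u-source-sigma}, $u$ is a weak solution of \eqref{power}.

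For the converse $(\Rightarrow)$, let $u \geq 0$ be a weak solution. By \eqref{u-source-sigma}, $u \geq \sigma\BBK_\mu[\nu]$ a.e.; raising to the $p$-th power and applying the Green operator yields the preliminary bound
\[
\sigma^p\, \BBG_\mu[\BBK_\mu[\nu]^p] \leq \BBG_\mu[u^p] = u - \sigma\BBK_\mu[\nu].
\]
To upgrade this to the pointwise estimate $\BBG_\mu[\BBK_\mu[\nu]^p] \leq C\BBK_\mu[\nu]$, I will pass to a smaller parameter $\sigma' \leq \sigma$: starting from $v_0 := 0$ and iterating $v_{n+1} := \BBG_\mu[v_n^p] + \sigma'\BBK_\mu[\nu]$, the iterates remain dominated by $u$ by monotonicity of $T$ (since $v_0 = 0 \leq u$ and $T$ with parameter $\sigma'\leq\sigma$ is bounded above by $T$ with parameter $\sigma$), and they increase to a minimal weak solution $u_* \leq u$ associated with the parameter $\sigma'$. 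For $\sigma'$ sufficiently small, a comparison argument combined with the sharp kernel bounds from Propositions \ref{Greenkernel} and \ref{Martin} will produce $u_* \leq M\sigma'\BBK_\mu[\nu]$ with $M$ independent of $\sigma'$; substituting $u_*$ and $\sigma'$ for $u$ and $\sigma$ in the preliminary inequality then yields $\BBG_\mu[\BBK_\mu[\nu]^p] \leq (M-1)(\sigma')^{1-p}\BBK_\mu[\nu]$, which is the desired bound with $C = (M-1)(\sigma')^{1-p}$.

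The main obstacle lies in the last comparison $u_* \leq M\sigma'\BBK_\mu[\nu]$ in the necessity direction: merely knowing that $u_*$ is finite a.e.\ and bounded above by $u$ does not suffice, one must control it pointwise by a constant multiple of the Martin potential. This requires exploiting the near-linear behavior of the iteration in the small-parameter regime, together with the structural two-sided estimates for $G_\mu$ and $K_\mu$; it is this step that forces the restriction to small enough $\sigma'$, at which point the nonlinear correction $\BBG_\mu[v_n^p]$ becomes a genuine perturbation of the linear leading order $\sigma'\BBK_\mu[\nu]$.
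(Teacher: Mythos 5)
Your sufficiency argument (iterating $T(w)=\BBG_\mu[w^p]+\sigma\BBK_\mu[\nu]$ from $u_0=\sigma\BBK_\mu[\nu]$, with $\sigma$ chosen so that $M^p\sigma^{p-1}C+1\le M$) is the standard one and is essentially correct; the only detail you elide is that $u^p\in L^1(\Omega;\ei)$, which follows because $\BBG_\mu[\BBK_\mu[\nu]^p]<\infty$ a.e.\ forces $\BBK_\mu[\nu]^p\in L^1(\Omega;\ei)$ (using that $G_\mu(x_0,\cdot)\gtrsim\ei$ away from $x_0$), and then $u\le M\sigma\BBK_\mu[\nu]$.

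The necessity direction, however, has a genuine gap and is in fact circular. The step you flag as the ``main obstacle'' — proving $u_*\le M\sigma'\BBK_\mu[\nu]$ for the minimal solution with a small parameter $\sigma'$ — is not a technical detail to be filled in by ``near-linear behaviour for small $\sigma'$''; it is \emph{equivalent} to the conclusion. Indeed, already the first nontrivial iterate gives
$u_*\ge \BBG_\mu[(\sigma'\BBK_\mu[\nu])^p]+\sigma'\BBK_\mu[\nu]=\sigma'^p\,\BBG_\mu[\BBK_\mu[\nu]^p]+\sigma'\BBK_\mu[\nu]$,
so if the ratio $\BBG_\mu[\BBK_\mu[\nu]^p]/\BBK_\mu[\nu]$ is unbounded then $u_*/\BBK_\mu[\nu]$ is unbounded for \emph{every} $\sigma'>0$, no matter how small: the nonlinear term is never a perturbation of the linear one unless the conclusion already holds. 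Conversely, if $u_*\le M\sigma'\BBK_\mu[\nu]$ then $\sigma'^p\BBG_\mu[\BBK_\mu[\nu]^p]\le u_*-\sigma'\BBK_\mu[\nu]\le(M-1)\sigma'\BBK_\mu[\nu]$, which \emph{is} the conclusion. So your ``comparison argument combined with sharp kernel bounds'' is really a restatement, not a proof, of what you need. The mere facts that $u_*$ is finite a.e.\ and $u_*\le u$ give no control on $u_*/\BBK_\mu[\nu]$. What is actually needed is the nonlinear potential theory (Kalton–Verbitsky) machinery that this paper invokes via \cite[Proposition 2.7]{BHV} (and for which the quasi-metric structure of the kernel is verified in Lemmas \ref{ineq}, \ref{l2.3}, \ref{vol}, \ref{vol-2} and exploited in Proposition \ref{t2.1}); it is precisely the weak-boundedness/quasi-metric structure, not smallness of $\sigma'$, that converts ``a finite solution exists'' into a pointwise bound of $\BBG_\mu[\BBK_\mu[\nu]^p]$ by $\BBK_\mu[\nu]$.
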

\begin{proof}[\textbf{Proof}]
The proof is similar to that of \cite[Proposition 6.2]{GkiNg_source} with some minor modifications, and hence we omit
it.
\end{proof}

\subsection{Preparative results } For $\xa\leq N$, set
\ba\label{Na}
\CN_{\ga}(x,y):=\frac{\max\{|x-y|,d_\xS(x),d_\xS(y)\}^\xa}{|x-y|^{N-2}\max\{|x-y|,d_{\partial \Omega}(x),d_{\partial \Omega}(y)\}^2},\quad (x,y)\in\overline{\xO}\times\overline{\xO}, x \neq y,
\ea
\bel{opN} \BBN_{\xa}[\omega](x):=\int_{\overline{\Gw}} \CN_{\xa}(x,y) \dd\omega(y), \quad \omega \in \GTM^+(\overline \Gw).\ee
Let $\alpha <N$, $b>0$, $\theta>-N+k$ and $s>1.$ We define the capacity $\text{Cap}_{\Nthb,s}^{b,\theta}$ by
\bal \text{Cap}_{\Nthb,s}^{b,\theta}(E) :=\inf\left\{\int_{\overline{\xO}}d_{\partial \Omega}^b d^\theta_\xS\gf^s\,\dx:\;\; \gf \geq 0, \;\;\Nthb[ d_{\partial \Omega}^b d_{\Sigma}^\theta\gf ]\geq\1_E\right\} \quad \text{for any Borel set } E\subset\overline{\xO}.
\eal
Here $\1_E$ denotes the indicator function of $E$. By \cite[Theorem 2.5.1]{Ad}, we have
\ba\label{dualcap}
(\text{Cap}_{\Nthb,s}^{b,\theta}(E))^\frac{1}{s}=\sup\{\omega(E):\omega \in\GTM^+(E), \|\Nthb[\omega]\|_{L^{s'}(\xO;d_{\partial \Omega}^b d^\theta_\xS)} \leq 1 \}.
\ea

We note here that if $\xm<\frac{N^2}{4}$ and $\xn\in\mathfrak{M}^+(\partial\xO),$ then, by \eqref{Greenesta} and \eqref{Martinest1}, we can easily show that
\ba \label{GN2a}
G_\mu(x,y)\approx d_{\partial \Omega}(x)d_{\partial \Omega}(y)(d_{\xS}(x)d_{\xS}(y))^{-\am} \CN_{2\am}(x,y) \quad \forall x,y \in \Omega, x \neq y
\ea
and
\ba\label{KN2a1}
K_\mu(x,y)\approx d_{\partial \Omega}(x)d_{\xS}(x)^{-\am} \CN_{2\am}(x,y) \quad \forall (x,y) \in \Omega\times\partial\xO.
\ea
Therefore, if the integral equation
\ba \label{vN2a} v=\BBN_{2\am}[(d_{\partial \Omega}d_{\xS}^{-\am})^{p+1}v^p]+\ell \BBN_{2\am}[\xn]
\ea
has a solution $v$ for some $\ell>0$ then $\tilde v=d_{\partial \Omega}d_{\Sigma}^{-\am}v$ satisfies
\ba\label{105}
\tilde v\approx\BBG_\xm[\tilde v^p]+\ell \BBK_\xm[\xn].
\ea
This, together with \cite[Proposition 2.7]{BHV}, implies that equation \eqref{u-source-sigma} has a positive solution $u$ for some $\xs>0$, whence problem \eqref{power} has a weak positive solution $u$ for some $\sigma>0$.

In order to show that \eqref{vN2a} possesses a solution, we will apply the results in \cite{KV} which we
recall here for the sake of completeness.

Let $\mathbf{Z}$ be a metric space and $\gw \in\GTM^+(\mathbf{Z}).$ Let $J : \mathbf{Z} \times \mathbf{Z} \to (0,\infty]$ be a Borel positive kernel such that $J$ is symmetric and $1/J$ satisfies a quasi-metric
inequality, i.e. there is a constant $C>1$ such that for all $x, y, z \in \mathbf{Z}$,
\ba \label{Jest} \frac{1}{J(x,y)}\leq C\left(\frac{1}{J(x,z)}+\frac{1}{J(z,y)}\right).
\ea
Under these conditions, one can define the quasi-metric $\mathbf{d}$ by
\bal
\mathbf{d}(x,y):=\frac{1}{J(x,y)}
\eal
and denote by $\GTB(x,r):=\{y\in\mathbf{Z}:\; \mathbf{d}(x,y)<r\}$ the open $\mathbf{d}$-ball of radius $r > 0$ and center $x$.
Note that this set can be empty.

For $\xo\in\GTM^+(\mathbf{Z})$ and a positive function $\phi$, we define the potentials $\BBJ[\gw]$ and $\BBJ[\gf,\gw]$ by
\bal
\BBJ[\gw](x):=\int_{\mathbf{Z}}J(x,y) \dd\xo(y)\quad\text{and}\quad \BBJ[\gf,\gw](x):=\int_{\mathbf{Z}}J(x,y)\gf(y) \dd\gw(y).
\eal
For $t>1$ the capacity $\text{Cap}_{\BBJ,t}^\gw$ in $\mathbf{Z}$ is defined for any Borel $E\subset\mathbf{Z}$ by
\bal
\text{Cap}_{\BBJ,t}^\gw(E):=\inf\left\{\int_{\mathbf{Z}}\gf(x)^{t} \dd\gw(x):\;\;\gf\geq0,\;\; \BBJ[\gf,\gw] \geq\1_E\right\}.
\eal

\begin{proposition}\label{t2.1}  \emph{(\cite{KV})} Let $p>1$ and $\gl,\gw \in\GTM^+(\mathbf{Z})$ such that
	\ba
	\int_0^{2r}\frac{\gw\left(\GTB(x,s)\right)}{s^2} \dd s &\leq C\int_0^{r}\frac{\gw\left(\GTB(x,s)\right)}{s^2} \dd s ,\label{2.3}\\
	\sup_{y\in \GTB(x,r)}\int_0^{r}\frac{\gw\left(\GTB(y,r)\right)}{s^2} \dd s&\leq C\int_0^{r}\frac{\gw\left(\GTB(x,s)\right)}{s^2} \dd s,\label{2.4}
	\ea
for any $r > 0,$ $x \in \mathbf{Z}$, where $C > 0$ is a constant. Then the following statements are equivalent.
	
	1. The equation $v=\BBJ[|v|^p,\gw]+\ell \BBJ[\gl]$ has a positive solution for $\ell>0$ small.
	
	2. For any Borel set $E \subset \mathbf{Z}$, there holds
	$
\int_E \BBJ[\1_E\gl]^p \dd \gw \leq C\, \gl(E).
	$
	
	3. The following inequality holds
	$
\BBJ[\BBJ[\gl]^p,\gw]\leq C\BBJ[\gl]<\infty\quad \gw-a.e.
$

	4. For any Borel set $E \subset \mathbf{Z}$ there holds
$
\gl(E)\leq C\, \emph{Cap}_{\BBJ,p'}^\gw(E).
$
\end{proposition}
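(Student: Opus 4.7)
My plan follows the Kalton--Verbitsky framework and closes the cycle $(1)\Rightarrow(3)\Rightarrow(2)\Leftrightarrow(4)\Rightarrow(1)$, with the main analytic device being a Wolff-type pointwise representation of the iterated potential $\BBJ[\BBJ[\gl]^p,\gw]$ that exploits the quasi-metric structure \eqref{Jest} and the doubling conditions \eqref{2.3}--\eqref{2.4}.

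The implication $(1)\Rightarrow(3)$ is essentially mechanical: any positive solution $v$ satisfies $v\geq \ell\BBJ[\gl]$ and $v\geq\BBJ[v^p,\gw]\geq \ell^p\BBJ[\BBJ[\gl]^p,\gw]$, and for the minimal positive solution (produced by the iteration described below) one has the comparable upper bound $v\leq 2\ell\BBJ[\gl]$, which yields (3). Conversely, for $(4)\Rightarrow(1)$ I would iterate $v_0:=2\ell\BBJ[\gl]$, $v_{n+1}:=\BBJ[v_n^p,\gw]+\ell\BBJ[\gl]$: once the Schur-type estimate $\BBJ[\BBJ[\gl]^p,\gw]\leq C\BBJ[\gl]$ (inherited from (4) through the Wolff equivalence below) is available, the sequence is nonincreasing and remains bounded below by $\ell\BBJ[\gl]$ provided $\ell$ is small enough that $2^pC\ell^p\leq 1$, and the monotone limit solves the integral equation.

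The heart of the proof, and the principal obstacle, is the block $(2)\Leftrightarrow(3)\Leftrightarrow(4)$. Using \eqref{Jest} to build Christ-type dyadic cubes adapted to the quasi-metric $\mathbf{d}:=1/J$, and invoking \eqref{2.3}--\eqref{2.4} to control the local overlap and the level-set decomposition of $\BBJ[\gl](x)=\int_0^\infty \gl(\GTB(x,r))r^{-2}\,\dd r$, I would establish the pointwise equivalence
\bal
\BBJ[\BBJ[\gl]^p,\gw](y)\approx \mathbf{W}^{\gl,\gw}_{J,p}(y),
\eal
where $\mathbf{W}^{\gl,\gw}_{J,p}$ is a Wolff-type functional expressible as a dyadic sum over cubes containing $y$. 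Integrating this equivalence against $\1_E\gl$ and exploiting the symmetry of $J$ (via Fubini) together with a Sawyer-type "good-$\lambda$" telescoping argument gives $(3)\Leftrightarrow(2)$. The remaining equivalence $(2)\Leftrightarrow(4)$ is the Maz'ya--Adams capacitary duality: (2) is exactly the Sawyer testing condition for the embedding $\BBJ:L^{p'}(\gw)\to L^{p'}(\gl)$, and applying Fenchel--Moreau to the convex extremal problem defining $\text{Cap}_{\BBJ,p'}^{\gw}$ converts this testing condition into the capacitary bound (4).

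The principal obstacle is thus the Wolff equivalence in the abstract quasi-metric setting; it requires the Christ cube construction in $(\mathbf{Z},\mathbf{d})$, a careful passage between the continuous integral representation of $\BBJ[\gl]$ and its dyadic sum, and a multi-level iteration that tracks doubling constants when inserting $\BBJ[\cdot,\gw]$ twice. Once this geometric step is completed, the remaining implications all reduce to standard weighted-norm-inequality and capacity arguments.
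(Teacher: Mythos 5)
The paper does not supply a proof of this proposition; it is recalled verbatim from Kalton--Verbitsky \cite{KV}, so there is no in-paper argument to compare against. Evaluating your plan on its own merits, the overall architecture (Wolff-potential dyadic representation of the iterated kernel, capacitary duality, and iteration from a supersolution) is indeed the right circle of ideas from \cite{KV}. However, there is a genuine logical gap.

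Your argument for $(1)\Rightarrow(3)$ is circular. You assert that the minimal positive solution of $v=\BBJ[v^p,\gw]+\ell\BBJ[\gl]$ satisfies the upper bound $v\le 2\ell\,\BBJ[\gl]$ and you point to the iteration ``described below'' as the justification. But that iteration scheme, as you yourself set it up in $(4)\Rightarrow(1)$, only produces a nonincreasing sequence confined between $\ell\BBJ[\gl]$ and $2\ell\BBJ[\gl]$ \emph{because} the Schur-type inequality $\BBJ[\BBJ[\gl]^p,\gw]\le C\,\BBJ[\gl]$ is already in hand: the induction step for $v_1\le v_0$ is $(2\ell)^p\BBJ[\BBJ[\gl]^p,\gw]\le\ell\BBJ[\gl]$, which is precisely (3) with a specific constant. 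Without (3) there is no reason for the minimal solution to be comparable to $\ell\BBJ[\gl]$; a priori it is only bounded above by the (possibly much larger) solution whose existence (1) grants, and bounded below by $\ell\BBJ[\gl]+\ell^p\BBJ[\BBJ[\gl]^p,\gw]$. In \cite{KV} the forward implication does not go through such a pointwise comparison; it passes through the testing/energy condition (your statement (2)) or the discrete Wolff functional, and (3) is then recovered from (2) via the dyadic decomposition rather than directly from the equation. You should reroute $(1)\Rightarrow(2)$ first (or establish the Wolff-potential lower bound for any supersolution), and obtain (3) from there. As a minor point, the smallness threshold in $(4)\Rightarrow(1)$ should read $2^p C\,\ell^{p-1}\le 1$ rather than $2^p C\,\ell^{p}\le 1$, since the induction requires $(2\ell)^p C\le\ell$.
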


We will point out below that $\BBN_\xa$ defined in \eqref{opN} with $\dd \gw=d_{\partial \Omega}(x)^b d_\xS(x)^\theta \1_{\Omega}(x)\,\dx$ satisfies all assumptions of $\BBJ$  in Proposition \ref{t2.1}, for some appropriate $b,\theta\in \BBR$. Let us first prove the quasi-metric inequality.

\begin{lemma}\label{ineq}
Let $\xa\leq N.$ There exists a positive constant $C=C(\xO,\Sigma,\xa)$ such that
\ba \label{dist-ineq} \frac{1}{\CN_\xa(x,y)}\leq C\left(\frac{1}{\CN_\xa(x,z)}+\frac{1}{\CN_\xa(z,y)}\right),\quad \forall x,y,z\in \overline{\xO}.
\ea
\end{lemma}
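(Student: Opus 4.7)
My plan is to establish \eqref{dist-ineq} by a case analysis on the relative sizes of $r:=|x-y|$, $s:=|x-z|$, $t:=|z-y|$. Introduce the shorthand $M_\xS(u,v):=\max\{|u-v|,d_\xS(u),d_\xS(v)\}$ and $M_\partial(u,v):=\max\{|u-v|,d_{\partial\Omega}(u),d_{\partial\Omega}(v)\}$, so that $\CN_\xa(u,v)^{-1}=|u-v|^{N-2}\,M_\partial(u,v)^{2}\,M_\xS(u,v)^{-\xa}$. Two structural facts will be used repeatedly: the inclusion $\xS\sbs\partial\xO$ gives $d_\xS\geq d_{\partial\Omega}$, hence $M_\partial\leq M_\xS$; and the $1$-Lipschitz property of the distance functions yields the triangle-type inequalities $M_\xS(x,y)\leq M_\xS(x,z)+M_\xS(z,y)$ and $M_\partial(x,y)\leq M_\partial(x,z)+M_\partial(z,y)$. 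Since $\CN_\xa$ is symmetric in its arguments, I may assume $s\geq t$, whence $s\geq r/2$.

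The easy case is $s\leq 2r$. All three pairwise distances are then comparable, so the Lipschitz bounds force $|d_\xS(z)-d_\xS(x)|\leq s\lesssim r$ and $|d_\xS(y)-d_\xS(x)|\leq r$, giving $M_\xS(x,y)\approx M_\xS(x,z)$ and, by an identical argument, $M_\partial(x,y)\approx M_\partial(x,z)$. Therefore $\CN_\xa(x,y)\approx \CN_\xa(x,z)$ and \eqref{dist-ineq} follows.

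In the substantive case $s>2r$ we have $t\geq s-r>s/2$, hence $s\approx t>r$. Fix a large constant $C_{0}$ to be chosen. If $\max\{M_\xS(x,z),M_\xS(z,y)\}\leq C_{0}M_\xS(x,y)$, then the triangle inequality for $M_\partial$, the bound $r\leq s\approx t$, and the comparability $M_\xS(x,y)^{-\xa}\lesssim M_\xS(x,z)^{-\xa}$ (and the analogous one for $M_\xS(z,y)$) directly yield $\CN_\xa(x,y)^{-1}\lesssim \CN_\xa(x,z)^{-1}+\CN_\xa(z,y)^{-1}$. In the complementary sub-case, after relabeling we may assume $M_\xS(x,z)>C_{0}M_\xS(x,y)$. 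Then $d_\xS(x)\leq M_\xS(x,y)<M_\xS(x,z)/C_{0}$ eliminates $d_\xS(x)$ as the maximum in $M_\xS(x,z)=\max\{s,d_\xS(x),d_\xS(z)\}$, and the resulting inequality $d_\xS(z)\leq d_\xS(x)+s<M_\xS(x,z)/C_{0}+s$ yields $M_\xS(x,z)\leq sC_{0}/(C_{0}-1)$, hence $M_\xS(x,z)\approx s$. Using $d_\xS(y)\leq d_\xS(x)+r\leq 2M_\xS(x,y)$ together with $t\approx s$ one gets $M_\xS(z,y)\approx s$, and from $d_{\partial\Omega}\leq d_\xS$ one further obtains $M_\partial(x,z),M_\partial(z,y)\approx s$. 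Consequently $\CN_\xa(x,z)^{-1}\approx s^{N-\xa}$. On the left-hand side, $M_\partial(x,y)\leq M_\xS(x,y)$ and $r\leq M_\xS(x,y)$ together with $N\geq 2$ give $\CN_\xa(x,y)^{-1}\leq M_\xS(x,y)^{N-\xa}$, and since $M_\xS(x,y)\leq s/(C_{0}-1)$ the hypothesis $\xa\leq N$ (so that $N-\xa\geq 0$) yields $\CN_\xa(x,y)^{-1}\leq s^{N-\xa}\lesssim \CN_\xa(x,z)^{-1}$.

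The main obstacle is this last sub-case, where $M_\xS(x,y)$ may be far smaller than $M_\xS(x,z)$; the argument relies decisively on the nesting $d_{\partial\Omega}\leq d_\xS$ (to substitute $M_\partial(x,y)$ by $M_\xS(x,y)$) and on the exponent bound $\xa\leq N$ (which converts the smallness of $M_\xS(x,y)$ into a favorable power of $s$ rather than a blow-up in the wrong direction). The symmetric sub-case $M_\xS(z,y)>C_{0}M_\xS(x,y)$ with $M_\xS(x,z)\leq C_{0}M_\xS(x,y)$ is handled identically, with the roles of $\CN_\xa(x,z)$ and $\CN_\xa(z,y)$ swapped on the right of \eqref{dist-ineq}.
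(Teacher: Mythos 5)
Your proof is correct, and it takes a genuinely different route from the paper's. The paper splits on the sign of $\xa$: for $0<\xa\leq N$ it cases on $|x-y|\lessgtr 2|x-z|$ with a nested split on $|x-z|\lessgtr d_\xS(x)$; for $\xa\leq 0$ it uses a completely different additive decomposition, $\max\{|x-y|,d_\xS(x),d_\xS(y)\}\leq |x-y|+\min\{d_\xS(x),d_\xS(y)\}$ (and its $\prt\xO$-analogue), expanding $1/\CN_\xa$ into several terms and bounding each separately via inequality \eqref{35}. You instead give a unified argument for all $\xa\leq N$ by abstracting the definition into $M_\xS,M_\partial$, exploiting their triangle-type inequality and the nesting $M_\partial\leq M_\xS$, and splitting on the comparability of $|x-y|,|x-z|,|z-y|$ and then on whether $M_\xS(x,z)$ or $M_\xS(z,y)$ dwarfs $M_\xS(x,y)$. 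Your second sub-case is the genuinely delicate one and is handled cleanly: everything collapses to $s^{N-\xa}$ on the right while the left is controlled by $M_\xS(x,y)^{N-\xa}$ with $M_\xS(x,y)\lesssim s$, so $\xa\leq N$ enters exactly where you say it does. One step is stated more tersely than it deserves: in the first sub-case, the claimed bound $M_\xS(x,y)^{-\xa}\lesssim M_\xS(x,z)^{-\xa}$ follows from the sub-case hypothesis $M_\xS(x,z)\leq C_0M_\xS(x,y)$ only when $\xa>0$; for $\xa\leq 0$ you instead need the reverse comparison $M_\xS(x,y)\lesssim M_\xS(x,z)$, which does hold here (it is a consequence of $|x-z|>2|x-y|$ alone, namely $M_\xS(x,y)\leq \tfrac{3}{2}M_\xS(x,z)$), but comes from a different source than the sub-case assumption. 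It would be worth flagging that distinction explicitly. What your approach buys is uniformity across the sign of $\xa$ and a more transparent identification of the two structural inputs (the $\xS\subset\prt\xO$ nesting and the exponent bound $\xa\leq N$); what the paper's approach buys, especially for $\xa\leq0$, is that the additive decomposition avoids any sub-case bookkeeping at the cost of introducing the auxiliary inequality \eqref{35}.
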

\begin{proof}[\textbf{Proof}]
\noindent \textbf{Case 1: $0<\xa\leq N$.}  We first assume that $|x-y|<2|x-z|$. Then by the triangle inequality, we have
$d_\xS(z)\leq |x-z|+d_\xS(x)\leq 2\max\{|x-z|,d_\xS(x)\}$ hence
\bal
\max\{|x-z|,d_\xS(x),d_\xS(z)\}\leq 2\max\{ |x-z|,d_\xS(x)\}.
\eal
If $|x-z|\geq d_\xS(x)$ then $|x-z|\geq d_{\partial \Omega}(x)$ which implies that $|x-z|\geq \frac{d_{\partial \Omega}(x)+|x-y|}{4}.$ Now,
\ba\nonumber
&\frac{|x-z|^{N-2}\max\{|x-z|,d_{\partial \Omega}(x),d_{\partial \Omega}(z)\}^2}{\max\{|x-z|,d_\xS(x),d_\xS(z)\}^\xa}\geq 2^{-\xa}|x-z|^{N-\xa}\geq 2^{-2N+\xa}(|x-y|+d_{\partial \Omega}(x))^{N-\xa}\\
&\qquad\qquad=2^{-2N+\xa}\frac{(|x-y|+d_{\partial \Omega}(x))^{N}}{(|x-y|+d_{\partial \Omega}(x))^\xa}\gtrsim \frac{|x-y|^{N-2}\max\{|x-y|,d_{\partial \Omega}(x),d_{\partial \Omega}(y)\}^2}{\max\{|x-y|,d_\xS(x),d_\xS(y)\}^\xa},\label{36}
\ea
since $d_{\partial \Omega}(x)\leq d_\xS(x).$

If $|x-z|\leq d_\xS(x)$ then
\ba\nonumber
\frac{|x-z|^{N-2}\max\{|x-z|,d_{\partial \Omega}(x),d_{\partial \Omega}(z)\}^2}{\max\{|x-z|,d_\xS(x),d_\xS(z)\}^\xa}&\geq 2^{-\xa-2}d_\xS(x)^{-\xa}|x-z|^{N-2}\max\{|x-y|,d_{\partial \Omega}(x)\}^2 \\
&\gtrsim\frac{|x-y|^{N-2}\max\{|x-y|,d_{\partial \Omega}(x),d_{\partial \Omega}(y)\}^2}{\max\{|x-y|,d_\xS(x),d_\xS(y)\}^\xa},\label{37}
\ea
since $d_{\partial \Omega}(y)\leq |x-y|+d_{\partial \Omega}(x)\leq 2\max\{|x-y|,d_\xS(x)\}.$
Combining \eqref{36}--\eqref{37}, we obtain \eqref{dist-ineq}.

Next we consider the case $2|x-z|\leq|x-y|$. Then $\frac{1}{2} |x-y|\leq |y-z|,$ thus by symmetry we obtain \eqref{dist-ineq}. \medskip

\noindent \textbf{Case 2: $\xa\leq 0$.} Let $b\in[0,2],$ since $d_\xS(x)\leq |x-y|+d_\xS(y)$,  it follows that
\bal
\max\{|x-y|,d_\xS(x),d_\xS(y)\}\leq |x-y|+\min\{d_\xS(x),d_\xS(y)\}.
\eal
Using the above estimate, we obtain
\ba\label{35}\BAL
&|x-y|^{N-b}\max\{|x-y|,d_\xS(x),d_\xS(y)\}^{-\xa} \\
&\lesssim |x-y|^{N-b-\xa}+\min\{d_\xS(x),d_\xS(y)\}^{-\xa}|x-y|^{N-b}\\
&\lesssim|x-z|^{N-b-\xa}+|y-z|^{N-b-\xa}+\min\{d_\xS(x),d_\xS(y)\}^{-\xa}(|x-z|^{N-b}+|y-z|^{N-b})\\
&\lesssim
\frac{|x-z|^{N-b}}{\max\{|x-z|,d_\xS(x),d_\xS(z)\}^\xa}+\frac{|z-y|^{N-b}}{\max\{|z-y|,d_\xS(z),d_\xS(y)\}^\xa}.
\EAL
\ea

 \smallskip

 Since $d_{\partial \Omega}(x)\leq |x-y|+d_{\partial \Omega}(y),$ we can easily show that $\max\{|x-y|,d_{\partial \Omega}(x),d_{\partial \Omega}(y)\}\leq |x-y|+\min\{d_{\partial \Omega}(x),d_{\partial \Omega}(y)\}.$ Hence
\bal
\frac{1}{\CN_\xa(x,y)}&=\frac{\max\{|x-y|,d_{\partial \Omega}(x),d_{\partial \Omega}(y)\}^2|x-y|^{N-2}}{\max\{|x-y|,d_\xS(x),d_\xS(y)\}^\xa} \\
&\leq  \frac{2|x-y|^{N}}{\max\{|x-y|,d_\xS(x),d_\xS(y)\}^\xa}
+\frac{2\min\{d_{\partial \Omega}(x),d_{\partial \Omega}(y)\}^2|x-y|^{N-2}}{\max\{|x-y|,d_\xS(x),d_\xS(y)\}^\xa}.
\eal
The desired result follows by the above inequality and \eqref{35}.
\end{proof}

Next we give sufficient conditions for \eqref{2.3} and \eqref{2.4} to hold.
\begin{lemma}\label{l2.3} Let $b>0$, $\theta+b>k-N$ and  $\dd \gw=d_{\partial \Omega}(x)^b d_\xS(x)^\theta \1_{\xO}(x)\,\dx$. Then
\be \label{doubling}
\gw(B(x,s))\approx \max\{d_{\partial \Omega}(x),s\}^b\max\{d_\xS(x),s\}^{\theta} s^N, \; \text{for all } x\in\xO \text{ and } 0 < s\leq 4 \mathrm{diam}(\xO).
\ee
\end{lemma}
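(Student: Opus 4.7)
The plan is to establish the equivalence by a case analysis on the relative sizes of $s$, $d_{\partial \Omega}(x)$, and $d_\Sigma(x)$, using throughout that $d_{\partial\Omega}(y) \leq d_\Sigma(y)$ on $\Omega$ (since $\Sigma \subset \partial\Omega$). I would fix a small $c_0 \in (0,\tfrac12)$ depending on $\Omega,\Sigma,N$ and split into three regimes: $\mathrm{(I)}$ $s \leq c_0 d_{\partial\Omega}(x)$; $\mathrm{(II)}$ $c_0 d_{\partial\Omega}(x) < s \leq c_0 d_\Sigma(x)$; and $\mathrm{(III)}$ $s > c_0 d_\Sigma(x)$. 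In each regime the goal is to sandwich $\omega(B(x,s))$ between constant multiples of $\max\{d_{\partial \Omega}(x),s\}^b \max\{d_\Sigma(x),s\}^\theta s^N$.

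Case $\mathrm{(I)}$ is immediate: by the triangle inequality $d_{\partial\Omega}(y) \approx d_{\partial\Omega}(x)$ and $d_\Sigma(y) \approx d_\Sigma(x)$ throughout $B(x,s) \subset \Omega$, so $\omega(B(x,s)) \approx d_{\partial\Omega}(x)^b d_\Sigma(x)^\theta s^N$, and both maxima on the right are attained at $x$. In Case $\mathrm{(II)}$ I would use the local straightening of $\partial\Omega$ from \eqref{straigh2}--\eqref{straigh2-b}: for $\xi \in \partial\Omega$ with $|x-\xi|=d_{\partial \Omega}(x)$, the set $B(x,s) \cap \Omega$ is bilipschitz-equivalent to a slab $\{(y',t) : |y'-\xi'|\lesssim s,\ 0<t\lesssim s\}$ in which $t \approx d_{\partial \Omega}(y)$. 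Since $s \leq c_0 d_\Sigma(x)$, the factor $d_\Sigma(y)^\theta$ is comparable to $d_\Sigma(x)^\theta$ throughout the slab, whence
\[
\omega(B(x,s)) \approx d_\Sigma(x)^\theta s^{N-1} \int_0^{cs} t^b\, dt \approx d_\Sigma(x)^\theta s^{N+b},
\]
matching the right-hand side in this regime.

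Case $\mathrm{(III)}$ is the main obstacle, since both weights can now degenerate and the hypothesis $b+\theta > k-N$ has to enter. I would use the refined local representation near $\Sigma$ from Subsection \ref{assumptionK}: choosing $\xi \in \Sigma$ with $|x-\xi|=d_\Sigma(x)$, the simultaneous straightening of $\Sigma$ and $\partial\Omega$ (via the coordinates in \eqref{dxi}--\eqref{d2Sigma} together with the comparisons \eqref{propdist2} and \eqref{ddK}) maps $B(x,s)\cap\Omega$ onto a set comparable to $\{(y',y''',y_N) : |y'-\xi'|\lesssim s,\ |y'''|\lesssim s,\ 0<y_N\lesssim s\}$, in which $d_{\partial \Omega}(y)\approx y_N$ and $d_\Sigma(y)\approx\sqrt{y_N^2+|y'''|^2}$. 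Integrating the $y'\in\R^k$ variable contributes a volume $\approx s^k$, while for the remaining integral in $(y''',y_N)\in\R^{N-k-1}\times(0,\infty)$ the substitution $y_N=su$, $y'''=sv$ yields
\[
\omega(B(x,s)) \approx s^k \cdot s^{b+\theta+N-k}\int_0^{c}\!\!\int_{|v|<c} u^b (u^2+|v|^2)^{\theta/2}\,dv\,du.
\]
The last integral is a finite positive constant precisely because $b+\theta+N-k>0$, and a lower bound comes from restricting to $\{u\approx|v|\approx 1\}$. Thus $\omega(B(x,s))\approx s^{N+b+\theta}$, which matches the right-hand side in this regime.

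A separate remark handles the tail subcase $s \gtrsim \diam\Omega$ of Case $\mathrm{(III)}$: there both $\omega(B(x,s)) \approx \omega(\Omega)$ and the right-hand side are comparable to fixed positive constants (the measure $\omega$ being finite precisely thanks to $b+\theta>k-N$), so the equivalence persists. In each regime the lower bound on $\omega(B(x,s))$ is produced by restricting integration to a suitable concentric subregion on which $d_{\partial\Omega}$, $d_\Sigma$ and the $N$-dimensional volume are all controlled from below, while the upper bound follows from the weight estimates above; piecing the three cases together yields the claimed uniform equivalence on $\Omega \times (0, 4\diam(\Omega)]$.
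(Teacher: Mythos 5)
Your proof is correct and follows the same basic strategy as the paper — a trichotomy on the relative sizes of $s$, $d_{\partial\Omega}(x)$, $d_\Sigma(x)$, with the key case handled by the simultaneous flattening of $\Sigma$ and $\partial\Omega$ from Subsection~\ref{assumptionK}. The organization, however, is genuinely different and arguably cleaner in your Case~(III). The paper only establishes the upper bound in its Case~3 (via the crude domination $d_{\partial\Omega}(y)^b d_\Sigma(y)^\theta \lesssim (\delta^\xi_\Sigma(y))^{b+\theta}$, using $b>0$ and $d_{\partial\Omega}\le d_\Sigma$), then splits the lower bound into two further sub-cases according to $\theta<0$ (reducing to a one-dimensional boundary estimate) versus $\theta\geq 0$ (constructing a comparison box $\CA$ inside $B(x,s)$, itself with three more sub-sub-cases). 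Your rescaled integral $\int_0^c\int_{|v|<c} u^b(u^2+|v|^2)^{\theta/2}\,dv\,du$ yields both bounds at once, with finiteness exactly equivalent to $b+\theta>k-N$ (in polar coordinates on $\R_+ \times \R^{N-k-1}$ it reduces to a beta function in the angle and a power $r^{N-k-1+b+\theta}$ in the radius) and positivity from the annular region $u\approx|v|\approx 1$; the sign of $\theta$ never needs to be distinguished. You also re-derive the flattened slab estimate in Case~(II) rather than invoking the external reference \cite[Lemma~2.3, estimate~(2.9)]{BHV} as the paper does, which makes the argument self-contained.

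One small imprecision worth flagging: you state the cutoff for the tail sub-case as $s\gtrsim\diam\Omega$, but the local straightening of Subsection~\ref{assumptionK} is only valid in charts of a fixed small scale $\beta_0$ (or $\beta_1$). So the correct splitting is: do the slab computation for $s\lesssim\beta_1$, and for $\beta_1\lesssim s\leq 4\diam\Omega$ simply observe that both $\omega(B(x,s))$ and the right-hand side of \eqref{doubling} are bounded above and below by fixed constants — $\omega(\Omega)<\infty$ by the integrability hypothesis, and $\omega(B(x,s))\geq\omega(B(\xi,\beta_1))\gtrsim 1$ by the slab estimate at the fixed scale $\beta_1$. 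This is exactly how the paper handles $\frac{\beta_1}{16}\leq s\leq 4\diam\Omega$. The same caveat implicitly applies to your Case~(II); for $s$ of intermediate size one either uses a finite covering by charts or appeals directly to the constant regime. These are adjustments of thresholds only, not a gap in the idea, and with that clarification the proof is complete.
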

\begin{proof}[\textbf{Proof}]
Let $\xb_0$ and $\beta_1$ be as in Subsection \ref{assumptionK} and $s<\frac{\xb_1}{32}.$ We first assume that $x\in \xS_{\frac{\xb_1}{4}}$.

\noindent\textbf{Case 1:}  $d_{\partial \Omega}(x)\geq 2s$. Let $\xG=\partial\xO$ or $\xS.$ Then $\frac{1}{2}d_{\xG}(x)\leq d_\xG(y)\leq\frac{3}{2} d_\xG(x)$ for any $y\in B(x,s)$, therefore \eqref{doubling} follows easily in this case. \medskip

\noindent \textbf{Case 2:} $d_{\partial \Omega}(x)\leq 2s$ and $d_\xS(x)\geq 2s$. By estimate (2.9) in \cite[Lemma 2.3]{BHV}, we have that
\ba\label{101}
\int_{B(x,s)\cap\xO}d_{\partial \Omega}(y)^b \dd y\approx \max\{d_{\partial \Omega}(x),s\}^b s^N.
\ea
Therefore
\bal
\int_{B(x,s)\cap\xO}d_{\partial \Omega}(y)^b d_\xS(y)^\theta\,\dy\approx d_\xS(x)^\theta\int_{B(x,s)\cap\xO}d_{\partial \Omega}(y)^b \dd y\approx \max\{d_{\partial \Omega}(x),s\}^b\max\{d_\xS(x),s\}^{\theta} s^N.
\eal

\noindent \textbf{Case 3:} $d_{\partial \Omega}(x)\leq 2s$ and $d_\xS(x)\leq 2s$. By \eqref{cover}, there exists  $\xi \in\xS $ such that $B(x,s)\cap\xO\subset V_\xS(\xi,\xb_0)$. If $y\in B(x,s),$ then $|y'-x'|<s$ and $d_\xS(y)\leq d_\xS(x)+|x-y|\leq 3s$.
Thus, by \eqref{propdist},
$\xd_\Sigma^\xi(y)\leq C_1s$ for any $y\in B(x,s)\cap\xO$, where $C_1$ depends on $\| \Sigma \|_{C^2},N$ and $k$. Therefore
\bal
\int_{B(x,s)\cap\xO}d_{\partial \Omega}(y)^b d_\xS(y)^\theta \dy &\lesssim \int_{\{|x'-y'|<s\}}\int_{\{\xd_\Sigma^\xi(y)\leq C_1s\}}(\xd_\Sigma^\xi(y))^{\theta+b} \dy''\dy'
\approx s^{N+\theta+b} \\
&\approx \max\{d_{\partial \Omega}(x),s\}^b\max\{d_\xS(x),s\}^{\theta} s^N.
\eal
Here the similar constants depend on $N,k,\| \Sigma \|_{C^2}$ and $\beta_0$. \medskip

\noindent \textbf{Case 4:} $d_{\partial \Omega}(x)\leq 2s$ and $d_\xS(x)\leq 2s$ and $\theta<0$. We have that $d_\xS(y)^\theta \geq3^\theta s^{\theta}$  for any $y\in B(x,s).$ Hence
\bal
\int_{B(x,s)\cap\xO}d_{\partial \Omega}(y)^b d_\xS(y)^\theta\,\dy\gtrsim s^\theta\int_{B(x,s)\cap\xO}d_{\partial \Omega}(y)^b \dd y\approx \max\{d_{\partial \Omega}(x),s\}^b\max\{d_\xS(x),s\}^{\theta} s^N.
\eal

\medskip

\noindent \textbf{Case 5:} $d_{\partial \Omega}(x)\leq 2s$ and $d_\xS(x)\leq 2s$ and $\theta \geq 0$. By \eqref{cover2}, there exists  $\xi \in\xS $ such that $B(x,s)\cap\xO\subset \mathcal{V}_{\Sigma}(\xi,\xb_0).$ Let $C_\xS,$ $C_{\partial\xO}$ be as in \eqref{propdist}, $A$ be as in \eqref{propdist2} and $C_2=\max\{C_\xS \| \Sigma \|_{C^2},C_{\partial\xO}\| \partial\xO \|_{C^2}\}(A+1).$

We first assume that $d_{\partial \Omega}(x)\leq \frac{s}{12NC_2}$ $d_\xS(x) \leq \frac{s}{12NC_2}.$ Set
\begin{align} \nonumber
\mathcal{A}:=\left\{\psi=(\psi',\psi'')\in\xO: |x'-\psi'|<r_0,\; |\xd(\psi)|<r_0,\; |\xd_{2,\xS}(\psi)|<r_0\right\},
\end{align}
where $r_0=\frac{s}{12N(A+1)}.$ By \eqref{propdist}, we have $\xd_\Sigma^\xi(x)\leq\frac{s}{12N(A+1)}$ and $\xd_{\partial\xO}^\xi(x)\leq\frac{s}{12N(A+1)}$. In addition for any $y\in \CA,$  we have
\bal
&|x''-y''|\leq \xd_\Sigma^\xi(x)+\xd_\Sigma^\xi(y)+\left(\sum_{i=k+1}^N|\xG_{i,\xS}^\xi(x')-\xG_{i,\xS}^\xi(y')|^2\right)^\frac{1}{2}\\
&\qquad\qquad\leq  \frac{s}{3}+(N-k)\| \Sigma \|_{C^2}|x'-y'|+ A(\xd_{2,\xS}^\xi(y)+\xd^\xi(y))<s,
\eal
where in the last inequality we used \eqref{propdist2}.
This implies that $\CA\subset B(x,s)$.  Consequently,
\ba\label{202}\BAL
&\int_{B(x,s)\cap\xO}d_{\partial \Omega}(y)^b d_\xS(y)^\theta\dy\approx \int_{B(x,s)\cap\xO}\xd^\xi(y)^b(\xd_\xS^\xi(y))^\theta \dy\gtrsim \int_{\CA}\xd^\xi(y)^b(\xd^\xi(y)+\xd_{2,\xS}^\xi(y))^\theta \dy\\
&\qquad\qquad\approx  s^{N+\theta+b} \approx C\approx \max\{d_{\partial \Omega}(x),s\}^b\max\{d_\xS(x),s\}^{\theta} s^N.
\EAL \ea

If $d_{\partial \Omega}(x)\leq \frac{s}{12NC_2}$ and $d_\xS(x)\geq \frac{s}{12NC_2}$ then
\bal
\int_{B(x,s)}d_{\partial \Omega}(y)^b d_\xS(y)^\theta \dy\geq\int_{B(x,\frac{s}{24NC_2})}d_{\partial \Omega}(y)^b d_\xS(y)^\theta \dy
\eal
and hence \eqref{202} follows by case 2.

If $d_{\partial \Omega}(x)\geq \frac{s}{12NC_2}$ then
\bal
\int_{B(x,s)}d_{\partial \Omega}(y)^b d_\xS(y)^\theta \dy\geq\int_{B(x,\frac{s}{24NC_2})}d_{\partial \Omega}(y)^b d_\xS(y)^\theta \dy
\eal
and hence \eqref{202} follows by Case 1.

Next we consider $x\in \xO\setminus\xS_{\frac{\xb_1}{4}}$ and $s<\frac{\xb_1}{32}$. Then $d_\xS(y)\approx 1$ for any $y\in \xO\cap B(x,s)$. This, together  with \eqref{101}, implies the desired result.

If $ \frac{\xb_1}{16}\leq s\leq 4 \text{diam}(\xO)$ then  $\gw(B(x,s))\approx 1$, hence estimate \eqref{doubling} follows straightforward. The proof is complete.
\end{proof}

\begin{lemma}\label{vol}
Let $\xa< N$, $b> 0$, $\theta>\max\{k-N-b,-b-\xa\}$ and $\dd \gw=d_{\partial \Omega}(x)^b d_\xS(x)^\theta$ $\1_{\Omega }(x)\,\dx$.  Then \eqref{2.3}  holds.
\end{lemma}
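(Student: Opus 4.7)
The plan is to reduce \eqref{2.3} to the pointwise doubling inequality
\[
\omega(\GTB(x, 2s)) \leq C\, \omega(\GTB(x, s)) \quad \forall\, x \in \overline{\Omega},\; s > 0.
\]
Once this is proved, \eqref{2.3} follows immediately from the change of variable $s = 2u$:
\[
\int_0^{2r} \frac{\omega(\GTB(x,s))}{s^2}\, ds = \frac{1}{2} \int_0^{r} \frac{\omega(\GTB(x, 2u))}{u^2}\, du \leq \frac{C}{2} \int_0^{r} \frac{\omega(\GTB(x, u))}{u^2}\, du.
\]

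First I would replace the quasi-distance $\mathbf{d}(x,y) := 1/\CN_\alpha(x,y)$ by a one-variable function of $|x-y|$. The bounds $|d_\Sigma(x) - d_\Sigma(y)|, |d_{\partial\Omega}(x) - d_{\partial\Omega}(y)| \leq |x-y|$ imply
\[
\max\{|x-y|, d_\Sigma(x), d_\Sigma(y)\} \approx \max\{|x-y|, d_\Sigma(x)\},
\]
and similarly for $d_{\partial\Omega}$, so that
\[
\mathbf{d}(x,y) \approx \rho(x, |x-y|), \qquad \rho(x,t) := \frac{t^{N-2}\max\{t, d_{\partial\Omega}(x)\}^2}{\max\{t, d_\Sigma(x)\}^\alpha}.
\]
Using $d_{\partial\Omega}(x) \leq d_\Sigma(x)$, the map $t \mapsto \rho(x, t)$ is continuous, strictly increasing, and piecewise a positive power of $t$ on the intervals $(0, d_{\partial\Omega}(x))$, $(d_{\partial\Omega}(x), d_\Sigma(x))$, $(d_\Sigma(x), \infty)$ with exponents $N-2$, $N$, $N-\alpha$ respectively (the first two intervals collapse in the boundary cases $x \in \partial\Omega \setminus \Sigma$ and $x \in \Sigma$). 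Let $r(x, \cdot)$ denote its inverse; the comparability $\mathbf{d} \approx \rho$ yields constants $0 < c_1 < c_2$ independent of $x$ with
\[
B(x, c_1\, r(x,s)) \cap \Omega \;\subset\; \GTB(x, s) \cap \Omega \;\subset\; B(x, c_2\, r(x,s)) \cap \Omega.
\]
Since $\omega$ is supported in $\Omega$, this gives $\omega(\GTB(x,s)) \approx \omega(B(x, r(x,s)) \cap \Omega)$.

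Second I would invoke Lemma \ref{l2.3} (extended by continuity to $x \in \partial\Omega$; the argument there relies only on distances and carries over when $d_{\partial\Omega}(x) = 0$ or $d_\Sigma(x) = 0$) to get
\[
\omega(B(x, r) \cap \Omega) \approx \max\{d_{\partial\Omega}(x), r\}^b \max\{d_\Sigma(x), r\}^\theta r^N,
\]
valid up to $r \approx \diam(\Omega)$; for larger $r$ the left-hand side is bounded by $\omega(\Omega) < \infty$, which is trivially doubling. Since $\rho$ scales as a pure power in each of the three regimes, the ratio $r(x, 2s)/r(x, s)$ is bounded above by $\max\{2^{1/(N-2)}, 2^{1/N}, 2^{1/(N-\alpha)}\}$, a constant depending only on $N$ and $\alpha$. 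Combining this with the monotonicity of the right-hand side above in $r$ (using $b > 0$ and $N + b + \theta > k \geq 0$, which follows from $\theta + b > k - N$), and with the fact that the doubling constant in each regime is a bounded power of $r(x,2s)/r(x,s)$, one obtains
\[
\frac{\omega(\GTB(x, 2s))}{\omega(\GTB(x, s))} \approx \frac{\omega(B(x, r(x, 2s)) \cap \Omega)}{\omega(B(x, r(x, s)) \cap \Omega)} \leq C,
\]
with $C$ depending only on $N, k, b, \theta, \alpha, \Omega, \Sigma$.

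The hard part will be the careful bookkeeping across the three regimes of $\rho$, particularly at the transition points $r = d_{\partial\Omega}(x)$ and $r = d_\Sigma(x)$, together with the limiting extension of Lemma \ref{l2.3} to boundary points $x \in \partial\Omega \setminus \Sigma$ and $x \in \Sigma$. The additional hypothesis $\theta + b + \alpha > 0$ beyond those of Lemma \ref{l2.3} does not enter the doubling argument itself, but will be needed to guarantee integrability of the integrand in \eqref{2.3} near $s = 0$ for $x \in \Sigma$: there only the large-scale regime is available and $\omega(\GTB(x,s))/s^2 \approx s^{(N+b+\theta)/(N-\alpha) - 2}$, which is integrable at $0$ precisely when $\theta + b + \alpha > 0$.
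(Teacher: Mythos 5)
Your proposal is correct and rests on the same two pillars as the paper's proof: comparing the quasi-metric ball $\GTB(x,s)$ (as a subset of $\Omega$) to a Euclidean ball, and computing the $\omega$-mass of Euclidean balls via Lemma~\ref{l2.3}. Where you differ is the organization of the final step. The paper derives the explicit four-regime formula \eqref{xoest1a} for $\omega(\GTB(x,s))$, integrates it to obtain \eqref{xoest1}, and reads off the doubling of $s\mapsto\int_0^s$ directly (this explicit formula \eqref{xoest1} is then reused in the proof of Lemma~\ref{vol-2}, so the extra precision is not wasted work). You instead reduce \eqref{2.3} by the change of variable $s=2u$ to the pointwise doubling $\omega(\GTB(x,2s))\lesssim\omega(\GTB(x,s))$, and observe that this follows because the radial profile $\rho(x,t)=t^{N-2}\max\{t,d_{\partial\Omega}(x)\}^2\max\{t,d_\Sigma(x)\}^{-\alpha}$ is continuous, strictly increasing, and piecewise a power of $t$ with exponents $N-2$, $N$, $N-\alpha$, all positive since $\alpha<N$; hence its inverse satisfies $r(x,2s)/r(x,s)\leq 2^{\max\{1/(N-2),\,1/N,\,1/(N-\alpha)\}}$ uniformly, which combined with the Euclidean doubling supplied by Lemma~\ref{l2.3} gives the pointwise bound. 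This is a slightly more streamlined route if \eqref{2.3} alone is the target. The transition-point bookkeeping you flag does close: on any window where $\rho(x,\cdot)$ changes exponent from $\beta_1$ to $\beta_2$ the ratio $r(x,2s)/r(x,s)$ is still at most $2^{\max\{1/\beta_1,1/\beta_2\}}$. And you correctly identify the role of the three hypotheses in making $\int_0^{2r}$ finite near $s=0$: $b+\theta+\alpha>0$ handles the regime $s\geq d_\Sigma(x)^{N-\alpha}$ (the only one available when $d_\Sigma(x)=0$), $b>0$ handles the middle regime, and $N>2$ the innermost one.
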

\begin{proof}[\textbf{Proof}]
We note that if $s\geq (4\diam(\xO))^{N-\xa}$ then $\xo(\GTB(x,s))=\xo(\overline{\xO})<\infty$, where $\GTB(x,s)$ is defined after \eqref{Jest}, namely $\GTB(x,s)=\{y \in \Omega \setminus \Sigma: \mathbf{d}(x,y)<s\}$ and $\mathbf{d}(x,y)= \frac{1}{\CN_\xa(x,y)}$.
Let $M=(4\diam\xO)^{N-\xa}.$ We first note that it is enough to show that
\ba\label{xoest1a}
\gw\left(\GTB(x,t)\right)\approx\left\{
\BAL
&d_{\partial \Omega}(x)^{b-\frac{2N}{N-2}} d_\xS(x)^{\theta+\frac{\xa N}{N-2}} s^\frac{N}{N-2}&&\quad \text{if}\; s\leq d_{\partial \Omega}(x)^N d_\xS(x)^{-\xa},\\
&s^\frac{b+N}{N}d_\xS(x)^{\theta+\xa\frac{b+N}{N}}&&\quad \text{if}\;d_{\partial \Omega}(x)^N d_\xS(x)^{-\xa}<s\leq d_\xS(x)^{N-\xa},\\
&s^\frac{b+\theta+N}{N-\xa}&&\quad \text{if}\; d_\xS(x)^{N-\xa}\leq s\leq M,\\
&M^\frac{b+\theta+N}{N-\xa}&&\quad \text{if}\;  M\leq s.
\EAL\right.
\ea
Indeed, by the above display, we can easily deduce that
\ba\label{xoest1}
\int_0^{s}\frac{\gw\left(\GTB(x,t)\right)}{t^2} \dd t\approx\left\{
\BAL
&d_{\partial \Omega}(x)^{b-\frac{2N}{N-2}}d_\xS(x)^{\theta+\frac{\xa N}{N-2}}s^\frac{2}{N-2} &&\quad \text{if}\; s\leq d_{\partial \Omega}(x)^N d_\xS(x)^{-\xa}, \\
&s^\frac{b}{N}d_\xS(x)^{\theta+\xa\frac{b+N}{N}}&&\quad \text{if}\;d_{\partial \Omega}(x)^Nd_\xS(x)^{-\xa}<s\leq d_\xS(x)^{N-\xa},\\
&s^\frac{b+\theta+\xa}{N-\xa}&&\quad \text{if}\; d_\xS(x)^{N-\xa}\leq s\leq M,\\
&M^\frac{b+\theta+\xa}{N-\xa}&&\quad \text{if}\;  M\leq s,
\EAL\right.
\ea
since $b>0$ and $b+\theta+\xa>0.$ This in turn implies \eqref{2.3}.

In order to show \eqref{xoest1a}, we will consider three cases.

\noindent \textbf{Case 1:} $s\leq d_{\partial \Omega}(x)^Nd_\xS(x)^{-\xa}$.

a) Let $y\in\GTB(x,s)$ be such that $d_{\partial \Omega}(x)\leq|x-y|$ and $d_\xS(x)\leq |x-y|.$ Then
\bal
\frac{1}{\CN_\xa(x,y)}\approx|x-y|^{N-\xa},
\eal
thus if $|x-y|^{N-\xa}\leq s\leq d_{\partial \Omega}(x)^N d_\xS(x)^{-\xa}\leq d_{\partial \Omega}(x)^{N-\xa}$ then $d_{\partial \Omega}(x)\approx d_\xS(x)\approx |x-y|.$ Hence, there exist constants $C_1,C_2$ depending only on $\xa,N$ such that
\ba\label{case1a}\BAL
& \left\{y\in\xO:|x-y|\leq C_1(d_\xS(x)^\xa d_{\partial \Omega}(x)^{-2}s)^\frac{1}{N-2},\;d_{\partial \Omega}(x)\leq|x-y|,\;d_\xS(x)\leq |x-y| \right\}\\
&\quad\subset \left\{y\in\xO:\frac{1}{\CN_\xa(x,y)}\leq s,\;d_{\partial \Omega}(x)\leq|x-y|,\;d_\xS(x)\leq |x-y| \right\}\\
&\qquad\subset \left\{y\in\xO:|x-y|\leq C_2(d_\xS(x)^\xa d_{\partial \Omega}(x)^{-2} s)^\frac{1}{N-2},\;d_{\partial \Omega}(x)\leq|x-y|,\;d_\xS(x)\leq |x-y| \right\}.
\EAL
\ea

\medskip

b) Let $y\in\GTB(x,s)$ be such that $d_{\partial \Omega}(x)\leq|x-y|$ and $d_\xS(x)> |x-y|.$ Then
\bal
\frac{1}{\CN_\xa(x,y)}\approx|x-y|^{N}d_\xS(x)^{-\xa},
\eal
thus if $|x-y|^{N}d_\xS(x)^{-\xa} \leq s,$ then $|x-y|^{N}\leq s d_\xS(x)^{\xa}\leq d_{\partial \Omega}(x)^N$. Thus  $d_{\partial \Omega}(x)\approx |x-y|.$ Hence, there exist constants $C_1,C_2$ depending only on $\xa,N$ such that
\ba\label{case1b}\BAL
&\left\{y\in\xO:|x-y|\leq C_1(d_\xS(x)^\xa d_{\partial \Omega}(x)^{-2} s)^\frac{1}{N-2},\;d_{\partial \Omega}(x)\leq|x-y|,\;d_\xS(x)> |x-y| \right\}\\
&\quad \subset \left\{y\in\xO:\frac{1}{\CN_\xa(x,y)}\leq s,\;d_{\partial \Omega}(x)\leq|x-y|,\;d_\xS(x)> |x-y|\right\}\\
&\qquad\subset \left\{y\in\xO:|x-y|\leq C_2(d_\xS(x)^\xa d_{\partial \Omega}(x)^{-2} s)^\frac{1}{N-2},\;d_{\partial \Omega}(x)\leq|x-y|,\;d_\xS(x)> |x-y| \right\}.
\EAL
\ea

\medskip

c)  Let $y\in\GTB(x,s)$ be such that $d_{\partial \Omega}(x)>|x-y|.$ Then, $d_\xS(x)\geq d_{\partial \Omega}(x)>|x-y|$ and
\bal
\frac{1}{\CN_\xa(x,y)}\approx|x-y|^{N-2}d_{\partial \Omega}(x)^2 d_\xS(x)^{-\xa}.
\eal
 Hence, there exist constants $C_1,C_2$ depending only on $\xa,N$ such that
\ba\label{case1c}\BAL
& \left\{x\in\xO:|x-y|\leq C_1(d_\xS(x)^\xa d_{\partial \Omega}(x)^{-2} s)^\frac{1}{N-2},\;d_{\partial \Omega}(x)>|x-y| \right\}\\
&\quad\subset \left\{x\in\xO:\frac{1}{\CN_\xa(x,y)}\leq s,\;d_{\partial \Omega}(x)>|x-y| \right\}\\
&\qquad\subset \left\{x\in\xO:|x-y|\leq C_2(d_\xS(x)^\xa d_{\partial \Omega}(x)^{-2} s)^\frac{1}{N-2},\;d_{\partial \Omega}(x)>|x-y|\right\}.
\EAL
\ea

Combining \eqref{case1a}--\eqref{case1c} and Lemma \ref{l2.3}, we deduce
\bal
\xo(\GTB(x,s))\approx\xo(B(x,s_1))\approx d_{\partial \Omega}(x)^{b-\frac{2N}{N-2}} d_\xS(x)^{\theta+\frac{\xa N}{N-2}} s^\frac{N}{N-2},
\eal
where $s_1=(d_\xS(x)^\xa d_{\partial \Omega}(x)^{-2}s)^\frac{1}{N-2}$.

\medskip

\noindent \textbf{Case 2:} $ d_{\partial \Omega}(x)^N d_\xS(x)^{-\xa}<s\leq d_\xS(x)^{N-\xa}.$

a) Let $y\in\GTB(x,s)$ be such that $d_{\partial \Omega}(x)\leq|x-y|$ and $d_\xS(x)\leq |x-y|.$ Then
\bal
\frac{1}{\CN_\xa(x,y)}\approx|x-y|^{N-\xa}.
\eal
Thus, if $|x-y|^{N-\xa}\leq s\leq d_\xS(x)^{N-\xa}$ then $d_\xS(x)\approx |x-y|$. Hence, there exist constants $C_1,C_2$ which depending only on $\xa,N$ such that
\ba\label{case2a}\BAL
&\left\{y\in\xO:|x-y|\leq C_1(d_\xS(x)^\xa s)^\frac{1}{N},\;d_{\partial \Omega}(x)\leq|x-y|,\;d_\xS(x)\leq |x-y| \right\}\\
&\quad\subset \left\{y\in\xO:\frac{1}{\CN_\xa(x,y)}\leq s,\;d_{\partial \Omega}(x)\leq|x-y|,\;d_\xS(x)\leq |x-y| \right\}\\
&\qquad\subset \left\{y\in\xO:|x-y|\leq C_2(d_\xS(x)^\xa s)^\frac{1}{N},\;d_{\partial \Omega}(x)\leq|x-y|,\;d_\xS(x)\leq |x-y| \right\}.
\EAL
\ea

\medskip

b) Let $y\in\GTB(x,s)$ be such that $d_{\partial \Omega}(x)\leq|x-y|$ and $d_\xS(x)> |x-y|.$ Then
\bal
\frac{1}{\CN_\xa(x,y)}\approx|x-y|^{N}d_\xS(x)^{-\xa}.
\eal
Hence, there exist constants $C_1,C_2$ depending only on $\xa,N$ such that
\ba\label{case2b}\BAL
&\left\{y\in\xO:|x-y|\leq C_1(d_\xS(x)^\xa s)^\frac{1}{N},\;d_{\partial \Omega}(x)\leq|x-y|,\;d_\xS(x)> |x-y|\right \}\\
&\quad\subset \left\{y\in\xO:\frac{1}{\CN_\xa(x,y)}\leq s,\;d_{\partial \Omega}(x)\leq|x-y|,\;d_\xS(x)> |x-y|\right \}\\
&\qquad\subset \left\{y\in\xO:|x-y|\leq C_2(d_\xS(x)^\xa s)^\frac{1}{N},\;d_{\partial \Omega}(x)\leq|x-y|,\;d_\xS(x)> |x-y| \right\}.
\EAL
\ea

\medskip

c)  Let $y\in\GTB(x,s)$ be such that $d_{\partial \Omega}(x)>|x-y|.$ Then $d_\xS(x)> |x-y|.$ In addition,
\bal
\frac{1}{\CN_\xa(x,y)}\approx|x-y|^{N-2}d_{\partial \Omega}(x)^2 d_\xS(x)^{-\xa},
\eal
\bal
|x-y|^{N-2}d_{\partial \Omega}(x)^2d_\xS(x)^{-\xa}\geq |x-y|^{N}d_\xS(x)^{-\xa},
\eal
and
\bal
|x-y|\leq\bigg(d_\xS(x)^\xa s\bigg)^\frac{1}{N}=\bigg(d_\xS(x)^\xa s\bigg)^\frac{1}{N-2}\bigg(d_\xS(x)^\xa s\bigg)^{-\frac{2}{N(N-2)}}
\leq\bigg(d_\xS(x)^\xa d_{\partial \Omega}(x)^{-2} s\bigg)^\frac{1}{N-2},
\eal
since $d_{\partial \Omega}(x)^N d_\xS(x)^{-\xa}<s$. Hence, there exist constants $C_1,C_2$ depending only on $\xa,N$ such that
\ba\label{case2c}\BAL
&\left\{y\in\xO:|x-y|\leq C_1(d_\xS(x)^\xa s)^\frac{1}{N},\;d_{\partial \Omega}(x)>|x-y| \right \}\\
&\quad \subset \left\{y\in\xO:\frac{1}{\CN_\xa(x,y)}\leq s,\;d_{\partial \Omega}(x)>|x-y| \right\}\\
&\qquad \subset \left\{y\in\xO:|x-y|\leq C_2(d_\xS(x)^\xa s)^\frac{1}{N},\;d_{\partial \Omega}(x)>|x-y| \right\}.
\EAL
\ea

Combining \eqref{case2a}-\eqref{case2c} and Lemma \ref{l2.3}, we derive
\bal
\xo(\GTB(x,s))\approx\xo(B(x,s_2))\approx s^\frac{b+N}{N}d_\xS(x)^{\theta+\xa\frac{b+N}{N}},
\eal
where $s_2=\bigg(d_\xS(x)^{\xa}s\bigg)^\frac{1}{N}.$

\medskip

\noindent \textbf{Case 3:} $  d_\xS(x)^{N-\xa}<s\leq (4\diam(\xO))^{N-\xa}.$

a) Let $y\in\GTB(x,s)$ be such that $d_{\partial \Omega}(x)\leq|x-y|$ and $d_\xS(x)\leq |x-y|.$ Then
\bal
\frac{1}{\CN_\xa(x,y)}\approx|x-y|^{N-\xa}.
\eal
Hence, there exist constants $C_1,C_2$ which depends only on $\xa,N$ such that
\ba\label{case3a}\BAL
&\left\{y\in\xO:|x-y|\leq C_1s^\frac{1}{N-\xa},\;d_{\partial \Omega}(x)\leq|x-y|,\;d_\xS(x)\leq |x-y| \right\}\\
&\quad\subset \left\{y\in\xO:\frac{1}{\CN_\xa(x,y)}\leq s,\;d_{\partial \Omega}(x)\leq|x-y|,\;d_\xS(x)\leq |x-y| \right\}\\
&\qquad\subset \left\{y\in\xO:|x-y|\leq C_2s^\frac{1}{N-\xa},\;d_{\partial \Omega}(x)\leq|x-y|,\;d_\xS(x)\leq |x-y| \right\}.
\EAL
\ea

\medskip

b) Let $y\in\GTB(x,s)$ be such that $d_{\partial \Omega}(x)\leq|x-y|$ and $d_\xS(x)> |x-y|.$ Then
\bal
\frac{1}{\CN_\xa(x,y)}\approx|x-y|^{N}d_\xS(x)^{-\xa}.
\eal

On one hand, if $\xa>0,$ we have
\bal
|x-y|^{N}d_\xS(x)^{-\xa}\leq|x-y|^{N-\xa}
\eal
and
\bal
|x-y|\leq\bigg(d_\xS(x)^\xa s\bigg)^\frac{1}{N}=s^\frac{1}{N-\xa} s^{-\frac{\xa}{N(N-\xa)}} d_\xS(x)^{\frac{\xa}{N}}
\leq s^\frac{1}{N-\xa},
\eal
since $d_\xS(x)^{N-\xa}<s$.

On the other hand, if $\xa\leq 0$ then
\bal
|x-y|^{N}d_\xS(x)^{-\xa}\geq|x-y|^{N-\xa}
\eal
and
\bal
|x-y|\leq s^\frac{1}{N-\xa}=s^\frac{1}{N} s^{\frac{\xa}{N(N-\xa)}}\leq \bigg(d_\xS(x)^\xa s\bigg)^\frac{1}{N},
\eal
since $d_\xS(x)^{N-\xa}<s$.

Hence, there exist constants $C_1,C_2$ which depends only on $\xa,N$ such that
\ba\label{case3b}\BAL
&\left \{y\in\xO:|x-y|\leq C_1s^\frac{1}{N-\xa},\;d_{\partial \Omega}(x)\leq|x-y|,\;d_\xS(x)> |x-y| \right \}\\
&\quad\subset \left\{y\in\xO:\frac{1}{\CN_\xa(x,y)}\leq s,\;d_{\partial \Omega}(x)\leq|x-y|,\;d_\xS(x)> |x-y| \right \}\\
&\qquad\subset \left \{y\in\xO:|x-y|\leq C_2s^\frac{1}{N-\xa},\;d_{\partial \Omega}(x)\leq|x-y|,\;d_\xS(x)> |x-y| \right \}.
\EAL
\ea

\medskip

c)  Let $y\in\GTB(x,s)$ be such that $d_{\partial \Omega}(x)>|x-y|.$ Then $d_\xS(x)> |x-y|$ and
\bal
\frac{1}{\CN_\xa(x,y)}\approx|x-y|^{N-2}d_{\partial \Omega}(x)^2d_\xS(x)^{-\xa}.
\eal

In view of the proof of \eqref{case2c}, we may deduce the existence of positive constants $C_1,C_2$ depending only on $\xa,N$ such that
\bal
&\left \{y\in\xO:|x-y|\leq C_1(d_\xS(x)^\xa s)^\frac{1}{N},\;d_{\partial \Omega}(x)>|x-y| \right \}\\
&\quad\subset \left\{y\in\xO:\frac{1}{\CN_\xa(x,y)}\geq s,\;d_{\partial \Omega}(x)>|x-y| \right\}\\
&\qquad\subset \left\{y\in\xO:|x-y|\leq C_2(d_\xS(x)^\xa s)^\frac{1}{N},\;d_{\partial \Omega}(x)>|x-y| \right\}.
\eal

This and \eqref{case3b} imply the existence of two positive constants $\tilde C_1,\tilde C_2$ depending only on $\xa,N$ such that
\ba\label{case3c}\BAL
&\left\{y\in\xO:|x-y|\leq \tilde C_1s^\frac{1}{N-\xa},\;d_{\partial \Omega}(x)>|x-y| \right \}\\
&\quad\subset \left \{y\in\xO:\frac{1}{\CN_\xa(x,y)}\leq s,\;d_{\partial \Omega}(x)>|x-y| \right \}\\
&\qquad\subset \left\{y\in\xO:|x-y|\leq \tilde C_2s^\frac{1}{N-\xa},\;d_{\partial \Omega}(x)>|x-y| \right\}.
\EAL
\ea

Combining \eqref{case3a}-\eqref{case3c} and Lemma \ref{l2.3}, we obtain
\bal
\xo(\GTB(x,s))\approx\xo(B(x,s_3))\approx s^\frac{b+\theta+N}{N-\xa},
\eal
where $s_3=s^\frac{1}{N-\xa}.$

The proof is complete.
\end{proof}

\begin{lemma}\label{vol-2}
Let $\xa< N$, $b> 0$, $\theta>\max\{k-N-b,-b-\xa\}$ and $\dd \gw=d_{\partial \Omega}(x)^b d_\xS(x)^\theta$ $\1_{\Omega }(x)\,\dx$.  Then \eqref{2.4}  holds.
\end{lemma}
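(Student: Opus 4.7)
The plan is to derive \eqref{2.4} from the explicit volume estimates \eqref{xoest1a}--\eqref{xoest1} already obtained in Lemma \ref{vol}, combined with the quasi-metric inequality \eqref{dist-ineq} of Lemma \ref{ineq} and an elementary doubling property for the measure $\gw$. My central observation is that, in each of the three non-trivial regimes ($r\le d_{\partial\xO}(x)^N d_\xS(x)^{-\xa}$, $d_{\partial\xO}(x)^N d_\xS(x)^{-\xa}<r\le d_\xS(x)^{N-\xa}$, and $d_\xS(x)^{N-\xa}\le r\le M:=(4\diam\xO)^{N-\xa}$), the function $s\mapsto \gw(\GTB(x,s))$ is comparable to a power of $s$ whose exponent ($N/(N-2)$, $(b+N)/N$, or $(b+\theta+N)/(N-\xa)$) is strictly greater than $1$. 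A direct integration then yields
\bal
\int_0^r \frac{\gw(\GTB(x,s))}{s^2}\, \dd s \approx \frac{\gw(\GTB(x,r))}{r}, \qquad 0<r\le M,
\eal
while for $r>M$ the integral on the left is bounded above and below by positive constants depending only on the parameters.

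Next I would verify the doubling property $\gw(\GTB(x,2r))\le C\gw(\GTB(x,r))$ for every $x\in\xO$ and $r>0$. Within each regime this follows by explicit computation from \eqref{xoest1a}. Across regime boundaries it propagates because the piecewise expressions in \eqref{xoest1a} agree (up to multiplicative constants) at the transition radii $u_1(x):=d_{\partial\xO}(x)^Nd_\xS(x)^{-\xa}$ and $u_2(x):=d_\xS(x)^{N-\xa}$; for instance at $r=u_1(x)$ both formulas reduce to $d_{\partial\xO}(x)^{b+N}d_\xS(x)^\theta$ after a short bookkeeping, and similarly at $r=u_2(x)$. Iterating gives $\gw(\GTB(x,Lr))\le C_L\gw(\GTB(x,r))$ for every fixed $L\ge 1$.

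With these two ingredients in hand the conclusion is short. Let $c_0=c_0(\xO,\xS,\xa)>1$ be the constant in \eqref{dist-ineq}. If $y\in\GTB(x,r)$ and $z\in\GTB(y,r)$, then $\mathbf{d}(x,z)\le c_0(\mathbf{d}(x,y)+\mathbf{d}(y,z))<2c_0 r$, so $\GTB(y,r)\subset\GTB(x,2c_0 r)$. For $r\le M$, combining this set inclusion with the previous two steps yields, uniformly in $y\in\GTB(x,r)$,
\bal
\int_0^r \frac{\gw(\GTB(y,s))}{s^2}\, \dd s
\approx \frac{\gw(\GTB(y,r))}{r}
\le \frac{\gw(\GTB(x,2c_0 r))}{r}
\lesssim \frac{\gw(\GTB(x,r))}{r}
\approx \int_0^r \frac{\gw(\GTB(x,s))}{s^2}\, \dd s,
\eal
which is \eqref{2.4}. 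For $r>M$ both sides are comparable to a single positive constant and \eqref{2.4} is trivial.

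The main technical obstacle I expect is the uniform verification of the doubling step across the three regimes, in particular at the transition radii $u_1(x)$ and $u_2(x)$, where one has to check that the implicit constants in \eqref{xoest1a} produce a uniformly bounded ratio. Once that is settled, the equivalence established in the first step funnels everything through the single scalar quantity $\gw(\GTB(x,r))$, which is then controlled by the set inclusion $\GTB(y,r)\subset\GTB(x,2c_0 r)$ provided by the quasi-metric inequality.
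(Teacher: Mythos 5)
Your proof is correct but takes a genuinely different route from the paper's. The paper's argument, like yours, begins from the explicit formula \eqref{xoest1} obtained in Lemma \ref{vol}, but then it compares $\int_0^r\gw(\GTB(x,s))s^{-2}\dd s$ and $\int_0^r\gw(\GTB(y,s))s^{-2}\dd s$ directly by a three-way case analysis on the relative sizes of $|x-y|$, $d_{\partial\xO}(x)$, $d_\xS(x)$: in each case it establishes $d_{\partial\xO}(y)\approx d_{\partial\xO}(x)$ or $d_{\partial\xO}(y)\lesssim|x-y|$ (and likewise for $d_\xS$), feeds these comparabilities into \eqref{xoest1}, and checks that the resulting expressions agree. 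Your argument instead extracts two structural facts from \eqref{xoest1a}: (a) the one-scale reduction $\int_0^r\gw(\GTB(x,s))s^{-2}\dd s\approx\gw(\GTB(x,r))/r$ (which, as you note, hinges on all three exponents $N/(N-2)$, $(b+N)/N$, $(b+\theta+N)/(N-\xa)$ exceeding $1$, equivalent to $b>0$ and $\theta>-b-\xa$), and (b) the doubling of $\gw$ with respect to $\GTB$-balls; you then conclude via the inclusion $\GTB(y,r)\subset\GTB(x,2c_0r)$ supplied by Lemma \ref{ineq}. This is a cleaner, more conceptual route: it funnels everything through the scalar quantity $\gw(\GTB(x,r))$ and does not require tracking how $d_{\partial\xO}(y)$ and $d_\xS(y)$ relate to $d_{\partial\xO}(x)$, $d_\xS(x)$, $|x-y|$. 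The price is that you must verify doubling uniformly across the transition radii $u_1=d_{\partial\xO}(x)^Nd_\xS(x)^{-\xa}$ and $u_2=d_\xS(x)^{N-\xa}$ (and past $M$); your observation that the piecewise formulas in \eqref{xoest1a} agree at those radii, together with the exponents being bounded, does close that gap. The paper's route avoids abstract doubling but at the cost of a longer case-by-case computation, and it also implicitly uses the matching at breakpoints via \eqref{xoest1}, so the underlying bookkeeping is comparable.
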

\begin{proof}[\textbf{Proof}]
Let $y\in\GTB(x,s).$ We will consider three cases.

\noindent \textbf{Case 1:} $d_{\partial \Omega}(x)\leq 2|x-y|$ and $d_\xS(x)\leq 2|x-y|.$ We can easily show that $d_{\partial \Omega}(y)\leq 3 |x-y|,$ $d_\xS(y)\leq 3|x-y|$ and
\bal
\frac{1}{\CN_\xa(x,y)}\approx|x-y|^{N-\xa}.
\eal
Therefore $|x-y|\lesssim s^{\frac{1}{N-\xa}},$ which implies that
$ d_\xS(x)+d_\xS(y)\lesssim s^{\frac{1}{N-\xa}}$.
By \eqref{xoest1}, we can easily show that
\bal
\int_0^{s}\frac{\gw\left(\GTB(x,t)\right)}{t^2} \dd t\approx\int_0^{s}\frac{\gw\left(\GTB(y,t)\right)}{t^2} \dd t\approx s^\frac{b+\theta+\xa}{N-\xa}.
\eal

\medskip

\noindent \textbf{Case 2:} $d_{\partial \Omega}(x)\leq 2|x-y|$ and $d_\xS(x)\geq 2|x-y|.$ In this case we have that $d_{\partial \Omega}(y)\leq 3 |x-y|,$ $\frac{1}{2}d_\xS(x)\leq d_\xS(y)\leq\frac{3}{2}d_\xS(x)$ and
\bal
\frac{1}{\CN_\xa(x,y)}\approx|x-y|^{N}d_\xS^{-\xa}(x)\approx|x-y|^{N}d_\xS(y)^{-\xa}.
\eal
This implies
\bal
d_{\partial \Omega}(x)^N d_\xS(x)^{-\xa}\lesssim s \quad\text{and}\quad d_{\partial \Omega}(y)^N d_\xS(y)^{-\xa}\lesssim s.
\eal
By \eqref{xoest1}, we obtain
\bal
\int_0^{s}\frac{\gw\left(\GTB(x,t)\right)}{t^2} \dd t\approx\int_0^{s}\frac{\gw\left(\GTB(y,t)\right)}{t^2} \dd t\approx\left\{
\BAL
&s^\frac{b+\theta+\xa}{N-\xa} &&\text{if}\; d_\xS(x)^{N-\xa} \leq s,\\
&s^\frac{b}{N}d_\xS(x)^{\theta+\xa\frac{b+N}{N}} &&\text{if}\; s\leq d_\xS(x)^{N-\xa}.
\EAL\right.
\eal

\medskip

\noindent \textbf{Case 3:} $d_{\partial \Omega}(x)\geq 2|x-y|$. We first note that $d_\xS(x)\geq 2|x-y|$,
\bal
\frac{1}{2}d_{\partial \Omega}(x)\leq d_{\partial \Omega}(y)\leq\frac{3}{2}d_{\partial \Omega}(x)\quad\text{and}\quad \frac{1}{2}d_\xS(x)\leq d_\xS(y)\leq\frac{3}{2}d_\xS(x).
\eal
From \eqref{xoest1}, we infer that
\bal
\int_0^{s}\frac{\gw\left(\GTB(x,t)\right)}{t^2} \dd t\approx\int_0^{s}\frac{\gw\left(\GTB(y,t)\right)}{t^2} \dd t.
\eal

Combining Cases 1--3, we derive \eqref{2.4}.
\end{proof}

By applying Proposition \ref{t2.1} with $J(x,y)=\CN_{2\am}(x,y)$, $\dd \xo= \left(d_{\partial \Omega}(x)d_{\xS}(x)^{-\am}\right)^{p+1}\dx$ and $\dd \lambda = \dd \xn$, we obtain the following result for any $\xn\in\mathfrak{M}^+(\partial\xO)$.

\begin{theorem}\label{sourceth}   Let $p$ satisfy \eqref{p-cond}. Then the following statements are equivalent.
	
	1. The equation
\bal
v=\BBN_{2\am}[|v|^p\left(d_{\partial \Omega}d_{\xS}^{-\am}\right)^{p+1}]+\ell \BBN_{2\am}[\xn]
\eal
has a positive solution for $\ell>0$ small.
	
	2. For any Borel set $E \subset \overline{\xO}$, there holds
\bal
\int_E \BBN_{2\am}[\1_E\xn]^p \left(d_{\partial \Omega}(x)d_{\xS}(x)^{-\am}\right)^{p+1}\dx \leq C\, \xn(E).
\eal
	
	3. The following inequality holds
\bal
\BBN_{2\am}[\BBN_{2\am}[\xn]^p\left(d_{\partial \Omega}d_{\xS}^{-\am}\right)^{p+1}]\leq C\BBN_{2\am}[\xn]<\infty\quad a.e.
\eal

4. For any Borel set $E \subset \overline{\xO}$ there holds
\bal
\xn(E)\leq C\, \mathrm{Cap}_{\BBN_{2\am},p'}^{p+1,-\am(p+1)}(E).
\eal

Here we implicitly extend $\xn$ to whole $\overline{\xO}$ by setting $\xn(\xO)=0.$

\end{theorem}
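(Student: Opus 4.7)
The plan is to invoke Proposition \ref{t2.1} directly, with the choices
\[
\mathbf{Z}=\overline{\xO},\qquad J(x,y)=\CN_{2\am}(x,y),\qquad \dd\gw=\bigl(d_{\partial \Omega}(x)d_{\xS}(x)^{-\am}\bigr)^{p+1}\1_{\xO}(x)\,\dx,\qquad \dd\gl=\dd\xn,
\]
the measure $\nu$ being extended to $\overline{\xO}$ by $\xn(\xO)=0$ as stated. One then only has to check that $J$ is symmetric, that $1/J$ satisfies a quasi-metric inequality, and that the doubling conditions \eqref{2.3}--\eqref{2.4} hold for $\gw$ with the exponent $\xa=2\am$; once these are established, the four statements of Proposition \ref{t2.1} translate \emph{verbatim} into statements 1--4 of the theorem.

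Symmetry of $J$ is immediate from the definition \eqref{Na}. The quasi-metric inequality for $1/J(x,y)=1/\CN_{2\am}(x,y)$ is exactly Lemma \ref{ineq} with $\xa=2\am$, which is applicable since $2\am\le 2H=N-k\le N$. For the doubling conditions, I will apply Lemmas \ref{vol} and \ref{vol-2} with $b=p+1$, $\theta=-\am(p+1)$ and $\xa=2\am$. The parameter $b$ is obviously positive, and the main task is to verify the two inequalities
\[
\theta>k-N-b \qquad \text{and} \qquad \theta>-b-\xa.
\]
The first inequality reads $(p+1)(\am-1)<N-k$. When $\am\le 1$ this is automatic (as $k<N$); when $\am>1$ it follows from \eqref{p-cond}, since
\[
(p+1)(\am-1)<\frac{2\am(\am-1)}{\am-1}=2\am\le N-k.
\]
The second inequality reads $p+1+\am(1-p)>0$, i.e.\ $p<\frac{\am+1}{\am-1}$ when $\am>1$ and is trivial when $\am\le 1$; this is again exactly \eqref{p-cond}.

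With these verifications in hand, Proposition \ref{t2.1} applies and yields the equivalence of statements 1--4, since the capacity $\mathrm{Cap}^{p+1,-\am(p+1)}_{\BBN_{2\am},p'}$ is precisely the capacity $\mathrm{Cap}^{\gw}_{\BBJ,p'}$ associated with $(J,\gw)$ above. The bulk of the proof is thus this translation; the only mildly delicate point---where I expect careful bookkeeping---is matching the sharp range \eqref{p-cond} with the inequalities $\theta>k-N-b$ and $\theta>-b-\xa$ required by Lemmas \ref{vol}--\ref{vol-2}, which is why the hypothesis \eqref{p-cond} appears exactly in the form stated.
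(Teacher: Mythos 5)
Your proposal is correct and follows exactly the paper's intended route: the paper's proof of Theorem~\ref{sourceth} consists precisely of the single sentence ``by applying Proposition~\ref{t2.1} with $J=\CN_{2\am}$, $\dd\gw=(d_{\partial\Omega}d_\Sigma^{-\am})^{p+1}\dx$, $\dd\gl=\dd\xn$'', and your verification that the hypotheses of Lemmas~\ref{ineq}, \ref{vol} and \ref{vol-2} hold for $b=p+1$, $\theta=-\am(p+1)$, $\xa=2\am$ under \eqref{p-cond} is the correct (if unstated) bookkeeping behind it. One small sloppiness: you justify Lemmas \ref{vol}--\ref{vol-2} by observing $2\am\le 2H=N-k\le N$, but those lemmas require the strict bound $\xa<N$; this holds whenever $k\ge 1$ or $\mu<\frac{N^2}{4}$, which is the regime in which Theorem~\ref{sourceth} is actually invoked (the boundary case $k=0$, $\mu=\frac{N^2}{4}$ is treated separately by the paper via $\tilde G_{H^2,\varepsilon}$), so it would be worth flagging that restriction explicitly.
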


\subsection{Existence and nonexistence results in the case $\xm<\frac{N^2}{4}$}

We first show that Theorem \ref{subm} is a direct consequence of Theorem \ref{sourceth}.

\begin{proof}[\textbf{Proof of Theorem \ref{subm}}] We will use Theorem \ref{sourceth} and show that statements 1--4 of the present theorem are equivalent to statements 1--4 of Theorem \ref{sourceth} respectively. By \eqref{GN2a}-\eqref{105} and \cite[Proposition 2.7]{BHV}, we can easily show that  the equation \eqref{vN2a} has a solution $v$ for some $\ell>0$ if and only if equation
\ba \label{u-sigmanu} u=\BBG_\xm[u^p]+\gs\BBK_\xm[\xn]
\ea	
has a positive solution $u$ for some $\xs>0$. This and the fact that $u$ is a weak solution of \eqref{BP-} if and only if $u$ is represented by \eqref{u-sigmanu} imply that statement 1 of Theorem \ref{sourceth} is equivalent to statement 1 of the present theorem. In addition, in light of \eqref{GN2a} and \eqref{KN2a1}, we can deduce that statements 2--3 of Theorem \ref{sourceth} are equivalent to statements 2--3 of the present theorem respectively.

Therefore, it remains to prove that  statement 4 of this theorem is equivalent to statement 4 of Theorem \ref{sourceth}. It is enough to show that for any compact subset $E \subset \Sigma$, there holds
\ba \label{Cap-equi-1} \mathrm{Cap}_{\vartheta,p'}^{\xS}(E) \approx \mathrm{Cap}_{\BBN_{2\am},p'}^{p+1,-\am(p+1)}(E),
\ea
where $\vartheta$ is defined in \eqref{gamma}. In view of  \eqref{K-split1}-\eqref{21}, we may employ a similar argument as in the proof of \cite[Estimate (6.36)]{GkiNg_source} to reach the desired result.
\end{proof}

\begin{remark} \label{existSigma} By \cite[Theorems 9.2 and 9.4]{BGT}, the following statements are valid.

(i) If $1<p<\min\{\frac{N+1}{N-1},\frac{N-\am+1}{N-\am-1}\}$ then
\ba\label{lpineq}
\int_\xO \BBK_\xm[|\xn|]^p \ei \dx \leq C(\xO,\xS,\xm,p)\,|\xn|(\xO)^p, \quad \forall \xn\in\mathfrak{M}(\partial\xO).
\ea

(ii)  If $1<p<\frac{N+1}{N-1}$  and $ \xn\in\mathfrak{M}(\partial\xO)$ has compact support in $\partial\xO\setminus\xS$ then \eqref{lpineq} holds true.

(iii)  If $1<p<\frac{N-\am+1}{N-\am-1}$  and $ \xn\in\mathfrak{M}(\partial\xO)$ has compact support in $\xS$ then \eqref{lpineq} holds true.

Hence, if one of the above cases occurs, we see that statement 2 of Theorem \ref{sourceth} holds true, which implies the existence of solution of \eqref{power} for some $\xs>0.$
\end{remark}

\begin{remark} \label{nonexistSigma}
Assume $\mu < \frac{N^2}{4}$ and $p \geq \frac{N-\am+1}{N-\am-1}$. Then for any $z \in \Sigma$ and any $\sigma>0$, problem \eqref{u-sigmanu} with $\nu=\delta_z$ does not admit any positive weak solution. Indeed, suppose by contradiction that for some $z \in \Sigma$ and $\sigma>0$, there exists a positive solution $u \in L^p(\Omega;\ei)$ of equation \eqref{u-sigmanu}. Without loss of generality, we can assume that $z =0 \in \Sigma$ and $\sigma=1$. From \eqref{u-sigmanu}, $u(x) \geq \BBK_{\mu}[\delta_0] (x)= K_\mu(x,0)$ for a.e. $x \in \Omega$. Let $\CC$ be a cone of vertex $0$ such that $\CC \subset \Omega $ and there exist $r>0$, $0<\ell<1$ satisfying for any $x \in \CC$, $|x|<r$ and $d_\Sigma(x)\geq d_{\partial \Omega}(x)> \ell |x|$. Then, by \eqref{Martinest1} and \eqref{eigenfunctionestimates},
\bal
\int_{\Omega } u(x)^p \ei(x) \dd x \geq \int_{\CC} K_\mu(x,0)^p \ei(x) \, \dd x
 &\geq \int_{\CC} |x|^{1-\am-(N-\am-1)p} \, \dd x\\
 &\approx \int_0^r t^{N-\am - (N-\am-1)p} \, \dd t.
\eal
Since $p \geq \frac{N-\am+1}{N-\am-1}$, the last integral is divergent, hence $u \not \in L^p(\Omega;\ei)$, which leads to a contradiction.
\end{remark}

\begin{remark} \label{nonexistpboundary}
Assume $\mu < \frac{N^2}{4}$ and $p \geq \frac{N+1}{N-1}$. Proceeding as in Remark \ref{nonexistpboundary}, we may show that  any $z \in \partial\xO\setminus\Sigma$ and any $\sigma>0$, problem \eqref{u-sigmanu} with $\nu=\delta_z$ does not admit any positive weak solution.
\end{remark}

By using the above capacities and Theorem \ref{sourceth}, we are able to prove Theorem \ref{th:existnu-prtO}.

\begin{proof}[\textbf{Proof of Theorem \ref{th:existnu-prtO} when $\xm<\frac{N^2}{4}$}.] The fact that statements 1--3 are equivalent follows by using a similar argument as in the proof of Theorem \ref{subm}. Hence, it remains to show that statement 4 is equivalent to statements 1--3. Since
\bal
K_\mu(x,z)\approx C\,\dist(z,\xS)d_{\partial \Omega}(x)d_{\xS}(x)^{-\am} |x-z|^{-N}, \quad \forall x \in \Omega , \,  z \in \partial \Omega\setminus\xS,
\eal
we may proceed as in the proof of \cite[estimate (6.40)]{GkiNg_source} to obtain the desired result.
\end{proof}

When $p \geq \frac{\ap+1}{\ap-1}$, the nonexistence occurs, as shown in the following remark.

\begin{remark} \label{rem3}
We additionally assume that $\xO$ is $C^3.$ If $p\geq \frac{\ap+1}{\ap-1}$ then, for any measure $\xn \in \GTM^+(\partial \Omega )$ with compact support in $\xS$ and any $\sigma>0$, there is no solution of problem \eqref{u-sigmanu}. Indeed, it can be proved by contradiction. Suppose that we can find $\sigma>0$ and a measure $\xn \in \GTM^+(\partial \Omega )$ with compact support in $\xS$ such that there exists a solution $0\leq u \in L^p(\xO;\ei)$ of \eqref{u-sigmanu}.
It follows that $\BBK_\xm[\xn]\in L^p(\xO;\ei)$. Therefore, by Proposition \ref{subcr} there is a unique nontrivial nonnegative solution $v$ of
\bal
\left\{ \BAL
- L_\gm v + |v|^{p-1}v &=0\qquad \text{in }\;\Gw,\\
\tr(v)&=\xn.\EAL\right.
\eal

Moreover, $v \leq \BBK_{\mu}[\nu]$ in $\Omega$. This, together with Proposition \ref{Martin} and the fact that $\nu$ has compact support in $\Sigma$, implies
$ v(x) \leq \BBK_{\mu}[\nu](x)  \lesssim d_{\partial \Omega}(x)\nu(\Sigma)$ for $x$ near $\partial \Omega\setminus\xS$. Therefore, by Theorem \ref{remov-1}, we have that $v \equiv 0$, which leads to a contradiction.

\end{remark}

\begin{remark} \label{rem4}
If $\am>1$ and $p\geq \frac{\am+1}{\am-1}$ then for any measure $\xn \in \GTM^+(\partial \Omega)$ with compact support in $\partial\xO$ and any $\sigma>0$, there is no solution of \eqref{u-sigmanu}. Indeed, it can be proved by contradiction. Suppose that we can find a measure $\xn \in \GTM^+(\partial \Omega )$ with compact support in $\partial \Omega\setminus\xS$ and $\sigma>0$ such that there exists a solution $0\leq u \in L^p(\xO;\ei)$ of \eqref{u-sigmanu}. Then by Theorem \ref{th:existnu-prtO}, estimate \eqref{GKp<K} holds for some constant $C>0$.

For simplicity, we assume that $0\in \Sigma$. Let $\{x_n\} \subset \xO$ be such that $\dist(x_n,\supp \xn)>\xe>0$ for any $n\in\BBN$ and $x_n\to 0$ as $n \to \infty$. Then there exists
a positive constant $C_1=C_1(\xe,\xO,\xS,\xm)$ such that
\bal
\begin{aligned}
C&\geq\frac{\int_{\xO} G_\xm(x_n,y)\BBK_\xm[\xn](y)^p \dy}{\BBK_\xm[\xn](x_n)}\\
&\gtrsim C_1 d_{\partial \Omega}(x_n)^{-1}d_\xS(x_n)^{\am}\xn(\partial \Omega)^{p-1} \int_{\xO} (d_{\partial \Omega}(y)d_\xS(y)^{-\am})^{p} G_\mu(x_n,y)\dy.
\end{aligned}
\eal

Set
\bal
F(x_n,y):=d_{\partial \Omega}(x_n)^{-1}d_\xS(x_n)^{\am}(d_{\partial \Omega}(y)d_\xS(y)^{-\am})^{p} G_\mu(x_n,y).
\eal
Then
\bal
\liminf_{n\to\infty} F(x_n,y)\gtrsim d_{\partial \Omega}(y)^{p+1}|y|^{2\am-N-\am(p+1)}.
\eal
Let $\CC$ be a cone of vertex $0$ such that $\CC \subset \Omega $ and there exist $r>0$, $0<\ell<1$ satisfying for any $x \in \CC$, $|x|<r$ and $d_\Sigma(x)\geq d_{\partial \Omega}(x)> \ell |x|$. Combining all above we have that
\bal
C\gtrsim \int_{\mathcal{C}} d_{\partial \Omega}(y)^{p+1}|y|^{2\am-N-\am(p+1)} \dd y&\gtrsim\int_{\mathcal{C}}|y|^{p+1+2\am-N-\am(p+1)} \dd y\\
&\approx\int_0^{r}s^{p(1-\am)+\am}\dd s=+\infty,
\eal
since $p\geq \frac{\am+1}{\am-1}.$ This is clearly a contradiction.
\end{remark}

\subsection{Existence results in the case $\xS=\{0\}$ and $\xm= \frac{N^2}{4}$}
Let $0<\xe<N$. For any $(x,y)\in\overline{\xO}\times\overline{\xO}$ such that $x \neq y,$ we set
\bal
\CN_{1,\xe}(x,y):=\frac{\max\{|x-y|,|x|,|y|\}^{N}}{|x-y|^{N-2}\max\{|x-y|,d_{\partial \Omega}(x),d_{\partial \Omega}(y)\}^2}
+\max\{|x-y|,d_{\partial \Omega}(x),d_{\partial \Omega}(y)\}^{-\xe}	
\eal
 and
\bal
\CN_{N-\xe}(x,y):=\frac{\max\{|x-y|,|x|,|y|\}^{N-\xe}}{|x-y|^{N-2}\max\{|x-y|,d_{\partial \Omega}(x),d_{\partial \Omega}(y)\}^2}.
\eal

Put
\bal \begin{aligned}
G_{H^2,\xe}(x,y) &:= |x-y|^{2-N} \left(1 \wedge \frac{d_{\partial \Omega}(x)d_{\partial \Omega}(y)}{|x-y|^2}\right) \left(1 \wedge \frac{|x||y|}{|x-y|^2} \right)^{-\frac{N}{2}} \\
&+\frac{d_{\partial \Omega}(x)d_{\partial \Omega}(y)}{(|x||y|)^{\frac{N}{2}}}  \max\{|x-y|,d_{\partial \Omega}(x),d_{\partial \Omega}(y)\}^{-\xe}, \quad x,y \in \Omega \setminus \{0\}, \, x \neq y
\end{aligned} \eal
and
\ba \label{tilG}
\tilde G_{H^2,\xe}(x,y):=d_{\partial \Omega}(x)d_{\partial \Omega}(y)(|x||y|)^{-\frac{N}{2}} \CN_{N-\xe}(x,y), \quad \forall x,y \in \Omega \setminus \{0\}, \, x \neq y.
\ea
Note that
\bal \begin{aligned}
\left|\ln\left(\min\left\{|x-y|^{-2},(d_{\partial \Omega}(x)d_{\partial \Omega}(y))^{-1}\right\}\right)\right|
&\leq C(\Omega,\xe) \max\{|x-y|,d_{\partial \Omega}(x),d_{\partial \Omega}(y)\}^{-\xe},
\end{aligned}
\eal
which, together with \eqref{Greenestb}, implies
\ba \label{GGe}
G_{H^2}(x,y)\lesssim G_{H^2,\xe}(x,y), \quad \forall x,y \in \Omega, \, x \neq y.
\ea

Next, from the estimates
\bal
G_{H^2,\varepsilon}(x,y) \approx d_{\partial \Omega}(x)d_{\partial \Omega}(y)(|x||y|)^{-\frac{N}{2}} \CN_{1,\varepsilon}(x,y), \quad x,y \in \Omega, \, x \neq y,
\eal
\bal
\CN_{1,\xe}(x,y)\leq C(\xe,\xO) \CN_{N-\xe}(x,y),\quad x,y \in \Omega, \, x \neq y,
\eal
we obtain
\ba \label{GGe1}
G_{H^2,\xe}(x,y)\lesssim \tilde G_{H^2,\xe}(x,y), \quad \forall x,y \in \Omega, \, x \neq y.
\ea

Set
\bal
\tilde\BBG_{H^2,\xe}[|u|^p](x):=\int_{\xO} \tilde G_{H^2,\xe}(x,y) |u|^p \, \dd y, \\ \BBN_{N-\varepsilon}[\tau](x):=\int_{\xO} \CN_{N-\varepsilon}(x,y) \,\dd\tau(y).
\eal
Proceeding as in the proof of Theorem \ref{sourceth}, we obtain the following result

\begin{theorem}\label{theoremint2}
 Let $0<\xe<2,$ $1<p<\frac{N+2-2\xe}{N-2}$ and $\xn \in \GTM^+(\partial\xO)$. Then the following statements are equivalent.
	
	1. The equation
	\be \label{eq:uH2e} u=\tilde\BBG_{H^2,\xe}[u^p]+\xs d_{\partial \Omega}|\cdot|^{-\frac{N}{2}}\BBN_{N-\varepsilon}[\xn]
	\ee
	has a positive solution for $\xs>0$ small.
	
	2. For any Borel set $E \subset \overline{\xO}$, there holds
	\bal
\int_E \BBN_{N-\varepsilon}[\1_E\xn]^p(x) \phi_{H^2,\Sigma}(x)^{p+1}\, \dx \leq C\xn(E).
\eal
	
	3. The following inequality holds
	\bal
\BBN_{N-\varepsilon}[\BBN_{N-\varepsilon}[\xn]^p\left(d_{\partial \Omega}d_{\xS}^{-\frac{N}{2}}\right)^{p+1}]\leq C\,\BBN_{N-\varepsilon}[\xn]<\infty\quad a.e.
\eal

	4. For any Borel set $E \subset \overline{\xO}$ there holds
\bal
\xn(E)\leq C\, \mathrm{Cap}_{\BBN_{N-\xe},p'}^{p+1,-\frac{N}{2}(p+1)}(E).
\eal
\end{theorem}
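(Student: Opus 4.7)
\textit{Proof strategy.} The plan is to mirror the proof of Theorem \ref{sourceth} by applying Proposition \ref{t2.1} on $\mathbf{Z}=\overline{\Omega}$ with the symmetric kernel $J(x,y)=\CN_{N-\varepsilon}(x,y)$, reference measure $\dd\omega(x)=(d_{\partial\Omega}(x)|x|^{-N/2})^{p+1}\1_{\Omega}(x)\,\dd x$, and $\dd\lambda=\dd\nu$ (with $\nu$ extended to $\overline\Omega$ by setting $\nu(\Omega)=0$). Since $k=0$ and $\mu=H^2=N^2/4$, the eigenfunction asymptotics \eqref{eigenfunctionestimates} yield $\ei(x)\approx d_{\partial\Omega}(x)|x|^{-N/2}$, so $\dd\omega\approx \ei^{p+1}\,\dd x$; consequently statements 2--4 of the theorem translate verbatim into statements 2--4 of Proposition \ref{t2.1}. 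For statement 1, the substitution $u=d_{\partial\Omega}|\cdot|^{-N/2} v$ transforms \eqref{eq:uH2e} into
\bal
v=\BBN_{N-\varepsilon}[v^p,\omega]+\sigma\,\BBN_{N-\varepsilon}[\nu],
\eal
which is precisely the integral equation from statement 1 of Proposition \ref{t2.1}.

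Next, I would verify the two structural hypotheses of Proposition \ref{t2.1}. The quasi-metric inequality \eqref{Jest} for $1/\CN_{N-\varepsilon}$ is exactly Lemma \ref{ineq} applied with $\alpha=N-\varepsilon$; since $0<\varepsilon<2\leq N$, Case 1 of that lemma applies. For the doubling-type estimates \eqref{2.3}--\eqref{2.4} on $\omega$, I would invoke Lemmas \ref{vol} and \ref{vol-2} with parameters $\alpha=N-\varepsilon$, $b=p+1$, $\theta=-N(p+1)/2$, $k=0$. The conditions $\alpha<N$ and $b>0$ are immediate, while the only non-trivial constraint $\theta>\max\{k-N-b,-b-\alpha\}$ reduces to
\bal
-\tfrac{N(p+1)}{2}>-(N+p+1-\varepsilon),
\eal
which after rearrangement is equivalent to $p(N-2)<N+2-2\varepsilon$, i.e.\ precisely the upper bound on $p$ assumed in the theorem.

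With both hypotheses in force, Proposition \ref{t2.1} immediately produces the equivalence of statements 1--4 through the dictionary above. I expect no genuine obstacle beyond this parameter bookkeeping: the heavy lifting (sharp two-sided bounds in the critical case $\mu=H^2$ encoded in $\CN_{N-\varepsilon}$, the quasi-metric inequality, and the volume estimates $\omega(B(x,s))$) is already available, and the critical exponent $(N+2-2\varepsilon)/(N-2)$ enters purely through the doubling requirement $\theta>-b-\alpha$. The one delicate point worth highlighting is that the logarithmic term in \eqref{Greenestb} must be absorbed through the bounds \eqref{GGe}--\eqref{GGe1}, which is why the proof is naturally stated for the majorant $\tilde G_{H^2,\varepsilon}$ rather than $G_{H^2}$ itself; the parameter $\varepsilon$ serves exactly to quantify this absorption and explains the appearance of $-2\varepsilon$ in the critical exponent.
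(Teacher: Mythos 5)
Your proposal is correct and follows exactly the route the paper takes: the paper's only indication of proof is the remark preceding Theorem \ref{theoremint2} that one proceeds as in Theorem \ref{sourceth}, i.e.\ apply Proposition \ref{t2.1} with $J=\CN_{N-\varepsilon}$, $\dd\omega=(d_{\partial\Omega}|\cdot|^{-N/2})^{p+1}\1_{\Omega}\,\dd x$, $\dd\lambda=\dd\nu$, using Lemma \ref{ineq} ($\alpha=N-\varepsilon\in(0,N)$) and Lemmas \ref{vol}--\ref{vol-2} ($b=p+1$, $\theta=-\tfrac{N}{2}(p+1)$, $k=0$). Your parameter check that $\theta>-b-\alpha$ is equivalent to $p<\tfrac{N+2-2\varepsilon}{N-2}$ is accurate and is precisely where the hypothesis of the theorem enters.
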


\begin{theorem}\label{singl}
We assume that at least one of the statements 1--4 of Theorem \ref{theoremint2} is valid. Then problem \eqref{BP-} with $\mu=H^2$ admits a positive weak solution for $\xs>0$ small.
\end{theorem}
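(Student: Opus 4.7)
\noindent The plan is to recast existence of a positive weak solution of \eqref{BP-} with $\mu=H^2$ as solvability of the fixed-point equation $u=\BBG_\mu[u^p]+\sigma\BBK_\mu[\nu]$, which by \eqref{represent-BP} is equivalent to the defining integral identity of a weak solution. By the hypothesis, statement 1 of Theorem~\ref{theoremint2} holds, so there exist $\sigma_0>0$ and a positive function $v$ with
\bal
v = \tilde\BBG_{H^2,\varepsilon}[v^p] + \sigma_0\, d_{\partial\Omega}|\cdot|^{-\frac{N}{2}} \BBN_{N-\varepsilon}[\nu] \quad\text{a.e.\ in } \Omega,
\eal
and $v$ will drive the construction of a pointwise supersolution.

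The key preliminary step is a pair of kernel comparisons between the actual kernels $G_\mu, K_\mu$ and the surrogate kernels used in Theorem~\ref{theoremint2}. The bound $G_\mu\leq C\tilde G_{H^2,\varepsilon}$ is already contained in \eqref{GGe}--\eqref{GGe1}. For the Martin kernel in the critical case $\mu=H^2$, $k=0$, I would begin from \eqref{Martinest2} and dominate each of its two summands by a multiple of $d_{\partial\Omega}(x)|x|^{-\frac{N}{2}}\CN_{N-\varepsilon}(x,\xi)$. The algebraic summand $(|x|+|x-\xi|)^N/|x-\xi|^N$ is bounded by $C\CN_{N-\varepsilon}(x,\xi)$, since $|\xi|\leq|x|+|x-\xi|$ forces $\max\{|x-\xi|,|x|,|\xi|\}\lesssim|x|+|x-\xi|$, and the lost power of this maximum is absorbed by $(\diam\Omega)^\varepsilon$. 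The logarithmic summand is handled via $|\ln r|\leq C_\varepsilon r^{-\varepsilon}$ on $(0,\diam\Omega]$ combined with the trivial bound $\CN_{N-\varepsilon}(x,\xi)\geq|x-\xi|^{-\varepsilon}$. These pointwise estimates translate into the operator bounds
\bal
\BBG_\mu[\phi]\leq C_1 \tilde\BBG_{H^2,\varepsilon}[\phi] \quad(\phi\geq 0), \qquad \BBK_\mu[\nu]\leq C_2\, d_{\partial\Omega}|\cdot|^{-\frac{N}{2}}\BBN_{N-\varepsilon}[\nu].
\eal

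Applying these termwise to the equation satisfied by $v$ yields $\BBG_\mu[v^p]\leq C_1 v$ and $\BBK_\mu[\nu]\leq(C_2/\sigma_0)v$, since each summand on the right is nonnegative and their sum equals $v$. Consequently, for $\lambda\in(0,1)$ and $\sigma>0$, the function $\tilde v:=\lambda v$ satisfies
\bal
\BBG_\mu[\tilde v^{p}]+\sigma\BBK_\mu[\nu] \leq \Bigl(C_1\lambda^{p-1}+\frac{C_2\sigma}{\sigma_0\lambda}\Bigr)\tilde v.
\eal
Since $p>1$, I would first fix $\lambda$ small enough that $C_1\lambda^{p-1}\leq\tfrac12$ and then choose $\sigma>0$ small enough that $C_2\sigma/(\sigma_0\lambda)\leq\tfrac12$, so that $\tilde v$ is a pointwise supersolution of the fixed-point equation. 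The standard monotone iteration $u_0:=0$, $u_{n+1}:=\BBG_\mu[u_n^p]+\sigma\BBK_\mu[\nu]$, combined with the positivity and monotonicity of $\BBG_\mu$ on nonnegative inputs, gives by induction $0\leq u_n\leq u_{n+1}\leq\tilde v$, and the monotone convergence theorem yields a limit $u$ satisfying $u=\BBG_\mu[u^p]+\sigma\BBK_\mu[\nu]$ a.e.\ with $0<u\leq\tilde v$. The integrability $u,u^p\in L^1(\Omega;\phi_{\mu,\Sigma})$ is inherited from the corresponding integrability of $v$ and $v^p$, which follows from the integral equation for $v$ together with the weight estimates for $\phi_{\mu,\Sigma}$ in the critical case derived in \cite{BGT}; \eqref{represent-BP} then identifies $u$ with the required positive weak solution of \eqref{BP-}.

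The main obstacle is precisely the Martin kernel comparison in this critical case $\mu=H^2$, $\Sigma=\{0\}$: the logarithmic singularity in \eqref{Martinest2} must be absorbed into the surrogate kernel $\CN_{N-\varepsilon}$ uniformly down to the boundary point $0\in\Sigma$, which is exactly the reason the strictly positive parameter $\varepsilon$ was introduced in the preceding subsection.
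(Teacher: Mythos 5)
Your proposal is correct and mirrors the paper's proof: both start from a positive solution of the surrogate equation \eqref{eq:uH2e} supplied by Theorem \ref{theoremint2}, use the kernel comparisons to turn it into a supersolution of $u=\BBG_{H^2}[u^p]+\sigma\BBK_{H^2}[\nu]$, and then pass to an actual solution for small $\sigma$. Where the paper compresses the last step into a citation of \cite[Proposition 2.7]{BHV}, you spell out the rescaling and monotone iteration, and you also make explicit the Martin-kernel bound $K_{H^2}(x,\xi)\lesssim d_{\partial \Omega}(x)|x|^{-N/2}\CN_{N-\varepsilon}(x,\xi)$, which the paper uses tacitly alongside the Green-kernel comparisons \eqref{GGe}--\eqref{GGe1}.
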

\begin{proof}[\textbf{Proof}]
 By Theorem \ref{theoremint2}, there exists a solution $u$ to equation \eqref{eq:uH2e} for $\xs>0$ small.  By \eqref{GGe} and \eqref{GGe1}, we have
 $u\gtrsim \BBG_{H^2}[u^p]+\xs\BBK_{H^2}[\xn]$.
By \cite[Proposition 2.7]{BHV}, we deduce that equation
\ba \label{eq:uH2}
u=\BBG_{H^2}[u^p]+\xs\BBK_{H^2}[\xn].
\ea
has a solution for $\xs>0$ small. This means that admits a positive weak solution for $\xs>0$.
\end{proof}

\begin{remark}\label{hardycrit} Let $\xS=\{0\}\subset\partial\xO,$ $\xm=\frac{N^2}{4},$  $\xn=\xd_0$ and $1<p<\frac{N+2}{N-2}.$ Then there exists $\xe>0$ small enough such that $1<p<\frac{N+2}{N-2+2\xe}\leq \frac{N+2-2\xe}{N-2}.$ In addition, we have
\bal
\int_\xO \BBN_{N-\varepsilon}[\xd_0]^p(x) \phi_{H^2,\Sigma}(x)^{p+1}\, \dx \lesssim\int_\xO |x|^{(p+1)(1-\frac{N}{2})-p\xe}\, \dx<\infty.
\eal
Hence statement 2 of Theorem \ref{theoremint2} is satisfied. This and Theorem \ref{singl} imply that equation \eqref{eq:uH2} has a solution for $\xs>0$ small.
\end{remark}

\begin{proof}[\textbf{Proof of Theorem \ref{th:existnu-prtO} when $\xS=\{0\}$ and $\xm=\frac{N^2}{4}$}.]
Let $\xe>0$ be small enough such that $1<p<\frac{N+2-2\xe}{N-2}.$ Let $K=\supp(\xn)\Subset \partial\xO\setminus\{0\}$ and $\tilde \xb=\frac{1}{2}\dist(K,\{0\})>0$. By  \eqref{Martinest2}, we can easily show that
\ba\label{62}
\BBK_{H^2,\xe}[\xn]:=d_{\partial \Omega}d_{\xS}^{-\frac{N}{2}}\BBN_{N-\xe}[\xn]\approx \BBK_{\frac{N^2}{4}}[\xn].
\ea
Hence, by Proposition \ref{equivba}, \eqref{62}, Theorems \ref{theoremint2} and \ref{singl}, it is enough to show that
\ba\label{63}
\tilde\BBG_{H^2,\xe}[\BBK_{H^2,\xe}[\xn]^p]\approx \BBG_{H^2}[\BBK_{H^2}[\xn]^p] \quad \text{in } \Omega .
\ea
By \eqref{GGe} and \eqref{GGe1}, it is sufficient to show that
 \ba\label{64}
\tilde\BBG_{H^2,\xe}[\BBK_{H^2}[\xn]^p]\lesssim \BBG_{H^2}[\BBK_{H^2}[\xn]^p] \quad  \text{in } \Omega .
\ea

Indeed, on one hand, since $1<p<\frac{N+2-2\xe}{N-2}$, for any $x\in \xO$ there holds
\ba\label{65}\BAL
&\int_{B(0,\frac{\tilde \xb}{4})\cap\xO}\tilde G_{H^2,\xe}(x,y)\BBK_{H^2}[\xn](y)^p \dd y\approx \xn(K)^p \int_{B(0,\frac{\tilde \xb}{4})\cap\xO} \tilde G_{H^2,\xe}(x,y)d_{\partial \Omega}(y)^p|y|^{-\frac{p N}{2}} \dd y\\
&\lesssim  \xn(K)^p d_{\partial \Omega}(x)|x|^{-\frac{N}{2}}\int_{B(0,\frac{\tilde \xb}{4})\cap\xO} |x-y|^{-\xe}(d_{\partial \Omega}(y)|y|^{-\frac{N}{2}})^{p+1} \dd y\\
& \quad +\xn(K)^p d_{\partial \Omega}(x)|x|^{-\frac{N}{2}}\int_{B(0,\frac{\tilde \xb}{4})\cap\xO} |x-y|^{-N+1}d_{\partial \Omega}(y)^p|y|^{N-\xe-\frac{(p+1)N}{2}} \dd y \\
& \lesssim\xn(K)^p d_{\partial \Omega}(x)|x|^{-\frac{N}{2}}.
\EAL
\ea
The implicit constants in the above inequalities depend only on $\xO,K,\tilde \beta,p,\xe$.

On the other hand, we have
\ba\label{66}\BAL
\int_{B(0,\frac{\tilde \xb}{4})\cap\xO} G_{H^2}(x,y)\BBK_{H^2}[\xn](y)^p \dd y&\gtrsim\xn(K)^p d_{\partial \Omega}(x)|x|^{-\frac{N}{2}}\int_{B(0,\frac{\tilde \xb}{4})\cap\xO} d_{\partial \Omega}(y)^{p+1} \dd y \\
&\gtrsim\xn(K)^p d_{\partial \Omega}(x)|x|^{-\frac{N}{2}},
\EAL
\ea
where the implicit constants in the above inequalities depend only on $\xO,K,\tilde \beta,p$.

Hence by \eqref{65} and \eqref{66}, we have that
\ba\label{67}
\int_{B(0,\frac{\tilde \xb}{4})\cap\xO} \tilde G_{H^2,\xe}(x,y)\BBK_{H^2}[\xn](y)^p \dd y\lesssim\int_{B(0,\frac{\tilde \xb}{4})\cap\xO} G_{H^2}(x,y)\BBK_{H^2}[\xn](y)^p \dd y \quad\forall x\in \xO.
\ea

Next, by \eqref{Greenestb} and \eqref{tilG}, for any $x \in \Omega $ and $y \in \Omega \setminus B(0,\frac{\tilde \beta}{4})$ we have
\bal
\tilde G_{H^2,\xe}(x,y) \approx d_{\partial \Omega}(x)d_{\partial \Omega}(y)(|x||y|)^{-\frac{N-2}{2}} \CN_{N}(x,y) \lesssim G_{H^2}(x,y).
\eal
This and \eqref{62} yield
\ba\label{68}
\int_{\xO\setminus B(0,\frac{\tilde \xb}{4})} \tilde G_{H^2,\xe}(x,y) \BBK_{H^2}[\xn](y)^p \, \dd y\lesssim\int_{\xO\setminus B(0,\frac{\tilde \xb}{4})} G_{H^2}(x,y)\BBK_{H^2}[\xn](y)^p \, \dd y \quad\forall x\in \xO.
\ea

Combining \eqref{67} and \eqref{68}, we deduce \eqref{64}. The proof is complete.
\end{proof}

\begin{remark} \label{partO-N}
	If $p<\frac{N+1}{N-1}$, by using a similar argument to the one in Remark \ref{hardycrit}, we obtain that statement 2 of Theorem \ref{theoremint2} holds true. Consequently, under the assumptions of Theorem \ref{th:existnu-prtO}, equation \eqref{u-sigmanu} has a positive solution for $\gs>0$ small.
\end{remark}


\appendix\section{Barriers} \label{app:A}
\setcounter{equation}{0}

Let $\xO$ be a $C^3$ open bounded domain. Then there exists $\xb_0>0$ depending on $C^3$ characteristic of $\xO$ such that for any $x\in \xO_{\xb_0}=\{x\in\xO:\;\;d_{\partial \Omega}(x)<\xb_0\}$ the followings hold.

(i) There exists a unique $\xs(x)\in\partial\xO$ such that
\bal d_{\partial \Omega}(x)=|x-\xs(x)|,  \quad
\xs(x)=x-d_{\partial \Omega}(x)\nabla d_{\partial \Omega}(x) \quad \text{and} \quad
\nabla d_{\partial \Omega}(x)=\frac{x-\xs(x)}{|x-\xs(x)|}.
\eal

(ii) $\xs(x)\in C^2(\overline{\xO}_{\xb_0})$ and $d_{\partial \Omega}\in C^3(\overline{\xO}_{\xb_0}).$

(iii) For any $i={1,...,N}$ there holds
\bal
|\nabla\xs_{i}(x)\cdot\nabla d_{\partial \Omega}(x)|\leq \norm{D^2 d_{\partial \Omega}}_{L^\infty(\overline{\xO}_{\xb_0})}d_{\partial \Omega}(x).
\eal

For any $(x,z)\in \overline{\xO}_{\xb_0}\times\partial\xO,$ set
\bal
d_z(x)=\sqrt{d_{\partial \Omega}^2(x)+|\xs(x)-z|^2}.
\eal

Then
\bal
\frac{1}{2}|x-z|\leq d_z(x)\leq\sqrt{5}|x-z|.
\eal

Finally, for any $0<R\leq\xb_0,$ we set
\bal
\mathscr{B}(z,R):=\{x\in\overline{\xO}_{\xb_0}:\;d_z(x)<R\}
\eal
Let $\xb_1$ be the constant in Proposition \ref{expansion}. In addition, we choose $\xb_1$ small enough such that
\bal
\frac{1}{2}d_\xS(x)\leq \tilde d_\xS(x)\leq 2d_\xS(x)\quad\text{in}\;\xO\cap \xS_{\xb_1}.
\eal
\begin{proposition}\label{barr} Let $\xb_2=\frac{1}{16}\min\{\xb_1,\xb_0\},$ $R_0\in (0,\xb_2]$ and $0<R\leq R_0.$ For any $z\in \overline{\Sigma}_{R_0}\cap\partial\xO,$ there is a supersolution $w:=w_{R,z}$ of \eqref{eq:power1} in $\mathscr{B}(z,R)$ such that
\bal &w \in C(\Gw\cap B(z,R)), \quad  \lim_{x\in\xO\cap\mathscr{B}(z,R),\;x\rightarrow\xi}\frac{w(x)}{\tilde W(x)}=0\quad \text{for any } \;\;\xi\in\partial\xO\cap \mathscr{B}(z,R),\\
&w(x)\to \infty \text{ as } \dist (x,F)\to 0, \text{ for any compact subset } F\subset \xO\cap \prt \mathscr{B}(z,R).
\eal
More precisely, for $\gamma \in (\am,\ap)$, $w$ can be constructed as
	\be \label{BAR1}
	w(x)= \left\{\BA{lll} \Gl(R^2- d_{z}(x)^2)^{-b}e^{Md_{\partial \Omega}(x)}d_{\partial \Omega}(x)\tilde{d}_\Sigma(x)^{-\gamma} \quad & \text{ if }\;\mu<H^2, \\[2mm]
	\Gl(R^2- d_{z}(x)^2)^{-b}e^{Md_{\partial \Omega}(x)}d_{\partial \Omega}(x)\tilde{d}_\Sigma(x)^{-H}\sqrt{|\ln \frac{\tilde d_\Sigma(x)}{16R_0}|} \;&\text{ if }\;\xm=H^2,
	\EA\right.
	\ee
	with $M<0$ depending only on the $C^2$ characteristic of $\partial\xO,$ $b> \frac{2(p+1)-2(p-1)\min\{\xg,0\}}{p-1}$ and $\Gl>0$ large enough depending only on $\xg,N,b,p,M,R_0,$ the $C^2$ characteristic of $\Sigma$ and the $C^3$ characteristic of $\partial\xO.$
\end{proposition}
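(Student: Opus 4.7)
I would verify directly that the ansatz in \eqref{BAR1} is a classical supersolution. The boundary conditions are immediate: the factor $(R^2-d_z^2)^{-b}$ forces $w\to\infty$ on $\Omega\cap\partial\mathscr{B}(z,R)$; the factor $d_{\partial\Omega}$, together with $\tilde d_\Sigma(x)\approx\dist^{\partial\Omega}(\sigma(x),\Sigma)$ for $x$ near $\partial\Omega\setminus\Sigma$, forces $w/\tilde W\to 0$ at such points; continuity inside $\Omega\cap\mathscr{B}(z,R)$ is manifest. The real content is the pointwise inequality
\[
L_\mu w \le w^p \qquad\text{in } \mathscr{B}(z,R)\cap\Omega.
\]

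I would factor $w=\Lambda F G$ with $F=(R^2-d_z^2)^{-b}$ and $G=e^{Md_{\partial\Omega}}d_{\partial\Omega}\tilde d_\Sigma^{-\gamma}$ (or the $\sqrt{|\ln(\tilde d_\Sigma/16R_0)|}$-corrected analogue in the critical case) and expand $L_\mu(FG)=G\Delta F+F L_\mu G+2\nabla F\cdot\nabla G$. The leading computation is on $L_\mu G$, using Proposition \ref{expansion}: the identities $\nabla d_{\partial\Omega}\cdot\nabla\tilde d_\Sigma=d_{\partial\Omega}/\tilde d_\Sigma$, $\tilde d_\Sigma\Delta\tilde d_\Sigma=N-k-1+f_3$, $|\nabla\tilde d_\Sigma|^2=1+f_2$, together with $\tilde d_\Sigma\approx d_\Sigma$, yield the key cancellation: the coefficient of $d_{\partial\Omega}\tilde d_\Sigma^{-\gamma-2}$ in $L_\mu(d_{\partial\Omega}\tilde d_\Sigma^{-\gamma})$ equals $\gamma^2-2H\gamma+\mu=(\gamma-\alpha_-)(\gamma-\alpha_+)$, which is strictly negative for $\gamma\in(\alpha_-,\alpha_+)$. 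The role of $e^{Md_{\partial\Omega}}$ is to produce, via $\Delta e^{Md_{\partial\Omega}}=e^{Md_{\partial\Omega}}(M^2+M\Delta d_{\partial\Omega})$, a dominating $M^2 d_{\partial\Omega}$ term that absorbs, for $M$ negative enough (chosen in terms of $\|\Delta d_{\partial\Omega}\|_\infty$), the sign-indefinite cross contributions $\tilde d_\Sigma^{-\gamma}\Delta d_{\partial\Omega}$ and $\nabla d_{\partial\Omega}\cdot\nabla(d_{\partial\Omega}\tilde d_\Sigma^{-\gamma})$. Grouping all of this, one obtains a bound of the form
\[
L_\mu w \le \Lambda e^{Md_{\partial\Omega}}d_{\partial\Omega}\tilde d_\Sigma^{-\gamma}\Bigl[C_1(R^2-d_z^2)^{-b-2}-C_\gamma(R^2-d_z^2)^{-b}d_{\partial\Omega}\tilde d_\Sigma^{-\gamma-2}\Bigr]+\text{lower order},
\]
with $C_\gamma>0$ and $C_1=C_1(b,R_0,\gamma,M,\Omega,\Sigma)$. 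The negative bulk term is absorbed against $-L_\mu w\ge 0$ without using $w^p$; for the positive blow-up term one compares with $w^p=\Lambda^p(R^2-d_z^2)^{-bp}e^{pMd_{\partial\Omega}}d_{\partial\Omega}^p\tilde d_\Sigma^{-p\gamma}$. The matching of powers of $(R^2-d_z^2)$, $d_{\partial\Omega}$ and $\tilde d_\Sigma$ (using $d_{\partial\Omega},\tilde d_\Sigma\le C(R_0)$) forces the stated lower bound $b>\frac{2(p+1)-2(p-1)\min\{\gamma,0\}}{p-1}$, after which $\Lambda\gg 1$ swallows all residual constants.

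\textbf{Main obstacle.} The delicate point is the critical case $\mu=H^2$, where $\alpha_-=\alpha_+=H$ and the quadratic $(\gamma-\alpha_-)(\gamma-\alpha_+)$ degenerates, so no pure power barrier in $\tilde d_\Sigma$ will do. I would resolve this exactly as the statement suggests, by inserting the factor $\sqrt{|\ln(\tilde d_\Sigma/16R_0)|}$: differentiating it twice against $\tilde d_\Sigma^{-H}$ yields a surviving negative coefficient of order $d_{\partial\Omega}\tilde d_\Sigma^{-H-2}|\ln\tilde d_\Sigma|^{-1/2}$, which (after multiplication by $\Lambda$) still dominates the error terms from $f_1,f_2,f_3$ of Proposition \ref{expansion} and the positive cross terms involving $M$ and $F$. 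Technically, the heaviest book-keeping is in the critical case: one must verify that every logarithmic cross term is bounded by the preceding negative leading one, and simultaneously ensure that the matching step against $w^p$ (which now carries a power of $\sqrt{|\ln|}$) is unaffected, since $|\ln(\tilde d_\Sigma/16R_0)|^{p/2}$ is, up to inessential factors, a positive multiple of $|\ln(\tilde d_\Sigma/16R_0)|^{1/2}$ once $\Lambda$ is taken large. The continuity and boundary behaviour of $w$ in both cases then follow from the explicit form of the barrier.
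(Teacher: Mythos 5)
Your proposal is correct and follows essentially the same route as the paper's proof: the same two-piece ansatz $w=\Lambda FG$, the same key cancellation $\gamma^2-2H\gamma+\mu=(\gamma-\alpha_-)(\gamma-\alpha_+)<0$ exploited via Proposition \ref{propdK}, the same choice of $M<0$ to make the $d_{\partial\Omega}$-cross terms sign-definite, the same origin of the lower bound on $b$ from matching powers against $w^p$, and the same $\sqrt{|\ln|}$ correction in the degenerate case $\mu=H^2$. The one thing worth flagging is that the bracketed bound you display for $L_\mu w$ has an extraneous factor $d_{\partial\Omega}\tilde d_\Sigma^{-\gamma}$ in the negative term (it should read $(R^2-d_z^2)^{-b}\tilde d_\Sigma^{-2}$ after the prefactor is factored out), and the sentence "the matching of powers forces the lower bound on $b$" tacitly requires the regionalization the paper spells out (the set where $\tilde d_\Sigma\lesssim R^2-d_z^2$, where the cancellation term alone suffices, versus its complement, where one must invoke $w^p$ or the $M^2 d_{\partial\Omega}$ term) — but these are bookkeeping points, not conceptual gaps.
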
 
\begin{proof}[\textbf{Proof}] Without loss of generality, we assume $z=0\in \overline{\Sigma}_{R_0}\cap\partial\xO.$
	
\noindent  \textbf{Case  1: $\xm<H^2$.} Set
	\bal w(x):=\Gl(R^2-d_0^2(x))^{-b} d_{\partial \Omega}(x)e^{Md_{\partial \Omega}(x)}\tilde{d}_\Sigma(x)^{-\gamma} \quad \text{for } x \in \xO\cap\mathscr{B}(0,R),
	\eal
	where $\gamma>0, b$ and $\Gl>0$ will be determined later on.

Then, by straightforward computation and using Proposition \ref{expansion}, we obtain

	\be\label{F3}-L_{\xm }w+w^p=\Gl(R^2-d_0(x)^2)^{-b-2}d_{\partial \Omega}(x) e^{Md_{\partial \Omega}(x)}\tilde{d}^{-\gamma-2}_\Sigma(x)(I_1+I_2+I_3+I_4),
	\ee
	where
\bal
I_1&:=-\tilde{d}_\xS^2 \left(4b(b+1)|\nabla d_0|^2d_0^2+2b(R^2-d_0^2)(|\nabla d_0|^2+d_0\xD d_0)\right), \\
I_2&:=-(R^2-d_0^2)^{2}\bigg(\xg^2-\xg(N-k)+\xm+\xg(\xg+1)f_2-\xg f_3+\xm f_1-2\xg Md_{\partial \Omega}\bigg),\\
I_3&:=-(R^2-d_0^2)^{2}\tilde{d}^2_\xS d_{\partial \Omega}^{-1}\bigg(\xD d_{\partial \Omega}(1+Md_{\partial \Omega})+2M+M^2d_{\partial \Omega}\bigg), \\
I_4&:=-4b(R^2-d_0^2)\frac{d_0}{d_{\partial \Omega}}(\tilde{d}_\xS^2\nabla d_0 \nabla d_{\partial \Omega}(1+Md_{\partial \Omega})-\xg \tilde{d}_\xS d_{\partial \Omega}\nabla d_0\cdot\nabla \tilde{d}_\xS),\\
I_5&:=\xL^{p-1}(R^2-d_0^2)^{-(p-1)b+2}e^{M(p-1)d_{\partial \Omega}}d_{\partial \Omega}^{p-1}\tilde{d}_\Sigma^{-(p-1)\gamma+2}.
\eal

By (i)--(iii), we have
\ba\label{F3-0a}
|I_1|\leq C_1(R_0,b,\xO,N)\tilde{d}^2_\Sigma.
\ea

Also,
	\be \label{F3-0b} \begin{aligned}
	  |I_4| \leq C_2(R_0,\xO,N,M,\xg,b)(R^2-d_0^2)\tilde{d}_\Sigma.
	\end{aligned} \ee
Next we choose $\gamma \in (\am,\ap),$ then $\xg^2-\xg(N-k)+\xm <0$. In addition, there exist $0<\ge_0<1,$ $\delta_0>0$ and $M<0$ such that if $\tilde d_\Sigma\leq\gd_0$ then
\bal
\xD d_{\partial \Omega}(1+Md_{\partial \Omega})+2M+M^2d_{\partial \Omega} <-\ge_0
\eal
and by \eqref{frag},
\bal
&\xg^2-\xg(N-k)+\xm+\xg(\xg+1)f_2-\xg f_3+\xm f_1 -2\xg Md_{\partial \Omega}<-\xe_0.\\
\eal
It follows that if $\tilde d_\Sigma\leq\gd_0$ then
\be \label{F3-000} I_2 \geq \epsilon_0(R^2-d_0^2)^2.
\ee
We set
	\bal &\CA_1:=\left\{x\in \xO\cap\mathscr{B}(0,R):\tilde d_\Sigma(x)\leq c_1(R^2-d_0(x)^2)\right\} \quad \text{where } c_1=\frac{\epsilon_0}{4\max\{\sqrt{C_1},C_2\}},\\
	& \CA_2:=\left\{x\in\xO\cap\mathscr{B}(0,R):d_\Sigma(x)\leq \gd_0^{\phantom{^4}}\right\}, \quad
	\CA_3:=\{x\in\xO\cap\mathscr{B}(0,R):\tilde d_\Sigma(x)\geq\gd_0\}.
	\eal
	
	In  $\CA_1 \cap \CA_2$, by \eqref{F3-0a}, \eqref{F3-0b} and \eqref{F3-000}, we have
	\be \label{F3-1}
	I_1 + I_2 + I_3 +I_4\geq \frac{\xe_0(R^2-d_0^2)^2}{2}.
	\ee
	
	In $\CA_1^c\cap \CA_2$, we have $\tilde d_\Sigma \geq c_1(R^2-d_0^2)$.  If $d_{\partial \Omega}(x)\leq c_2(R^2-d_0(x)^2)^2,$ where
\bal
c_2=\min\left\{\frac{\xe_0}{3C_1},\frac{\xe_0^2c_1^2}{9C_2^2}\right\},
\eal
then we can show that
\bal
I_3\geq c_2^{-1}\frac{\xe_0}{2}\tilde d_\xS^2+ c_2^{-\frac{1}{2}}c_1\frac{\xe_0}{2}\tilde d_\xS(R^2-d_0^2),
\eal
This, together with \eqref{F3-0a} and \eqref{F3-0b}, implies \eqref{F3-1}. If $d_{\partial \Omega}(x)\geq c_2(R^2-d_0(x)^2)^2,$ then by Proposition \ref{expansion}, $\tilde d_\xS(x)\geq c_2c_3(\xb_1,\xS)(R^2-d_0(x)^2)^2.$ Therefore,
\bal
I_5\geq c_4(R_0,M,p,\xg,c_1,c_2,c_3)\xL^{p-1}(R^2-d_0^2)^{-(p-1)b+2+2(p-1)-2(p-1)\min\{\xg,0\}+2}d_\xS.
\eal
If  we choose $b>\frac{2(p+1)-2(p-1)\min\{\xg,0\}}{p-1}=:b_0$, then there exists $\Gl$ large enough depending on $c_4,R_0,b,p,\xg$ such that
	\be\label{F4} \begin{aligned}
	&I_5 \geq I_1+I_4.
	\end{aligned} \ee
This and \eqref{F4} yield
	\be \label{F4-0}
	I_1 +I_2+ I_3 + I_4+I_5 \geq 0.
	\ee
	
Similarly we may show that \eqref{F4-0} is valid in $\CA_3$ for some positive constant $\xL$ depending on $M,R_0,b,p,\xg,\xO,\xS.$

Combining the above estimates, we deduce that for $\gamma \in (\am,\ap)$, $b>b_0$ and $\Gl>0$ large enough, there holds
	\be \label{F11}
	-L_{\xm }w + w^p\geq 0\qquad\text{in }\; \xO\cap\mathscr{B}(0,R).
	\ee

\noindent  \textbf{Case 2: $\xm=H^2$.} First we note that $\frac{\tilde d_\Sigma}{16R_0}\leq\frac{1}{2}$ in  $\xO\cap\mathscr{B}(0,R).$ Set
	\bal w(x):=\Gl(R^2- d_0(x)^2)^{-b}d_{\partial \Omega}(x)e^{Md_{\partial \Omega}(x)}\tilde{d}_\Sigma(x)^{-H}\bigg(-\ln \frac{\tilde d_\Sigma(x)}{16R_0}\bigg)^{\frac{1}{2}} \quad \text{for } x \in \xO\cap\mathscr{B}(0,R),
	\eal
	where $\gamma>0, b$ and $\Gl>0$ will be determined later on. Then, by straightforward calculations we have
\be \label{F12}
	-L_{\xm }w +w^p =\Gl(R^2-d_0^2)^{-b-2}\tilde d^{-H-2}_\Sigma \left(-\ln \frac{\tilde d_\Sigma}{16R_0}\right)^{-\frac{3}{2}}(\tilde I_1 + \tilde I_2 + \tilde I_3 + \tilde I_4),
	\ee
	where
	\bal
	\tilde I_1&:=-\tilde{d}_\xS^2\left(\ln \frac{\tilde d_\Sigma}{16R_0}\right)^{2}\left(4b(b+1)|\nabla d_0|^2d_0^2+2b(R^2-d_0^2)(|\nabla d_0|^2+d_0\xD d_0)\right),\\
\tilde I_2&:=-(R^2-d_0^2)^{2}\bigg(\frac{1}{2}(-\ln \frac{\tilde d_\Sigma}{16R_0})(f_3-(N-k-1)f_2)-\frac{1}{4}\\
&\qquad\qquad+\left(\ln \frac{\tilde d_\Sigma}{16R_0}\right)^{2}\left(H(H+1)f_2-H f_3+H^2 f_1-2H Md_{\partial \Omega}\right)+M(-d_{\partial \Omega}\ln \frac{\tilde d_\Sigma}{16R_0})\bigg),\\
\tilde I_3&:=-(R^2-d_0^2)^{2}\bigg(\ln \frac{\tilde d_\Sigma}{16R_0}\bigg)^{2}\tilde{d}^2_\xS d_{\partial \Omega}^{-1}\bigg(\xD d_{\partial \Omega}(1+Md_{\partial \Omega})+2M+M^2d_{\partial \Omega}\bigg),\\
\tilde I_4&:=-4b(R^2-d_0^2)\frac{d_0}{d_{\partial \Omega}}\bigg(-\ln \frac{\tilde d_\Sigma}{16R_0}\bigg)\Bigg(\bigg(-\ln \frac{\tilde d_\Sigma}{16R_0}\bigg)\tilde{d}_\xS^2\nabla d_0\nabla d_{\partial \Omega}(1+Md_{\partial \Omega})\\
&\qquad\qquad-H \bigg(-\ln \frac{\tilde d_\Sigma}{16R_0}\bigg)\tilde{d}_\xS d_{\partial \Omega}\nabla d_0\cdot\nabla \tilde{d}_\xS
-\frac{1}{2}\tilde d_\xS\nabla d_0\cdot\nabla  \tilde{d}_\xS\Bigg),\\
\tilde I_5&:=\xL^{p-1}(R^2-d_0^2)^{-(p-1)b+2}\bigg(\ln \frac{\tilde d_\Sigma}{16R_0}\bigg)^{\frac{p-1}{2}+2}e^{M(p-1)d_{\partial \Omega}}d_{\partial \Omega}^{p-1}\tilde{d}_\Sigma^{-(p-1)H+2}.
\eal

By (i)--(iii) and the fact that $-\ln \frac{\tilde d_\Sigma}{16R_0}\geq \ln 2$ , we have
\ba\label{F3-0aa}
|\tilde I_1|\leq \tilde C_1(R_0,b,\xO,N)\tilde{d}^2_\Sigma\left(\ln \frac{\tilde d_\Sigma}{16R_0}\right)^{2}.
\ea

Also,
	\be \label{F3-0bb} \begin{aligned}
	  |\tilde I_4| \leq \tilde C_2(R_0,\xO,N,M,k,b)(R^2-d_0^2)\tilde{d}_\Sigma\bigg|\ln \frac{\tilde d_\Sigma}{16R_0}\bigg|.
	\end{aligned} \ee
Next we choose $\delta_0>0$ and $M<0$ such that if $\tilde d_\Sigma\leq\gd_0$ then
\bal
\tilde I_3 >\xe_0(R^2-d_0^2)^{2}\left(\ln \frac{\tilde d_\Sigma}{16R_0}\right)^{2}\tilde{d}^2_\xS d_{\partial \Omega}^{-1}
\eal
and
\ba\label{F3-000b}
&\tilde I_2>\xe_0(R^2-d_0^2)^{2}.
\ea

We set
	\bal &\tilde\CA_1:=\left\{x\in \xO\cap\mathscr{B}(0,R):d_\Sigma(x)\leq \tilde c_1\frac{(R^2-d_0(x)^2)}{|\ln \frac{\tilde d_\Sigma(x)}{16R_0}|}\right\} \quad \text{where } \tilde c_1=\frac{\epsilon_0}{4\max\{\sqrt{\tilde C_2},\tilde C_3\}},\\
	& \tilde\CA_2:=\left\{x\in\xO\cap\mathscr{B}(0,R):d_\Sigma(x)\leq \gd_0^{\phantom{^4}}\right\}, \quad
	\tilde\CA_3:=\{x\in\xO\cap\mathscr{B}(0,R):d_\Sigma(x)\geq\gd_0\}.
	\eal
	
	In  $\tilde\CA_1 \cap \tilde\CA_2$, by \eqref{F3-0aa}, \eqref{F3-0bb} and \eqref{F3-000b}, we have
	\be \label{F3-1b}
	\tilde I_1 +\tilde I_2 +\tilde I_3 +\tilde I_4\geq \frac{\xe_0(R^2-d_0^2)^2}{2}.
	\ee
	
In $\tilde \CA_1^c\cap \tilde\CA_2$, we have $\tilde d_\Sigma \geq \tilde c_1\frac{(R^2-d_0^2)}{|\ln \frac{\tilde d_\Sigma}{16R_0}|}$. If $d_{\partial \Omega}(x)\leq \tilde c_2(R^2-d_0(x)^2)^2,$ where
\bal
\tilde c_2=\min\left\{\frac{\xe_0}{3\tilde C_1},\frac{\xe_0^2{\tilde c}_1^2}{9{\tilde C}_2^2}\right\}.
\eal
Then, we can easily show that  \eqref{F3-1b} is valid. The rest of
the proof is the same as in case 1 with obvious modifications so we omit it. The proof is complete.	
\end{proof}

\end{document}